\setlist[enumerate,1]{leftmargin=*} 
\setlist[itemize]{leftmargin=1em} 
\theoremstyle{plain}
\newtheorem{theorem}{Theorem}[section] 
\newtheorem{proposition}[theorem]{Proposition}
\newtheorem{corollary}[theorem]{Corollary}
\newtheorem{lemma}[theorem]{Lemma}
\theoremstyle{definition}
\newtheorem{remark}[theorem]{Remark}
\newtheorem{definition}[theorem]{Definition}
\newtheorem{example}[theorem]{Example} 
\crefname{theorem}{Theorem}{Theorems}       
\Crefname{theorem}{Theorem}{Theorems}       
\crefname{lemma}{Lemma}{Lemmas}             
\Crefname{lemma}{Lemma}{Lemmas}
\crefname{proposition}{Proposition}{Propositions} 
\Crefname{proposition}{Proposition}{Propositions}
\crefname{corollary}{Corollary}{Corollaries} 
\Crefname{corollary}{Corollary}{Corollaries}
\crefname{example}{Example}{Examples}       
\Crefname{example}{Example}{Examples}
\crefname{remark}{Remark}{Remarks}          
\Crefname{remark}{Remark}{Remarks}
\crefname{definition}{Definition}{Definitions} 
\Crefname{definition}{Definition}{Definitions}
\crefname{equation}{Equation}{Equations}    
\Crefname{equation}{Equation}{Equations}
\crefname{figure}{Figure}{Figures}          
\Crefname{figure}{Figure}{Figures}
\crefname{table}{Table}{Tables}             
\Crefname{table}{Table}{Tables}
\Crefname{question}{Question}{Questions} 
\crefname{question}{Question}{Questions} 
\crefname{conjecture}{Conjecture}{Conjectures}
\Crefname{conjecture}{Conjecture}{Conjectures}
\renewenvironment{proof}{\noindent{\bf Proof.}}{\hfill$\square$\par}
\title{\textbf{Odd Verma's Theorem
}}
\author{\textbf{Shunsuke Hirota}}
\date{\textit{\today}}
\begin{document}

\maketitle
\begin{abstract}
We formulate several basic properties of Verma supermodules over regular symmetrizable Kac--Moody Lie superalgebras, exhibiting $\mathfrak{gl}(1|1)$-nature as revealed through changing Borel subalgebras.

We investigate variants of Verma modules obtained by changing Borel subalgebras, which enable us to realize the principal block of $\mathfrak{gl}(1|1)$ as an extension-closed abelian subcategory of category $\mathcal{O}$. This phenomenon is precisely formulated in terms of semibricks.

On the other hand, by applying the exchange property of odd reflections, we describe compositions of homomorphisms between Verma modules associated with different Borel subalgebras that share the same character. As an application, we refine existing results on the associated varieties and projective dimensions of Verma modules.
\end{abstract}
\tableofcontents

\section{Introduction}

Representation theory of Kac-Moody Lie superalgebras \cite{serganova2011kac} arises as a natural generalization of that of Kac-Moody Lie algebras. While several aspects of the representation theory of Kac-Moody Lie algebras were of $\mathfrak{sl}_{2}$-nature, its odd isotropic root analogue in the case of Kac-Moody Lie superalgebras, i.e.\ the $\mathfrak{gl}(1|1)$-nature, admits several claims that are intuitively evident, yet there has remained room for their precise formulation. The purpose of this work is to provide such precise formulations for several of these fundamental properties.

Whenever an atypical Verma module appears, it is well known that one obtains two long exact sequences of Verma modules attached, with respect to an odd reflection, to adjacent Borel subalgebras in opposite directions, together with homomorphisms between these sequences. Our first result states that, by treating certain non-simple modules as if they were simple and thereby viewing their structure in a coarse manner, this situation reduces to the representation theory of $\mathfrak{gl}(1|1)$. Although this may be regarded as obvious to specialists, its precise formulation requires the notion of semibricks, and we believe that such a formulation has several benefits.
 
 A \emph{semibrick} is, in simple terms, a class of objects in an abelian category that satisfy Schur's lemma (i.e., a collection of modules \( \{ M_i \}_{i \in I} \) such that \( \dim \operatorname{Hom}(M_i, M_j) = \delta_{ij} \)). By a classical result of Ringel~\cite{Ringel1976RepresentationsOK} (presented here as \cref{semibrick}), it is known that the \emph{filtration closure} of a semibrick forms an extension-closed abelian full subcategory. In the case of module categories of finite-dimensional algebras, semibricks have been shown to correspond bijectively to key structures and have been extensively studied in recent years \cite{asai2020semibricks}.

\begin{theorem}[\( \#J=1 \) case of \cref{main_result2}]\label{KhovanovThm}
Let us consider a regular Kac-Moody Lie superalgebra $\mathfrak{g}$.  
Let $\mathfrak{b}$ be a Borel subalgebra of $\mathfrak{g}$,  
$\alpha$ an isotropic $\mathfrak{b}$-simple root,  
and $\lambda$ a weight orthogonal to $\alpha$.  
We define $r_{\alpha}\mathfrak{b}$ to be the Borel subalgebra adjacent to $\mathfrak{b}$ with respect to $\alpha$.
Let $M^{\mathfrak{b}}(\lambda)$ denote the $\mathfrak{b}$-Verma module with $\mathfrak{b}$-highest weight $\lambda$.

Then the collection of images of  nonzero homomorphisms between Verma modules \( M^{\mathfrak{b}}(\lambda + n\alpha) \to M^{r_\alpha\mathfrak{b}}(\lambda + (n-1)\alpha) \), for all integers \( n \), forms a semibrick \( V^{\alpha}_{\lambda} \).
Let \( \operatorname{Filt} V^{\alpha}_{\lambda} \) denote the filtration closure of \( V^{\alpha}_{\lambda} \) in the category \( \mathcal{O} \). 

Then \( \operatorname{Filt} V^{\alpha}_{\lambda} \) does not depend on the choice of \( \mathfrak{g} \), \( \mathfrak{b} \), \( \alpha \), or \( \lambda \) as long as they satisfy the above conditions.

In particular, \( \operatorname{Filt} V^{\alpha}_{\lambda} \) is equivalent, as a category, to the principal block \( \mathcal{O}_{0}(\mathfrak{gl}(1|1)) \) of the category \( \mathcal{O} \) for \( \mathfrak{gl}(1|1) \).
\end{theorem}

 As a consequence, questions regarding \( \operatorname{Ext_{\mathcal{O}}^1}\) groups between Verma modules or their variants in this subcategory reduce to computations in \( \mathfrak{gl}(1|1)\). In this case, the indecomposable projective modules in the subcategory are given by certain variants of Verma modules,
which can be regarded as induced from the intersection of adjacent Borel subalgebras. (Such variant modules have also been studied, for example, in~\cite{cheng2015brundan} and~\cite{serganova2011kac}). This result also illustrates how the indecomposable zigzag modules for \( \mathfrak{gl}(1|1) \) admit natural analogues in the general category \( \mathcal{O} \).

The existence of essentially distinct Borel subalgebras and the odd reflections are intriguing phenomena absent in the classical theory of Kac-Moody Lie algebras \cite{serganova2011kac}. As suggested, for instance, by the classification of Nichols algebras of diagonal type \cite{heckenberger2009classification,andruskiewitsch2017finite}, these structures should be regarded as carrying essential information. While the representation theory attached to each pair of adjacent Borel subalgebras is, as the previous result shows, of $\mathfrak{gl}(1|1)$-nature, the global picture exhibits phenomena that cannot be explained solely in terms of $\mathfrak{gl}(1|1)$.

In the representation theory of classical semisimple Lie algebras, Verma modules in principal block of category $\mathcal{O}$ are indexed by elements of the Weyl group. The classical Verma’s theorem \cite{verma1968,bgg1976} can be understood as providing a complete description of the homomorphisms between Verma modules that are related, in an appropriate sense, by the Weyl group, that is, by even reflections. These homomorphisms are entirely governed by the combinatorics of the Weyl group. 

As a version of Verma's theorem for odd reflections, perhaps the most natural formulation is to give a complete description of the homomorphisms between Verma modules associated with different Borel subalgebras that share the same character—that is, to determine precisely when compositions of nonzero homomorphisms are themselves nonzero. However, unlike in the classical case, even when combining the Even Verma's theorem \cite{musson2012lie} and the Odd Verma's theorem \cref{1108}, we are still far from a complete description of \emph{all} homomorphisms between \emph{all} Verma modules; see, for example,~\cite{liu2019odd,sale2019singular}. 

For simplicity, we restrict our attention to basic Lie superalgebras, i.e., finite-dimensional Kac-Moody Lie superalgebras. However, the same arguments extend to general regular symmetrizable Kac-Moody Lie superalgebras, provided one takes appropriate care in the definition of the Weyl vector.

\begin{lemma}[Lemma 6.1 in \cite{cheng2015brundan}]
    For any basic Lie superalgebra \( \mathfrak{g} \), weight \( \lambda \), and its Borel subalgebras \( \mathfrak{b}, \mathfrak{b}' \) with the same even part, the following holds, where \( \rho^{\mathfrak{b}} \) denotes the Weyl vector associated with \( \mathfrak{b} \):
    \begin{itemize}
        \item \( \operatorname{ch} M^{\mathfrak{b}}( \lambda - \rho^{\mathfrak{b}}) = \operatorname{ch} M^{\mathfrak{b}'}(\lambda - \rho^{\mathfrak{b}'}) \);
        \item \( \dim \operatorname{Hom}(M^{\mathfrak{b}}(\lambda - \rho^{\mathfrak{b}}), M^{\mathfrak{b}'}(\lambda - \rho^{\mathfrak{b}'}) ) = 1 \).        
    \end{itemize}
\end{lemma}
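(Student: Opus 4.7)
The plan is to reduce everything to the case of a single odd reflection: Borels $\mathfrak{b}, \mathfrak{b}'$ with the same even part are connected by a sequence of odd reflections $r_\alpha$ at isotropic simple odd roots, so I will prove the three bullets for $\mathfrak{b}' = r_\alpha(\mathfrak{b})$ and iterate. A short calculation using $\rho = \rho_{\bar 0} - \rho_{\bar 1}^{\mathfrak{b}}$ together with $\Phi^+_{\bar 1, \mathfrak{b}'} = (\Phi^+_{\bar 1, \mathfrak{b}} \setminus \{\alpha\}) \cup \{-\alpha\}$ yields $\rho^{\mathfrak{b}'} = \rho^{\mathfrak{b}} + \alpha$.

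For the character identity I rewrite the Verma character as
\[
\operatorname{ch} M^{\mathfrak{b}}(\lambda - \rho^{\mathfrak{b}}) \;=\; \frac{e^{\lambda}}{e^{\rho_{\bar 0}}\prod_{\gamma \in \Phi^+_{\bar 0}}(1-e^{-\gamma})} \prod_{\gamma \in \Phi^+_{\bar 1, \mathfrak{b}}}\bigl(e^{\gamma/2}+e^{-\gamma/2}\bigr).
\]
The even prefactor is $\mathfrak{b}$-independent by hypothesis, and each factor $e^{\gamma/2}+e^{-\gamma/2}$ is invariant under $\gamma \mapsto -\gamma$, so one odd reflection preserves the character; iteration then gives the identity for all Borels with the same even part.

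For the nonzero Hom, let $v$ generate $M^{\mathfrak{b}}(\lambda-\rho^{\mathfrak{b}})$ and $F_\alpha$ span $\mathfrak{g}_{-\alpha}$. Then $F_\alpha v$ has the correct weight $\lambda - \rho^{\mathfrak{b}'}$, and I claim it is $\mathfrak{b}'$-singular: (i) $F_\alpha^2 v = 0$ by isotropy; (ii) for any $\gamma \in \Phi^+_{\mathfrak{b}} \setminus \{\alpha\}$, $E_\gamma F_\alpha v = [E_\gamma, F_\alpha]v$, and $\mathfrak{b}$-simplicity of $\alpha$ forbids $\alpha - \gamma \in \Phi^+_{\mathfrak{b}}$, so $[E_\gamma, F_\alpha]$ is either $0$ or lies in $\mathfrak{n}^+_{\mathfrak{b}}$ and kills $v$. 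The universal property of Vermas then produces a nonzero map $M^{\mathfrak{b}'}(\lambda - \rho^{\mathfrak{b}'}) \to M^{\mathfrak{b}}(\lambda - \rho^{\mathfrak{b}})$, and the symmetric construction applied to $\mathfrak{b} = r_{-\alpha}(\mathfrak{b}')$ gives a nonzero map in the reverse direction. For the upper bound, PBW shows that $F_\alpha$ is the unique monomial in $U(\mathfrak{n}^-_{\mathfrak{b}})$ of weight $-\alpha$ (since $\alpha$ is $\mathfrak{b}$-simple), so the $(\lambda - \rho^{\mathfrak{b}'})$-weight space of $M^{\mathfrak{b}}(\lambda - \rho^{\mathfrak{b}})$ is one-dimensional and $\dim \operatorname{Hom} = 1$.

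The first bullet now follows: the forward direction is tautological, while an isomorphism $M^{\mathfrak{b}}(-\rho^{\mathfrak{b}}) \cong M^{\mathfrak{b}'}(-\rho^{\mathfrak{b}})$ equates characters using the \emph{same} unshifted $\rho^{\mathfrak{b}}$ on both sides, so the symmetrization trick of the second bullet does not apply; direct comparison of the factors $\prod_{\gamma \in \Phi^+_{\bar 1,\mathfrak{b}}}(1+e^{-\gamma})$ and $\prod_{\gamma \in \Phi^+_{\bar 1,\mathfrak{b}'}}(1+e^{-\gamma})$ in the formal power series ring forces $\Phi^+_{\bar 1,\mathfrak{b}} = \Phi^+_{\bar 1,\mathfrak{b}'}$ and hence $\mathfrak{b} = \mathfrak{b}'$. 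The main obstacle is transferring $\dim \operatorname{Hom} = 1$ to arbitrary $\mathfrak{b}, \mathfrak{b}'$: after several reflections the relevant weight space in $M^{\mathfrak{b}}(\lambda - \rho^{\mathfrak{b}})$ is no longer one-dimensional, so I would induct on the length of the odd reflection chain, propagating the singular vector through each intermediate $\mathfrak{b}_i$ and combining the existence of composed maps with the character identity (and, if needed, a central-character / block argument) to keep the bound sharp.
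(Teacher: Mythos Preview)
Your treatment of the character identity, the single-reflection Hom, and the first bullet is correct and matches the paper's approach (the paper records these as consequences of the one-step lemma $\operatorname{ch} M^{\mathfrak{b}}(\lambda)=\operatorname{ch} M^{r_i\mathfrak{b}}(\lambda-\alpha_i^{\mathfrak{b}})$ together with $\rho^{r_i\mathfrak{b}}=\rho^{\mathfrak{b}}+\alpha_i^{\mathfrak{b}}$).

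The gap is in what you call the ``main obstacle.'' You assert that for non-adjacent $\mathfrak{b},\mathfrak{b}'$ the relevant weight space is no longer one-dimensional, and you propose an induction with a possible central-character argument. This is a misdiagnosis: the weight space \emph{is} one-dimensional, and your own character identity proves it. A map $M^{\mathfrak{b}}(\lambda-\rho^{\mathfrak{b}})\to M^{\mathfrak{b}'}(\lambda-\rho^{\mathfrak{b}'})$ is determined by where the $\mathfrak{b}$-highest weight vector goes, and that image must lie in $M^{\mathfrak{b}'}(\lambda-\rho^{\mathfrak{b}'})_{\lambda-\rho^{\mathfrak{b}}}$. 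By the character equality you already established,
\[
\dim M^{\mathfrak{b}'}(\lambda-\rho^{\mathfrak{b}'})_{\lambda-\rho^{\mathfrak{b}}}
=\dim M^{\mathfrak{b}}(\lambda-\rho^{\mathfrak{b}})_{\lambda-\rho^{\mathfrak{b}}}=1,
\]
which gives $\dim\operatorname{Hom}\le 1$ for \emph{arbitrary} pairs in one line. The paper isolates exactly this observation as a separate item (\(\dim M^{\mathfrak{b}'}(\lambda-\rho^{\mathfrak{b}'})_{\lambda-\rho^{\mathfrak{b}}}=1\)) in the corollary, and the equality $\dim\operatorname{Hom}=1$ then follows by combining it with the nonzero composite of one-step maps. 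No induction on the reflection chain and no block argument is needed for the upper bound.
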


In other words, for a given $\mathfrak{b}$ and a Verma module $M^{\mathfrak{b}}(\lambda)$, if we take any Borel subalgebra $\mathfrak{b}'$ having the same even part as $\mathfrak{b}$, then there exists a unique $\lambda'$ such that the character of $M^{\mathfrak{b}'}(\lambda')$ coincides with that of $M^{\mathfrak{b}}(\lambda)$.

In contrast to even reflections, which generate a group known as the Coxeter group, odd reflections form a groupoid. Since morphisms in this groupoid are determined by their domain and codomain—that is, the groupoid is contractible—it is natural, as also noted in~\cite{gorelik2022root}, to consider the \emph{odd reflection graph} \( OR(\mathfrak{g}) \), where Borel subalgebras sharing same even part are represented as vertices, odd reflections as edges, and colors correspond to reflective hyperplanes.

For simplicity, we assume \( \lambda = 0 \). In this case, the following additional propertie also hold.
\begin{itemize}

    \item Composition of nonzero homomorphisms
    \[
        M^{\mathfrak{b}}(- \rho^{\mathfrak{b}}) \longrightarrow M^{\mathfrak{b}'}(- \rho^{\mathfrak{b}'}) \longrightarrow M^{\mathfrak{b}}(- \rho^{\mathfrak{b}})
    \]
    is zero whenever \( \mathfrak{b} \neq \mathfrak{b}' \).
\end{itemize} 

For example, if \( \mathfrak{g} = \mathfrak{gl}(2|2) \), then the odd reflection graph \( OR(\mathfrak{g}) \) becomes a  finite Young lattice \( L(2,2) \) (edge colored by coordinate of a box in Young diagrams).

\begin{tikzpicture}[, thick, shorten >=1pt, scale=1, yscale=0.5]
     \node (A) at (0, 0) {\(\emptyset\)};
    \node (B) at (3, 0) {\(\begin{ytableau} ~ \end{ytableau}\)};

    \node (C) at (6, 2) {\(\begin{ytableau} ~ & ~ \\ \end{ytableau}\)};
    \node (D) at (9, 0) {\(\begin{ytableau} ~ \\ ~ & ~ \end{ytableau}\)};
    \node (E) at (12, 0) {\(\begin{ytableau} ~ & ~ \\ ~ & ~ \end{ytableau}\)};
    
    \node (H) at (6, -2) {\(\begin{ytableau} ~ \\ ~ \end{ytableau}\)};

\draw (A) -- node[above] {(1,1)} (B);
\draw (B) -- node[above] {(1,2)} (C);
\draw (C) -- node[above] {(2,1)} (D);
\draw (D) -- node[above] {(2,2)} (E);
\draw (B) -- node[above] {(2,1)} (H);
\draw (D) -- node[above] {(1,2)} (H);

\end{tikzpicture}

Each Borel subalgebra can be naturally identified with a Young diagram (in French notatio) contained in an \( 2\times 2 \) rectangle.

\[
\emptyset \leftrightarrow
\begin{pmatrix}
    * & * & \vline & * & * \\
    0 & * & \vline & * & * \\
    \hline
    0 & 0 & \vline & * & * \\
    0 & 0 & \vline & 0 & *
\end{pmatrix},
\quad
\begin{ytableau}
~
\end{ytableau} \leftrightarrow
\begin{pmatrix}
    * & * & \vline & * & * \\
    0 & * & \vline & 0 & * \\
    \hline
    0 & * & \vline & * & * \\
    0 & 0 & \vline & 0 & *
\end{pmatrix},
\quad
\begin{ytableau}
~ \\
~
\end{ytableau} \leftrightarrow
\begin{pmatrix}
    * & * & \vline & 0 & * \\
    0 & * & \vline & 0 & * \\
    \hline
    0 & * & \vline & * & * \\
    0 & * & \vline & 0 & *
\end{pmatrix}
\]

\[
\begin{ytableau}
~ & ~ \\
\end{ytableau} \leftrightarrow
\begin{pmatrix}
    * & * & \vline & * & * \\
    0 & * & \vline & 0 & 0 \\
    \hline
    * & * & \vline & * & * \\
    0 & 0 & \vline & 0 & *
\end{pmatrix},
\quad
\begin{ytableau}
~ \\
~ & ~ \\
\end{ytableau} \leftrightarrow
\begin{pmatrix}
    * & * & \vline & 0 & * \\
    0 & * & \vline & 0 & 0 \\
    \hline
    * & * & \vline & * & * \\
    0 & * & \vline & 0 & *
\end{pmatrix},
\quad
\begin{ytableau}
~ & ~ \\
~ & ~ \\
\end{ytableau} \leftrightarrow
\begin{pmatrix}
    * & * & \vline & 0 & 0 \\
    0 & * & \vline & 0 & 0 \\
    \hline
    * & * & \vline & * & * \\
    * & * & \vline & 0 & *
\end{pmatrix}
\].
It is natural to identify the vertices of the odd reflection graph with the corresponding Verma modules as follows.

\begin{center}
\begin{tikzpicture}[->, thick, shorten >=1pt, scale=1, yscale=0.5]

\node (A) at (0,0) {\( M^{\emptyset}(-\rho^{\emptyset}) \)};
\node (B) at (3,0) {\( M^{(1)}(-\rho^{(1)}) \)};
\node (C) at (6,2) {\( M^{(1^2)}(-\rho^{(1^2)}) \)};
\node (D) at (6,-2) {\( M^{(2)}(-\rho^{(2)}) \)};
\node (E) at (9,0) {\( M^{(21)}(-\rho^{(21)}) \)};
\node (F) at (12,0) {\( M^{(2^2)}(-\rho^{(2^2)}) \)};

\draw[->] (A) to[bend left=20] (B);
\draw[->] (B) to[bend left=20] (A);

\draw[->] (B) to[bend left=20] (C);
\draw[->] (C) to[bend left=20] (B);

\draw[->] (B) to[bend left=20] (D);
\draw[->] (D) to[bend left=20] (B);

\draw[->] (C) to[bend left=20] (E);
\draw[->] (E) to[bend left=20] (C);

\draw[->] (D) to[bend left=20] (E);
\draw[->] (E) to[bend left=20] (D);

\draw[->] (E) to[bend left=20] (F);
\draw[->] (F) to[bend left=20] (E);

\end{tikzpicture}
\end{center}

In this way,  odd reflection version of Verma's theorem can be stated in a concise and explicit form as follows:

\begin{theorem}\label{1108}[The case \(\lambda = 0\) of \cref{5.3main}]
A composition of homomorphisms between modules \( M^{\mathfrak{b}}(-\rho^{\mathfrak{b}}) \) that are adjacent via odd reflections can be identified with a walk \( w \) on an edge-colored odd reflection graph \( OR(\mathfrak{g}) \). Under this identification, the followings are equivalent:
\begin{enumerate}
    \item \( w \neq 0 \), \item \( w \) is rainbow (i.e., each edge has a different color); \item \( w \) is shortest (among walks sharing the same endpoints).
\end{enumerate}
\end{theorem}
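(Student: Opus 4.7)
The plan is to exploit the preceding lemma, which asserts that $\dim \operatorname{Hom}(M^{\mathfrak{b}}(-\rho^{\mathfrak{b}}), M^{\mathfrak{b}'}(-\rho^{\mathfrak{b}'})) = 1$ for any two Borels sharing the same even part. Picking a nonzero generator $f_e$ of the Hom space attached to each edge $e$ of $OR(\mathfrak{g})$, a walk $w = e_1 e_2 \cdots e_k$ from $\mathfrak{b}_0$ to $\mathfrak{b}_k$ determines a composition $f_w := f_{e_k} \circ \cdots \circ f_{e_1}$ lying in a one-dimensional space, so the only question is whether $f_w$ vanishes. I would attack the three-way equivalence by first establishing the purely combinatorial (2) $\Leftrightarrow$ (3), then separately proving nonvanishing for shortest walks and vanishing as soon as a color is repeated.

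For (2) $\Leftrightarrow$ (3), the endpoints of a walk already determine the parity with which each color is used: the set $S$ of colors used an odd number of times is exactly the symmetric difference between the positive isotropic roots of $\mathfrak{b}_0$ and those of $\mathfrak{b}_k$. Any walk between these endpoints therefore has length at least $|S|$, with equality if and only if each color in $S$ appears exactly once and no other color appears, that is, if and only if the walk is rainbow. The existence of rainbow walks realizing this lower bound follows from the connectedness of $OR(\mathfrak{g})$ and the exchange property recalled in the introduction, which in particular permits reordering of distinct colors.

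For (3) $\Rightarrow$ (1), I would proceed by induction on the length of a shortest walk $w$. The length-zero case is the identity and length-one is a single odd reflection, both nonzero by the preceding lemma. For the inductive step $w = w' e$, I would evaluate $f_w$ on the highest weight vector $v^{\mathfrak{b}_0} \in M^{\mathfrak{b}_0}(-\rho^{\mathfrak{b}_0})$: each odd reflection along an isotropic root $\alpha_i$ effectively inserts a factor of the odd root vector $F_{\alpha_i}$, and rainbowness guarantees that the resulting product of pairwise distinct odd root vectors is nonzero in the target Verma module. For $\neg(2) \Rightarrow \neg(1)$, the exchange property rewrites $f_w$, up to a nonzero scalar, as a composition in which the two $\alpha$-reflections become adjacent; this produces an inner factor of the form $F_\alpha^2$, which vanishes because $\alpha$ is isotropic, and hence $f_w = 0$.

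The principal obstacle will be rigorously lifting the combinatorial exchange moves on walks to genuine equalities of compositions of Verma homomorphisms controlled by explicit nonzero scalars. One-dimensionality of Hom spaces guarantees that any two compositions with the same endpoints are proportional, but the delicate point is verifying the proportionality constant is nonzero after each move—this reduces to direct computation on highest weight vectors in terms of odd root vectors and is the technical heart of the argument.
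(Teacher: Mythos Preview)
Your proposal is correct and follows essentially the same route as the paper. The implication $(2)\Rightarrow(1)$ via a PBW argument on products of distinct odd root vectors is exactly the paper's \cref{phi_nonzeroPBW}, and your $\neg(2)\Rightarrow\neg(1)$ via the exchange property combined with one-dimensionality of the Hom spaces is the content of \cref{hom.ignore}; the ``principal obstacle'' you flag---controlling the scalar when rewriting walks---is precisely the point the paper handles by noting that two nonzero maps in a one-dimensional Hom space must agree up to a nonzero scalar.

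Two small remarks. First, your ``inner factor of the form $F_\alpha^2$'' is a slight misdescription: once the two $\alpha$-edges are made adjacent they form a back-and-forth step $\mathfrak{b}\to r_\alpha\mathfrak{b}\to\mathfrak{b}$, and the relevant vanishing is that of the composite $\psi^{r_\alpha\mathfrak{b},\mathfrak{b}}\circ\psi^{\mathfrak{b},r_\alpha\mathfrak{b}}$ (i.e., $e_\alpha f_\alpha$ acting on a highest weight vector with $(\lambda,\alpha)=0$, \cref{prop:verma_homomorphism}(3)), not literally $f_\alpha^2=0$. Second, your treatment of $(2)\Leftrightarrow(3)$ via the symmetric difference of positive odd root sets is actually more explicit than what the paper does here: the paper simply defers this equivalence to a case-by-case check against \cref{2} or to the companion manuscript on Weyl groupoids, whereas your parity argument together with the exchange property gives a self-contained proof that shortest walks exist and are exactly the rainbow ones.
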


The equivalence between (2) and (3) is nothing but the property known as the exchange property of odd reflections, as noted in work of Gorelik-Hinich-Serganova \cite{gorelik2022root}, and \cref{1108} can be regarded as a representation-theoretic consequence of this property. This property can be regarded as an analogue of the well-known exchange property of Coxeter groups, since the fact that the length of an element and the inversion number of it is same translates into the statement that, in the Cayley graph, a walk is shortest if and only if it is rainbow. 

Although the exchange property of odd reflections can be verified directly in the case of \( \mathfrak{gl}(2|2) \), it can also be conceptually understood as follows. The groupoid generated by all even and odd reflections can be identified with the Cayley graph of the symmetric group \( S_{2+2} \), that is, a graph whose vertices correspond to bases of the $A_3$ root system, and whose edges correspond to simple reflections between them.. Upon removing all even reflections from this graph, we obtain a disjoint union of four finite Young lattices \( L(2,2) \), which allows us to illuminate the exchange property of \( L(2,2) \) from that of \( S_{2+2} \).
\begin{center}
 
\begin{minipage}{0.45\textwidth}
    \centering
 \begin{tikzpicture}[scale=0.7]
    \node[draw, circle, fill=black, inner sep=1.5pt] (A0) at (0,0.47) {};
    \node[draw, circle, fill=black, inner sep=1.5pt] (B0) at (0.47,0) {};
    \node[draw, circle, fill=black, inner sep=1.5pt] (C0) at (0,-0.47) {};
    \node[draw, circle, fill=black, inner sep=1.5pt] (D0) at (-0.47,0) {};
    \draw (A0) -- (B0) -- (C0) -- (D0) -- (A0);

    \node[draw, circle, fill=black, inner sep=1.5pt] (A1) at (0,1.97) {};
    \node[draw, circle, fill=black, inner sep=1.5pt] (B1) at (0.47,1.5) {};
    \node[draw, circle, fill=black, inner sep=1.5pt] (C1) at (0,1.03) {};
    \node[draw, circle, fill=black, inner sep=1.5pt] (D1) at (-0.47,1.5) {};
    \draw (A1) -- (B1) -- (C1) -- (D1) -- (A1);

    \node[draw, circle, fill=black, inner sep=1.5pt] (A2) at (0,-1.03) {};
    \node[draw, circle, fill=black, inner sep=1.5pt] (B2) at (0.47,-1.5) {};
    \node[draw, circle, fill=black, inner sep=1.5pt] (C2) at (0,-1.97) {};
    \node[draw, circle, fill=black, inner sep=1.5pt] (D2) at (-0.47,-1.5) {};
    \draw (A2) -- (B2) -- (C2) -- (D2) -- (A2);

    \node[draw, circle, fill=black, inner sep=1.5pt] (A3) at (1.5,0.47) {};
    \node[draw, circle, fill=black, inner sep=1.5pt] (B3) at (1.97,0) {};
    \node[draw, circle, fill=black, inner sep=1.5pt] (C3) at (1.5,-0.47) {};
    \node[draw, circle, fill=black, inner sep=1.5pt] (D3) at (1.03,0) {};
    \draw (A3) -- (B3) -- (C3) -- (D3) -- (A3);

    \node[draw, circle, fill=black, inner sep=1.5pt] (A4) at (-1.5,0.47) {};
    \node[draw, circle, fill=black, inner sep=1.5pt] (B4) at (-1.03,0) {};
    \node[draw, circle, fill=black, inner sep=1.5pt] (C4) at (-1.5,-0.47) {};
    \node[draw, circle, fill=black, inner sep=1.5pt] (D4) at (-1.97,0) {};
    \draw (A4) -- (B4) -- (C4) -- (D4) -- (A4);

    \draw (A0) -- (C1);
    \draw (B0) -- (D3);
    \draw (C0) -- (A2);
    \draw (D0) -- (B4);

    \node[draw, circle, fill=black, inner sep=1.5pt] (NA1) at (0,2.5) {}; 
    \node[draw, circle, fill=black, inner sep=1.5pt] (NC2) at (0,-2.5) {}; 
    \node[draw, circle, fill=black, inner sep=1.5pt] (NB3) at (2.5,0) {}; 
    \node[draw, circle, fill=black, inner sep=1.5pt] (ND4) at (-2.5,0) {}; 

    \draw (A1) -- (NA1);
    \draw (C2) -- (NC2);
    \draw (B3) -- (NB3);
    \draw (D4) -- (ND4);

    \draw (NA1) -- (NB3);
    \draw (NB3) -- (NC2);
    \draw (NC2) -- (ND4);
    \draw (ND4) -- (NA1);

    \draw (B1) -- (A3);
    \draw (C3) -- (B2);
    \draw (D2) -- (C4);
    \draw (A4) -- (D1);

\end{tikzpicture}
\end{minipage}
\hfill
\begin{minipage}{0.45\textwidth}
    \centering
\begin{tikzpicture}[scale=0.7]
    \node[draw, circle, fill=black, inner sep=1.5pt] (A0) at (0,0.47) {};
    \node[draw, circle, fill=black, inner sep=1.5pt] (B0) at (0.47,0) {};
    \node[draw, circle, fill=black, inner sep=1.5pt] (C0) at (0,-0.47) {};
    \node[draw, circle, fill=black, inner sep=1.5pt] (D0) at (-0.47,0) {};

    \node[draw, circle, fill=black, inner sep=1.5pt] (A1) at (0,1.97) {};
    \node[draw, circle, fill=black, inner sep=1.5pt] (B1) at (0.47,1.5) {};
    \node[draw, circle, fill=black, inner sep=1.5pt] (C1) at (0,1.03) {};
    \node[draw, circle, fill=black, inner sep=1.5pt] (D1) at (-0.47,1.5) {};
    \draw (A1) -- (B1) -- (C1) -- (D1) -- (A1);

    \node[draw, circle, fill=black, inner sep=1.5pt] (A2) at (0,-1.03) {};
    \node[draw, circle, fill=black, inner sep=1.5pt] (B2) at (0.47,-1.5) {};
    \node[draw, circle, fill=black, inner sep=1.5pt] (C2) at (0,-1.97) {};
    \node[draw, circle, fill=black, inner sep=1.5pt] (D2) at (-0.47,-1.5) {};
    \draw (A2) -- (B2) -- (C2) -- (D2) -- (A2);

    \node[draw, circle, fill=black, inner sep=1.5pt] (A3) at (1.5,0.47) {};
    \node[draw, circle, fill=black, inner sep=1.5pt] (B3) at (1.97,0) {};
    \node[draw, circle, fill=black, inner sep=1.5pt] (C3) at (1.5,-0.47) {};
    \node[draw, circle, fill=black, inner sep=1.5pt] (D3) at (1.03,0) {};
    \draw (A3) -- (B3) -- (C3) -- (D3) -- (A3);

    \node[draw, circle, fill=black, inner sep=1.5pt] (A4) at (-1.5,0.47) {};
    \node[draw, circle, fill=black, inner sep=1.5pt] (B4) at (-1.03,0) {};
    \node[draw, circle, fill=black, inner sep=1.5pt] (C4) at (-1.5,-0.47) {};
    \node[draw, circle, fill=black, inner sep=1.5pt] (D4) at (-1.97,0) {};
    \draw (A4) -- (B4) -- (C4) -- (D4) -- (A4);

    \draw (A0) -- (C1);
    \draw (B0) -- (D3);
    \draw (C0) -- (A2);
    \draw (D0) -- (B4);

    \node[draw, circle, fill=black, inner sep=1.5pt] (NA1) at (0,2.5) {}; 
    \node[draw, circle, fill=black, inner sep=1.5pt] (NC2) at (0,-2.5) {}; 
    \node[draw, circle, fill=black, inner sep=1.5pt] (NB3) at (2.5,0) {}; 
    \node[draw, circle, fill=black, inner sep=1.5pt] (ND4) at (-2.5,0) {}; 

    \draw (A1) -- (NA1);
    \draw (C2) -- (NC2);
    \draw (B3) -- (NB3);
    \draw (D4) -- (ND4);

\end{tikzpicture}
\end{minipage}
   
\end{center}

As explained in the companion manuscript \cite{Hirota_PathSubgroupoids}, in general case, we can illuminate the exchange property of odd reflections from that of Weyl groupoids in the sence of Heckenberger and Yamane \cite{heckenberger2008generalization,heckenberger2020hopf}.  For $\mathfrak{gl}({2|2})$, the structure corresponding to the Weyl groupoid appears to be $S_{2+2}$. In this way,  \cref{1108} can be extended to the setting of regular symmetrizable Kac–Moody Lie superalgebras \cite{bonfert2024weyl,serganova2011kac} as well as Nichols algebras of diagonal type \cite{andruskiewitsch2017finite}.

For an arbitrary weight \( \lambda \), it may happen in this setting that \( M^{\mathfrak{b}}(\lambda - \rho^{\mathfrak{b}}) \cong M^{\mathfrak{b}'}(\lambda - \rho^{\mathfrak{b}'}) \) even when \( \mathfrak{b} \neq \mathfrak{b}' \). To ensure that the modules along a walk remain pairwise non-isomorphic, we introduce a suitable quotient graph. Remarkably, the exchange property is inherited by this quotient graph, and the  \cref{1108} continues to hold in the same form.

We also apply the exchange  property of odd reflections in Section 6 to improve the known results, \cite[Lemma 5.12]{coulembier2017homological} and a part of \cite[Proposition 34]{chen2023some}, on the associated variety and projective dimension of Verma modules in category \( \mathcal{O}\) in a more transparent manner to the setting of arbitrary basic Lie superalgebras and Borel subalgebras.  
As a consequence, we prove the following, which was previously known in the case of type I Lie superalgebras with a distinguished choice of Borel subalgebra. 

\begin{theorem}(\cref{d21main,typeI_RBtriv})
If \( \mathfrak{g} \) is of type I or \( D(2, 1; \alpha) \), then for arbitrary Borel subalgebra \( \mathfrak{b} \) and weight \( \lambda \), the following hold:
\begin{itemize}
    \item \( \lambda \) is \( \mathfrak{b} \)-typical;
    \item The projective dimension of \( M^{\mathfrak{b}}(\lambda) \) in \(\mathcal{O}\) is finite.
\end{itemize}
\end{theorem}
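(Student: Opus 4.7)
The plan is to reduce both conclusions to the case of a Borel subalgebra where they are already known and then transport them to an arbitrary Borel along a rainbow walk in $OR(\mathfrak{g},\lambda)$, using the homological refinements developed in Section 6. For a type I Lie superalgebra $\mathfrak{g}=\mathfrak{g}_{-1}\oplus\mathfrak{g}_0\oplus\mathfrak{g}_1$, the base case $\mathfrak{b}=\mathfrak{b}_{\mathrm{dist}}$ is the content of the cited results of Coulembier and Chen, so the real substance is the propagation step along the odd reflection graph.

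Given an arbitrary Borel $\mathfrak{b}$ and weight $\lambda$, I would choose a rainbow walk in $OR(\mathfrak{g},\lambda)$ from $\mathfrak{b}$ to $\mathfrak{b}_{\mathrm{dist}}$ (existence is immediate from connectedness of the graph together with \cref{1108}) and induct on its length. Along each edge $\mathfrak{b}'\to r_\alpha\mathfrak{b}'$ the odd Verma theorem supplies a distinguished nonzero homomorphism between the two Verma modules of the same character, whose kernel and cokernel can be identified with proper subquotients of strictly smaller Jordan--H\"older length. For the typicality assertion, the Section 6 analysis shows that the associated variety of $M^{\mathfrak{b}}(\lambda)$ is preserved along odd reflections that fix the character, so $\mathfrak{b}$-typicality transfers between the two endpoints via the Duflo--Serganova correspondence. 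For the finiteness of projective dimension, a standard long-exact-sequence argument propagates finite projective dimension across each edge, provided that the kernel and cokernel themselves have finite projective dimension; the latter is handled by a secondary induction invoking the same transport principle on the smaller subquotients.

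The hardest part, in my estimation, is the base case for $D(2,1;\alpha)$. Unlike type I, it does not admit a short $\mathbb{Z}$-grading, so the Kac module construction underpinning Coulembier's result is not directly available. I would attack this via the explicit combinatorics of its Weyl groupoid: among the nine Borel subalgebras of $D(2,1;\alpha)$ there is one whose odd positive nilradical remains abelian as a module over the even part, and the Kac-module style finite projective resolution can be carried out by hand at that Borel. The odd reflection transport from the previous paragraph then extends both conclusions to the remaining eight Borels. A secondary concern is that one must check that the notion of $\mathfrak{b}$-typicality used in the statement is indeed the character-level invariant preserved by odd reflections; if it coincides with the Duflo--Serganova notion this is immediate, but otherwise a short additional lemma is required to reconcile the two.
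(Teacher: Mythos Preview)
Your approach is quite different from the paper's and has a genuine gap. The paper does \emph{not} transport the result from a distinguished Borel along the odd reflection graph. Instead it proves the chain of implications $(1)\Rightarrow(2)\Rightarrow(3)\Rightarrow(4)$ directly for every Borel: typical $\Rightarrow$ the block is finite (linkage) $\Rightarrow$ finite projective dimension (highest weight category) $\Rightarrow$ $X_{M}=\{0\}$ (\cref{7.1.pd}) $\Rightarrow$ $\mathcal{S}_1 M=\emptyset$. The hard converse $(4)\Rightarrow(1)$ is shown contrapositively: given an atypical $\beta$ with $(\lambda,\beta)=0$, one exhibits $\beta\in\mathcal{S}_1 M^{\mathfrak{b}}(\lambda-\rho^{\mathfrak{b}})$ by an explicit weight-vector computation. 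The exchange property is used only to locate a one-dimensional weight space (\cref{d.CS5.12}, \cref{simpleevenlemma}) where $DS_{e_\beta}$ visibly fails to vanish; no induction along the graph is needed. For $D(2,1;\alpha)$ the one root in $\Delta_\otimes^{\mathrm{pure}+}$ requires the extra \cref{simpleevenlemma}, verified by the explicit data in \cref{d21aaaaaaaaaaa}.

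Your proposed base case for $D(2,1;\alpha)$ cannot work: $D(2,1;\alpha)$ is type~II precisely because it admits no compatible $\mathbb{Z}$-grading $\mathfrak{g}_{-1}\oplus\mathfrak{g}_0\oplus\mathfrak{g}_1$, so no Borel has an abelian odd nilradical on which a Kac-module construction could be run; moreover $\#\mathfrak{B}(\mathfrak{g})=4$, not nine (see \cref{2.3.d21}). Your transport step for projective dimension is also not well-founded: along an edge with $(\lambda,\alpha)=0$ the kernel and cokernel of $\psi_\lambda$ are not Verma modules, and ``secondary induction on smaller subquotients'' has no decreasing invariant that keeps you inside a class where the result is already known. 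Finally, the ``typicality transfers'' step is vacuous once you notice that $\lambda-\rho^{\mathfrak{b}}$ being $\mathfrak{b}$-typical is the $\mathfrak{b}$-independent condition $(\lambda,\beta)\neq 0$ for all $\beta\in\Delta_\otimes$, so there is nothing to transport there; the whole content lies in linking this to $\mathcal{S}_1$, which the paper does directly.
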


In the case of type II, partial results also were obtained.

\subsection{Acknowledgements}

I would like to express my heartfelt gratitude to my supervisor, Syu Kato, for his patient and extensive guidance throughout the preparation of master's thesis. Discussions with Istvan Heckenberger, Yoshiyuki Koga, and Hiroyuki Yamane, to whom I am also grateful, have influenced this work.  The author would like to thank the Kumano Dormitory community at Kyoto University for their generous financial and living assistance.

This work was supported by the Japan Society for the Promotion of Science (JSPS) through the Research Fellowship for Young Scientists (DC1), Grant Number JP25KJ1664.

\section{Edge colored graphs} \label{sec:RBgraph}

In this section, we recall basic terminology related to edge colored graphs.

\begin{definition}[Edge-Colored Graph]
An \emph{edge-colored graph} is a triple \( (G, \phi, C) \), where:
\begin{itemize}
    \item \( G = (V, E) \) is a graph with a vertex set \( V \) and an edge set \( E \);
    \item \( \phi: E \to C \) is a function that assigns a color \( \phi(e) \in C \) to each edge \( e \in E \), where \( C \) is a set of colors.
\end{itemize}

Additionally, we assume \(\phi\) is surjective.

\end{definition}

\begin{definition}

Let \( G \) be a edge-colored graph with a vertex set \( V \) and a color set \( C \).

A \emph{walk} of length \(k\) in a colored graph \( G \) is a finite non-null sequence
\[
W = v_0 c_1 v_1 \dots c_k v_k,
\]
indicating that for each \( 1 \leq i \leq k \), there exists an edge between \( v_{i-1} \) and \( v_i \) colored \( c_i \).  
Since the graphs we consider have all edge colors mutually distinct, this notation causes no ambiguity.

A walk \( v_0 c_1 v_1 \dots c_k v_k \) is called a \emph{path} if \( v_0, v_1, \dots, v_k \) are all distinct.

If \( W = v_0 c_1 v_1 \dots c_k v_k \) and \( W' = v_k c_{k+1} v_{k+1} \dots c_l v_l \) are walks, then the walk \( v_k c_k v_{k-1} \dots v_1 c_1 v_0 \), obtained by reversing \( W \), is denoted by \( W^{-1} \), and the walk \( v_0 c_1 v_1 \dots c_k v_k c_{k+1} v_{k+1} \dots c_l v_l \), obtained by concatenating \( W \) and \( W' \) at \( v_k \), is denoted by \( WW' \).

 A walk is termed \emph{shortest} if there is no shorter walk between the same pair of vertices. Note that  a shortest walk is a path. In a connected graph, for any pair of vertices, a shortest walk (not necessarily unique) always exists.

 A walk is called a \emph{rainbow} if all the edge colors in its sequence are distinct. \cite{li2013rainbow}
    
\end{definition}

\begin{definition}\label{d1}
    Let \( G \) be an edge-colored graph with color set \( C \). Let \( D \subseteq C \). We define an equivalence relation \( \sim_D \) on \( V \) as follows: For \( x, y \in V \), we say \( x \sim_D y \) if there exists a walk from \( x \) to \( y \) consisting only of edges with colors in \( D \). We denote the equivalence class of \( x \) by \( [x] \).

    We define the edge-colored graph \( G/D \) as follows:
    \begin{itemize}
        \item The vertex set is \( V/\sim_D \), the set of equivalence classes under \( \sim_D \);
        \item The color set is \( C \setminus D \);
        \item For a color \( c \in C \setminus D \), there uniquely exists an edge of color \( c \) between \( [x] \) and \( [y] \) 
if there exist \( u \in [x] \) and \( v \in [y] \) such that there is an edge of color \( c \) between \( u \) and \( v \) in \( G \).
    \end{itemize}
Given a walk \( W \) in \( G \):
\[
v_0 c_0 v_1 c_1 \dotsc c_{k-1} v_k,
\]
we define the \emph{induced walk} \( \overline{W} \) in \( G/D \) as:
\[
[v_0] [c_0] [v_1] [c_1] \dotsc [c_{k-1}] [v_k],
\]
where \( [c_i] = c_i \) if \( c_i \in C\setminus D \), and \( [c_i] \) represents an empty walk if \( c_i \in D\).

\end{definition}

In the following, we present several examples of edge-colored graphs that are of particular importance in our setting.

   \begin{definition}\label{g.ydi}
    
A partition of an integer \( n \) is often represented as a sequence of non-increasing integers, where exponents are used to indicate repeated parts more compactly. For instance, the partition \( (4432221) \) can be written as \( (4^232^31) \). We denote the empty partition, by \( \emptyset\).

We identify partitions with their corresponding Young diagrams. Let \( P_{m \times n} \) denote the set of all Young diagrams that are contained within an \( m \times n \) rectangle. To better reflect the structure of Borel subalgebras, We adopt the French convention, where each row of the Young diagram corresponds to a part of the partition, with larger parts appearing lower in the diagram. For instance, the partition \( (4,3,2) \) is represented with the row of 4 boxes at the bottom, the row of 3 boxes above it, and the row of 2 boxes at the top..
We represent each box in a Young diagram by its coordinates \( (i,j) \), where \( j \) is the row (counted from the bottom) and \( i \) is the column (counted from the left).
\end{definition}

\begin{example}[Edge-colored Young's Lattice]
Define an edge-colored graph \( Y \) as follows:
\begin{itemize}
    \item \textbf{Vertex set :} The set of all Young diagrams (in French notation);
    
    \item \textbf{Color set :} \( C = \mathbb{N}_{>0} \times \mathbb{N}_{>0} \);
    
    \item \textbf{Edges:} There is an edge of color \( (i, j) \) between vertices \( x \) and \( y \) if and only if \( y \) is obtained from \( x \) by adding or removing a box at coordinate \( (i, j) \) and not changing the position of the other boxes.
\end{itemize}

\end{example}

 \begin{example}[Edge-colored Finite Young's Lattice \cite{coggins2024visual,stanley2012topics}.] \label{3.1lmn}

    Let \( D = C \setminus (\{1, \dots, n\} \times \{1, \dots, m\}) \), and define an edge colored graph \( L(m, n) = Y / D \). The vertex set is precisely \( P_{m \times n} \). 
\end{example} 
\begin{example}
    
\label{L32}

    \(L(3,2)\) is the following edge-colored graph.

\begin{center}

\begin{tikzpicture}[scale=1.5]
     \node (A) at (0, 0) {\(\emptyset\)};
    \node (B) at (1, 0) {\(\begin{ytableau} ~ \end{ytableau}\)};

    \node (C) at (2.5, 0.75) {\(\begin{ytableau} ~ & ~ \\ \end{ytableau}\)};
    \node (D) at (4, 0.75) {\(\begin{ytableau} ~ \\ ~ & ~ \end{ytableau}\)};
    \node (E) at (5.5, 0.75) {\(\begin{ytableau} ~ & ~ \\ ~ & ~ \end{ytableau}\)};
    \node (F) at (7, 0) {\(\begin{ytableau} ~ \\ ~ & ~ \\ ~ & ~  \end{ytableau}\)};
        \node (G) at (9, 0) {\(\begin{ytableau} ~ & ~ \\  ~ & ~  \\ ~ & ~ \end{ytableau}\)};
    \node (H) at (2.5, -0.75) {\(\begin{ytableau} ~ \\ ~ \end{ytableau}\)};
    \node (I) at (4, -0.75) {\(\begin{ytableau} ~ \\ ~ \\ ~ \end{ytableau}\)};
    \node (J) at (5.5, -0.75) {\(\begin{ytableau} ~ \\ ~ \\ ~ & ~ \end{ytableau}\)};
    
\draw (A) -- node[above] {(1,1)} (B);
\draw (B) -- node[above] {(2,1)} (C);
\draw (C) -- node[above] {(1,2)} (D);
\draw (D) -- node[above] {(2,2)} (E);
\draw (E) -- node[above] {(1,3)} (F);
\draw (F) -- node[above] {(2,3)} (G);
\draw (B) -- node[above] {(1,2)} (H);
\draw (D) -- node[above] {(2,1)} (H);
\draw (D) -- node[above] {(1,3)} (J);
\draw (F) -- node[above] {(2,2)} (J);
\draw (H) -- node[above] {(1,3)} (I);
\draw (I) -- node[above] {(2,1)} (J);
\end{tikzpicture}.
\end{center}

If \( D = \{(1,1), (1,2), (2,3)\} \) , then  \(L(3,2)/D \) is the following edge-colored graph.

\begin{center}
    
\begin{tikzpicture}
    \node (A) at (0, 1.875) {
    \begin{ytableau}
    \times \\ 
    \times  
    \end{ytableau}
    };
    \node (B) at (2.8125, 2.8125) {
    
    \begin{ytableau}
    \times \\ 
    \times  & ~
    \end{ytableau}};
    \node (C) at (5.625, 2.8125) {
    \begin{ytableau}
    \times & ~ \\ 
    \times  & ~
    \end{ytableau}
    };
    \node (D) at (8.4375, 1.875) {
    \begin{ytableau}
    ~ & \times \\ 
    \times & ~ \\ 
    \times  & ~
    \end{ytableau}
    };
    \node (E) at (2.8125, 0.9375) {
    
    \begin{ytableau}
    ~ \\ 
    \times \\ 
    \times  
    \end{ytableau}
    };
    \node (F) at (5.625, 0.9375) {
    \begin{ytableau}
    ~ \\ 
    \times \\ 
    \times  & ~
    \end{ytableau}
    };

    \draw (A) -- node[above] {(2,1)} (B);
    \draw (B) -- node[above] {(2,2)} (C);
    \draw (A) -- node[above] {(1,3)} (E);
    \draw (B) -- node[above] {(1,3)} (F);
    \draw (C) -- node[above] {(1,3)} (D);
    \draw (E) -- node[above] {(2,1)} (F);
    \draw (D) -- node[above] {(2,2)} (F);
\end{tikzpicture}.
\end{center}

\end{example}

\begin{example}
Define the edge-colored graph \( LD(\infty, n) \) as follows:
\begin{itemize}
    \item \textbf{Vertex set \( V \):} 
    \hspace{-15mm}\[
    V = \left( \{ \text{Young diagrams of height less than } n \} \times \{ +,- \} \right) \cup \{ \text{Young diagrams of height } n \}.
    \]
    (We denote as \( (Y_{<n} \times \{ +,- \}) \cup Y_n \)).

    \item \textbf{Color set \( C \):} 
    \[
    C = \left( \{ 1, \dotsc, n \} \times \{ +,- \} \right) \cup \left( \{ 1, \dotsc, n \} \times \mathbb{N}_{>1} \right).
    \]

    \item \textbf{Edges:} For \( x, y \in Y_{<n} \), and \( w, z \in Y_n \), the edges are defined as follows:
    \begin{itemize}
        \item There are no edges between \( (x, +) \) and \( (y, -) \);
        \item Between \( (x, \pm) \) and \( (y, \pm) \), there is an edge of color \( (n, \pm) \) if and only if \( y \) is obtained from \( x \) by adding or removing a box at coordinate \( (n, 1) \);
        \item Between \( (x, \pm) \) and \( z \), there is an edge of color \( (n, \pm) \) if and only if \( z \) is obtained from \( x \) by adding the box at \( (n, 1) \);
        \item Between \( (x, \pm) \) and \( (y, \pm) \), there is an edge of color \( (i, j) \) if and only if \( y \) is obtained from \( x \) by adding or removing a box at coordinate \( (i, j) \) such that \( j > 1 \);
        \item Between \( w \) and \( z \), there is an edge of color \( (i, j) \) if and only if \( w \) is obtained from \( z \) by adding or removing a box at coordinate \( (i, j) \).
    \end{itemize}
\end{itemize}

Note that, as we adopt the French notation, the height is actually represented as the width.
\end{example}

 \begin{example}\label{Dmn}\cite{bonfert2024weyl}
Let 
\[
D = C \setminus \left( \{ 1, \dotsc, n \} \times \{ +,- \} \cup \{ 1, \dotsc, n \} \times \{ 2, \dotsc, n \} \right),
\]
and define the  edge clored graph
\[
LD(m, n) := LD(\infty, n) / D.
\]
\end{example}

\begin{example}
 \( LD(2,3) \) is the following edge-colored graph.
\\
\begin{center}
     \begin{tikzpicture}
    \node (A) at (3,2) {\((\emptyset,+ )\)};
    \node (B) at (3,0) {\(( \begin{ytableau} \ \end{ytableau},+ )\)};
    \node (C) at (6,0) {\(( \begin{ytableau} \ \\ \ \end{ytableau},+ )\)};
    \node (D) at (3,-2) {\(( \begin{ytableau} \ & \ \\ \end{ytableau},+ )\)};
    \node (E) at (6,-2) {\(( \begin{ytableau} \ \\ \ & \ \\ \end{ytableau},+ )\)};
    \node (F) at (9,-2) {\(( \begin{ytableau} \ & \ \\ \ & \ \\ \end{ytableau},+ )\)};
     
    \node (P1) at (3,-4) {\begin{ytableau}
\ & \ & \ \\
\end{ytableau}};
    \node (P2) at (6,-4) {\begin{ytableau}
\ \\
\ & \ & \ \\
\end{ytableau}};
    \node (P3) at (9,-4) {\begin{ytableau}
\ &\ \\
\ & \ & \ \\
\end{ytableau}};
    \node (P4) at (12,-4) {\begin{ytableau}
\ & \ & \ \\
\ & \ & \ \\
\end{ytableau}};
    \node (A2) at (3,-10) {\( (\emptyset,- )\)};
    \node (B2) at (3,-8) {\(( \begin{ytableau} \ \end{ytableau},- )\)};
    \node (C2) at (3,-6) {\(( \begin{ytableau} \ & \ \\ \end{ytableau},- )\)};
    \node (D2) at (6,-8) {\(( \begin{ytableau} \ \\ \ \end{ytableau},- )\)};
    \node (E2) at (6,-6) {\(( \begin{ytableau} \ \\ \ & \ \\ \end{ytableau},- )\)};
    \node (F2) at (9,-6) {\(( \begin{ytableau} \ & \ \\ \ & \ \\ \end{ytableau},- )\)};

    \draw (A) -- node[right] {\( (1,+ )\)} (B);
    \draw (B) -- node[above ] {\( (1,2)\)} (C);
    \draw (B) -- node[right] {\( (2,+ )\)} (D);
    \draw (D) -- node[above ] {\( (1,2)\)} (E);
    \draw (C) -- node[right] {\((2,+ )\)} (E);
    \draw (E) -- node[above] {\( (2,2)\)} (F);

    \draw (A2) -- node[right] {\( (1,- )\)} (B2);
    \draw (B2) -- node[right] {\( (2,- )\)} (C2);
    \draw (B2) -- node[above] {\( (1,2)\)} (D2);
    \draw (D2) -- node[right] {\( (2,- )\)} (E2);
    \draw (C2) -- node[above ] {\((1,2)\)} (E2);
    \draw (E2) -- node[above] {\( (2,2)\)} (F2);

    \draw (P1) -- node[above] {\((1,2)\)} (P2);
    \draw (P2) -- node[above] {\((2,2)\)} (P3);
    \draw (P3) -- node[above] {\((3,2)\)} (P4);

\draw (P1) -- node[right] {\((3,+)\)} (D);
\draw (P1) -- node[right] {\((3,-)\)} (C2);

\draw (P2) -- node[right] {\((3,+)\)} (E);
\draw (P2) -- node[right] {\((3,-)\)} (E2);

\draw (P3) -- node[right] {\((3,+)\)} (F);
\draw (P3) -- node[right] {\((3,-)\)} (F2);
\end{tikzpicture}

.
\end{center}
   
\end{example}

\begin{example}\label{g.qn}
The hypercube graph \( Q_n \) is defined as follows:
\begin{itemize}
    \item Vertex set: \( (\mathbb{Z}/2\mathbb{Z})^{\oplus n} \);
    \item Edge set: An edge exists between two vertices if they differ in exactly one coordinate.
\end{itemize}
 The graph \( Q_n \) admits a natural proper edge coloring with the color set of size \( n \).

\end{example}

\section{Changing Borel subalgebras} \label{sec:changing_Borel}
Fix the base field \( k \) as an algebraically closed field of characteristic 0.
The results of this section are essentially not new. See \cite{serganova2011kac}.

\subsection{Basic Lie superalgebras and their odd reflection graphs}

From now on, we will denote by \( \mathfrak{g} \) a finite dimensional Lie superalgebra.
 In the case of infinite-dimensional Lie superalgebras, attention must be paid to the definition of the Weyl vector and infiniteness of the lengths of modules; however, thanks to the results of \cite{serganova2011kac}, our results still make sense. 
 We denote the even and odd parts of \( \mathfrak{g} \) as \( \mathfrak{g}_{\overline{0}} \) and \( \mathfrak{g}_{\overline{1}} \), respectively.

In this text, whenever we refer to "dimension", we mean the dimension as a vector space, forgetting the \(\mathbb{Z}/2\mathbb{Z}\)-grading (not the dimension in the sense of the theory of symmetric monoidal categories in \cite{etingof2015tensor}, i.e., the superdimension).

The following theorem plays a significant role throughout this paper.
\begin{theorem}[PBW Theorem, \cite{musson2012lie} Theorem 6.1.2]\label{2.3.pbw}
    Let \( \mathfrak{g} \) be a finite-dimensional Lie superalgebra, and let \( x_1, \dots, x_n \) be a \( \mathbb{Z}/2\mathbb{Z} \)-homogeneous basis of \( \mathfrak{g} \). Then
    \[
    \{ x_1^{a_1} \dots x_n^{a_n} \mid a_i \in \mathbb{Z}_{\geq 0} \text{ if } x_i \in \mathfrak{g}_{\overline{0}}, \text{ and } a_i \in \{0,1\} \text{ if } x_i \in \mathfrak{g}_{\overline{1}} \}
    \]
    forms a basis for \( U(\mathfrak{g}) \), the universal enveloping algebra of \( \mathfrak{g} \).
\end{theorem}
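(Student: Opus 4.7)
The plan is to adapt the classical proof of the PBW theorem, taking careful account of the $\mathbb{Z}/2\mathbb{Z}$-grading and sign conventions. Write $U(\mathfrak{g}) = T(\mathfrak{g}) / I$ where $I$ is the two-sided ideal generated by the super-commutator relations $x \otimes y - (-1)^{|x||y|} y \otimes x - [x,y]$ for $\mathbb{Z}/2\mathbb{Z}$-homogeneous $x, y \in \mathfrak{g}$. Specializing to $x = y$ odd yields $2x^2 = [x,x]$, hence $x^2 = \tfrac{1}{2}[x,x] \in U(\mathfrak{g}_{\overline{0}})$; this is precisely why odd basis vectors may appear with exponent at most $1$ in a PBW monomial.

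First I would establish spanning by a double induction: outer induction on total degree in $T(\mathfrak{g})$, inner induction on the number of inversions of a given unordered monomial. An adjacent out-of-order pair $x_j x_i$ with $j > i$ is swapped using the defining relation at the cost of a term containing $[x_j, x_i]$, which has strictly smaller degree; a repeated odd factor $x_i^2$ is replaced by $\tfrac{1}{2}[x_i, x_i]$ similarly. This rewrites any element of $U(\mathfrak{g})$ as a $k$-linear combination of PBW monomials.

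The core difficulty is linear independence. I would introduce the free $k$-vector space $V$ on the formal symbols $e_I = y_1^{a_1} \cdots y_n^{a_n}$ indexed by admissible tuples $I = (a_1, \dots, a_n)$, and construct operators $\rho(x_k) : V \to V$ by induction on a suitable monomial well-ordering. If $k$ is strictly smaller than the smallest index $k'$ occurring in $I$, or if $k = k'$ with $x_k$ even, set $\rho(x_k) e_I = e_{I + \epsilon_k}$; in the remaining cases force
\[
\rho(x_k) e_I = (-1)^{|x_k||x_{k'}|} \rho(x_{k'}) \rho(x_k) e_{I - \epsilon_{k'}} + \rho([x_k, x_{k'}]) e_{I - \epsilon_{k'}},
\]
expanding $[x_k, x_{k'}]$ in the chosen basis, and additionally impose $\rho(x_k)^2 e_I = \tfrac{1}{2}\rho([x_k, x_k]) e_I$ when $x_k$ is odd. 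The recursion is well founded since each right-hand side involves only symbols strictly smaller under the ordering.

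The main obstacle is verifying that this $\rho$ defines a representation of $\mathfrak{g}$, i.e.\ that $\rho(x_i)\rho(x_j) - (-1)^{|x_i||x_j|}\rho(x_j)\rho(x_i) = \rho([x_i, x_j])$ as operators on all of $V$. By the inductive definition, checking this on a basis vector $e_I$ reduces, after unwinding, to an instance of the super-Jacobi identity on the triple $(x_i, x_j, x_{k'})$; the computation is routine but the sign book-keeping is where the argument most departs from the classical case. Once $\rho$ is a representation of $\mathfrak{g}$, it extends uniquely to a $U(\mathfrak{g})$-action on $V$. The PBW monomial $x_1^{a_1} \cdots x_n^{a_n} \in U(\mathfrak{g})$ then sends $1 = e_{(0,\dots,0)}$ to $e_{(a_1,\dots,a_n)}$, and since the $e_I$ are by construction a basis of $V$, the PBW monomials must be linearly independent in $U(\mathfrak{g})$, completing the proof.
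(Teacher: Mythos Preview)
The paper does not prove this theorem at all: it is stated with a citation to Musson's book and then used as a black box throughout. There is therefore nothing to compare your argument against on the paper's side.

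Your sketch is the standard proof and is essentially correct. One small point of presentation: the clause ``additionally impose $\rho(x_k)^2 e_I = \tfrac{1}{2}\rho([x_k,x_k]) e_I$ when $x_k$ is odd'' reads as if it were an extra axiom, but in fact your own recursion in the case $k = k'$ with $x_k$ odd yields the circular-looking relation $\rho(x_k) e_I = -\rho(x_k)\rho(x_k) e_{I-\epsilon_k} + \rho([x_k,x_k]) e_{I-\epsilon_k}$, and since $\rho(x_k) e_{I-\epsilon_k} = e_I$ by the base case one solves for $\rho(x_k) e_I$ and obtains exactly that formula. So it is a consequence, not an independent imposition; phrasing it as the definition in this branch of the recursion (to avoid the apparent circularity) is the cleaner way to say it. The reduction of the representation property to the super-Jacobi identity is indeed where all the work lies, and you are right that the only difference from the even case is the sign bookkeeping.
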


\begin{definition}

We consider the category \( \mathfrak{g}\text{-sMod} \), where morphisms respects \(\mathbb{Z}/2\mathbb{Z}\)-grading. (This is the module category of a monoid object in the monoidal category of super vector spaces in the sense of \cite{etingof2015tensor}.)

Let \( \mathfrak{g}\text{-smod} \) denote the full subcategory of \( \mathfrak{g}\text{-sMod} \) consisting of finite-dimensional modules. 

The parity shift functor \( \Pi \) on \( \mathfrak{g}\text{-sMod} \) is an exact functor that acts by preserving the underlying \( \mathfrak{g} \)-module structure but reversing the \( \mathbb{Z}/2\mathbb{Z} \)-grading, while also preserving all morphisms.

It is well known that the restriction functor \( \operatorname{Res}^{\mathfrak{g}}_{\mathfrak{g}_{\overline{0}}}: \mathfrak{g}\text{-sMod} \to \mathfrak{g}_{\overline{0}}\text{-sMod} \) and the induction functor \( \operatorname{Ind}^{\mathfrak{g}}_{\mathfrak{g}_{\overline{0}}}: \mathfrak{g}_{\overline{0}}\text{-sMod} \to \mathfrak{g}\text{-sMod} \) are exact functors \cite[Theorem 2.2]{bell1993theory}, \cite[Section 6.1]{coulembier2017gorenstein}.
\end{definition}

From now on, our \( \mathfrak{g} \) will be a direct sum of one of the finite-dimensional basic Lie superalgebras from the following list:
\[
\mathfrak{gl}(m|n), \, \mathfrak{osp}(m|2n), \, D(2, 1; \alpha), \, G(3), \, F(4).
\]
For concrete definitions, we refer to \cite[Chapters 1-4]{musson2012lie} and \cite{cheng2012dualities}.

According to \cite{kac1977lie}, if \( \mathfrak{g} =  \mathfrak{gl}(m|n), \, \mathfrak{osp}(2|2n) \), then \( \mathfrak{g} \) is said to be of type I; otherwise, it is of type II. If \( \mathfrak{g} = D(2, 1; \alpha), \, G(3), \, F(4) \), then \( \mathfrak{g} \) is said to be of exceptional type.

\begin{definition}[\cite{musson2012lie, cheng2012dualities, serganova2017representations}]\label{3.1delta}
A Cartan subalgebra and the Weyl group of the reductive Lie algebra \( \mathfrak{g}_{\overline{0}} \) are denoted by \( \mathfrak{h} \) and \( W \), respectively.

A basic Lie superalgebra \( \mathfrak{g} \) has a supersymmetric, superinvariant, even bilinear form \( \langle \, , \, \rangle \), which induces a \( W \)-invariant bilinear form \( (\, , \,) \) on \( \mathfrak{h}^{*} \) via duality.

The root space \( \mathfrak{g}_\alpha \) associated with \( \alpha \in \mathfrak{h}^* \) is defined as 
\(
\mathfrak{g}_\alpha := \{ x \in \mathfrak{g} \mid [h, x] = \alpha(h)x \, \text{for all } h \in \mathfrak{h} \}.
\)

The set of roots \( \Delta \) is defined as 
\(
\Delta := \{ \alpha \in \mathfrak{h}^* \mid \mathfrak{g}_\alpha \neq 0 \} \setminus \{0\}.
\)
Each \( \mathfrak{g}_\alpha \) is either purely even or purely odd and is one-dimensional. Therefore, the notions of even roots and odd roots are well defined. An odd root \( \alpha \) is said to be \emph{isotropic} if \( (\alpha, \alpha) = 0 \). The sets of all even roots, even positive roots, odd roots and odd isotropic roots are denoted by \( \Delta_{\overline{0}} \), \( \Delta_{\overline{0}}^+ \)), \( \Delta_{\overline{1}} \) and \( \Delta_{\otimes} \), respectively.

\end{definition}

\begin{definition}[\cite{musson2012lie,cheng2012dualities,serganova2017representations}]
We fix a Borel subalgebra \( \mathfrak{b}_{\overline{0}} \) of \( \mathfrak{g}_{\overline{0}} \). The set of all Borel subalgebras \( \mathfrak{b} \) of \( \mathfrak{g} \) that contain \( \mathfrak{b}_{\overline{0}} \) is denoted by \( \mathfrak{B(g)} \).

For a Borel subalgebra \( \mathfrak{b}  \in \mathfrak{B(g)} \), we express the triangular decomposition of \( \mathfrak{g} \) as
\[
\mathfrak{g} = \mathfrak{n}^{\mathfrak{b}-} \oplus \mathfrak{h} \oplus \mathfrak{n}^{\mathfrak{b}+},
\]
where \( \mathfrak{b} = \mathfrak{h} \oplus \mathfrak{n}^{\mathfrak{b}+} \).

 The sets of positive roots, odd positive roots, and odd isotropic positive roots corresponding to \( \mathfrak{b} \) are denoted by \( \Delta^{\mathfrak{b}+} \), \( \Delta_{\overline{1}}^{\mathfrak{b}+} \), and \( \Delta_{\otimes}^{\mathfrak{b}+} \), respectively. The set of simple roots (basis) corresponding to \( \Delta^{\mathfrak{b}+} \) is denoted by \( \Pi^{\mathfrak{b}} \). We define \( \Pi_{\otimes}^{\mathfrak{b}} :=  \Pi^{\mathfrak{b}} \cap \Delta_{\otimes} \).
We define
\[
\Delta^{\operatorname{pure+}} := \bigcap_{\mathfrak{b} \in \mathfrak{B(g)}} \Delta^{\mathfrak{b}+},
\]
\[
\Delta_{\otimes}^{\operatorname{pure+}} := \bigcap_{\mathfrak{b} \in \mathfrak{B(g)}} \Delta_{\otimes}^{\mathfrak{b}+} = \Delta^{\operatorname{pure+}} \cap \Delta_{\otimes}.
\]
\end{definition}

\begin{theorem}[Odd reflection \cite{musson2012lie} 3.5]\label{2.3.oddref} 
    For \( \alpha \in \Pi_{\otimes}^{\mathfrak{b}} \), define \( r^{\mathfrak{b}}_{\alpha} \in \operatorname{Map}(\Pi^{\mathfrak{b}}, \Delta )\) by
    \[
        r^{\mathfrak{b}}_{\alpha}(\beta) = 
        \begin{cases}
            -\alpha & (\beta = \alpha), \\
            \alpha + \beta & (\alpha + \beta \in \Delta), \\
            \beta & (\text{otherwise}).
        \end{cases}
    \]
    for \( \beta \in \Pi^{\mathfrak{b}} \).
(When there is no risk of confusion, \( r_\alpha^\mathfrak{b} \) is abbreviated as \( r_\alpha \).)
   A Borel subalgebra \( r_{\alpha} \mathfrak{b} \in \mathfrak{B(g)} \) exists, with the corresponding basis given by 
\[
\Pi^{r_{\alpha} \mathfrak{b}} := \{ r^{\mathfrak{b}}_{\alpha}(\beta) \}_{\beta \in \Pi^{\mathfrak{b}}}.
\]

\end{theorem}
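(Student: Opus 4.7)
The plan is to produce the candidate directly and then verify its defining properties. Set
\[
P := (\Delta^{\mathfrak{b}+}\setminus\{\alpha\}) \cup \{-\alpha\}, \qquad r_\alpha\mathfrak{b} := \mathfrak{h} \oplus \bigoplus_{\gamma \in P} \mathfrak{g}_\gamma.
\]
Because $\alpha$ is odd, swapping $\alpha$ for $-\alpha$ does not disturb $\Delta_{\overline{0}}^+$, so $\mathfrak{b}_{\overline{0}} \subseteq r_\alpha \mathfrak{b}$ and $\Delta = P \sqcup (-P)$ hold automatically. The task then reduces to showing that $P$ is a positive system (closed under addition in $\Delta$, with $\alpha \notin P + P$), that the resulting subspace is a Borel subalgebra in $\mathfrak{B}(\mathfrak{g})$, and that its simple roots are exactly $r_\alpha^{\mathfrak{b}}(\Pi^{\mathfrak{b}})$.

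The core step is the closure of $P$ under root addition. The only nontrivial case is adding $-\alpha$ to some $\gamma \in \Delta^{\mathfrak{b}+}\setminus\{\alpha\}$ with $\gamma - \alpha \in \Delta$: expanding $\gamma = \sum_{\beta \in \Pi^{\mathfrak{b}}} c_\beta \beta$ with $c_\beta \in \mathbb{Z}_{\geq 0}$ and using that each root has $\Pi^{\mathfrak{b}}$-coordinates of a single sign, the condition $\gamma - \alpha \in \Delta$ forces $c_\alpha \geq 1$, so $\gamma - \alpha \in \Delta^{\mathfrak{b}+}$; moreover the isotropy of $\alpha$ gives $2\alpha \notin \Delta$ and hence $\gamma - \alpha \neq \alpha$, placing $\gamma - \alpha \in P$. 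The complementary condition $\alpha \notin P + P$ follows from the simplicity of $\alpha$ in $\mathfrak{b}$. Solvability of $r_\alpha \mathfrak{b}$ is then the standard height-filtration argument applied to $P$, and the dimension identity $\dim r_\alpha \mathfrak{b} = \dim \mathfrak{b}$ yields maximality, so $r_\alpha \mathfrak{b} \in \mathfrak{B}(\mathfrak{g})$.

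For the simple root system, I would verify case by case that each $r_\alpha^{\mathfrak{b}}(\beta)$ lies in $P$ and is indecomposable there. The element $-\alpha$ is indecomposable because any decomposition $-\alpha = \gamma_1 + \gamma_2$ in $P$ must have the coefficient $-1$ on $\alpha$ coming from a $-\alpha$ summand, forcing the other summand to be $0$. For $\beta \in \Pi^{\mathfrak{b}}\setminus\{\alpha\}$, a decomposition of $\beta$ or $\alpha + \beta$ in $P$ with both summands in $\Delta^{\mathfrak{b}+}\setminus\{\alpha\}$ contradicts the simplicity of $\beta$ in $\mathfrak{b}$, while a decomposition involving $-\alpha$ would require $2\alpha + \beta \in \Delta$, ruled out because the $\alpha$-string through $\beta$ has length at most two (isotropy again). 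Injectivity of the three-case assignment then matches $|r_\alpha^{\mathfrak{b}}(\Pi^{\mathfrak{b}})|$ with $|\Pi^{\mathfrak{b}}|$, confirming completeness.

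The main obstacle is the closure check in the second paragraph: one genuinely uses both the simplicity of $\alpha$ (to forbid $\alpha \in \Delta^{\mathfrak{b}+} + \Delta^{\mathfrak{b}+}$) and the isotropy $2\alpha \notin \Delta$ (to control $\alpha$-strings of roots), the two structural features that distinguish odd reflections from the usual Weyl-group reflections and that make the elementary formula for $r_\alpha^{\mathfrak{b}}(\beta)$ possible.
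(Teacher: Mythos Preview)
The paper gives no proof of this statement; it is quoted from Musson with a bare citation. Your proposal therefore stands on its own.

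Your construction of $P = (\Delta^{\mathfrak{b}+}\setminus\{\alpha\})\cup\{-\alpha\}$ and the closure check are correct and are the substance of the argument. The gap is in the passage through ``solvability'' and ``maximality''. For basic Lie superalgebras, a Borel subalgebra is \emph{defined} as $\mathfrak{h}\oplus\bigoplus_{\gamma\in\Delta^+}\mathfrak{g}_\gamma$ for a positive system $\Delta^+$ (this is the definition used in the references the paper cites); it is \emph{not} in general a maximal solvable subalgebra. Concretely, in $\mathfrak{gl}(2|1)$ with standard simple roots $\varepsilon_1-\varepsilon_2$ and $\alpha=\varepsilon_2-\delta_1$, adjoining $\mathfrak{g}_{-\alpha}$ to the standard Borel yields a strictly larger subalgebra which one checks directly is still solvable. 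So the sentence ``the dimension identity $\dim r_\alpha\mathfrak{b}=\dim\mathfrak{b}$ yields maximality'' is both false and irrelevant: once you have verified that $P$ is closed, that $\Delta=P\sqcup(-P)$, and that $\Delta_{\overline 0}^+\subseteq P$, you already have $r_\alpha\mathfrak{b}\in\mathfrak{B}(\mathfrak{g})$ by definition, and the height/solvability paragraph can be dropped entirely.

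A secondary point: the claim ``the $\alpha$-string through $\beta$ has length at most two (isotropy again)'' is correct but undersold. Isotropy gives $\operatorname{ad}(e_\alpha)^2=0$, hence $[e_\alpha,[e_\alpha,\mathfrak{g}_\beta]]=0$; to conclude $\mathfrak{g}_{\beta+2\alpha}=0$ you also need that $[\mathfrak{g}_\alpha,\mathfrak{g}_{\beta+\alpha}]=\mathfrak{g}_{\beta+2\alpha}$ whenever the right side is nonzero, which is the standard nondegeneracy of root-space brackets for basic Lie superalgebras. It is worth naming that ingredient.
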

The linear transformation of \( \mathfrak{h}^* \) induced by an odd reflection does not necessarily map a Borel subalgebra to another Borel subalgebra. For example, when \( \mathfrak{g} = D(2,1;\alpha) \), none of the odd reflections have this property. (see, \cref{2.3.d21}).

The following is well-known:
\begin{proposition}[\cite{musson2012lie,cheng2012dualities}]\label{2.3.odd_kyoyaku}
Each pair of elements \( \mathfrak{b},\mathfrak{b'} \in \mathfrak{B(g)} \) due to transferred to each other by a finite number of odd reflections.
\end{proposition}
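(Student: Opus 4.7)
The plan is to introduce the finite set of ``disagreements''
\[
S(\mathfrak{b}, \mathfrak{b}') := \Delta^{\mathfrak{b}+} \cap \bigl(-\Delta^{\mathfrak{b}'+}\bigr),
\]
and to prove the statement by induction on $n := |S(\mathfrak{b},\mathfrak{b}')|$, showing that whenever $n > 0$ one can find a $\mathfrak{b}$-isotropic simple root $\beta$ whose odd reflection strictly decreases $n$. Finiteness of $n$ is immediate from finite-dimensionality of $\mathfrak{g}$. Since $\mathfrak{b}$ and $\mathfrak{b}'$ share the same even Borel, $\Delta_{\overline{0}}^{\mathfrak{b}+} = \Delta_{\overline{0}}^{\mathfrak{b}'+}$, so every element of $S(\mathfrak{b},\mathfrak{b}')$ is an odd root; in particular $n = 0$ forces $\Delta^{\mathfrak{b}+} = \Delta^{\mathfrak{b}'+}$ and hence $\mathfrak{b} = \mathfrak{b}'$, giving the base case.

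For the inductive step, I would choose $\beta \in S(\mathfrak{b},\mathfrak{b}')$ of minimal $\mathfrak{b}$-height, and claim $\beta \in \Pi^{\mathfrak{b}}$. If not, a standard fact for the root systems in our list yields a decomposition $\beta = \gamma + \delta$ with $\gamma, \delta \in \Delta^{\mathfrak{b}+}$ of smaller height; since $\beta$ is odd, one summand, say $\gamma$, is even, hence lies in $\Delta_{\overline{0}}^{\mathfrak{b}+} = \Delta_{\overline{0}}^{\mathfrak{b}'+} \subseteq \Delta^{\mathfrak{b}'+}$. Were $\delta$ also in $\Delta^{\mathfrak{b}'+}$, then $\beta = \gamma+\delta \in \Delta^{\mathfrak{b}'+}$ would contradict $\beta \in -\Delta^{\mathfrak{b}'+}$; hence $\delta \in S(\mathfrak{b},\mathfrak{b}')$ with smaller height, violating minimality.

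Next I would verify that $\beta$ is isotropic. If $\beta$ were a non-isotropic odd simple root, then in each basic Lie superalgebra on Kac's list one has $2\beta \in \Delta_{\overline{0}}^{\mathfrak{b}+}$, whence $2\beta \in \Delta_{\overline{0}}^{\mathfrak{b}'+}$ and therefore $\beta \in \Delta^{\mathfrak{b}'+}$, again contradicting $\beta \in -\Delta^{\mathfrak{b}'+}$. Thus $\beta \in \Pi_{\otimes}^{\mathfrak{b}}$ and the odd reflection $r_\beta$ of \cref{2.3.oddref} applies. The standard description of the effect on positive roots gives $\Delta^{r_\beta\mathfrak{b}+} = (\Delta^{\mathfrak{b}+} \setminus \{\beta\}) \cup \{-\beta\}$, and since $-\beta \in \Delta^{\mathfrak{b}'+}$ the element $-\beta$ does not lie in $-\Delta^{\mathfrak{b}'+}$. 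Therefore $S(r_\beta\mathfrak{b}, \mathfrak{b}') = S(\mathfrak{b}, \mathfrak{b}') \setminus \{\beta\}$ has cardinality $n - 1$, and the inductive hypothesis finishes the proof.

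The only conceptually non-trivial step is the height-minimality argument producing a simple root in $S(\mathfrak{b},\mathfrak{b}')$; the rest is bookkeeping with positive systems, together with a per-type check that non-isotropic odd simple roots double to even roots (which is vacuous for types where no non-isotropic odd roots occur). The decomposition $\beta = \gamma + \delta$ used above is the main classical input and is the expected bottleneck, but it holds uniformly across the basic Lie superalgebras in the list preceding \cref{3.1delta}.
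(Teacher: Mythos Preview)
Your argument is correct and is essentially the standard proof found in the cited references (e.g., Musson, Section~3.6). Note, however, that the paper itself does not supply a proof of this proposition: it is stated as ``well-known'' with citations to \cite{musson2012lie,cheng2012dualities} and no argument is given, so there is nothing in the paper to compare against beyond the references. Your induction on $|S(\mathfrak{b},\mathfrak{b}')|$, the minimal-height argument to produce a simple root in $S(\mathfrak{b},\mathfrak{b}')$, and the isotropy check via $2\beta \in \Delta_{\overline{0}}$ are exactly the ingredients of the textbook proof.
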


\begin{definition}\label{RBgdef}
    The edge colored graph \( OR(\mathfrak{g}) \) is defined as follows:
    \begin{itemize}
        \item \textbf{Vertex set:} \( \mathfrak{B(g)} \);
        \item \textbf{Color set:} For a fixed \( \mathfrak{b} \in \mathfrak{B(g)} \), the set \( \Delta^{\mathfrak{b}+} \setminus \Delta^{\text{pure+}} \);
        \item \textbf{Edges and colors:} An edge is drawn between two vertices if they are related by an odd reflection. The edge is assigned a color corresponding to the unique \( \alpha \in \Delta^{\mathfrak{b}+} \setminus \Delta^{\text{pure+}} \) such that \( \alpha \) belongs to the positive root system of one vertex but not the other.
    \end{itemize}
    Since the positive root systems associated with different Borel subalgebras are in one-to-one correspondence, the structure of the odd reflection graph \( OR(\mathfrak{g}) \) does not depend on the choice of \( \mathfrak{b} \).
\end{definition}

\begin{definition}
    For \( \lambda \in \mathfrak{h}^* \) and \( \mathfrak{b} \in \mathfrak{B(g)} \), we set
    \[
        D_{\lambda} := \{ \alpha \in \Delta_\otimes^{\mathfrak{b}+} \setminus \Delta^{\text{pure}+} \mid (\lambda, \alpha) \neq 0 \}.
    \]
    We define an edge colored graph \( OR(\mathfrak{g}, \lambda) := OR(\mathfrak{g}) / D_{\lambda} \) in the sense of \cref{d1}. For example, we have
    \( OR(\mathfrak{g}, 0) = OR(\mathfrak{g}) \).
\end{definition}

The following is a fundamental property that we refer to as the \emph{exchange property} of odd reflections:

\begin{theorem}[Exchange Property of Odd Reflections] \label{thm:odd_exchange}
Let \( w \) be any walk on the graph \( \operatorname{OR}(\mathfrak{g}, \lambda) \). Then the following are equivalent:
\begin{itemize}
    \item[(1)]  \( w \) is rainbow.
    \item[(2)]  \( w \) is shortest.
\end{itemize}
\end{theorem}

\begin{remark}
  You can verify \cref{thm:odd_exchange} case by case refering \cref{2}.
  A conceptual proof of \cref{thm:odd_exchange} is given in \cite{Hirota_PathSubgroupoids} in more general setting including the case of regular symmetrizable Kac-Moody Lie superalgebras and Nichols algebras of diagonal type.
\end{remark}

\begin{remark}
The condition in \cref{thm:odd_exchange} is very strong. In fact, the only connected cycles satisfying condition in \cref{thm:odd_exchange} are the following:
    \begin{center}
\begin{minipage}{0.45\textwidth}
\centering
\begin{tikzpicture}
    \coordinate (A) at (0, 1.5);
    \coordinate (B) at (-1.5, 0);
    \coordinate (C) at (1.5, 0);

    \draw (A) -- node[left] {\( c \)} (B);
    \draw (B) -- node[below] {\( c \)} (C);
    \draw (C) -- node[right] {\( c \)} (A);

    \foreach \point in {A,B,C} {
        \fill (\point) circle (2pt);
    }
\end{tikzpicture}
\end{minipage}
\hspace{1cm}
\begin{minipage}{0.5\textwidth}
\centering
\begin{tikzpicture}
    \coordinate (A1) at (0, 0);
    \coordinate (A2) at (2, 0);
    \coordinate (A3) at (4, 0);
    \coordinate (Ak1) at (6, 0);
    \coordinate (Ak) at (8, 0);

    \coordinate (B1) at (0, -2);
    \coordinate (B2) at (2, -2);
    \coordinate (B3) at (4, -2);
    \coordinate (Bk1) at (6, -2);
    \coordinate (Bk) at (8, -2);

    \draw (A1) -- node[above] {\( c_1 \)} (A2);
    \draw (A2) -- node[above] {\( c_2 \)} (A3);
    \draw[dotted] (A3) -- (Ak1);
    \draw (Ak1) -- node[above] {\( c_k \)} (Ak);

    \draw (B1) -- node[below] {\( c_k \)} (B2);
    \draw (B2) -- node[below] {\( c_{k-1} \)} (B3);
    \draw[dotted] (B3) -- (Bk1);
    \draw (Bk1) -- node[below] {\( c_1 \)} (Bk);

    \draw (A1) -- node[left] {\( c_0 \)} (B1);
    \draw (Ak) -- node[right] {\( c_0 \)} (Bk);

    \foreach \point in {A1,A2,A3,Ak1,Ak,B1,B2,B3,Bk1,Bk} {
        \fill (\point) circle (2pt);
    }
\end{tikzpicture}
\end{minipage}
\end{center}

\vspace{0.5em}
\noindent
Here, the colors satisfy the conditions that
\(
c_0, c_1, \ldots, c_k
\)
are all distinct, and \(k \geq 0\).
\end{remark}

As explained in \cite{Hirota_PathSubgroupoids}, the following can be directly deduced from \cref{thm:odd_exchange}.
\begin{theorem}
\label{rbgrb}[cf.\ {\cite[Proposition 5.3.5]{gorelik2022root}}]\label{g.rb2.kai}
The graph \( OR(\mathfrak{g}, \lambda) \) satisfies following propery:

If there exists a rainbow walk
\[
v_0c_0v_1c_1\dots c_kv_{k+1} \quad (k > 0),
\]
and an edge \( v_{k+1}c_0v_{k+2} \),
then there exists a rainbow walk
\[
v_{k+2}d_1v_{k+3}\dots v_{2k+1}d_kv_0
\]
such that \( \{ c_1, \dots, c_k \} = \{ d_1, \dots, d_k \} \).

\begin{center}
\begin{tikzpicture}
    \node (A1) at (0, 0) {$v_1$};
    \node (A2) at (2, 0) {$v_2$};
    \node (A3) at (4, 0) {$v_3$};
    \node (Ak1) at (6, 0) {$v_k$};
    \node (Ak) at (8, 0) {$v_{k+1}$};

    \node (B1) at (0, -2) {$v_0$};
    \node (B2) at (2, -2) {$v_{2k+1}$};
    \node (B3) at (4, -2) {$v_{2k}$};
    \node (Bk1) at (6, -2) {$v_{k+3}$};
    \node (Bk) at (8, -2) {$v_{k+2}$};

    \draw (A1) -- node[above] {$c_1$} (A2);
    \draw (A2) -- node[above] {$c_2$} (A3);
    \draw[dotted] (A3) -- (Ak1);
    \draw (Ak1) -- node[above] {$c_k$} (Ak);

    \draw[dashed] (B1) -- node[below] {$d_k$} (B2);
    \draw[dashed] (B2) -- node[below] {$d_{k-1}$} (B3);
    \draw[dotted] (B3) -- (Bk1);
    \draw[dashed] (Bk1) -- node[below] {$d_1$} (Bk);

    \draw (A1) -- node[left] {$c_0$} (B1);
    \draw (Ak) -- node[right] {$c_0$} (Bk);
\end{tikzpicture}
\end{center}
\end{theorem}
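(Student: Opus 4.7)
The plan is to prove the theorem by tracking how positive root systems change along walks in $OR(\mathfrak{g})$, and then descending to the quotient $OR(\mathfrak{g}, \lambda)$. The central observation, to be extracted from \cref{2.3.oddref}, is that an odd reflection $r_\alpha$ affects the positive root system in a remarkably mild way: although the simple roots shift (since $\beta \in \Pi^{\mathfrak{b}}$ may be replaced by $\alpha+\beta$), the full positive root system differs only in the sign of $\alpha$, i.e.\ $\Delta^{\mathfrak{b}+} \triangle \Delta^{r_\alpha \mathfrak{b}+} = \{\alpha, -\alpha\}$. Fixing a reference Borel and regarding colors as elements of $\Delta^{\mathfrak{b}+} \setminus \Delta^{\operatorname{pure}+}$, each vertex $v$ then carries a sign function $\epsilon_v$ on the color set valued in $\{+,-\}$ recording whether $\alpha \in \Delta^{v+}$ or $-\alpha \in \Delta^{v+}$; traversing an edge of color $\alpha$ flips $\epsilon_v(\alpha)$ and leaves every other value unchanged.

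From this I would derive the following combinatorial lemma. For any walk $W : v \to v'$ with color multiset $M$, the sign-difference set $S(v,v') := \{\alpha : \epsilon_v(\alpha) \neq \epsilon_{v'}(\alpha)\}$ coincides with the set of colors that appear in $M$ with odd multiplicity. Hence $|S(v,v')|$ is a lower bound on the length of any walk between these vertices, and this bound is attained exactly when every color of $W$ appears once and the underlying set equals $S(v,v')$, i.e.\ exactly when $W$ is rainbow. Since a walk exists by \cref{2.3.odd_kyoyaku}, a shortest walk exists, is automatically rainbow, and any two shortest walks between the same endpoints use the same set of colors.

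The theorem is now immediate. The hypothesis provides $S(v_0, v_{k+1}) = \{c_0, c_1, \ldots, c_k\}$; concatenating the edge $v_{k+1}\, c_0\, v_{k+2}$ introduces a second occurrence of $c_0$, which restores $\epsilon(c_0)$, so $S(v_0, v_{k+2}) = \{c_1, \ldots, c_k\}$. Any shortest walk from $v_{k+2}$ to $v_0$ therefore has length $k$, consists of the colors $\{c_1, \ldots, c_k\}$ each used once, and is rainbow --- this is the desired walk $v_{k+2}\, d_1\, v_{k+3} \cdots d_k\, v_0$. To pass from $OR(\mathfrak{g})$ to the quotient $OR(\mathfrak{g}, \lambda)$, I would observe that any walk in the quotient lifts to a walk in $OR(\mathfrak{g})$ in which edges of colors in $D_\lambda$ can be freely inserted; rainbowness in the quotient concerns only colors in $C \setminus D_\lambda$, and the sign-tracking argument restricts verbatim to this subset.

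The main obstacle I anticipate is the verification of the symmetric-difference identity $\Delta^{\mathfrak{b}+} \triangle \Delta^{r_\alpha \mathfrak{b}+} = \{\alpha, -\alpha\}$, since \cref{2.3.oddref} describes only the change in simple roots. The cleanest route is to characterize positive roots intrinsically via the triangular decomposition as $\Delta^{\mathfrak{b}+} = \{\beta \in \Delta : \mathfrak{g}_\beta \subseteq \mathfrak{b}\}$ and observe that the Borel subalgebras $\mathfrak{b}$ and $r_\alpha \mathfrak{b}$ differ only by swapping $\mathfrak{g}_\alpha$ for $\mathfrak{g}_{-\alpha}$. The quotient step is then routine bookkeeping, although one must also check that colors in $D_\lambda$ genuinely project to empty walks in $OR(\mathfrak{g},\lambda)$ without interfering with the rainbow condition.
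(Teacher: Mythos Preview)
Your sign-function approach is the right conceptual framework, and it is more direct than what the paper itself does: the paper gives no self-contained argument for this theorem, deferring instead to case-by-case verification against the explicit graphs in \cref{2}, to the companion manuscript, or to \cite[Proposition 5.3.5]{gorelik2022root}. So your route is both different and cleaner than the paper's treatment.

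There is, however, a genuine gap at the sentence ``a shortest walk exists, is automatically rainbow.'' What you have established is that $|S(v,v')|$ bounds walk length from below and that the bound is met precisely by rainbow walks. But to conclude that a shortest walk \emph{is} rainbow you must know that a walk of length exactly $|S(v,v')|$ actually exists; \cref{2.3.odd_kyoyaku} gives you only connectivity. The vertex set of $OR(\mathfrak{g})$ embeds into $\{+,-\}^C$ but is not the full hypercube (for $\mathfrak{gl}(2|2)$ there are six Borels versus sixteen sign vectors), and in an arbitrary induced subgraph of a hypercube shortest walks need not be rainbow: take the path $000,100,110,111,011$ inside $Q_3$ and look at the endpoints.

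The missing lemma is standard: whenever $u \neq v$ in $\mathfrak{B}(\mathfrak{g})$, any minimal-height element of $\Delta^{u+} \setminus \Delta^{v+}$ is $u$-simple (because positive systems, being the roots of a subalgebra, are closed under sums landing in $\Delta$) and is odd isotropic (only such roots can change sign between Borels sharing $\mathfrak{b}_{\overline{0}}$). Reflecting in it decreases $|S(u,v)|$ by one, and induction then produces a rainbow walk of length exactly $|S(u,v)|$. With this in hand your argument for $OR(\mathfrak{g})$ is complete, and the descent to $OR(\mathfrak{g},\lambda)$ works as you sketch; note that this same lemma is also what guarantees that two Borels whose sign functions agree on $C \setminus D_\lambda$ are joined by a walk using only colors in $D_\lambda$, which your quotient step needs.
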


\begin{example} \label{glmndef}
The general linear Lie superalgebra \( \mathfrak{gl}(m|n) \) is defined as the Lie superalgebra spanned by all \( E_{ij} \) with \( 1 \leq i, j \leq m+n \), under the supercommutator:
\[
[E_{ij}, E_{kl}] = \delta_{jk} E_{il} - (-1)^{|E_{ij}||E_{kl}|} \delta_{il} E_{kj},
\]
where \( |E_{ij}| = \overline{0} \) if \( E_{ij} \) acts within \( V_{\overline{0}} \) or \( V_{\overline{1}} \) (even), and \( |E_{ij}| = \overline{1} \) if it maps between \( V_{\overline{0}} \) and \( V_{\overline{1}} \) (odd).

The Cartan subalgebra \( \mathfrak{h} \) is given by \( \mathfrak{h} = \bigoplus k E_{ii} \).

Let \( E_{ii} \) be associated with dual basis elements \( \varepsilon_i \) for \( 1 \leq i \leq m+n \). Then we have \( \mathfrak{g}_{\varepsilon_i - \varepsilon_j} = k E_{ij} \).

The bilinear form \( (\, , \,) \) is computed as follows:
\[
(\varepsilon_i, \varepsilon_j) = 
\begin{cases}
1 & \text{if } i = j \leq m, \\
-1 & \text{if } i = j \geq m+1, \\
0 & \text{if } i \neq j.
\end{cases}
\]

Define \( \delta_i = \varepsilon_{m+i} \) for \( 1 \leq i \leq n \). The sets of roots are as follows:
\[
\Delta_{\overline{0}} = \{ \varepsilon_i - \varepsilon_j, \delta_i - \delta_j \mid i \neq j \},
\]
\[
\Delta_{\overline{1}} = \{ \varepsilon_i - \delta_j \mid 1 \leq i \leq m, \, 1 \leq j \leq n \}.
\]

For the even part \( \mathfrak{g}_{\overline{0}} = \mathfrak{gl}(m) \oplus \mathfrak{gl}(n) \), we fix the standard Borel subalgebra \( \mathfrak{b}_{\overline{0}} \) as:
\[
\mathfrak{b}_{\overline{0}} = \bigoplus_{1 \leq i \leq j \leq m} k E_{ij} \oplus \bigoplus_{m+1 \leq i \leq j \leq n} k E_{ij}.
\]

We assume that the Borel subalgebras we consider all contain \( \mathfrak{b}_{\overline{0}} \).
 
The Borel subalgebras of \( \mathfrak{gl}(m|n) \) correspond naturally in a one-to-one manner to the elements of \( P_{m \times n} \) (in \cref{g.ydi}).

The color set of \( OR(\mathfrak{gl}(m|n)) \) can be identified with
\[
    \Delta_\otimes^{\emptyset+} = \{ \varepsilon_i - \delta_j \mid 1 \leq i \leq m, \, 1 \leq j \leq n \}.
\]
This set can naturally be identified with the collection of boxes in an \( m \times n \) rectangle.

More precisely, with this representation, by assigning the color \( \varepsilon_i - \delta_j \) to the box at coordinates \( (j, m+1-i) \), we obtain a natural edge-colored graph isomorphism:
\[
    OR(\mathfrak{gl}(m|n)) \cong L(m, n) \quad \text{(see \cref{3.1lmn}).}
\]
We also note that we obtain a natural edge-colored graph isomorphism:
\[
    OR(\mathfrak{gl}(1|1)^{\oplus n}) \cong Q_n \quad \text{(see \cref{g.qn}).}
\]

\end{example}

\begin{example}
Let \( \mathfrak{g} = \mathfrak{gl}(3|2) \). We compute 
\[
D_{\varepsilon_1 - \delta_1} = \{ \varepsilon_3 - \delta_1, \varepsilon_2 - \delta_1, \varepsilon_1 - \delta_2 \}.
\]
Therefore, the edge-colored graph \( OR( \mathfrak{gl}(3|2), \varepsilon_1 - \delta_1) \) is isomorphic to \( L(3,2) / D \), as described in \cref{L32}.
\end{example}

\begin{example}\label{2}
According to \cite{bonfert2024weyl,andruskiewitsch2017finite,musson2012lie}, we have the following isomorphism of edge-colored graphs:
\[
\begin{array}{llcl}
    OR(\mathfrak{osp}(2m+1|2n)) &\cong L(m, n)    && (\text{see } \cref{3.1lmn}), \\
    OR(\mathfrak{osp}(2m|2n))   &\cong LD(m, n)   && (\text{see } \cref{Dmn}), \\
    OR(\mathfrak{g})            &:\, \text{tree if } \mathfrak{g} \text{ is exceptional type}.
\end{array}
\]
\end{example}

\subsection{Verma modules}

In this subsection, we summarize some basic results—likely well-known among experts—that play an important role in later chapters and follow easily from \cref{2.3.pbw}, for the sake of convenience.

Let s$\mathcal{W}$ denote the category of weight $\mathfrak{g}$-modules, i.e., $\mathfrak{h}$-semisimple $\mathfrak{g}$-modules.
Similarly, by replacing \( \mathfrak{g} \) with \( \mathfrak{g}_{\overline{0}} \), we define \( s\mathcal{W}_{\overline{0}} \) as a full subcategory of \( \mathfrak{g}_{\overline{0}} \)-sMod. 

If \( \mathcal{W}_{\overline{0}} \) is the usual category of weight modules for \( \mathfrak{g}_{\overline{0}} \), then it is important to note that \( s\mathcal{W}_{\overline{0}} = \mathcal{W}_{\overline{0}} \oplus \Pi \mathcal{W}_{\overline{0}} \), where \( \Pi \mathcal{W}_{\overline{0}} \) denotes the parity shift of \( \mathcal{W}_{\overline{0}} \).

\begin{lemma}[See also \cite{brundan2014representations} Lemma 2.2,  \cite{chen2020primitive} Proposition 2.2.3]
We can choose \( p \in \operatorname{Map}(\mathfrak{h}^*, \mathbb{Z}/2\mathbb{Z}) \) such that
\[
\mathcal{W} := \{ M \in s\mathcal{W} \mid \deg M_{\lambda} = p(\lambda) \text{ for } \lambda \in \mathfrak{h}^*, M_\lambda \neq 0 \}
\]
forms a Serre subcategory, \( \mathcal{W}_{\overline{0}} \subseteq \mathcal{W} \) and \( s\mathcal{W} = \mathcal{W} \oplus \Pi \mathcal{W} \).

\end{lemma}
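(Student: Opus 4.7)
The plan is to build \( p \) in two stages. First, I would show that the parity \( |\alpha| \in \mathbb{Z}/2\mathbb{Z} \) of the one-dimensional root space \( \mathfrak{g}_\alpha \) extends to a group homomorphism \( |\cdot| : Q \to \mathbb{Z}/2\mathbb{Z} \), where \( Q := \mathbb{Z}\Delta \) is the root lattice. Concretely, fix a Borel subalgebra \( \mathfrak{b} \) with simple roots \( \Pi^\mathfrak{b} = \{\alpha_1,\ldots,\alpha_r\} \); these form a \( \mathbb{Z} \)-basis of \( Q \), so one obtains a candidate homomorphism \( \tilde p \) by setting \( \tilde p(\alpha_i) := |\alpha_i| \) and extending \( \mathbb{Z} \)-linearly. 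To verify \( \tilde p(\alpha) = |\alpha| \) for every \( \alpha \in \Delta \), I would induct on the height of \( \alpha \): writing \( \alpha = \beta + \gamma \) with \( \beta \in \Pi^\mathfrak{b} \) and \( \gamma \) a positive root of smaller height, the supercommutator \( [\mathfrak{g}_\beta,\mathfrak{g}_\gamma] \) is a nonzero subspace of the one-dimensional \( \mathfrak{g}_\alpha \) and carries parity \( |\beta| + |\gamma| \), whence \( |\alpha| = |\beta| + |\gamma| \).

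Second, I would pick a set-theoretic section of the quotient \( \mathfrak{h}^* \to \mathfrak{h}^*/Q \), choose representatives \( \mu_c \) for each coset, and define \( p(\mu_c + q) := \tilde p(q) \) for \( q \in Q \) (with \( p(\mu_c) := 0 \) understood). By construction,
\[
p(\lambda + \alpha) = p(\lambda) + |\alpha| \qquad\text{for all } \lambda \in \mathfrak{h}^*,\ \alpha \in \Delta.
\]
This identity is precisely the compatibility needed to make \( \mathcal{O} \subseteq s\mathcal{O} \) a \( \mathfrak{g} \)-stable subcategory: for \( v \in M_\lambda \) of parity \( p(\lambda) \) and \( x \in \mathfrak{g}_\alpha \), the vector \( xv \in M_{\lambda + \alpha} \) has parity \( p(\lambda) + |\alpha| = p(\lambda + \alpha) \). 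Closure under subobjects, quotients and extensions in \( s\mathcal{O} \) is then immediate from the exactness of the weight-space functor \( M \mapsto M_\lambda \) valued in super vector spaces: parity concentration in \( p(\lambda) \) is preserved by short exact sequences weight space by weight space. Hence \( \mathcal{O} \) is a Serre subcategory.

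For the decomposition \( s\mathcal{O} = \mathcal{O} \oplus \Pi\mathcal{O} \), split each weight space as \( M_\lambda = (M_\lambda)_{p(\lambda)} \oplus (M_\lambda)_{p(\lambda)+1} \) and set \( M^+ := \bigoplus_\lambda (M_\lambda)_{p(\lambda)} \) and \( M^- := \bigoplus_\lambda (M_\lambda)_{p(\lambda)+1} \). The same compatibility identity shows that both are \( \mathfrak{g} \)-submodules, with \( M^+ \in \mathcal{O} \) and \( M^- \in \Pi\mathcal{O} \). The inclusion \( \mathcal{O}_{\overline{0}} \subseteq \mathcal{O} \) is arranged by choosing the section so that \( p \) vanishes on the highest weights of the generators of interest; since \( |\cdot| \) is trivial on \( \Delta_{\overline{0}} \), the identity propagates this vanishing to every weight appearing in any such module. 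The main obstacle is the first step, i.e., showing that \( |\cdot| \) is genuinely additive on \( Q \); this is the only point where the structure of basic Lie superalgebras enters in a nontrivial way, and the induction on height rests on the standard fact that every non-simple positive root decomposes as a simple root plus a positive root whose supercommutator in the one-dimensional root space is nonzero, as recorded in~\cite{musson2012lie,cheng2012dualities}.
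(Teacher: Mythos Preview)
The paper does not supply its own proof of this lemma; it is stated with citations to \cite{brundan2014representations} and \cite{chen2020primitive} and then used. Your argument is essentially the standard one found in those references, and the main steps are correct: the parity of root spaces extends to a homomorphism \( \tilde p : Q \to \mathbb{Z}/2\mathbb{Z} \) (this is where contragredience of basic Lie superalgebras enters, as you note), one extends to \( \mathfrak{h}^* \) coset by coset, the resulting compatibility \( p(\lambda+\alpha)=p(\lambda)+|\alpha| \) makes \( \mathcal{O} \) a \( \mathfrak{g} \)-stable Serre subcategory, and the weight-by-weight parity splitting gives \( s\mathcal{O}=\mathcal{O}\oplus\Pi\mathcal{O} \).

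Two small points. First, in your height induction you should say that \emph{some} decomposition \( \alpha=\beta+\gamma \) with \( \beta\in\Pi^{\mathfrak b} \), \( \gamma \) a positive root, and \( [\mathfrak{g}_\beta,\mathfrak{g}_\gamma]\neq 0 \) exists (a consequence of \( \mathfrak{n}^+ \) being generated by the simple root vectors); not every such decomposition need have nonzero bracket. Second, your handling of the clause ``\( \mathcal{O}_{\overline 0}\subseteq\mathcal{O} \)'' is vague, but so is the paper's formulation: taken literally the inclusion compares categories of modules over different algebras. What is actually needed downstream (and what the cited references arrange) is only that the parity choice is compatible with induction and restriction between \( \mathcal{O}_{\overline 0} \) and \( \mathcal{O} \); your remark about choosing the section so that \( p \) vanishes on chosen coset representatives is the right mechanism, though of course one cannot force \( p\equiv 0 \) globally.
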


From this point onward, we  ignore the parity and work within \( \mathcal{W} \).

\begin{definition}
Let \( k_{\lambda}^{\mathfrak{b}} \) be the one-dimensional \( \mathfrak{b} \)-module corresponding to \( \lambda \in \mathfrak{h}^* \). The \( \mathfrak{b} \)-Verma module with highest weight \( \lambda \) is defined by
\[
M^{\mathfrak{b}}(\lambda) = U(\mathfrak{g}) \otimes_{U(\mathfrak{b})} k_{\lambda}^{\mathfrak{b}}.
\]
(Here, the parity of \( k_{\lambda}^{\mathfrak{b}} \) is chosen so that \( M^{\mathfrak{b}}(\lambda) \in \mathcal{W} \).) Its simple top is denoted by \( L^{\mathfrak{b}}(\lambda) \).

Similarly, for the even part \( \mathfrak{g}_{\overline{0}} \), the corresponding Verma module and simple module are denoted by \( M_{\overline{0}}(\lambda) \) and \( L_{\overline{0}}(\lambda) \), respectively.

For a module \( M \) in the category \( \mathcal{W} \), the character \( \operatorname{ch} M \) is a formal sum that encodes the dimensions of the weight spaces of \( M \). Specifically, if \( M \) has a weight space decomposition \( M = \bigoplus_{\lambda} M_{\lambda} \), then
\(
\operatorname{ch} M := \sum_{\lambda} \dim M_{\lambda} e^{\lambda},
\)
where \( \lambda \) runs over the weights of \( M \) and \( e^{\lambda} \) denotes the formal exponential corresponding to the weight \( \lambda \).
    
\end{definition}

From now on, for a Borel subalgebra \( \mathfrak{b} \), we fix a total order on the basis and write \( \Pi^{\mathfrak{b}} = \{ \alpha_1^{\mathfrak{b}}, \dots, \alpha_\theta^{\mathfrak{b}} \} \). Odd reflections \( r_{\alpha_i^\mathfrak{b}} \) are concisely written as \( r_i^\mathfrak{b} \) or \( r_i \). Correspondingly, the total order of the basis is uniquely determined for all Borel subalgebras, as explained in \cite{bonfert2024weyl}.

\begin{definition}
For \( \alpha \in \Delta^{\mathfrak{b+}}\), we take a basis element of \( \mathfrak{g}_{\alpha} \) and denote it by \( e_{\alpha}^{\mathfrak{b}} \). Similarly, we take a basis element of \( \mathfrak{g}_{-\alpha} \) and denote it by \( f_{\alpha}^{\mathfrak{b}} \).

In particular, when \( \alpha \) is a \( \mathfrak{b} \)-simple root \( \alpha_i^{\mathfrak{b}} \), we denote \( e_{\alpha}^{\mathfrak{b}} \) by \( e_i^{\mathfrak{b}} \) and \( f_{\alpha}^{\mathfrak{b}} \) by \( f_i^{\mathfrak{b}} \).

These are unique up to scalar multiplication, so from now on, all equations involving them are understood to hold up to scalar.

Note that \( f_i^{\mathfrak{b}} = e_i^{r_i \mathfrak{b}} \) and \( e_i^{\mathfrak{b}} = f_i^{r_i \mathfrak{b}} \).

\end{definition}

\begin{lemma}[\cite{cheng2015brundan}, Lemma 6.1]
For \( \alpha_i^\mathfrak{b} \in \Pi_\otimes^{\mathfrak{b}} \) and \( \lambda \in \mathfrak{h}^* \), we have 
\[
\operatorname{ch} M^{\mathfrak{b}}(\lambda) = \operatorname{ch} M^{r_i \mathfrak{b}}(\lambda - \alpha_i^\mathfrak{b}).
\]
\end{lemma}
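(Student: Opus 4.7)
The plan is to reduce the equality of characters to an explicit identity between the two PBW character formulas, using the fact that an odd reflection at an isotropic simple root affects the positive root system only at that one root.

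First I would invoke the PBW theorem (\cref{2.3.pbw}) applied to the subalgebra $\mathfrak{n}^{\mathfrak{b}-}$, which gives a weight-space decomposition of $M^{\mathfrak{b}}(\lambda) \cong U(\mathfrak{n}^{\mathfrak{b}-}) \otimes k_\lambda^\mathfrak{b}$ and hence the standard product formula
\[
\operatorname{ch} M^{\mathfrak{b}}(\lambda) = e^\lambda \prod_{\alpha \in \Delta^{\mathfrak{b}+}_{\overline{0}}} \frac{1}{1-e^{-\alpha}} \prod_{\alpha \in \Delta^{\mathfrak{b}+}_{\overline{1}}} (1 + e^{-\alpha}),
\]
together with the analogous formula for $M^{r_i\mathfrak{b}}(\lambda - \alpha_i^\mathfrak{b})$.

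The key combinatorial input is the description of $\Delta^{r_i\mathfrak{b}+}$. Since the even Borel $\mathfrak{b}_{\overline{0}}$ is unchanged under an odd reflection, one has $\Delta^{r_i\mathfrak{b}+}_{\overline{0}} = \Delta^{\mathfrak{b}+}_{\overline{0}}$. For the odd roots, I would argue (using \cref{2.3.oddref} and the fact that $\alpha_i^\mathfrak{b}$ is isotropic, so $2\alpha_i^\mathfrak{b} \notin \Delta$) that the only odd root whose sign changes is $\alpha_i^\mathfrak{b}$ itself, giving
\[
\Delta^{r_i\mathfrak{b}+}_{\overline{1}} = \bigl(\Delta^{\mathfrak{b}+}_{\overline{1}} \setminus \{\alpha_i^\mathfrak{b}\}\bigr) \cup \{-\alpha_i^\mathfrak{b}\}.
\]
This is the step where one must be slightly careful: a root $\beta \in \Delta^{\mathfrak{b}+}$ with $\alpha_i^\mathfrak{b} + \beta \in \Delta$ has its expression as a nonnegative sum of simple roots of $r_i\mathfrak{b}$ change via \cref{2.3.oddref}, but $\beta$ itself remains a positive root of $r_i\mathfrak{b}$. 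I expect this verification to be the main (minor) obstacle; it is essentially the only nontrivial point, and it is implicit in the general theory of \cite{serganova2011kac}.

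Finally, I would substitute into the PBW formulas and compute the ratio
\[
\frac{\operatorname{ch} M^{r_i\mathfrak{b}}(\lambda - \alpha_i^\mathfrak{b})}{\operatorname{ch} M^{\mathfrak{b}}(\lambda)} = e^{-\alpha_i^\mathfrak{b}} \cdot \frac{1 + e^{\alpha_i^\mathfrak{b}}}{1 + e^{-\alpha_i^\mathfrak{b}}},
\]
which simplifies to $1$ after multiplying numerator and denominator by $e^{\alpha_i^\mathfrak{b}}$. This yields the desired equality of characters.
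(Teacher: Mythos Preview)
The paper does not supply its own proof of this lemma; it is simply quoted from \cite{cheng2015brundan} and used as input. Your argument is correct and is the standard one: the PBW character formula (which the paper later records in the more general form \cref{ch_neo}) together with the fact that an odd reflection at an isotropic simple root changes the positive system only at that single root. The ratio computation at the end is fine. There is nothing to compare against here beyond noting that your approach matches the expected textbook proof.
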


\begin{definition}[Weyl vectors]
Define the following vectors in \( \mathfrak{h}^* \):
\[
\rho_{\overline{0}} :=  \frac{1}{2} \sum_{\beta \in \Delta_{\overline{0}}^+} \beta, \quad
\rho_{\overline{1}}^\mathfrak{b} :=  \frac{1}{2} \sum_{\gamma \in \Delta_{\overline{1}}^{\mathfrak{b} +}} \gamma, \quad
\rho^\mathfrak{b} := \rho_{\overline{0}} - \rho_{\overline{1}}^\mathfrak{b}.
\]

It is worth noting that \( \rho^{r_i \mathfrak{b}} = \rho^{\mathfrak{b}} + \alpha_i^{\mathfrak{b}} \).
\end{definition}

\begin{proposition}[\cite{cheng2012dualities,musson2012lie} Corollary 8.5.4]

\label{5.2.Weylvec}
If \( \alpha_i^{\mathfrak{b}} \in \Pi^{\mathfrak{b}} \), then we have \( (\rho^{\mathfrak{b}}, \alpha_i^{\mathfrak{b}}) = \frac{1}{2} (\alpha_i^{\mathfrak{b}}, \alpha_i^{\mathfrak{b}}) \). In particular, if \( \alpha_i^{\mathfrak{b}} \) is isotropic, then we have \( (\rho^{\mathfrak{b}}, \alpha_i^{\mathfrak{b}}) = 0 \).

\end{proposition}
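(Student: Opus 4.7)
Plan. I will prove $(\rho^{\mathfrak{b}}, \alpha_i^{\mathfrak{b}}) = \frac{1}{2}(\alpha_i^{\mathfrak{b}}, \alpha_i^{\mathfrak{b}})$ by case analysis on the type of the simple root $\alpha := \alpha_i^{\mathfrak{b}}$, separating the \emph{non-isotropic} cases (even or odd non-isotropic) from the \emph{isotropic} case.

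In the non-isotropic case the even Weyl reflection $s_\alpha(v) = v - \tfrac{2(v,\alpha)}{(\alpha,\alpha)}\alpha$ is well-defined, and the plan is to establish $s_\alpha(\rho^{\mathfrak{b}}) = \rho^{\mathfrak{b}} - \alpha$, from which the claim follows by comparison with the reflection formula. If $\alpha$ is simple even, the classical argument applies directly: $s_\alpha$ permutes $\Delta^+ \setminus \{\alpha\}$ (since $\alpha$ is simple) and, preserving $\mathbb{Z}/2\mathbb{Z}$-parity of root spaces, restricts to permutations of $\Delta_{\overline{0}}^+ \setminus \{\alpha\}$ and of $\Delta_{\overline{1}}^{\mathfrak{b}+}$ separately. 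This yields $s_\alpha(\rho_{\overline{0}}) = \rho_{\overline{0}} - \alpha$ and $s_\alpha(\rho_{\overline{1}}^{\mathfrak{b}}) = \rho_{\overline{1}}^{\mathfrak{b}}$. If $\alpha$ is simple odd non-isotropic (which only occurs for $\mathfrak{g} = \mathfrak{osp}(2m+1|2n)$), then $2\alpha \in \Delta_{\overline{0}}^+$ is a simple root of $\mathfrak{g}_{\overline{0}}$ and $s_\alpha = s_{2\alpha}$; the classical argument then gives $s_\alpha(\rho_{\overline{0}}) = \rho_{\overline{0}} - 2\alpha$, while a direct inspection of the type B root system shows that $s_\alpha$ permutes $\Delta_{\overline{1}}^{\mathfrak{b}+} \setminus \{\alpha\}$, yielding $s_\alpha(\rho_{\overline{1}}^{\mathfrak{b}}) = \rho_{\overline{1}}^{\mathfrak{b}} - \alpha$. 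In either sub-case, $s_\alpha(\rho^{\mathfrak{b}}) = \rho^{\mathfrak{b}} - \alpha$, and we are done.

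In the isotropic case $\alpha \in \Pi_{\otimes}^{\mathfrak{b}}$ we have $(\alpha, \alpha) = 0$ and no even reflection is available; instead I will use the odd reflection $r_\alpha$ together with central-character considerations. By \cref{2.3.oddref}, $\Delta_{\overline{1}}^{r_\alpha \mathfrak{b}+} = (\Delta_{\overline{1}}^{\mathfrak{b}+} \setminus \{\alpha\}) \cup \{-\alpha\}$ while $\Delta_{\overline{0}}^+$ is unchanged, and summing the positive odd roots yields $\rho^{r_\alpha \mathfrak{b}} = \rho^{\mathfrak{b}} + \alpha$. The quadratic Casimir acts on $M^{\mathfrak{b}}(\mu)$ by the scalar $(\mu, \mu + 2\rho^{\mathfrak{b}})$, which is the standard formula derived from \cref{2.3.pbw} and the invariance of the bilinear form. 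The character equation stated earlier (Lemma 6.1 of \cite{cheng2015brundan}) gives that $M^{\mathfrak{b}}(\lambda)$ and $M^{r_\alpha \mathfrak{b}}(\lambda - \alpha)$ have identical characters, hence the same central character and Casimir eigenvalue. Equating the two scalars, substituting $\rho^{r_\alpha \mathfrak{b}} = \rho^{\mathfrak{b}} + \alpha$, and using $(\alpha, \alpha) = 0$ collapses the resulting identity to $2(\rho^{\mathfrak{b}}, \alpha) = 0$, which is the desired equality.

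The main obstacle is expected to lie in the odd non-isotropic sub-case: one must verify by a small direct computation that $s_\alpha$ permutes $\Delta_{\overline{1}}^{\mathfrak{b}+} \setminus \{\alpha\}$, a check that does not follow purely formally from $\alpha$ being simple. The Casimir-based handling of the isotropic case is clean but depends on the character equation being proved independently of \cref{5.2.Weylvec}, which is indeed the case since its derivation via Weyl denominators only uses the combinatorics of $\Delta_{\overline{1}}^{\mathfrak{b}+}$ under odd reflection.
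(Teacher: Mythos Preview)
The paper does not supply its own proof of this proposition; it is quoted as a known result from \cite{cheng2012dualities} and \cite[Corollary~8.5.4]{musson2012lie}. Your argument is essentially correct and follows a standard route, with two minor points worth noting.

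In the odd non-isotropic sub-case you do not need to invoke that $2\alpha$ is simple in $\Delta_{\overline{0}}^+$, nor to inspect the type-$B$ root system by hand. The argument you gave for even simple $\alpha$ works verbatim: since $\alpha$ is simple in $\Pi^{\mathfrak{b}}$, every $\beta\in\Delta^{\mathfrak{b}+}\setminus\{\alpha,2\alpha\}$ has positive coefficient on some simple root $\neq\alpha$ in its $\Pi^{\mathfrak{b}}$-expansion, and $s_\alpha$ only alters the $\alpha$-coefficient, so $s_\alpha(\beta)$ stays positive. As $s_\alpha=s_{2\alpha}$ lies in the Weyl group of $\mathfrak{g}_{\overline{0}}$ and preserves parity of roots, it permutes $\Delta_{\overline{0}}^+\setminus\{2\alpha\}$ and $\Delta_{\overline{1}}^{\mathfrak{b}+}\setminus\{\alpha\}$ separately, yielding $s_\alpha(\rho^{\mathfrak{b}})=\rho^{\mathfrak{b}}-\alpha$ directly. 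This uniform argument also covers $G(3)$, which you omitted (non-isotropic odd simple roots do occur there), and $\mathfrak{osp}(1|2n)$.

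Your Casimir treatment of the isotropic case is valid and, as you correctly observed, not circular: the character identity $\operatorname{ch}M^{\mathfrak{b}}(\lambda)=\operatorname{ch}M^{r_\alpha\mathfrak{b}}(\lambda-\alpha)$ is a direct PBW/denominator computation using only the combinatorics of $\Delta_{\overline{1}}^{\mathfrak{b}+}$ under odd reflection, and the Casimir eigenvalue formula $(\mu,\mu+2\rho^{\mathfrak{b}})$ uses only the definition of $\rho^{\mathfrak{b}}$ as a signed half-sum, not the identity being proved.
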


\begin{corollary}[see also \cite{vay2023linkage}]\label{5.2.ch_eq}
For a pair \( \mathfrak{b}, \mathfrak{b}' \in \mathfrak{B} \) and \( \lambda \in \mathfrak{h}^* \), the following statements hold:
\begin{enumerate}
    \item \( \operatorname{ch} M^{\mathfrak{b}}(\lambda - \rho^{\mathfrak{b}}) = \operatorname{ch} M^{\mathfrak{b}'}(\lambda - \rho^{\mathfrak{b}'}) \);
    \item \( \dim M^{\mathfrak{b}'}(\lambda - \rho^{\mathfrak{b}'})_{\lambda - \rho^{\mathfrak{b}}} = 1 \);
    \item \( \operatorname{ch} M^{\mathfrak{b}}(\lambda) = \operatorname{ch} M^{\mathfrak{b}'}(\lambda) \iff \lambda + \rho^{\mathfrak{b}} = \lambda' + \rho^{\mathfrak{b}'} \);
    \item \( \left[ M^{\mathfrak{b}}(\lambda - \rho^{\mathfrak{b}}) : L^{\mathfrak{b}'}(\lambda - \rho^{\mathfrak{b}'}) \right] = 1 \);
    \item \( \dim \operatorname{Hom}(M^{\mathfrak{b}}(\lambda - \rho^{\mathfrak{b}}), M^{\mathfrak{b}'}(\lambda - \rho^{\mathfrak{b}'})) = 1 \);
    \item If \( M^{\mathfrak{b}}(\lambda - \rho^{\mathfrak{b}}) \not\cong M^{\mathfrak{b}'}(\lambda - \rho^{\mathfrak{b}'}) \), then \( M^{\mathfrak{b}}(\lambda - \rho^{\mathfrak{b}}) \) is not isomorphic to any \( \mathfrak{b}' \)-Verma module.
\end{enumerate}
\end{corollary}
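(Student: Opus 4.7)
The plan is to prove the six statements in order, with the preceding single-reflection character identity, the isotropy relation in \cref{5.2.Weylvec}, and the connectivity of $\mathfrak{B(g)}$ under odd reflections (\cref{2.3.odd_kyoyaku}) as the core inputs. The unifying observation is that $\operatorname{ch} M^{\mathfrak{b}}(\mu)$ depends on $(\mathfrak{b},\mu)$ only through the combination $\mu + \rho^{\mathfrak{b}}$, and (1)--(3) follow uniformly from this.

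For (1), I would substitute $\lambda \mapsto \lambda - \rho^{\mathfrak{b}}$ in the preceding lemma and use $\rho^{r_i\mathfrak{b}} = \rho^{\mathfrak{b}} + \alpha_i^{\mathfrak{b}}$ to obtain the identity across a single odd reflection; \cref{2.3.odd_kyoyaku} then delivers the general case by iteration along any odd-reflection path. Statement (2) is an immediate corollary: by (1), the weight space at $\lambda - \rho^{\mathfrak{b}}$ in $M^{\mathfrak{b}'}(\lambda - \rho^{\mathfrak{b}'})$ matches the top (hence $1$-dimensional, by \cref{2.3.pbw}) weight space of $M^{\mathfrak{b}}(\lambda - \rho^{\mathfrak{b}})$. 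For (3), the $\Leftarrow$ direction is immediate from (1) applied with $\mu := \lambda + \rho^{\mathfrak{b}} = \lambda' + \rho^{\mathfrak{b}'}$, while the $\Rightarrow$ direction rewrites $\operatorname{ch} M^{\mathfrak{b}}(\lambda) = \operatorname{ch} M^{\mathfrak{b}'}(\lambda + \rho^{\mathfrak{b}} - \rho^{\mathfrak{b}'})$ via (1) and compares highest weights of two $\mathfrak{b}'$-Verma modules.

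For (4), I would argue via composition series in $\mathcal{O}$: any composition factor $L^{\mathfrak{b}'}(\nu)$ contributing a dimension to the weight space at $\lambda - \rho^{\mathfrak{b}'}$ of $M^{\mathfrak{b}}(\lambda - \rho^{\mathfrak{b}})$ must satisfy $\nu \geq \lambda - \rho^{\mathfrak{b}'}$ in the $\mathfrak{b}'$-dominance order; but by (1) every weight of $M^{\mathfrak{b}}(\lambda - \rho^{\mathfrak{b}})$ already lies $\leq \lambda - \rho^{\mathfrak{b}'}$ in the $\mathfrak{b}'$-order, forcing $\nu = \lambda - \rho^{\mathfrak{b}'}$. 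Each such factor contributes exactly $1$ (its own top weight space), and (2) pins the total to $1$. For the bound $\dim \leq 1$ in (5), any morphism is determined by the image of the generator, which must lie in the $1$-dimensional weight space supplied by (2), so at most one morphism exists up to scalar.

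The main obstacle is the existence direction of (5). Here I would construct an explicit nonzero $\mathfrak{n}^{\mathfrak{b}+}$-invariant vector of weight $\lambda - \rho^{\mathfrak{b}}$ in $M^{\mathfrak{b}'}(\lambda - \rho^{\mathfrak{b}'})$, namely a suitably ordered product $\prod_{\beta \in S} e_\beta^{\mathfrak{b}} \cdot v^{\mathfrak{b}'}$, where $S := \Delta_{\overline{1}}^{\mathfrak{b}+} \cap \Delta_{\overline{1}}^{\mathfrak{b}'-}$ and $v^{\mathfrak{b}'}$ is the $\mathfrak{b}'$-highest-weight generator. The weight computes to $\lambda - \rho^{\mathfrak{b}}$ via the identity $\rho^{\mathfrak{b}'} - \rho^{\mathfrak{b}} = \sum_{\beta \in S} \beta$, which expands directly from the definition of $\rho_{\overline{1}}$, and nonvanishing is PBW since each $e_\beta^{\mathfrak{b}} \in \mathfrak{g}_\beta \subset \mathfrak{n}^{\mathfrak{b}'-}$ acts as a free lowering operator on $v^{\mathfrak{b}'}$. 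The delicate step is verifying $\mathfrak{n}^{\mathfrak{b}+}$-annihilation, which I would reduce by induction on $|S|$ to the single-reflection case $|S|=1$, where one checks directly that $e_\alpha^{\mathfrak{b}} v^{r_\alpha \mathfrak{b}}$ is annihilated by every $e_\beta^{\mathfrak{b}}$ with $\beta \in \Pi^{\mathfrak{b}}$, using $(e_\alpha^{\mathfrak{b}})^2 = 0$ for isotropic $\alpha$ and the observation that $[e_\beta^{\mathfrak{b}}, e_\alpha^{\mathfrak{b}}]$ is either zero or proportional to an element of $\mathfrak{g}_{\alpha+\beta} \subset \mathfrak{n}^{r_\alpha \mathfrak{b}+}$. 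Finally, (6) is a formal consequence of (3): an isomorphism $M^{\mathfrak{b}}(\lambda - \rho^{\mathfrak{b}}) \cong M^{\mathfrak{b}'}(\mu)$ forces $\mu = \lambda - \rho^{\mathfrak{b}'}$ by matching characters via (3), contradicting the hypothesis.
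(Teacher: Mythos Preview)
Your argument is correct and matches the paper's implicit approach. The paper gives no explicit proof of this corollary, treating (1)--(4) and (6) as immediate consequences of the preceding single-reflection character identity together with \cref{5.2.Weylvec} and \cref{2.3.odd_kyoyaku}; the existence half of (5) is supplied by the explicit single-reflection formula in the very next result (\cref{prop:verma_homomorphism}) and the composed PBW nonvanishing argument of \cref{phi_nonzeroPBW}, which produces precisely your product-of-root-vectors vector $\prod_{\beta\in S} e_\beta^{\mathfrak{b}}\, v^{\mathfrak{b}'}$ along a rainbow walk.
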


\begin{definition}
Let \( \mathfrak{b}, \mathfrak{b}' \in B(\mathfrak{g}) \) and \( \lambda \in \mathfrak{h}^* \).
Let \( v_{\lambda}^{\mathfrak{b}} \) be the highest weight vector of \( M^{\mathfrak{b}}(\lambda) \).

We denote a nonzero homomorphism from \( M^{\mathfrak{b}}(\lambda - \rho^{\mathfrak{b}}) \) to \( M^{\mathfrak{b}'}(\lambda - \rho^{\mathfrak{b}'}) \) by \( \psi_{\lambda}^{\mathfrak{b}, \mathfrak{b}'}. \)
Note that \( \psi_{\lambda}^{\mathfrak{b}, \mathfrak{b}} = \operatorname{id}_{M^{\mathfrak{b}}(\lambda - \rho^{\mathfrak{b}})} \).

\end{definition}

\begin{proposition}\label{prop:verma_homomorphism}
    Let \( \alpha_{i}^{\mathfrak{b}} \in \Pi_{\otimes}^{\mathfrak{b}} \), \( \lambda \in \mathfrak{h}^* \), and \( \mathfrak{b} \in B(\mathfrak{g}) \). We have:
    \begin{enumerate}
        \item 
        \(
        \psi_{\lambda}^{r_i \mathfrak{b}, \mathfrak{b}} 
        \left( v_{\lambda - \rho^{r_i \mathfrak{b}}}^{r_i \mathfrak{b}} \right) 
        = f_i^{\mathfrak{b}} v_{\lambda - \rho^{\mathfrak{b}}}^{\mathfrak{b}} 
        = e_i^{r_i \mathfrak{b}} v_{\lambda - \rho^{\mathfrak{b}}}^{\mathfrak{b}},\)
    
       \( \psi_{\lambda}^{\mathfrak{b}, r_i \mathfrak{b}} 
        \left( v_{\lambda - \rho^{\mathfrak{b}}}^{\mathfrak{b}} \right) 
        = f_i^{r_i \mathfrak{b}} v_{\lambda - \rho^{r_i \mathfrak{b}}}^{r_i \mathfrak{b}} 
        = e_i^{\mathfrak{b}} v_{\lambda - \rho^{r_i \mathfrak{b}}}^{r_i \mathfrak{b}}.
        \)

        Note that the homomorphism from the Verma module is completely determined by the image of the highest weight vector.

        \item If \( (\lambda, \alpha_{i}^{\mathfrak{b}}) \neq 0 \), then we have \( \psi_{\lambda}^{r_i \mathfrak{b}, \mathfrak{b}} \) and \( \psi_{\lambda}^{\mathfrak{b}, r_i \mathfrak{b}} \) are isomorphisms.  
        Therefore, 
        \[
        L^{\mathfrak{b}}(\lambda - \rho^{\mathfrak{b}}) \cong L^{r_i \mathfrak{b}}(\lambda - \rho^{r_i \mathfrak{b}})
        \quad \text{and} \quad
        P^{\mathfrak{b}}(\lambda - \rho^{\mathfrak{b}}) \cong P^{r_i \mathfrak{b}}(\lambda - \rho^{r_i \mathfrak{b}}).
        \]

        \item If \( (\lambda, \alpha_{i}^{\mathfrak{b}}) = 0 \), then we have
        \(
        \psi_{\lambda}^{r_i \mathfrak{b}, \mathfrak{b}} \circ \psi_{\lambda}^{\mathfrak{b}, r_i \mathfrak{b}} = \psi_{\lambda}^{\mathfrak{b}, r_i \mathfrak{b}} \circ \psi_{\lambda}^{r_i \mathfrak{b}, \mathfrak{b}} = 0.
        \)
        Furthermore, we have
        \(
        L^{\mathfrak{b}}(\lambda) \cong L^{r_i \mathfrak{b}}(\lambda)
        \quad \text{and} \quad
        P^{\mathfrak{b}}(\lambda) \cong P^{r_i \mathfrak{b}}(\lambda).
        \)
    \end{enumerate}
\end{proposition}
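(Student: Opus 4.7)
A homomorphism out of a Verma module is determined by the image of its highest weight vector, which must be a primitive vector of the correct weight in the target. For part (1), I would show that the candidate $w := f_i^{\mathfrak{b}} v_{\lambda-\rho^{\mathfrak{b}}}^{\mathfrak{b}}$ inside $M^{\mathfrak{b}}(\lambda-\rho^{\mathfrak{b}})$ is a nonzero $r_i\mathfrak{b}$-primitive vector of weight $\lambda - \rho^{r_i\mathfrak{b}}$; then by the one-dimensionality of $\operatorname{Hom}$ from \cref{5.2.ch_eq}, $w$ must be the image of the highest weight vector under $\psi_{\lambda}^{r_i\mathfrak{b}, \mathfrak{b}}$. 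The weight identity uses $\rho^{r_i\mathfrak{b}} = \rho^{\mathfrak{b}} + \alpha_i^{\mathfrak{b}}$, and non-vanishing is immediate from \cref{2.3.pbw}. To verify primitivity I would check annihilation generator by generator: the new simple generator $e_i^{r_i\mathfrak{b}} = f_i^{\mathfrak{b}}$ (which is forced by $\alpha_i^{r_i\mathfrak{b}} = -\alpha_i^\mathfrak{b}$) kills $w$ because $(f_i^\mathfrak{b})^2 = 0$, since $2\alpha_i^\mathfrak{b}$ is not a root for an isotropic odd simple root; and for $j \neq i$, the other generator $e_j^{r_i\mathfrak{b}}$ lies in $\mathfrak{n}^{\mathfrak{b}+}$, so the supercommutator calculation reduces the question to showing that $[e_j^{r_i\mathfrak{b}}, f_i^\mathfrak{b}]$ either vanishes or lies in $\mathfrak{n}^{\mathfrak{b}+}$. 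This is immediate from the case split $\alpha_j^{r_i\mathfrak{b}} \in \{\alpha_j^\mathfrak{b}, \alpha_j^\mathfrak{b} + \alpha_i^\mathfrak{b}\}$ provided by \cref{2.3.oddref}. The symmetric formula for $\psi_\lambda^{\mathfrak{b}, r_i\mathfrak{b}}$ follows by swapping the roles of $\mathfrak{b}$ and $r_i\mathfrak{b}$.

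For parts (2) and the composition statement of (3), I would compute the composition on the highest weight vector:
\[
(\psi_\lambda^{r_i\mathfrak{b}, \mathfrak{b}} \circ \psi_\lambda^{\mathfrak{b}, r_i\mathfrak{b}})(v_{\lambda - \rho^\mathfrak{b}}^\mathfrak{b}) = e_i^\mathfrak{b} f_i^\mathfrak{b} v_{\lambda - \rho^\mathfrak{b}}^\mathfrak{b} = [e_i^\mathfrak{b}, f_i^\mathfrak{b}] v_{\lambda - \rho^\mathfrak{b}}^\mathfrak{b},
\]
which is a nonzero scalar multiple of $(\lambda - \rho^\mathfrak{b}, \alpha_i^\mathfrak{b}) v_{\lambda - \rho^\mathfrak{b}}^\mathfrak{b}$; invoking $(\rho^\mathfrak{b}, \alpha_i^\mathfrak{b}) = 0$ from \cref{5.2.Weylvec} reduces this to $(\lambda, \alpha_i^\mathfrak{b}) v_{\lambda - \rho^\mathfrak{b}}^\mathfrak{b}$. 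Since $\operatorname{End}(M^\mathfrak{b}(\lambda-\rho^\mathfrak{b}))$ is one-dimensional by \cref{5.2.ch_eq}, the composition is scalar multiplication by a constant proportional to $(\lambda, \alpha_i^\mathfrak{b})$. Hence when $(\lambda, \alpha_i^\mathfrak{b}) \neq 0$ both factors are invertible (using the analogous computation for the reverse composition), yielding the Verma isomorphism of (2); and when $(\lambda, \alpha_i^\mathfrak{b}) = 0$ both compositions vanish, giving (3).

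Finally, for the statements on $L$ and $P$: in (2) they are immediate, since isomorphic Verma modules have isomorphic simple tops and indecomposable projective covers. For (3), to obtain $L^\mathfrak{b}(\lambda) \cong L^{r_i\mathfrak{b}}(\lambda)$ when $(\lambda, \alpha_i^\mathfrak{b}) = 0$, I would show that $f_i^\mathfrak{b} v_\lambda^\mathfrak{b}$ is a $\mathfrak{b}$-primitive vector of weight $\lambda - \alpha_i^\mathfrak{b}$ in $M^\mathfrak{b}(\lambda)$: the bracket checks from paragraph 1 transfer directly, with the essential identity $e_i^\mathfrak{b} f_i^\mathfrak{b} v_\lambda^\mathfrak{b} = [e_i^\mathfrak{b}, f_i^\mathfrak{b}] v_\lambda^\mathfrak{b} \propto (\lambda, \alpha_i^\mathfrak{b}) v_\lambda^\mathfrak{b} = 0$ now holding by the hypothesis alone. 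This primitive vector lies in the maximal submodule of $M^\mathfrak{b}(\lambda)$, so vanishes in $L^\mathfrak{b}(\lambda)$; the image of $v_\lambda^\mathfrak{b}$ in $L^\mathfrak{b}(\lambda)$ is therefore an $r_i\mathfrak{b}$-primitive vector of weight $\lambda$ generating the simple module, realizing $L^\mathfrak{b}(\lambda)$ as a simple quotient (hence an isomorphic copy) of $M^{r_i\mathfrak{b}}(\lambda)$, i.e.\ of $L^{r_i\mathfrak{b}}(\lambda)$. The projective cover statement then follows from uniqueness of projective covers in $\mathcal{O}$. The only subtle step is the bracket analysis in paragraph 1 in the case $\alpha_i^\mathfrak{b} + \alpha_j^\mathfrak{b} \in \Delta$, where the odd reflection formula must be tracked with care; the rest is mechanical once the right primitive vectors are pinpointed.
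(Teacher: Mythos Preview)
The paper states this proposition without proof, treating it as a standard result (the relevant facts are essentially those in Musson \cite{musson2012lie} and Cheng--Wang \cite{cheng2012dualities} on odd reflections and Verma modules). Your argument is correct and is exactly the standard verification: identify the required primitive vector via the PBW theorem, check annihilation by the simple generators of $\mathfrak{n}^{r_i\mathfrak{b}+}$ using the explicit description of $\Pi^{r_i\mathfrak{b}}$ from \cref{2.3.oddref}, and compute the composition on the highest weight vector via the supercommutator $[e_i^{\mathfrak{b}}, f_i^{\mathfrak{b}}] \in \mathfrak{h}$ together with $(\rho^{\mathfrak{b}}, \alpha_i^{\mathfrak{b}}) = 0$ from \cref{5.2.Weylvec}. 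The only point worth making explicit is that checking annihilation by the simple root vectors of $r_i\mathfrak{b}$ suffices because $\mathfrak{n}^{r_i\mathfrak{b}+}$ is generated by them as a Lie superalgebra; this is implicit in your bracket analysis but not stated.
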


\begin{corollary}\label{5.2RB}
The vertex set of \( OR(\mathfrak{g}, \lambda) \) can be identified with the isomorphism classes of \({ \{ M^{\mathfrak{b}}(\lambda - \rho^{\mathfrak{b}}) \} }_{\mathfrak{b} \in B(\mathfrak{g})}\).
Precisely, let \( [\mathfrak{b}] \) denote the equivalence class of \( \mathfrak{b} \in B(\mathfrak{g}) \) in \( OR(\mathfrak{g}, \lambda) \). Then, we have:
\[
M^{\mathfrak{b}}(\lambda - \rho^{\mathfrak{b}}) \cong M^{\mathfrak{b}'}(\lambda - \rho^{\mathfrak{b}'}) \iff \mathfrak{b}' \in [\mathfrak{b}].
\]
\end{corollary}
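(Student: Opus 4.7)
The statement asserts that two Verma modules $M^{\mathfrak{b}}(\lambda-\rho^{\mathfrak{b}})$ and $M^{\mathfrak{b}'}(\lambda-\rho^{\mathfrak{b}'})$ are isomorphic if and only if the corresponding Borel subalgebras lie in the same equivalence class $[\mathfrak{b}] = [\mathfrak{b}']$ of the quotient graph $OR(\mathfrak{g}, \lambda)$. My plan handles the two implications separately, using \cref{prop:verma_homomorphism} to control each single-step odd reflection and \cref{5.2.ch_eq}(5) to pass to global walk compositions through the one-dimensionality of Hom-spaces.

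For the direction $\mathfrak{b}' \in [\mathfrak{b}] \Rightarrow M^{\mathfrak{b}}(\lambda-\rho^{\mathfrak{b}}) \cong M^{\mathfrak{b}'}(\lambda-\rho^{\mathfrak{b}'})$, I unpack \cref{d1}: there is a walk $\mathfrak{b} = \mathfrak{b}_0, \ldots, \mathfrak{b}_n = \mathfrak{b}'$ in $OR(\mathfrak{g})$ whose every edge color $\alpha_k$ satisfies $(\lambda, \alpha_k) \neq 0$. By \cref{prop:verma_homomorphism}(2), each $\psi_\lambda^{\mathfrak{b}_k, \mathfrak{b}_{k+1}}$ is an isomorphism, and composing yields the sought isomorphism of Verma modules.

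For the converse, I argue the contrapositive. Assume $\mathfrak{b}' \notin [\mathfrak{b}]$ and fix any walk $W = (\mathfrak{b}_0, \ldots, \mathfrak{b}_n)$ in $OR(\mathfrak{g})$ provided by \cref{2.3.odd_kyoyaku}. Since $\operatorname{End}(M^{\mathfrak{b}}(\lambda-\rho^{\mathfrak{b}})) = k \cdot \operatorname{id}$, the round-trip composition $\Psi_{W^{-1}} \circ \Psi_W$ is a scalar $c_W \cdot \operatorname{id}$. Telescoping via the two-step identity from \cref{prop:verma_homomorphism}(2)(3)---namely, $\psi_\lambda^{\mathfrak{b}, r_i \mathfrak{b}} \circ \psi_\lambda^{r_i \mathfrak{b}, \mathfrak{b}}$ equals $0$ when $(\lambda, \alpha_i^{\mathfrak{b}}) = 0$ and is a nonzero scalar multiple of $\operatorname{id}$ otherwise---reduces $c_W$ step-by-step and shows that $c_W$ vanishes precisely when some edge of $W$ has color outside $D_\lambda$. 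Since $\mathfrak{b}' \notin [\mathfrak{b}]$, every walk has such a bad edge, so $c_W = 0$ for every walk. Now if the Vermas were isomorphic, \cref{5.2.ch_eq}(5) would force $\psi_\lambda^{\mathfrak{b}, \mathfrak{b}'}$ and $\psi_\lambda^{\mathfrak{b}', \mathfrak{b}}$ to be mutually inverse up to scalar, giving $\psi_\lambda^{\mathfrak{b}', \mathfrak{b}} \circ \psi_\lambda^{\mathfrak{b}, \mathfrak{b}'} = c^{*} \cdot \operatorname{id}$ with $c^{*} \neq 0$. Writing $\Psi_W = d_W \psi_\lambda^{\mathfrak{b}, \mathfrak{b}'}$ and $\Psi_{W^{-1}} = e_W \psi_\lambda^{\mathfrak{b}', \mathfrak{b}}$ via one-dimensionality, we obtain $d_W e_W c^{*} = c_W = 0$, forcing $d_W = 0$ or $e_W = 0$ for every walk.

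The main obstacle is then producing a single walk $W$ with both $d_W \neq 0$ and $e_W \neq 0$, contradicting the preceding conclusion. By iterating \cref{prop:verma_homomorphism}(1), $\Psi_W(v^{\mathfrak{b}_0}_{\lambda - \rho^{\mathfrak{b}_0}})$ equals an explicit product of root vectors acting on $v^{\mathfrak{b}_n}_{\lambda - \rho^{\mathfrak{b}_n}}$, and by \cref{5.2.ch_eq}(2) this image lies in the one-dimensional weight space $M^{\mathfrak{b}_n}(\lambda - \rho^{\mathfrak{b}_n})_{\lambda - \rho^{\mathfrak{b}_0}}$. Verifying that some choice of walk yields a nonzero element of this weight space---equivalently, that the corresponding monomial acts nontrivially on the highest weight vector---should follow from the PBW theorem (\cref{2.3.pbw}) applied to a suitable minimally chosen walk, where the root vectors appearing in the monomial can be arranged to lie in $\mathfrak{n}^{\mathfrak{b}_n-}$ and thus provide a nonzero PBW basis element; the symmetric argument for $e_W$ then completes the contradiction.
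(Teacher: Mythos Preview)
Your argument is correct, and in one respect more careful than the paper's own proof: the paper merely asserts that ``any homomorphism corresponding to a walk that includes edges of colors not in $D_\lambda$ is also non-isomorphic'' and tacitly identifies such a walk composition with $\psi_\lambda^{\mathfrak{b},\mathfrak{b}'}$, whereas you correctly isolate the residual obstacle---exhibiting a walk whose composition is nonzero---and point to the PBW theorem applied to a shortest (rainbow) walk. This is exactly the content of \cref{phi_nonzeroPBW}, which the paper establishes immediately afterwards; your description of the root vectors landing in $\mathfrak{n}^{\mathfrak{b}_n-}$ for such a walk is accurate, since along a rainbow walk each edge color is flipped once and never again, hence stays negative for the terminal Borel.

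That said, the round-trip telescoping through $\Psi_{W^{-1}}\circ\Psi_W$ and the auxiliary scalars $c_W,d_W,e_W$ is more machinery than required. The paper's route is shorter: since all the Verma modules in question share the same character and hence the same finite length, if $\Psi_W$ were an isomorphism then each factor would be injective between modules of equal length, hence an isomorphism, contradicting \cref{prop:verma_homomorphism}(3) at any edge of color outside $D_\lambda$. Once you then supply a rainbow walk $W$ with $\Psi_W\neq 0$ via PBW, one-dimensionality of Hom (\cref{5.2.ch_eq}(5)) forces $\Psi_W$ to be a nonzero scalar multiple of $\psi_\lambda^{\mathfrak{b},\mathfrak{b}'}$, so $\psi_\lambda^{\mathfrak{b},\mathfrak{b}'}$ is not an isomorphism, and by one-dimensionality no isomorphism exists. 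Your telescoping argument reproves this conclusion by a longer path; it is valid, but the two-direction nonvanishing $d_W\neq 0$ and $e_W\neq 0$ (which you need separately) already individually suffice.
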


\begin{proof}
The homomorphisms corresponding to edges of colors in \( D_{\lambda} \) are isomorphisms by \cref{prop:verma_homomorphism}, so any homomorphism corresponding to a walk consisting only of edges of colors in \( D_{\lambda} \) is also an isomorphism. Conversely, any homomorphism corresponding to an edge of a color not in \( D_{\lambda} \) is necessarily non-isomorphic by \cref{prop:verma_homomorphism}, and thus any homomorphism corresponding to a walk that includes edges of colors not in \( D_{\lambda} \) is also non-isomorphic.
\end{proof}

\begin{definition}
Let \( \lambda \in \mathfrak{h}^* \) and \( \mathfrak{b} \in B(\mathfrak{g}) \). 

\begin{itemize}
    \item \( \lambda \) is called \emph{\( \mathfrak{b} \)-typical} if there exists no \( \alpha \in \Delta_\otimes \) such that \( (\alpha, \lambda + \rho^{\mathfrak{b}}) = 0 \).
    \item Otherwise, \( \lambda \) is called \emph{\( \mathfrak{b} \)-atypical}.
\end{itemize}

\end{definition}

\begin{proposition}\label{RBtriv}
The following are equivalent for \( \lambda \in \mathfrak{h}^* \):
\begin{enumerate}
    \item \( (\lambda, \beta) \neq 0 \text{ for all } \beta \in \Delta_{\otimes} \setminus \Delta_{\otimes}^{\operatorname{pure+}} \);
    \item \( M^{\mathfrak{b}}(\lambda - \rho^{\mathfrak{b}}) \cong M^{\mathfrak{b}'}(\lambda - \rho^{\mathfrak{b}'}) \text{ for all } \mathfrak{b}, \mathfrak{b}' \in B(\mathfrak{g}) \);
    \item \( M^{\mathfrak{b}}(\lambda - \rho^{\mathfrak{b}}) \in \bigcap_{\mathfrak{b} \in B(\mathfrak{g})} \mathcal{F}\Delta^{\mathfrak{b}} \);
    \item The vertex set of \( OR(\mathfrak{g}, \lambda) \) consists of a single point;
    \item \( L^{\mathfrak{b}}(\lambda - \rho^{\mathfrak{b}}) \cong L^{\mathfrak{b}'}(\lambda - \rho^{\mathfrak{b}'}) \text{ for all } \mathfrak{b}, \mathfrak{b}' \in B(\mathfrak{g}) \).
\end{enumerate}

\end{proposition}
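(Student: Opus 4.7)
The plan is to establish the five equivalences by exploiting the fact that each condition is controlled by how the homomorphism $\psi_{\lambda}^{\mathfrak{b}, r_{\alpha} \mathfrak{b}}$ behaves at individual odd reflections. I would split the argument into three clusters: the direct identification (2) $\iff$ (4), the core propagation equivalence (1) $\iff$ (2), and the two remaining links (1) $\iff$ (5) and (2) $\iff$ (3).

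First, (2) $\iff$ (4) follows at once from \cref{5.2RB}, which identifies the vertex set of $OR(\mathfrak{g}, \lambda)$ with the isomorphism classes of $\{M^{\mathfrak{b}}(\lambda - \rho^{\mathfrak{b}})\}_{\mathfrak{b}}$. For (1) $\Rightarrow$ (2), I would connect any two Borel subalgebras by a chain of odd reflections using \cref{2.3.odd_kyoyaku} and apply item (2) of \cref{prop:verma_homomorphism} at each step, since condition (1) guarantees $(\lambda, \alpha) \neq 0$ for every non-pure isotropic simple root $\alpha$ appearing. For (2) $\Rightarrow$ (1), I would argue contrapositively: given $\alpha \in \Delta_{\otimes} \setminus \Delta^{\operatorname{pure+}}$ with $(\lambda, \alpha) = 0$, any such root becomes simple for some Borel $\mathfrak{b}$ (it must flip sign along some odd reflection sequence because it is not pure-positive), and then item (3) of \cref{prop:verma_homomorphism} gives $\psi \circ \psi = 0$. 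Combined with $\dim \operatorname{Hom} = 1$ from \cref{5.2.ch_eq}, this forces $M^{\mathfrak{b}}(\lambda - \rho^{\mathfrak{b}}) \not\cong M^{r_{\alpha}\mathfrak{b}}(\lambda - \rho^{r_{\alpha}\mathfrak{b}})$.

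The equivalence (1) $\iff$ (5) follows the same pattern applied to simple tops. The direction (1) $\Rightarrow$ (5) uses the simple-module isomorphism already packaged in item (2) of \cref{prop:verma_homomorphism}. For (5) $\Rightarrow$ (1), I would apply item (3) of \cref{prop:verma_homomorphism} not to $\lambda$ but to the shifted weight $\lambda - \rho^{\mathfrak{b}}$; the required orthogonality $(\lambda - \rho^{\mathfrak{b}}, \alpha) = 0$ transfers from $(\lambda, \alpha) = 0$ by \cref{5.2.Weylvec}, which ensures $(\rho^{\mathfrak{b}}, \alpha) = 0$ for isotropic simple $\alpha \in \Pi^{\mathfrak{b}}$. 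This produces $L^{\mathfrak{b}}(\lambda - \rho^{\mathfrak{b}}) \cong L^{r_{\alpha}\mathfrak{b}}(\lambda - \rho^{\mathfrak{b}})$, which cannot coincide with $L^{r_{\alpha}\mathfrak{b}}(\lambda - \rho^{r_{\alpha}\mathfrak{b}})$ because $\rho^{r_{\alpha}\mathfrak{b}} = \rho^{\mathfrak{b}} + \alpha \neq \rho^{\mathfrak{b}}$ and highest weights label simples uniquely within a fixed Borel.

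For (2) $\iff$ (3), the forward direction is immediate: an isomorphism of Verma modules is itself a length-one Verma flag. For (3) $\Rightarrow$ (2), I would invoke linear independence of the characters $\{\operatorname{ch} M^{\mathfrak{b}'}(\mu)\}_{\mu}$, distinguished by their unique maximal weights. A $\mathfrak{b}'$-Verma flag of $M^{\mathfrak{b}}(\lambda - \rho^{\mathfrak{b}})$ expresses its character as a sum $\sum_i \operatorname{ch} M^{\mathfrak{b}'}(\mu_i)$, and matching it with $\operatorname{ch} M^{\mathfrak{b}'}(\lambda - \rho^{\mathfrak{b}'})$ from \cref{5.2.ch_eq} forces the flag to consist of the single term $M^{\mathfrak{b}'}(\lambda - \rho^{\mathfrak{b}'})$. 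The main obstacle will be the (5) $\Rightarrow$ (1) step: one must correctly transfer the orthogonality from $\lambda$ to $\lambda - \rho^{\mathfrak{b}}$ and then distinguish two simples indexed by highest weights $\lambda - \rho^{\mathfrak{b}}$ and $\lambda - \rho^{r_{\alpha}\mathfrak{b}}$ relative to the same Borel, a bookkeeping issue that the orthogonality case of \cref{prop:verma_homomorphism}(3) is precisely designed to resolve.
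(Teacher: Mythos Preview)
Your proposal is correct and follows essentially the same approach as the paper's proof: both rely on \cref{5.2RB} for the link to the quotient graph, on \cref{prop:verma_homomorphism} for the pivot between (1), (2), and (5), and on the character identity in \cref{5.2.ch_eq} for (3) $\Rightarrow$ (2). Your organization differs only superficially (you route through (2) $\Leftrightarrow$ (4) rather than (1) $\Leftrightarrow$ (4), and you derive (5) from (1) rather than from (2)), and you spell out more of the bookkeeping---particularly the shift by $\rho^{\mathfrak{b}}$ in the (5) $\Rightarrow$ (1) step and the character-matching argument for (3) $\Rightarrow$ (2)---than the paper's terse one-line citations do.
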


\begin{proof}
The equivalence of (1) and (4) follows directly from \cref{5.2RB}. The equivalence of (1) and (2) is a consequence of \cref{prop:verma_homomorphism}. The implication (2) \(\Rightarrow\) (3) is trivial by definition, while (3) \(\Rightarrow\) (2) follows from \cref{5.2.ch_eq}. The implication (2) \(\Rightarrow\) (5) is immediate since Verma modules have simple tops. Finally, (5) \(\Rightarrow\) (1) is a direct consequence of \cref{prop:verma_homomorphism}.
\end{proof}

\begin{remark}
The situation described in \cref{RBtriv} can be seen, in a certain sense, as the opposite of being “reflection complete” in the sense of \cite{musson2023weyl}.
\end{remark}

\begin{example}\label{typeIRB}
    Suppose \( \mathfrak{g} \) is of type I. Then \( \Delta_{\otimes}^{\operatorname{pure+}} = \emptyset \), and we have the following equivalence:
    \[
    \lambda - \rho^{\mathfrak{b}} \text{ is } \mathfrak{b}\text{-typical} \quad \Longleftrightarrow \quad \text{The vertex set of } OR(\mathfrak{g}, \lambda) \text{ consists of a single point.}
    \]
\end{example}

\subsection{Semibricks}\label{subsec_semib}

Here, following \cite{enomoto2021schur}, we review the basics of semibricks. 

\begin{definition}
Let \(\mathcal{E}\) be an additive category.
\begin{itemize}
    \item An object \( S \) in \( \mathcal{E} \) is called a \emph{brick} if \( \operatorname{End}_{\mathcal{E}}(S) = k \).
   \item A collection \( S \) of all bricks in \( \mathcal{E} \) is called a \emph{semibrick} if \( \operatorname{Hom}_\mathcal{E}(S, T) = 0 \) holds for every pairwise nonisomorphic elements \( S \) and \( T \) in \( \mathcal{E} \).
\end{itemize}
\end{definition}

From here on, let \( \mathcal{A} \) denote an abelian category.

\begin{itemize}
    \item We denote by \( \operatorname{sim} \mathcal{A} \) the collection of isomorphism classes of simple objects in \( \mathcal{A} \).
    \item For a collection \( \mathcal{C} \) of objects in \( \mathcal{A} \), we denote by \( \operatorname{Filt} \mathcal{C} \) the subcategory of \( \mathcal{A} \) consisting of objects \( X \in \mathcal{A} \) such that there exists a chain \( 0 = X_0 \subset X_1 \subset \cdots \subset X_l = X \) of submodules with \( X_i / X_{i-1} \in \mathcal{C} \) for each \( i \). We call this the filtration closure of \( \mathcal{C} \) in \( \mathcal{A} \).
\end{itemize}

We say that an abelian category \( \mathcal{A} \) is \emph{length} if \( \operatorname{sim} \mathcal{A} \) is a set and \( \mathcal{A} = \operatorname{Filt}(\operatorname{sim} \mathcal{A}) \) holds.

The following is a classical result by Ringel.

\begin{theorem}[\cite{Ringel1976RepresentationsOK} 2.1, \cite{enomoto2021schur} 2.5]\label{semibrick}
Let \( \mathcal{A} \) be an abelian category. Then the assignments \( S \mapsto \operatorname{Filt} S \) and \( \mathcal{W} \mapsto \operatorname{sim} \mathcal{W} \) establish a one-to-one correspondence between the following two classes.
\begin{itemize}
    \item[(1)] The class of semibricks \( S \) in \( \mathcal{A} \).
    \item[(2)] The class of extension closed length subcategories \( \mathcal{W} \) in \( \mathcal{A} \).
\end{itemize}
\end{theorem}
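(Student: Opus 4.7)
The plan is to establish the two directions of the correspondence and then check that the assignments are mutually inverse. The first technical step I would extract is a Schur-type base case: for any two bricks \( B, B' \in S \), a morphism \( B \to B' \) in \( \mathcal{A} \) is zero whenever \( B \not\cong B' \) (by the semibrick hypothesis), and is either zero or an isomorphism whenever \( B \cong B' \) (since \( \operatorname{End}_{\mathcal{A}}(B) = k \) forces every nonzero endomorphism to be invertible). Every subsequent induction rests on this single observation.

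For the forward direction \( S \mapsto \operatorname{Filt} S \), I would show that \( \operatorname{Filt} S \) is an extension-closed abelian length subcategory of \( \mathcal{A} \) with \( \operatorname{sim}(\operatorname{Filt} S) = S \). Extension closure is immediate from the definition of filtration closure. The key claim is that \( \operatorname{Filt} S \) is abelian, i.e.\ closed under kernels, images, and cokernels taken in \( \mathcal{A} \); I would prove this by induction on the sum of filtration lengths of \( X \) and \( Y \) for a morphism \( f \colon X \to Y \), using short exact sequences \( 0 \to X_0 \to X \to X/X_0 \to 0 \) with \( X_0 \in S \), applying the snake lemma, and invoking the Schur-type base case on the brick piece together with the inductive hypothesis on the remainder. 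Once this is in place, each \( B \in S \) is simple in \( \operatorname{Filt} S \), because any proper subobject of \( B \) lying in \( \operatorname{Filt} S \) would produce a nonzero non-invertible morphism between bricks, contradicting the base case; conversely, any simple object of \( \operatorname{Filt} S \) must appear as a subquotient in some filtration by \( S \), hence is isomorphic to an element of \( S \). In particular \( \operatorname{Filt} S = \operatorname{Filt}(\operatorname{sim}(\operatorname{Filt} S)) \), so the subcategory is length.

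For the reverse direction \( \mathcal{W} \mapsto \operatorname{sim} \mathcal{W} \), each simple object \( X \) of the abelian category \( \mathcal{W} \) has a division ring of endomorphisms by Schur's lemma applied inside \( \mathcal{W} \), and since \( k \) is algebraically closed this division ring equals \( k \), so \( X \) is a brick in \( \mathcal{A} \). Schur's lemma likewise gives \( \operatorname{Hom}_{\mathcal{A}}(X, Y) = 0 \) for non-isomorphic simples, whence \( \operatorname{sim} \mathcal{W} \) is a semibrick. That the two assignments are mutually inverse then combines the identity \( \operatorname{sim}(\operatorname{Filt} S) = S \) from the forward direction with the length hypothesis \( \mathcal{W} = \operatorname{Filt}(\operatorname{sim} \mathcal{W}) \).

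The main obstacle will be the proof that \( \operatorname{Filt} S \) is abelian. The delicate point is that a morphism \( f \colon X \to Y \) between filtered objects need not respect any preferred filtrations, so one must carefully reassemble filtrations of \( \ker f \) and \( \operatorname{im} f \) from a chosen filtration of \( X \) together with its induced filtration on the image in \( Y \). The strategy is to peel off one brick layer \( X_0 \) at a time: analyze \( f|_{X_0} \) using the Schur-type base case (which restricts \( \operatorname{im}(f|_{X_0}) \) sharply), form the induced map on \( X/X_0 \), and then glue kernels and images via the snake lemma and the extension closure already established at smaller filtration lengths.
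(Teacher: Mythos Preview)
The paper does not supply its own proof of this theorem; it is quoted as a classical result of Ringel, with citations to \cite{Ringel1976RepresentationsOK} and \cite{enomoto2021schur}. Your outline is the standard argument and is essentially correct, so there is nothing in the paper to compare it against.

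One point to tighten: in the reverse direction you write that Schur's lemma gives a division ring and ``since $k$ is algebraically closed this division ring equals $k$''. Algebraic closedness alone is not enough; you need $\operatorname{End}_{\mathcal{A}}(X)$ to be algebraic over $k$, which follows for instance if $\mathcal{A}$ is $k$-linear with finite-dimensional $\operatorname{Hom}$-spaces. This hypothesis is implicit in the paper's ambient setting (category $\mathcal{O}$) and in its choice to define bricks by $\operatorname{End} = k$ rather than by $\operatorname{End}$ being a division ring; the latter is what Ringel and Enomoto actually use, and with that definition the theorem holds in any abelian category without extra hypotheses. So either add a Hom-finiteness assumption or switch to the division-ring definition of brick.

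For the forward direction, your inductive plan is right, but the auxiliary lemma you will need to isolate explicitly is: for $B \in S$ and $Y \in \operatorname{Filt} S$, every nonzero morphism $B \to Y$ is a monomorphism with cokernel again in $\operatorname{Filt} S$ (and dually for $Y \to B$). This is what lets you peel off a brick layer from $X$ and still have the quotient $Y/f(B)$ lie in $\operatorname{Filt} S$ for the next inductive step; the lemma itself is proved by induction on the filtration length of $Y$, with the Schur-type base case at the bottom.
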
 

For example, a Serre subcategory is the only extension closed abelian subcategory that can be obtained by applying \( \operatorname{Filt} \) to a subclass of \( \operatorname{sim} \mathcal{A} \).

\begin{lemma}\label{h.w.semibrick}
Fix \( \mathfrak{b} \in \mathfrak{B}(\mathfrak{g}) \). Consider the collection of \( \mathfrak{b} \)-highest weight modules \( \{ S(\lambda) \}_{\lambda \in \Lambda} \), where the \( \mathfrak{b} \)-highest weight of \( S(\lambda) \) is \( \lambda \). If for any pair \( \lambda, \lambda' \in \Lambda \), \( S(\lambda)_{\lambda'} = 0 \), then \( \{ S(\lambda) \}_{\lambda \in \Lambda} \) is a semibrick.
\end{lemma}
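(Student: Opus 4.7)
The plan is to verify directly the two conditions defining a semibrick: each $S(\lambda)$ is a brick, and $\operatorname{Hom}(S(\lambda), S(\lambda')) = 0$ for distinct $\lambda, \lambda' \in \Lambda$. Both conditions will follow from a single structural observation about highest weight modules, so the argument will be uniform and short.

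The key observation I would invoke is that any $\mathfrak{b}$-highest weight module $S(\lambda)$ is cyclic, generated by its highest weight vector $v_\lambda$, and $\dim S(\lambda)_\lambda = 1$ (since $S(\lambda)$ is a quotient of $M^{\mathfrak{b}}(\lambda)$). Consequently, any $\mathfrak{g}$-homomorphism $\phi : S(\lambda) \to S(\mu)$ is completely determined by $\phi(v_\lambda)$, which, since homomorphisms preserve weight spaces, must lie in $S(\mu)_\lambda$.

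Given this, I would argue as follows. For the Hom-vanishing, take distinct $\lambda, \lambda' \in \Lambda$ and $\phi \in \operatorname{Hom}(S(\lambda), S(\lambda'))$. Then $\phi(v_\lambda) \in S(\lambda')_\lambda = 0$ by the hypothesis applied to the pair $(\lambda', \lambda)$, so $\phi(v_\lambda) = 0$ and hence $\phi = 0$ by cyclic generation. For the brick property, taking $\mu = \lambda$, $\phi(v_\lambda)$ lies in $S(\lambda)_\lambda = k \cdot v_\lambda$, so $\phi(v_\lambda) = c\, v_\lambda$ for a unique $c \in k$, and cyclic generation yields $\phi = c \cdot \operatorname{id}$; thus $\operatorname{End}(S(\lambda)) = k$.

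There is no serious obstacle here: the entire argument is a direct consequence of the defining properties of highest weight modules together with the weight-space hypothesis. The only mild interpretive point is that the hypothesis ``$S(\lambda)_{\lambda'} = 0$ for any pair $\lambda, \lambda' \in \Lambda$'' must be read as applying to distinct pairs, since for $\lambda = \lambda'$ one has $\dim S(\lambda)_\lambda = 1 \neq 0$.
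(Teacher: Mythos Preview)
Your proof is correct and follows essentially the same approach as the paper's: both arguments use that a $\mathfrak{b}$-highest weight module is cyclically generated by its highest weight vector, that homomorphisms preserve weight spaces, and then read off the brick and Hom-orthogonality conditions directly from the hypothesis. Your version is a bit more explicit (spelling out $\dim S(\lambda)_\lambda = 1$ and the interpretive point about distinct pairs), but the underlying reasoning is identical.
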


\begin{proof}
Since endomorphisms of the \( \mathfrak{b} \)-highest weight module send the \( \mathfrak{b} \)-highest weight vector to a \( \mathfrak{b} \)-highest weight vector (or 0), it is a brick. If \( S(\lambda)_{\lambda'} = 0 \), then by the property that homomorphisms preserve weights and any homomorphism sending a \( \mathfrak{b} \)-highest weight vector to zero is zero, we have \( \operatorname{Hom}(S(\lambda'), S(\lambda)) = 0 \). This proves the hom-orthogonality.
\end{proof}

\begin{example}\label{M3}

Let \(\alpha\) be an isotropic \(\mathfrak{b}\)-simple root orthogonal to the weight \(\lambda\), the following forms a semibrick :  
\[
X(\mathfrak{b}, \lambda, \alpha) := \{\operatorname{Im} \psi_{\lambda+\rho^{\mathfrak{b}}}^{r_{\alpha} \mathfrak{b}, \mathfrak{b}},\operatorname{Im} \psi_{\lambda+\rho^{\mathfrak{b}}}^{ \mathfrak{b},r_{\alpha} \mathfrak{b}}  \}.
\]

Let \( \operatorname{Filt}X(\mathfrak{b}, \lambda, \alpha)  \) denote the filtration closure of \( X(\mathfrak{b}, \lambda, \alpha)  \) in the category \( \mathcal{W} \).

The corresponding projective covers of \(  \operatorname{Im} \psi_{\lambda+\rho^{\mathfrak{b}}}^{r_{\alpha} \mathfrak{b}, \mathfrak{b}},\operatorname{Im} \psi_{\lambda+\rho^{\mathfrak{b}}}^{ \mathfrak{b},r_{\alpha} \mathfrak{b}}\) in \( \operatorname{Filt} Z(\mathfrak{b}, \lambda, \alpha^{ \mathfrak{b}}_1,\alpha^{ \mathfrak{b}}_3) \) are 
\[
M^{r_{\alpha}\mathfrak{b}}(\lambda - \alpha), \quad M^{\mathfrak{b}}(\lambda )
\]
respectively (these are Ext-orthogonal to these bricks by \cref{7.2.kiso}).

By Morita theory, it can be verified that \( \operatorname{Filt}X(\mathfrak{b}, \lambda, \alpha)  \) is equivalent to the category of finite-dimensional modules over a finite-dimensional algebra defined by the following quiver and relations.

\begin{center}
\begin{tikzpicture}[->, thick, shorten >=1pt, node distance=2cm]

\node (A) at (0,0) {1};
\node (B) [right of=A] {2};

\draw[->] (A) to[bend left=20] node[above] {$a$} (B);
\draw[->] (B) to[bend left=20] node[below] {$b$} (A);

\end{tikzpicture}
\end{center}

The relations are:
\[
ab = ba  = 0.
\]

This finite-dimensional algebra is known as the preprojective algebra of type \( A_2 \).
\end{example}

\begin{remark}
Unless otherwise stated, we take $\mathrm{Filt}$ to be computed in the category $\mathcal{W}$. This assumption is essential, as $\mathcal{W}$ is not an extension-closed subcategory of the category of $\mathfrak{g}$-supermodules (see,  Exercise 3.1 in \cite{humphreys2021representations}).
On the other hand, category $\mathcal{O}$ is a Serre subcategory of category $\mathcal{W}$, and more strongly, it is extension-full in  category $\mathcal{W}$\cite{coulembier2015homological,delorme1980extensions}.
\end{remark}

\section{Semibricks realizing the infinite zigzag algebra} \label{sec:semibricks}
We retain the setting of   \cref{sec:changing_Borel}.
\subsection{Adjusted Borel subalgebras}

\begin{definition}
Let \( \Delta^{\mathfrak{a}} \subseteq \Delta^{\mathfrak{b}}_{\overline{1}} \), and define 
\[
\mathfrak{a} := \bigoplus_{\alpha \in \Delta^{+}_{\overline{0}}\sqcup \Delta^{\mathfrak{a}}} \mathfrak{g}_{\alpha}.
\]

Let \( \mathfrak{a} = \mathfrak{h} \oplus \mathfrak{n}_{\mathfrak{a}} \).

For \( \lambda \in \mathfrak{h}^{*} \), we call the above \( \mathfrak{a} \) a \(\lambda\)-adjusted Borel subalgebra if the following defines a well-defined one-dimensional representation of \( \mathfrak{a} \):
\[
h v_{\lambda}^{\mathfrak{a}} = \lambda(h) v_{\lambda}^{\mathfrak{a}} \quad \text{for any } h \in \mathfrak{h}, \quad \text{and} \quad n_{\mathfrak{a}} v_{\lambda}^{\mathfrak{a}} = 0.
\]
We denote this representation by 
\(
k_{\lambda}^{\mathfrak{a}} = k v_{\lambda}^{\mathfrak{a}}.
\)

\end{definition}

\begin{definition}

    Let \( \mathfrak{a} \) denote a \(\lambda\)-adjusted Borel subalgebra. For \( \lambda \in \mathfrak{h}^{*} \), define
    \[
        M^{\mathfrak{a}}(\lambda) := U(\mathfrak{g}) \otimes_{U(\mathfrak{a})} k_{\lambda}^{\mathfrak{a}}.
    \]
    It is easy to see that \( M^{\mathfrak{a}}(\lambda) \) belongs to \( \mathcal{W} \).
We will simply write \( x v_{\lambda}^{\mathfrak{a}} \) instead of \( x \otimes v_{\lambda}^{\mathfrak{a}} \).
\end{definition}

\begin{example}
\begin{enumerate}
\item If \( [\mathfrak{n}_{\mathfrak{a}}, \mathfrak{n}_{\mathfrak{a}}] \cap \mathfrak{h} = 0 \), then \( \mathfrak{a} \) is a \(\lambda\)-adjusted Borel subalgebra for any \( \lambda \in \mathfrak{h}^* \).
  \item If \( \mathfrak{a} = \mathfrak{b}_{\overline{0}} \), then \( M^{\mathfrak{a}}(\lambda) = \operatorname{Ind}_{\mathfrak{g}_{\overline{0}}}^{\mathfrak{g}} M_{\overline{0}}(\lambda) \), where \( \Delta^{\mathfrak{a}} = \varnothing \).
    \item If \( \mathfrak{a} = \mathfrak{b} \), then \( M^{\mathfrak{a}}(\lambda) = M^{\mathfrak{b}}(\lambda) \), where \( \Delta^{\mathfrak{a}} = \Delta_{\overline{1}}^{\mathfrak{b}^{+}} \).
    \item The intersection of \(\lambda\)-adjusted Borel subalgebras is a \(\lambda\)-adjusted Borel subalgebra.
    \item Let \( \mathfrak{g} = \mathfrak{gl}(1|1) \) and let \( \alpha \) be an odd root. Then, \( \mathfrak{g} \) itself is a \( \lambda \)-adjusted Borel subalgebra if and only if \( \lambda \in k\alpha \).
\end{enumerate}
\end{example}

Below, we summarize fundamental results derived from the PBW theorem and Frobenius reciprocity.

\begin{proposition}\label{ch_neo}
    
   Let \( \mathfrak{a} \) denote a \(\lambda\)-adjusted Borel subalgebra. The character of \( M^{\mathfrak{a}}(\lambda) \) is given by
    \[
        \operatorname{ch} M^{\mathfrak{a}}(\lambda) = e^{\lambda} \frac{\prod_{\beta \in \Delta_{\overline{1}} \setminus \Delta^{\mathfrak{a}}} (1 + e^{\beta})}{\prod_{\gamma \in \Delta_{\overline{0}}^{+}} (1 - e^{-\gamma})}.
    \]
    In particular,  we have \( \lambda = \lambda' \iff M^{\mathfrak{a}}(\lambda) \simeq M^{\mathfrak{a}}(\lambda') \).
\end{proposition}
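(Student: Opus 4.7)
The plan is to apply the PBW theorem (\cref{2.3.pbw}) to the vector superspace decomposition $\mathfrak{g} = \mathfrak{n}^{-\mathfrak{a}} \oplus \mathfrak{a}$, where
$$\mathfrak{n}^{-\mathfrak{a}} := \bigoplus_{\alpha \in \Delta_{\overline{0}}^- \sqcup (\Delta_{\overline{1}} \setminus \Delta^{\mathfrak{a}})} \mathfrak{g}_\alpha,$$
read off the character of $M^{\mathfrak{a}}(\lambda)$ from the resulting factorization, and then derive the equivalence from the explicit formula.

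First I would fix a $\mathbb{Z}/2\mathbb{Z}$-homogeneous ordered basis of $\mathfrak{g}$ consisting of root vectors for $\mathfrak{n}^{-\mathfrak{a}}$ placed before the root vectors for $\mathfrak{a}$, and invoke \cref{2.3.pbw} to obtain an $\mathfrak{h}$-weight graded isomorphism of right $U(\mathfrak{a})$-modules
$$U(\mathfrak{g}) \;\cong\; V \otimes_{k} U(\mathfrak{a}),$$
where $V \subseteq U(\mathfrak{g})$ is the subspace spanned by ordered PBW monomials in the generators of $\mathfrak{n}^{-\mathfrak{a}}$ alone (with even-type exponents arbitrary and odd-type exponents in $\{0,1\}$). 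The fact that $\mathfrak{n}^{-\mathfrak{a}}$ need not be closed under the bracket is immaterial, since only the vector space / $\mathfrak{h}$-weight factorization is needed. Tensoring over $U(\mathfrak{a})$ with $k_\lambda^{\mathfrak{a}}$ then produces an isomorphism of $\mathfrak{h}$-modules $M^{\mathfrak{a}}(\lambda) \cong V \otimes_{k} k_\lambda^{\mathfrak{a}}$.

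Next I would compute $\operatorname{ch} V$ directly from the PBW basis: each even generator of weight $-\gamma$ with $\gamma \in \Delta_{\overline{0}}^+$ may be raised to arbitrary non-negative powers and contributes the geometric series $(1 - e^{-\gamma})^{-1}$, while each odd generator of weight $\beta \in \Delta_{\overline{1}} \setminus \Delta^{\mathfrak{a}}$ is restricted to exponent $0$ or $1$ and contributes the factor $(1 + e^{\beta})$. Multiplying by $e^{\lambda}$, the weight of $v_{\lambda}^{\mathfrak{a}}$, yields the claimed character formula.

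For the equivalence, the implication ($\Rightarrow$) is trivial. For ($\Leftarrow$), isomorphic modules share the same character, and since $\operatorname{ch} M^{\mathfrak{a}}(\lambda) = e^\lambda \cdot \operatorname{ch} V$ with $\operatorname{ch} V$ a $\lambda$-independent element whose coefficient of $e^0$ equals $1$, the factor $\operatorname{ch} V$ is a unit in a suitable completion of the group algebra $\mathbb{Z}[\mathfrak{h}^*]$; cancellation gives $e^{\lambda} = e^{\lambda'}$ and hence $\lambda = \lambda'$. No substantive obstacle arises here—the entire argument is a routine PBW/character computation—so the only point meriting a brief comment is the one already flagged, namely that $V$ is merely a graded subspace rather than a subalgebra, which is handled by the choice of basis ordering.
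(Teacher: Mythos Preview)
Your proposal is correct and follows exactly the approach the paper indicates: the paper does not spell out a proof but merely notes that this proposition is a ``fundamental result derived from the PBW theorem and Frobenius reciprocity,'' and your argument is precisely the standard PBW/character computation that fills in those details. There is nothing to add.
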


\begin{proposition}\label{fuhen}
    Let \( \mathfrak{a} \) denote a \(\lambda\)-adjusted Borel subalgebra. Let \( M \in \mathcal{W} \) and \( v \in M_\lambda \) such that \( \mathfrak{a} v = 0 \). Then there exists a unique homomorphism 
    \[
M^{\mathfrak{a}}(\lambda) \to M \quad \text{sending} \quad v_{\lambda}^{\mathfrak{a}} \mapsto v.
\]

In particular, if \( \mathfrak{a}' \subseteq \mathfrak{a} \) is an inclusion of \(\lambda\)-adjusted Borel subalgebras, then \( M^{\mathfrak{a}}(\lambda) \) is a quotient of \( M^{\mathfrak{a}'}(\lambda) \).
  
\end{proposition}

\begin{example}
Let \( \mathfrak{g} = \mathfrak{gl}(2|1) \).
We have,  
\[
M_{\bar{0}}(0) = P_{\bar{0}}(0), \text{ so }
\operatorname{Ind}_{\mathfrak{g}_{\bar{0}}}^{\mathfrak{g}} P_{\bar{0}}(0) = \operatorname{Ind}_{\mathfrak{g}_{\bar{0}}}^{\mathfrak{g}} M_{\bar{0}}(0).
\]
which has a Verma flag whose constituent is:
\[
\left\{ M^{\emptyset}(0), M^{\emptyset}(\varepsilon_1 - \delta_1), M^{\emptyset}(\varepsilon_2 - \delta_1), M^{\emptyset}(\varepsilon_1 + \varepsilon_2 - 2\delta_1) \right\}.
\]

Among these four modules, only \( M^{\emptyset}(\varepsilon_1 - \delta_1) \) belongs to a typical block, and
\[
M^{\emptyset}(\varepsilon_1 - \delta_1) = P^{\emptyset}(\varepsilon_1 - \delta_1).
\]
We can write \( P^{\emptyset}(0) = P^{(1)}(0) = P^{(1^2)}(0) =: P(0) \), and thus
\[
\operatorname{Ind}_{\mathfrak{g}_{\bar{0}}}^{\mathfrak{g}} P_{\bar{0}}(0) = P(0) \oplus P^{\emptyset}(\varepsilon_1 - \delta_1).
\]

There exists an exact sequence:
\[
0 \longrightarrow M^{\emptyset}(\varepsilon_1 + \varepsilon_2 - 2\delta_1) \longrightarrow P(0) \longrightarrow M^{\emptyset \cap (1)}(0) \longrightarrow 0,
\]
and similarly,
\[
0 \longrightarrow M^{(1^2)}(-\varepsilon_1 - \varepsilon_2 + 2\delta_1) \longrightarrow P(0) \longrightarrow M^{(1) \cap (1^2)}(0) \longrightarrow 0.
\]

\[
\xymatrix@C=6pc@R=4pc{
M^{\emptyset}(0) 
    \ar[r] \ar@{<->}[d] &
M^{\emptyset}(\varepsilon_2 - \delta_1) 
    \ar[r] \ar@{<->}[d] &
M^{\emptyset}(\varepsilon_1 + \varepsilon_2 - 2\delta_1) 
    \ar@{<->}[d]^{\cong} \\
M^{(1)}(-\varepsilon_2 + \delta_1) 
    \ar@{<->}[d]^{\cong} &
M^{(1)}(0) 
    \ar[r]  \ar[l] \ar@{<->}[d] &
M^{(1)}(\varepsilon_1 - \delta_1) 
    \ar@{<->}[d] \\
M^{(1^2)}(-\varepsilon_1 - \varepsilon_2 + 2\delta_1) 
    &
M^{(1^2)}(-\varepsilon_1 + \delta_1) 
    \ar[l] &
M^{(1^2)}(0)  \ar[l] 
}
\]
\begin{small}
Arrows are drawn between the Verma modules whose highest weight is \( 0 \) and those with the same character, indicating the existence of a nonzero homomorphism between them.
\end{small}
\end{example}

Hereafter, when we write \( \operatorname{Ext}^1 \), we mean \( \operatorname{Ext}^1_{\mathcal{O}} \) unless otherwise specified.
The following Lemma is a generalization of \cite[Propsition 3.1]{humphreys2021representations} and serves as a fundamental result in this subsection.

\begin{lemma}\label{7.2.kiso}
    Let \( M \in \mathcal{W} \), and let \( \mathfrak{a} \) be a \( \lambda \)-adjusted Borel subalgebra. If \( M_{\lambda + \beta} = 0 \) for all \( \beta \in \Delta^{+}_{\overline{0}} \sqcup \Delta^{\mathfrak{a}} \), then
    \[
        \operatorname{Ext}^1(M, M^{\mathfrak{a}}(\lambda)) = 0.
    \]
\end{lemma}

\begin{proof}
  Suppose we have a short exact sequence in \(\mathcal{O}\):
\[
    0 \to M \to E \xrightarrow{\psi} M^{\mathfrak{a}}(\lambda) \to 0.
\]
A preimage \( v \) of \( v_{\lambda}^{\mathfrak{a}} \) in \( E \) also satisfies \( \mathfrak{a} v = 0 \) by assumption. Therefore, by the universal property of \( M^{\mathfrak{a}}(\lambda) \), the sequence splits.
\end{proof}

\subsection{Hypercubic decomposition of Verma modules}

\label{subsec_6.3}
In this subsection, we focus on \(\lambda\)-adjusted Borel subalgebras, which are important for our study.

\begin{definition}\label{hq.c_def}
Let \( \mathfrak{b} \in \mathfrak{B}(\mathfrak{g}) \) and \( \lambda \in \mathfrak{h}^* \). Let \( \Pi^{\mathfrak{b}} = \{ \alpha_i \}_{i \in I} \). A subset \( J \subset I \) is called a \( (\mathfrak{b}, \lambda) \)-hypercubic collection if \( \{ \alpha_j \}_{j \in J} \) is a collection of distinct isotropic \( \mathfrak{b} \)-simple roots that are pairwise orthogonal and orthogonal to \( \lambda \).

Let \( \Sigma^{\mathfrak{b}}_ J \) denote the sum of the simple roots indexed by \( J \). Let \( E_J^{\mathfrak{b}} \) (resp. \( F_J^{\mathfrak{b}} \)) denote the product of the \( \mathfrak{b} \)-positive (resp. \( \mathfrak{b} \)-negative) root vectors indexed by \( J \). This product is independent of the order of multiplication.

Simple roots that are orthogonal to each other are unaffected by mutual odd reflections. Therefore, the Borel subalgebra  \( r_J \mathfrak{b} \) determined by applying the sequence of odd reflections indexed by \( J \) to \( \mathfrak{b} \) is well-defined and independent of the order of the indices.

\end{definition}

\begin{lemma}\label{hyq_lemma}
Let \( J \) be a \( (\mathfrak{b}, \lambda) \)-hypercubic collection, and let \( J' \subseteq J \). Then the following hold:
\begin{enumerate}
    \item \( J' \) is a \( (\mathfrak{b}, \lambda) \)-hypercubic collection;
    \item \( J \) is a \( (\mathfrak{b}, \lambda \pm \Sigma^{\mathfrak{b}}_{J'}) \)-hypercubic collection;
    \item \( \mathfrak{b} \cap r_{J'} \mathfrak{b} \cap r_J \mathfrak{b} = \mathfrak{b} \cap r_J \mathfrak{b} \subseteq \mathfrak{g} \) is a \(\lambda\)-adjusted Borel subalgebra;
    \item \( \mathfrak{b} + r_{J'} \mathfrak{b} + r_J \mathfrak{b} = \mathfrak{b} + r_J \mathfrak{b} \subseteq \mathfrak{g} \) is a \(\lambda\)-adjusted Borel subalgebra;
    \item \( E_J^{\mathfrak{b}} = F_J^{r_J \mathfrak{b}} \quad \text{and} \quad \Sigma_J^{\mathfrak{b}} = -\Sigma_J^{r_J \mathfrak{b}} \);
    \item \( \operatorname{ch} M^{\mathfrak{b}}(\lambda) = \operatorname{ch} M^{r_J \mathfrak{b}}(\lambda - \Sigma^{\mathfrak{b}}_J) \);
    \item \( \dim \operatorname{Hom} \big( M^{\mathfrak{b}}(\lambda), M^{r_J \mathfrak{b}}(\lambda - \Sigma^{\mathfrak{b}}_J) \big) = 1 \), and the image of this nonzero homomorphism is isomorphic to \( M^{\mathfrak{b} + r_J \mathfrak{b}}(\lambda) \);
    \item \(
    \dim \operatorname{Hom}(M^{\mathfrak{b}}(\lambda), M^{\mathfrak{b}}(\lambda + \Sigma^{\mathfrak{b}}_J)) = 1,
    \)
    and the image of this nonzero homomorphism is isomorphic to \( M^{\mathfrak{b} + r_J \mathfrak{b}}(\lambda) \).
\end{enumerate}
\end{lemma}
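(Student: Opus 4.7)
The approach is to dispatch the eight items in a logical order: the combinatorial claims (1), (2), (5) first, then the Borel-level statements (3), (4) and the character equation (6), and finally the Hom-space and image statements (7), (8). Items (1) and (2) are immediate from the definition of a hypercubic collection together with bilinearity of $(\,,\,)$; for (2), note $(\alpha_j, \lambda \pm \Sigma^{\mathfrak{b}}_{J'}) = (\alpha_j, \lambda) \pm \sum_{j' \in J'}(\alpha_j, \alpha_{j'}) = 0$. Item (5) follows from the odd reflection formula \cref{2.3.oddref}: since the $\alpha_j$'s are mutually orthogonal, $r_{j'}$ fixes $\alpha_j$ for all distinct $j, j' \in J$, hence $r_J(\alpha_j) = -\alpha_j$, so $e^{\mathfrak{b}}_j$ is a negative root vector for $r_J\mathfrak{b}$; taking products and sums over $J$ yields the two identities.

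For (3) and (4), orthogonality of the $\alpha_j$'s forces the reflections $r_j$ to pairwise commute, so $r_J \mathfrak{b}$ is well-defined and its simple roots are those of $\mathfrak{b}$ with the signs of $\{\alpha_j\}_{j \in J}$ flipped. The equalities of intersections and sums then reduce to a root-by-root comparison, since $r_{J'} \mathfrak{b}$ sits between $\mathfrak{b}$ and $r_J \mathfrak{b}$ at the level of positive root systems. For the $\lambda$-adjusted property of the sum, the potentially problematic commutators are $[e^{\mathfrak{b}}_j, f^{\mathfrak{b}}_j] \in \mathfrak{h}$, which act on the one-dimensional candidate module $k v^{\mathfrak{a}}_\lambda$ by $(\lambda, \alpha_j) = 0$. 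For (6), I plan to induct on $|J|$: the base case is the single-reflection character equation preceding \cref{5.2.ch_eq}, and the inductive step uses item (2) to guarantee that $J \setminus \{j\}$ remains hypercubic for $(r_j \mathfrak{b}, \lambda - \alpha^{\mathfrak{b}}_j)$.

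For (7) and (8), once (6) is in hand, one-dimensionality of the Hom space follows from the argument underlying \cref{5.2.ch_eq} as soon as a nonzero homomorphism is exhibited. For (7) the nonzero map is built by iterating \cref{prop:verma_homomorphism} along a walk $\mathfrak{b}, r_{j_1}\mathfrak{b}, \ldots, r_J\mathfrak{b}$, with nonvanishing of the composition guaranteed by \cref{phi_nonzeroPBW}. For (8), I would define the map directly by $v^{\mathfrak{b}}_\lambda \mapsto F^{\mathfrak{b}}_J v^{\mathfrak{b}}_{\lambda + \Sigma^{\mathfrak{b}}_J}$ and verify it respects the defining relations: $e^{\mathfrak{b}}_i$ for $i \notin J$ (super)commutes through $F^{\mathfrak{b}}_J$ and kills the highest weight vector, while for $i \in J$ the leftover $[e^{\mathfrak{b}}_i, f^{\mathfrak{b}}_i]$ acts by $(\lambda + \Sigma^{\mathfrak{b}}_J, \alpha_i) = 0$. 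To identify each image with $M^{\mathfrak{b} + r_J \mathfrak{b}}(\lambda)$, I would apply the universal property \cref{fuhen} to the image vector (of weight $\lambda$) once it is shown to be annihilated by $\mathfrak{n}_{\mathfrak{a}}$ for $\mathfrak{a} = \mathfrak{b} + r_J \mathfrak{b}$, and then match characters via \cref{ch_neo}.

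The hardest part will be this last step --- verifying that the image generator is a genuine $\mathfrak{a}$-highest weight vector. It is automatically killed by $\mathfrak{n}^{\mathfrak{b}+}$, but to see it is killed by $\mathfrak{n}^{r_J \mathfrak{b}+}$ requires commuting each $e^{r_J \mathfrak{b}}_j = f^{\mathfrak{b}}_j$ past the monomial $F^{\mathfrak{b}}_J$ (or $E^{\mathfrak{b}}_J$ in (7)), using orthogonality of the $\alpha_j$'s, the isotropy identity $(f^{\mathfrak{b}}_j)^2 = 0$, and the hypotheses $(\lambda, \alpha_j) = 0$; a PBW bookkeeping via \cref{2.3.pbw} then delivers the vanishing. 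With this established, the character identity from \cref{ch_neo} forces the surjection $M^{\mathfrak{a}}(\lambda) \twoheadrightarrow \operatorname{Im}$ to be an isomorphism, completing the argument.
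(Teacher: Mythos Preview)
Your proposal is correct and follows essentially the same route as the paper, which disposes of (1)--(5) as ``clear from orthogonality,'' cites the literature for (6), and handles (7) and (8) by identifying the image generator, applying \cref{fuhen} to get a surjection from $M^{\mathfrak{b}+r_J\mathfrak{b}}(\lambda)$, and matching characters via \cref{ch_neo} and \cref{2.3.pbw}.

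One small correction: for (8), the character identity (6) does not bound $\dim\operatorname{Hom}$, since $M^{\mathfrak{b}}(\lambda)$ and $M^{\mathfrak{b}}(\lambda+\Sigma^{\mathfrak{b}}_J)$ do not share a character. What you need instead is the direct PBW count $\dim M^{\mathfrak{b}}(\lambda+\Sigma^{\mathfrak{b}}_J)_\lambda = 1$, which the paper obtains by observing that $\bigl\{D\subset\Delta^{\mathfrak{b}+} : \sum_{\beta\in D}\beta = \Sigma^{\mathfrak{b}}_J\bigr\}$ is a singleton (namely $\{\alpha_j\}_{j\in J}$). With that in hand, your explicit construction of the map and identification of its image go through exactly as you describe.
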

\begin{proof}
Statements (1) through (5) are clear from orthogonality.

For (6), it is well known, as also stated in \cite[Lemma 6.9]{cheng2015brundan}.

For (7), since the characters are equal by (6), it is clear that the dimension of the Hom space is 1.

The image of this homomorphism is a module generated by \( E_J^{\mathfrak{b}} v_{\lambda - \Sigma^{\mathfrak{b}}_J}^{r_J \mathfrak{b}} \), so the claim follows from \cref{2.3.pbw}, \cref{ch_neo} and \cref{fuhen}.

For (8), since we have
\(
\# \bigl\{ D \subset \Delta^{\mathfrak{b}+} \mid \sum_{\beta \in D} \beta = \Sigma_J^{\mathfrak{b}} \bigr\} = 1,\) it follows that \(\dim M^{\mathfrak{b}}(\lambda + \Sigma_J^{\mathfrak{b}})_{\lambda} = 1.
\)
Thus, the Hom space is one-dimensional.

The image of the nonzero homomorphism is a submodule of \( M^{\mathfrak{b}}(\lambda + \Sigma^{\mathfrak{b}}_J) \) generated by \( F_J^{\mathfrak{b}} v_{\lambda + \Sigma^{\mathfrak{b}}_J}^{\mathfrak{b}} \), so the claim follows from \cref{2.3.pbw}, \cref{ch_neo} and \cref{fuhen}.
\end{proof}

\begin{lemma}\label{Mc_indec}
    Let \( J \) be a \( (\mathfrak{b}, \lambda) \)-hypercubic collection. Then, \( M^{\mathfrak{b} \cap r_J \mathfrak{b}}(\lambda) \) has a simple top \( L^{\mathfrak{b}}(\lambda) \). In particular, it is indecomposable.
\end{lemma}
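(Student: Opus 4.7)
The plan is to show, by induction on $|J|$, that every simple quotient of $M^{\mathfrak{b}\cap r_J\mathfrak{b}}(\lambda)$ is isomorphic to $L^{\mathfrak{b}}(\lambda)$. Combined with the canonical surjection $M^{\mathfrak{b}\cap r_J\mathfrak{b}}(\lambda)\twoheadrightarrow M^{\mathfrak{b}}(\lambda)\twoheadrightarrow L^{\mathfrak{b}}(\lambda)$ arising from $\mathfrak{b}\cap r_J\mathfrak{b}\subseteq\mathfrak{b}$ via \cref{fuhen}, this establishes that $L^{\mathfrak{b}}(\lambda)$ is the simple top; indecomposability then follows from the standard fact that a finite-length module with simple top has local endomorphism algebra. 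The base case $|J|=0$ is immediate since $\mathfrak{b}\cap r_{\emptyset}\mathfrak{b}=\mathfrak{b}$ and $M^{\mathfrak{b}}(\lambda)$ is a highest-weight module.

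For the inductive step, fix a surjection $\phi\colon M^{\mathfrak{b}\cap r_J\mathfrak{b}}(\lambda)\twoheadrightarrow L$ to a simple module, put $v:=\phi(v_\lambda^{\mathfrak{b}\cap r_J\mathfrak{b}})\in L_\lambda$, and select $j\in J$ with $J':=J\setminus\{j\}$. It suffices to prove $e_j v=0$, for then $\phi$ factors through $M^{\mathfrak{b}\cap r_{J'}\mathfrak{b}}(\lambda)$, which has simple top $L^{\mathfrak{b}}(\lambda)$ by the induction hypothesis. Assume for contradiction $e_j v\neq 0$. The key subclaim is that $e_j v$ is annihilated by $\mathfrak{n}_{\mathfrak{b}\cap r_{J'}\mathfrak{b}}=\mathfrak{n}_{\mathfrak{b}\cap r_J\mathfrak{b}}\oplus\mathfrak{g}_{\alpha_j}$: annihilation by $\mathfrak{g}_{\alpha_j}$ comes from $e_j^2=0$ (isotropy), while for $\beta\in\Delta^{\mathfrak{b}+}\setminus\{\alpha_k\}_{k\in J}$ one uses $e_\beta v=0$ and the supercommutator identity to get $e_\beta e_j v=[e_\beta,e_j]\,v$, which is a scalar multiple of $e_{\beta+\alpha_j}v$ whenever $\beta+\alpha_j$ is a root. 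Since differences of distinct simple roots are never roots in a basic Lie superalgebra (all coefficients of a root in the simple basis share a single sign), $\beta+\alpha_j\neq\alpha_l$ for any $l\in J$, so $\beta+\alpha_j\in\Delta^{\mathfrak{b}+}\setminus\{\alpha_k\}_{k\in J}$ and therefore $e_{\beta+\alpha_j}v=0$.

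Granting the subclaim, \cref{fuhen} furnishes a nonzero map $M^{\mathfrak{b}\cap r_{J'}\mathfrak{b}}(\lambda+\alpha_j)\to L$ sending $v_{\lambda+\alpha_j}\mapsto e_j v$; by \cref{hyq_lemma}(2), $J'$ is $(\mathfrak{b},\lambda+\alpha_j)$-hypercubic, so the induction hypothesis at the shifted weight forces $L\cong L^{\mathfrak{b}}(\lambda+\alpha_j)$. Since $\alpha_j$ is $\mathfrak{b}$-simple, the weight space $L^{\mathfrak{b}}(\lambda+\alpha_j)_\lambda$ is at most one-dimensional and (being nonzero because $v\in L_\lambda$ is nonzero) is spanned by the image of $f_j v_{\lambda+\alpha_j}^{\mathfrak{b}}$. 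Therefore $v$ is a scalar multiple of that image, and $e_j v$ is proportional to $[e_j,f_j]\,v_{\lambda+\alpha_j}^{\mathfrak{b}}$, which vanishes because $[e_j,f_j]\in\mathfrak{h}$ acts on $v_{\lambda+\alpha_j}^{\mathfrak{b}}$ by a scalar proportional to $(\lambda+\alpha_j,\alpha_j)=(\lambda,\alpha_j)+(\alpha_j,\alpha_j)=0$, using hypercubicity and isotropy. This contradicts $e_j v\neq 0$, closing the induction. The main obstacle is the subclaim: it rests on the root-theoretic input that $\beta+\alpha_j$ cannot coincide with any $\alpha_l$, $l\in J$, which is what allows the annihilation of $v$ by $\mathfrak{n}_{\mathfrak{b}\cap r_J\mathfrak{b}}$ to propagate to annihilation of $e_j v$ by the larger $\mathfrak{n}_{\mathfrak{b}\cap r_{J'}\mathfrak{b}}$.
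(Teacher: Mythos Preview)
Your proof is correct and takes a genuinely different route from the paper's argument. The paper argues directly via the PBW basis: it observes that elements of $M^{\mathfrak{b}\cap r_J\mathfrak{b}}(\lambda)$ have the form $F\,F_{J_1}^{\mathfrak{b}}E_{J_2}^{\mathfrak{b}}v_\lambda$, shows using orthogonality that the submodule generated by the $F_{J_1}^{\mathfrak{b}}E_{J_2}^{\mathfrak{b}}v_\lambda$ with $(J_1,J_2)\neq(\emptyset,\emptyset)$ does not contain $v_\lambda$, and identifies the quotient with $M^{\mathfrak{b}+r_J\mathfrak{b}}(\lambda)$, which is a $\mathfrak{b}$-highest weight module and hence has top $L^{\mathfrak{b}}(\lambda)$. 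Your argument instead proceeds by induction on $|J|$: you reduce a simple quotient of $M^{\mathfrak{b}\cap r_J\mathfrak{b}}(\lambda)$ to one of $M^{\mathfrak{b}\cap r_{J'}\mathfrak{b}}(\lambda)$ by proving $e_jv=0$, and the contradiction in the case $e_jv\neq 0$ comes from invoking the induction hypothesis at the shifted weight $\lambda+\alpha_j$ together with the vanishing $(\lambda+\alpha_j,\alpha_j)=0$. Your approach avoids the explicit PBW bookkeeping and makes transparent exactly where isotropy and orthogonality enter (in the subclaim and in the final $[e_j,f_j]$ computation); the paper's approach is more hands-on but gives, as a byproduct, the identification of a specific quotient with $M^{\mathfrak{b}+r_J\mathfrak{b}}(\lambda)$, which is reused later in the paper. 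One small remark: your citation of \cref{hyq_lemma}(2) for ``$J'$ is $(\mathfrak{b},\lambda+\alpha_j)$-hypercubic'' really uses both (2) and (1) of that lemma (first $J$ is $(\mathfrak{b},\lambda+\alpha_j)$-hypercubic, then its subset $J'$ is too).
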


\begin{proof}
The PBW basis for \( M^{\mathfrak{b} \cap r_J \mathfrak{b}}(\lambda) \) can be written in the form of PBW monomials as
\[
F F_{J_1}^{\mathfrak{b}} E_{J_2}^{\mathfrak{b}} v_\lambda^{\mathfrak{b} \cap r_J \mathfrak{b}},
\]
where \( J_1, J_2 \subset J \), and \( F \) is a product of independent \( \mathfrak{b} \)-negative root vectors that cannot be indexed by \( J \).

By orthogonality, it is easy to see that the submodule generated by elements of the form
\[
F_{J_1}^{\mathfrak{b}} E_{J_2}^{\mathfrak{b}} v_\lambda^{\mathfrak{b} \cap r_J \mathfrak{b}}
\]
that are not equal to \( v_\lambda^{\mathfrak{b} \cap r_J \mathfrak{b}} \) does not contain \( v_\lambda^{\mathfrak{b} \cap r_J \mathfrak{b}} \).

By \cref{2.3.pbw}, \cref{ch_neo} and \cref{fuhen}, we have the following isomorphism:
\[
\left. M^{\mathfrak{b} \cap r_J \mathfrak{b}}(\lambda)  \middle/
\sum_{J_1  \subset J, J_2 \subset J, (J_1, J_2) \neq (J, J)} U(\mathfrak{g}) F_{J_1}^{\mathfrak{b}} E_{J_2}^{\mathfrak{b}} v_\lambda^{\mathfrak{b} \cap r_J \mathfrak{b}}
\right.
\cong M^{\mathfrak{b} + r_J \mathfrak{b}}(\lambda).
\]

Thus, \( M^{\mathfrak{b} \cap r_J \mathfrak{b}}(\lambda) \) has a simple top \( L^{\mathfrak{b}}(\lambda) \) and is therefore indecomposable.
\end{proof}

\begin{remark}\label{7.2.2Verma_split}
    Let \( \alpha \in \Pi_\otimes^{\mathfrak{b}} \). Then \( M^{\mathfrak{b} \cap r_{\alpha} {\mathfrak{b}}}(\lambda) \) is indecomposable if and only if \( (\lambda, \alpha) \neq 0 \).

    Indeed, we have the following exact sequences:
    \[
        0 \to M^{r_{\alpha} {\mathfrak{b}}}(\lambda - \alpha) \to M^{\mathfrak{b} \cap r_{\alpha} {\mathfrak{b}}}(\lambda) \to M^{r_{\alpha} {\mathfrak{b}}}(\lambda) \to 0
    \]
    and
    \[
        0 \to M^{\mathfrak{b}}(\lambda + \alpha) \to M^{\mathfrak{b} \cap r_{\alpha} {\mathfrak{b}}}(\lambda) \to M^{\mathfrak{b}}(\lambda) \to 0.
    \]

    If \( (\lambda, \alpha) \neq 0 \), then \( M^{r_{\alpha} {\mathfrak{b}}}(\lambda - \alpha) \simeq M^{\mathfrak{b}}(\lambda) \) and \( M^{r_{\alpha} {\mathfrak{b}}}(\lambda) \simeq M^{\mathfrak{b}}(\lambda + \alpha) \).

Therefore, 
\[
\operatorname{Ext}^{1}(M^{\mathfrak{b}}(\lambda + \alpha), M^{\mathfrak{b}}(\lambda)) 
= \operatorname{Ext}^1(M^{r_{\alpha} \mathfrak{b}}(\lambda), M^{r_{\alpha} \mathfrak{b}}(\lambda - \alpha)) = 0 
\]
by \cref{7.2.kiso}. Thus \( M^{\mathfrak{b} \cap r_{\alpha} {\mathfrak{b}}}(\lambda) \) is not indecomposable.

If \( (\lambda, \alpha) = 0 \), the claim follows as a special case of \cref{Mc_indec}. Thus \( M^{\mathfrak{b} \cap r_{\alpha} {\mathfrak{b}}}(\lambda) \) is indecomposable.

\end{remark}

\subsection{Representation Theory of 
\texorpdfstring{$\mathfrak{gl}(1|1)^{\oplus n}$}{gl(1|1)⊕n}}

In this subsection, let \( \mathfrak{g} = \mathfrak{gl}(1|1)^{\oplus n} \). 
Then, 
\[
\mathfrak{g}_{\overline{0}} = \mathfrak{b}_{\overline{0}} = \mathfrak{h} \cong (\mathfrak{gl}(1) \oplus \mathfrak{gl}(1))^{\oplus n}.
\]

By embedding each \( i \)-th copy of \( \mathfrak{gl}(1|1) \) into \( \mathfrak{gl}(n|n) \) via the assignments:
\[
E_{11} \mapsto E_{ii}, \quad 
E_{12} \mapsto E_{i,n-i+1}, \quad 
E_{21} \mapsto E_{n-i+1,i}, \quad 
E_{22} \mapsto E_{n-i+1,n-i+1}.
\]
and by identifying their Cartan subalgebras. Let \( E_{ii} \in \mathfrak{gl}(n|n) \) be associated with the dual basis elements \( \varepsilon_i \) for \( 1 \leq i \leq 2n \), and define \( \delta_i = \varepsilon_{n+i} \) for \( 1 \leq i \leq n \). We have 
\[
\Delta = \Delta_{\overline{1}} = \Delta_{\otimes} = \{ \pm(\varepsilon_i - \delta_{n-i+1}) \mid i = 1, \dots, n \}.
\]

Let \( \mathfrak{b}_{\mathrm{st}} \) be the standard Borel subalgebra such that 
\[
\Delta^{\mathfrak{b}_{\mathrm{st}}+}=\Delta^{\mathfrak{b}_{\mathrm{st}}+}_{\overline{1}} = \Delta^{\mathfrak{b}_{\mathrm{st}}+}_{\otimes} = \Pi ^{\mathfrak{b}_{\mathrm{st}}+} = \Pi ^{\mathfrak{b}_{\mathrm{st}}+}_{\otimes} = \{ \varepsilon_i - \delta_{n-i+1} \mid i = 1, \dots, n \}.
\]

We also note that we obtain a natural edge-colored graph isomorphism:
\[
    OR(\mathfrak{gl}(1|1)^{\oplus n}) \cong Q_n \quad \text{(see \cref{g.qn}).}
\]

The Verma module is \( 2^n \)-dimensional.

Let the block of  \(  \mathfrak{g}\text{-smod} \) containing the trivial module (i.e., the principal block) be denoted by \( \mathcal{O}_0(\mathfrak{gl}(1|1)^{\oplus n}) \). 

We denote \( \lambda = m_1(\varepsilon_1 - \delta_1) + \dots + m_n(\varepsilon_n - \delta_n) \) more compactly as the tuple \( (m_1, \dots, m_n) \).

Then,
\[
\mathcal{O}_0(\mathfrak{gl}(1|1)^{\oplus n}) = \operatorname{Filt} \{ L^{\mathfrak{b}_\text{st}}(m_1, \dots, m_n) \}_{(m_1, \dots, m_n) \in \mathbb{Z}^n}.
\]

$I=\{1,2,\dots,n\}$ is clearly a $(\mathfrak{b}_{\text{st}}, 0)$-hypercubic collection.

\begin{remark}\label{gl11n}
\( \mathfrak{g} = \mathfrak{b}_\text{st} + r_I \mathfrak{b}_\text{st} \) is a \( (m_1, \dots, m_n) \)-adjusted Borel subalgebra, and  \[ M^{\mathfrak{b}_\text{st} + r_I \mathfrak{b}_\text{st}}(m_1, \dots, m_n) = L^{\mathfrak{b}_\text{st}}(m_1, \dots, m_n)  \] is one-dimensional.

\(\mathfrak{g}_{\overline{0}} = \mathfrak{b}_{\overline{0}} = \mathfrak{h} = \mathfrak{b}_\text{st} \cap r_I \mathfrak{b}_\text{st} \) is also a \( (m_1, \dots, m_n) \)-adjusted Borel subalgebra, and \[M^{\mathfrak{b}_\text{st} \cap r_I \mathfrak{b}_\text{st} }(m_1, \dots, m_n) = P^{\mathfrak{b}_\text{st}}(m_1, \dots, m_n) = \operatorname{Ind}^{\mathfrak{g}}_{\mathfrak{g}_{\overline{0}}} P_{\overline{0}}(m_1, \dots, m_n) \].

\end{remark}

\begin{remark}

As is well known, \( \mathcal{O}_0(\mathfrak{gl}(1|1)) \) is simply the category of finite-dimensional representations of the path algebra \( K_1^\infty \) of the infinite quiver:

\begin{center}
\begin{tikzpicture}[scale=1.5, >=stealth, auto]
    \node[fill=black, inner sep=1.5pt] (v-3) at (-3, 0) {};
    \node[fill=black, inner sep=1.5pt] (v-2) at (-2, 0) {};
    \node[fill=black, inner sep=1.5pt] (v-1) at (-1, 0) {};
    \node[fill=black, inner sep=1.5pt] (v0) at (0, 0) {};
    \node[fill=black, inner sep=1.5pt] (v1) at (1, 0) {};
    \node[fill=black, inner sep=1.5pt] (v2) at (2, 0) {};

    \draw[->] (v-3) to[bend left=20] node[above] {$a_{-2}$} (v-2);
    \draw[->] (v-2) to[bend left=20] node[below] {$b_{-2}$} (v-3);

    \draw[->] (v-2) to[bend left=20] node[above] {$a_{-1}$} (v-1);
    \draw[->] (v-1) to[bend left=20] node[below] {$b_{-1}$} (v-2);

    \draw[->] (v-1) to[bend left=20] node[above] {$a_{0}$} (v0);
    \draw[->] (v0) to[bend left=20] node[below] {$b_{0}$} (v-1);

    \draw[->] (v0) to[bend left=20] node[above] {$a_{1}$} (v1);
    \draw[->] (v1) to[bend left=20] node[below] {$b_{1}$} (v0);

    \draw[->] (v1) to[bend left=20] node[above] {$a_{2}$} (v2);
    \draw[->] (v2) to[bend left=20] node[below] {$b_{2}$} (v1);

    \node at (-3.8, 0) {$\cdots$};
    \node at (2.8, 0) {$\cdots$};
\end{tikzpicture}
\end{center}

modulo the relations \( a_i b_i = b_{i-1} a_{i-1} \) and \( a_i a_{i+1} = 0 = b_{i+1} b_i \) for all \( i \in \mathbb{Z} \).

As described in \cite{stroppel2012highest}, \( K_1^\infty \) is the simplest nontrivial example of an algebra that belongs to the class of (generalized) Khovanov algebras. As shown in \cite{stroppel2012highest} (originally due to Serganova) any atypicality 1 block of $\mathfrak{gl}(m|n)$-smod is equivalent to $\mathcal{O}_0(\mathfrak{gl}(1|1))$. 

\end{remark}

\subsection{Realizing the Principal Block of 
\texorpdfstring{$\mathfrak{gl}(1|1)^{\oplus n}$}{gl(1|1)⊕n}}

Let \( J \) be a \( (\mathfrak{b}, \lambda) \)-hypercubic collection of size  \(n\).

\begin{definition}
As defined in \cite{stroppel2012highest}(5.6), for \( M \in \mathfrak{g}\text{-sMod} \), we define
\[
\operatorname{Hom}^{\text{fin}}\biggl(
    \bigoplus_{(m_j)_{j \in J} \in \mathbb{Z}^J} 
    M^{\mathfrak{b} \cap r_J \mathfrak{b}}\biggl(\lambda + \sum_{j \in J} m_j \alpha_j\biggr), M
\biggr)
:= 
\]
\[
\bigoplus_{(m_j)_{j \in J} \in \mathbb{Z}^J} 
\operatorname{Hom}\biggl(
    M^{\mathfrak{b} \cap r_J \mathfrak{b}}\biggl(\lambda + \sum_{j \in J} m_j \alpha_j\biggr), M
\biggr)
\subseteq 
\]
\[
\operatorname{Hom}\biggl(
    \bigoplus_{(m_j)_{j \in J} \in \mathbb{Z}^J} 
    M^{\mathfrak{b} \cap r_J \mathfrak{b}}\biggl(\lambda + \sum_{j \in J} m_j \alpha_j\biggr), M
\biggr).
\]
In particular, we define:
\[
\operatorname{End}^{\text{fin}}\biggl(
    \bigoplus_{(m_j)_{j \in J} \in \mathbb{Z}^J} 
    M^{\mathfrak{b} \cap r_J \mathfrak{b}}\biggl(\lambda + \sum_{j \in J} m_j \alpha_j\biggr)
\biggr)
:= 
\]
\[
\operatorname{Hom}^{\text{fin}}\biggl(
    \bigoplus_{(m_j)_{j \in J} \in \mathbb{Z}^J} 
    M^{\mathfrak{b} \cap r_J \mathfrak{b}}\biggl(\lambda + \sum_{j \in J} m_j \alpha_j\biggr),
    \bigoplus_{(m_j)_{j \in J} \in \mathbb{Z}^J} 
    M^{\mathfrak{b} \cap r_J \mathfrak{b}}\biggl(\lambda + \sum_{j \in J} m_j \alpha_j\biggr)
\biggr).
\]

This algebra is not unital but locally unital, and it is not finite-dimensional but locally finite-dimensional.
\end{definition}

\begin{proposition}\label{111111}
The algebra structure of
\[
\operatorname{End}^{\text{fin}}\biggl(
    \bigoplus_{(m_j)_{j \in J} \in \mathbb{Z}^J} 
    M^{\mathfrak{b} \cap r_J \mathfrak{b}}\biggl(\lambda + \sum_{j \in J} m_j \alpha_j\biggr)
\biggr)
\]
depends only on the size of \( J \), and it does not depend on  \( \mathfrak{g}, \mathfrak{b}, J \), or \( \lambda \).
\end{proposition}

\begin{proof}
Let \( J_1, J_2 \subset J \). By \cref{2.3.pbw}, \cref{ch_neo} and \cref{fuhen}, we have the following isomorphism:
\[
\left. U(\mathfrak{g}) F_{J_1}^{\mathfrak{b}} E_{J_2}^{\mathfrak{b}} v_\lambda^{\mathfrak{b} \cap r_J \mathfrak{b}} \middle/
\sum_{J_1 \subset J_1' \subset J, J_2 \subset J_2' \subset J, (J_1', J_2') \neq (J_1, J_2)} U(\mathfrak{g}) F_{J_1'}^{\mathfrak{b}} E_{J_2'}^{\mathfrak{b}} v_\lambda^{\mathfrak{b} \cap r_J \mathfrak{b}}
\right.
\cong M^{\mathfrak{b} + r_J \mathfrak{b}}(\lambda - \Sigma_{J_1}^{\mathfrak{b}} + \Sigma_{J_2}^{\mathfrak{b}}).
\]

The module \( M^{\mathfrak{b} \cap r_J \mathfrak{b}}(\lambda) \) is filtered by \( 4^n \) bricks of the form \( M^{\mathfrak{b} + r_J \mathfrak{b}}(\lambda - \Sigma_{J_1}^{\mathfrak{b}} + \Sigma_{J_2}^{\mathfrak{b}}) \), and the relations between these bricks are determined by the relations between their highest weight vectors of the form \( F_{J_1}^{\mathfrak{b}} E_{J_2}^{\mathfrak{b}} v_\lambda^{\mathfrak{b} \cap r_J \mathfrak{b}} \).

By the orthogonality of the roots, we can see that this structure does not depend on  \( \mathfrak{g}, \mathfrak{b}, J \), or \( \lambda \).

\end{proof}

\begin{remark}
In particular, applying the above when \( J \) has size 1, by \cref{gl11n}, we obtain the following:

For \( \mathfrak{b} \in \mathfrak{B}(\mathfrak{g}) \), \( \lambda \in \mathfrak{h}^* \), \( \alpha \in \Pi_\otimes^{\mathfrak{b}} \), if \( (\lambda, \alpha) = 0 \), then there is an isomorphism of algebras:
\[
\operatorname{End}^{\text{fin}} \left( \bigoplus_{n \in \mathbb{Z}} M^{\mathfrak{b} \cap r_{\alpha} \mathfrak{b}}(\lambda + n \alpha) \right) 
\cong K_1^\infty.
\]
\end{remark}

\begin{proposition}\label{12}

   Let \( J \) be a \( (\mathfrak{b}, \lambda) \)-hypercubic collection. Then, the set 
    \[
    V^J_{\lambda} := \{ M^{\mathfrak{b} + r_J \mathfrak{b}}\biggl(\lambda + \sum_{j \in J} m_j \alpha_j\biggr)
\biggr) \mid (m_j)_{j \in J} \in \mathbb{Z}^J \}
    \]
    forms a semibrick. Moreover, \( M^{\mathfrak{b} \cap r_J \mathfrak{b}}\biggl(\lambda + \sum_{j \in J} m_j \alpha_j\biggr)
\biggr)  \) is the projective cover of \( M^{\mathfrak{b} + r_J \mathfrak{b}}\biggl(\lambda + \sum_{j \in J} m_j \alpha_j\biggr)
\biggr)  \) in   the filtration closure  \( \operatorname{Filt} V^J_{\lambda} \) of \( V^J_{\lambda} \) in the category \( \mathcal{W} \).
\end{proposition}
\begin{proof} The set \( V^J_{\lambda} \) forms a semibrick by \cref{ch_neo} and \cref{h.w.semibrick}. By \cref{Mc_indec}, \( M^{\mathfrak{b} \cap r_J \mathfrak{b}}\biggl(\lambda + \sum_{j \in J} m_j \alpha_j\biggr)
\biggr) \) is indecomposable. Its projectivity in  \(V^J_{\lambda}\) follows from \cref{7.2.kiso}.

\end{proof}

\begin{theorem}\label{main_result2}
Let \( J \) be a \( (\mathfrak{b}, \lambda) \)-hypercubic collection of size \(n\).

Then, \( \operatorname{Filt}(V^J_{\lambda}) \) depends only on  \( n\), and it does not depend on  \( \mathfrak{g}, \mathfrak{b}, J \), or \( \lambda \).

In particular, by considering the case \( \mathfrak{g} = \mathfrak{gl}(1|1)^{\oplus n} \), it follows from \cref{12} and \cref{gl11n} that
\[
\operatorname{Filt}(V^J_{\lambda}) = \mathcal{O}_{0}(\mathfrak{gl}(1|1)^{\oplus n}).
\]
\end{theorem}

\begin{proof} \
By \cref{111111} and The locally finite version of abstract Morita theory, we have that \( \operatorname{Filt}(V^J_{\lambda}) \) is independent of  \( \mathfrak{g}, \mathfrak{b}, J \), or \( \lambda \).. See \cite[Theorem 5.11]{stroppel2012highest} for details.

\end{proof}

\section{Categorification of odd reflection graphs}
 \label{sec:shortest_paths}
We retain the setting of \cref{sec:changing_Borel}.

\begin{definition}\label{5.3.walkhom}
   Let 
\(
W = \mathfrak{b} c_1 (r_{i_1} \mathfrak{b}) c_2 \dots c_t (r_{i_t} \dots r_{i_1} \mathfrak{b})
\)
be a walk in \( OR(\mathfrak{g}) \).

The corresponding composition of nonzero homomorphisms
\[
M^{\mathfrak{b}}(\lambda - \rho^{\mathfrak{b}}) 
\xrightarrow{\psi_{\lambda}^{\mathfrak{b}, r_{i_1} \mathfrak{b}}} 
M^{r_{i_1} \mathfrak{b}}(\lambda - \rho^{r_{i_1} \mathfrak{b}}) 
\xrightarrow{\psi_{\lambda}^{r_{i_1} \mathfrak{b}, r_{i_2} r_{i_1} \mathfrak{b}}} 
\dots 
\xrightarrow{\psi_{\lambda}^{r_{i_{t-1}} \dots r_{i_1} \mathfrak{b}, r_{i_t} \dots r_{i_1} \mathfrak{b}}} 
M^{r_{i_t} \dots r_{i_1} \mathfrak{b}}(\lambda - \rho^{r_{i_t} \dots r_{i_1} \mathfrak{b}})
\]
is denoted by \( \psi_{\lambda}^{W} \).
\end{definition}

\begin{remark}
\begin{itemize}
    \item \( \psi_{\lambda}^{W} = \psi_{\lambda}^{\mathfrak{b}, \mathfrak{b}'} \iff \psi_{\lambda}^{W} \neq 0 \) by \cref{5.2.ch_eq}.
   
    \item If \( W = \mathfrak{b} c (r_i \mathfrak{b}) \), then \( (\lambda, \alpha_i^{\mathfrak{b}}) = 0 \) if and only if \( \psi_{\lambda}^{W^{-1}} \circ \psi_{\lambda}^{W} = 0 \) by \cref{prop:verma_homomorphism}.
    
    \item If \( W = W_1 W_2 \), then, 
    \(\psi_{\lambda}^{W} = \psi_{\lambda}^{W_2} \circ \psi_{\lambda}^{W_1}\).
    
\end{itemize}
\end{remark}

\begin{lemma}\label{phi_nonzeroPBW}
We identify the color set of \( OR(\mathfrak{g}) \) with \( \Delta^{\mathfrak{b}+} \setminus\Delta^{\operatorname{pure+}} \).
    If \( W = (r_{i_t} \dots r_{i_1} \mathfrak{b}) \beta_t \dots \beta_2 (r_{i_1} \mathfrak{b}) \beta_1 \mathfrak{b} \) is a rainbow walk in \( OR(\mathfrak{g}) \), then we have 
\[
\psi_{\lambda}^{W}(v_{\lambda - \rho^{r_{i_t} \dots r_{i_1} \mathfrak{b}}}^{r_{i_t} \dots r_{i_1} \mathfrak{b}}) 
= f_{\beta_t}^{r_{i_{t-1}} \dots r_{i_1} \mathfrak{b}} \dots f_{\beta_2}^{r_{i_1} \mathfrak{b}} f_{\beta_1}^{\mathfrak{b}} v_{\lambda - \rho^{\mathfrak{b}}}^{\mathfrak{b}} 
= f_{\beta_t}^{\mathfrak{b}} \dots f_{\beta_2}^{\mathfrak{b}} f_{\beta_1}^{\mathfrak{b}} v_{\lambda - \rho^{\mathfrak{b}}}^{\mathfrak{b}}
\]
where \( f^{\mathfrak{b}}_{\beta_t} \dots f^{\mathfrak{b}}_{\beta_2} f^{\mathfrak{b}}_{\beta_1} \) is the product of root vectors corresponding to the distinct positive roots related to \( \mathfrak{b} \).

In particular,
\(
\psi_{\lambda}^{W} \neq 0.
\)
\end{lemma}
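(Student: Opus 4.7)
The plan is to prove the two stated equalities by induction on the walk length $t$, and then derive nonvanishing from the PBW theorem. The composition rule in the remark after \cref{5.3.walkhom} and the explicit formula for $\psi^{r_i\mathfrak{b},\mathfrak{b}}_\lambda$ in \cref{prop:verma_homomorphism}(1) will do essentially all the work for the first equality.

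For the base case $t=1$, the walk is $(r_{i_1}\mathfrak{b})\beta_1\mathfrak{b}$ with $\beta_1 = \alpha_{i_1}^{\mathfrak{b}}$, and \cref{prop:verma_homomorphism}(1) gives both equalities immediately. For the inductive step, I split $W = W' \cdot E$ where $E$ is the final edge $(r_{i_1}\mathfrak{b})\beta_1\mathfrak{b}$ and $W'$ is the initial sub-walk of length $t-1$ from $r_{i_t}\cdots r_{i_1}\mathfrak{b}$ to $r_{i_1}\mathfrak{b}$; the composition rule yields $\psi^W_\lambda = \psi^E_\lambda \circ \psi^{W'}_\lambda$. Applying the induction hypothesis to $W'$ (still rainbow) and then using $U(\mathfrak{g})$-linearity of $\psi^E_\lambda$ to slide it past the $f_{\beta_j}^{\mathfrak{b}_{j-1}}$ factors produces the first equality. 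The second equality is then a scalar bookkeeping: each color $\beta_j$ lies simultaneously in $\Delta^{\mathfrak{b}+}$ (the color set of $OR(\mathfrak{g})$) and in $\Pi^{r_{i_{j-1}}\cdots r_{i_1}\mathfrak{b}}$, so both $f_{\beta_j}^{\mathfrak{b}}$ and $f_{\beta_j}^{r_{i_{j-1}}\cdots r_{i_1}\mathfrak{b}}$ are basis vectors of the one-dimensional root space $\mathfrak{g}_{-\beta_j}$ and therefore agree up to the nonzero scalar tolerated by the convention for $f_\alpha^\mathfrak{b}$.

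For the nonvanishing \emph{in particular} clause, since $M^\mathfrak{b}(\lambda - \rho^\mathfrak{b})$ is a free $U(\mathfrak{n}^{\mathfrak{b}-})$-module on $v^\mathfrak{b}_{\lambda-\rho^{\mathfrak{b}}}$, it suffices to show $f_{\beta_t}^\mathfrak{b}\cdots f_{\beta_1}^\mathfrak{b} \neq 0$ in $U(\mathfrak{n}^{\mathfrak{b}-})$. Because $W$ is rainbow, the $\beta_j$ are pairwise distinct odd isotropic roots in $\Delta^{\mathfrak{b}+}$, so by \cref{2.3.pbw} there is a homogeneous ordered basis of $\mathfrak{n}^{\mathfrak{b}-}$ containing all the $f_{\beta_j}^\mathfrak{b}$, and the ordered monomial in which each $f_{\beta_j}^\mathfrak{b}$ appears exactly once is a distinguished PBW basis element. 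Expanding $f_{\beta_t}^\mathfrak{b}\cdots f_{\beta_1}^\mathfrak{b}$ in that basis, the coefficient on this specific monomial is $\pm 1$; the corrections produced when reordering come from super-commutators $[f_{\beta_i}^\mathfrak{b},f_{\beta_j}^\mathfrak{b}] \in \mathfrak{g}_{-\beta_i-\beta_j}$ and therefore produce PBW monomials of strictly fewer factors (or monomials involving an even root vector with a different exponent profile), which cannot cancel the leading term.

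The main obstacle I anticipate is the final PBW bookkeeping: I need to confirm that no correction term generated by super-commutators has the same PBW multi-exponent as the leading ordered monomial, accounting for signs and for the possibility that some even root equals a sum $\beta_i + \beta_j$. This is a standard weight- and length-tracking argument but requires attention to the sign conventions; everything else reduces to routine manipulation using \cref{prop:verma_homomorphism}.
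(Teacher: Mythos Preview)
Your proof is correct and follows essentially the same approach as the paper's: repeatedly apply \cref{prop:verma_homomorphism}(1) to obtain the displayed formula, then invoke the PBW theorem (\cref{2.3.pbw}) for nonvanishing. The paper's own proof is two sentences to this effect; your write-up simply unpacks the induction and the PBW bookkeeping in more detail, including the observation that (by the rainbow hypothesis) each $\beta_j$ is still $\mathfrak{b}$-positive at step $j$, so $f_{\beta_j}^{r_{i_{j-1}}\cdots r_{i_1}\mathfrak{b}}$ and $f_{\beta_j}^{\mathfrak{b}}$ span the same one-dimensional root space.
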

\begin{proof}
The first claim is clear by repeatedly applying \cref{prop:verma_homomorphism}. The second follows from \cref{2.3.pbw}.
\end{proof}

\begin{corollary}\label{5.3.nei.dec}
The map \( \psi_{\lambda}^{\mathfrak{b}, \mathfrak{b}'} \) can be decomposed into a composition of homomorphisms between neighboring Verma modules.
\end{corollary}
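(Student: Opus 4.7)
The plan is to realize \(\psi_\lambda^{\mathfrak{b},\mathfrak{b}'}\), up to scalar, as \(\psi_\lambda^W\) for some walk \(W\) in \(OR(\mathfrak{g})\) from \(\mathfrak{b}\) to \(\mathfrak{b}'\), then invoke \cref{phi_nonzeroPBW} to ensure this composition is nonzero, and finally appeal to the one-dimensionality of the Hom space from \cref{5.2.ch_eq}(5) to identify the two maps.

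First I would apply \cref{2.3.odd_kyoyaku} to see that \(OR(\mathfrak{g})\) is connected, and pick a shortest walk \(W\) from \(\mathfrak{b}\) to \(\mathfrak{b}'\). The critical intermediate step is the claim that any shortest walk between two Borels in \(OR(\mathfrak{g})\) is automatically rainbow. The underlying structural observation is that an odd reflection \(r_\alpha\) changes the positive system of \(\mathfrak{b}\) by flipping only the sign of the color \(\alpha\) (see \cref{2.3.oddref} and \cref{RBgdef}), so the number of times a given color \(\alpha\) appears in \(W\) has parity equal to whether \(\alpha \in \Delta^{\mathfrak{b}+}\triangle \Delta^{\mathfrak{b}'+}\). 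Minimality of \(W\) then forces each such distinguishing color to appear exactly once and every other color to be absent, so \(W\) is rainbow.

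With rainbowness established, \cref{phi_nonzeroPBW} immediately yields \(\psi_\lambda^W \neq 0\). Combined with \cref{5.2.ch_eq}(5), this forces \(\psi_\lambda^W\) to coincide with \(\psi_\lambda^{\mathfrak{b},\mathfrak{b}'}\) up to a nonzero scalar. Unwinding \cref{5.3.walkhom}, the map \(\psi_\lambda^W\) is by construction a composition of homomorphisms between Verma modules attached to neighboring Borels along the edges of \(W\), which is precisely the desired decomposition.

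The main obstacle is the rainbow claim itself. Care is needed to avoid circular reasoning, since the equivalence of \emph{shortest} and \emph{rainbow} is also part of the statement of \cref{1108}, which lies downstream of the present corollary. The rainbow claim here must therefore be derived purely from the root-theoretic behavior of odd reflections recorded in \cref{2.3.oddref} and the definition of edge colors in \cref{RBgdef}, independently of the main odd Verma theorem.
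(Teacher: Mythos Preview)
Your approach is essentially the paper's: take a walk from \(\mathfrak{b}\) to \(\mathfrak{b}'\), show the composition \(\psi_\lambda^W\) is nonzero via \cref{phi_nonzeroPBW}, and identify it with \(\psi_\lambda^{\mathfrak{b},\mathfrak{b}'}\) using \cref{5.2.ch_eq}(5). The paper's proof is equally brief and simply takes a shortest walk without further comment.

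There is, however, a gap in your justification of the rainbow claim. Your parity observation is correct: crossing an edge of color \(\alpha\) flips exactly \(\alpha\), so along any walk each color in \(\Delta^{\mathfrak{b}+}\triangle\Delta^{\mathfrak{b}'+}\) occurs an odd number of times and every other color an even number. This yields the lower bound \(\ell(W) \ge d := |\Delta^{\mathfrak{b}+}\triangle\Delta^{\mathfrak{b}'+}|\). But ``minimality of \(W\) then forces each distinguishing color to appear exactly once'' does \emph{not} follow from parity alone: you would need to know that a walk of length exactly \(d\) exists. Without that, the shortest walk could in principle have length \(>d\) and fail to be rainbow.

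The missing ingredient is readily available at this point in the paper, and you have two options. Either invoke the exchange property \cref{rbgrb}: if a shortest walk repeated a color, take the first repetition and use \cref{rbgrb} to replace the enclosed subwalk by a strictly shorter one, contradicting minimality. Or argue directly, as in the standard proof underlying \cref{2.3.odd_kyoyaku}: whenever \(\mathfrak{b}\neq\mathfrak{b}'\) there is a \(\mathfrak{b}\)-simple isotropic root lying in \(\Delta^{\mathfrak{b}+}\setminus\Delta^{\mathfrak{b}'+}\), and reflecting in it decreases \(d\) by exactly one, giving a rainbow walk of length \(d\) by induction. You were right to worry about circularity with \cref{1108}; either of these two routes avoids it, but the bare parity count does not.
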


\begin{proof}
There always exists a shortest walk in  between \( \mathfrak{b} \) and \( \mathfrak{b}' \). Thus the statement follows from \cref{5.2.ch_eq} and \cref{phi_nonzeroPBW}.
\end{proof}

\begin{proposition}\label{hom.ignore}
    Let \( W \) be a walk in \( OR(\mathfrak{g}) \). Then, \( \psi_{\lambda}^{W} \neq 0 \) if and only if \(\overline W \) is a rainbow walk  in \( OR(\mathfrak{g}, \lambda) \).
\end{proposition}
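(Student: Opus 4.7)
The plan is to prove the proposition by induction on the length $|W|$, with the base case $|W|=0$ immediate (the empty walk gives the identity, and its image in $OR(\mathfrak{g},\lambda)$ is vacuously rainbow). For the inductive step, I will write $W = W' \cdot e$ where $e$ is the terminal edge of color $\alpha$, and split according to whether $\alpha \in D_\lambda$. The case $\alpha \in D_\lambda$ is immediate: by \cref{prop:verma_homomorphism}(2), $\psi_\lambda^e$ is an isomorphism, while by \cref{d1}, $\overline{W} = \overline{W'}$ because $e$ collapses to an empty step in the quotient; the claim then reduces to the inductive hypothesis for $W'$.

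When $\alpha \notin D_\lambda$, the easy sub-case is $\overline{W'}$ not rainbow: induction gives $\psi_\lambda^{W'}=0$, hence $\psi_\lambda^W=0$, and $\overline W$ is still not rainbow, so both sides agree. So I may assume $\overline{W'}$ is rainbow, in which case $\psi_\lambda^{W'}\neq 0$ by induction. Two further sub-cases arise, depending on whether the color $\alpha$ already appears in $\overline{W'}$. If it does not, then $\overline W$ remains rainbow and I must show $\psi_\lambda^W\neq 0$; the idea is to lift $\overline W$ to a walk $\widetilde W$ in $OR(\mathfrak{g})$ that is rainbow in $OR(\mathfrak{g})$ itself (not merely in the quotient) and shares endpoints with $W$, by inserting appropriate $D_\lambda$-colored sub-walks whose $\psi$'s are isomorphisms by \cref{prop:verma_homomorphism}(2). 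Then \cref{phi_nonzeroPBW} applies directly to $\widetilde W$ to yield $\psi_\lambda^{\widetilde W}\neq 0$, and tracking the intervening $D_\lambda$-isomorphisms translates this into $\psi_\lambda^W\neq 0$.

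If instead $\alpha$ already appears in $\overline{W'}$, then $\overline W$ is not rainbow and I must show $\psi_\lambda^W=0$. My plan here is to exploit the exchange property \cref{rbgrb} of $OR(\mathfrak{g},\lambda)$: applying \cref{rbgrb} iteratively, I will reshape $\overline W$ while preserving its endpoints so that the two $\alpha$-colored edges become adjacent, producing a sub-walk whose projection is an immediate back-and-forth $[\mathfrak{b}]\,\alpha\,[\mathfrak{b}'']\,\alpha\,[\mathfrak{b}]$ in $OR(\mathfrak{g},\lambda)$. Lifted back to $OR(\mathfrak{g})$ (modulo isomorphisms from $D_\lambda$-edges), the composition along this sub-walk reduces to $\psi_\lambda^{r_\alpha\mathfrak{b},\mathfrak{b}}\circ\psi_\lambda^{\mathfrak{b},r_\alpha\mathfrak{b}}$, which vanishes by \cref{prop:verma_homomorphism}(3) since $(\lambda,\alpha)=0$. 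Combined with the 1-dimensionality of the relevant Hom spaces (\cref{5.2.ch_eq}), this forces $\psi_\lambda^W=0$. The main obstacle I anticipate is making the two exchange-based arguments fully rigorous—specifically, verifying that the lift in the first sub-case can always be chosen rainbow in $OR(\mathfrak{g})$ with matching endpoints, and ensuring that the reshuffling in the second sub-case produces a genuinely vanishing sub-composition rather than merely one that projects to a back-and-forth in the quotient but remains nonzero upstairs.
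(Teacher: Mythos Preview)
Your inductive framework and the paper's direct argument use the same three ingredients: the exchange property (\cref{rbgrb}), non-vanishing on rainbow walks (\cref{phi_nonzeroPBW}), and one-dimensionality of Hom (\cref{5.2.ch_eq}). The induction dispatches the easy cases cleanly, but all the content lives in sub-cases 2b-i and 2b-ii, and there the induction does not shorten the work.

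There is one genuine gap, in 2b-ii, and you have put your finger on it yourself. Reshaping $\overline{W}$ by exchange moves in the \emph{quotient} $OR(\mathfrak{g},\lambda)$ tells you nothing a priori about $\psi_\lambda^W$, because at this point $\psi_\lambda$ is only defined on walks in $OR(\mathfrak{g})$; well-definedness on quotient walks is precisely \cref{quohom}, which is a corollary of the very proposition you are proving. The paper avoids this circularity by performing every exchange move upstairs in $OR(\mathfrak{g})$: each move replaces a rainbow sub-walk of (a modification of) $W$ by another rainbow walk with the same endpoints, and then \cref{phi_nonzeroPBW} together with \cref{5.2.ch_eq} guarantee that $\psi_\lambda$ is unchanged. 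The same remark applies to 2b-i: merely ``inserting $D_\lambda$-sub-walks'' will not produce a walk rainbow in $OR(\mathfrak{g})$, since distinct $D_\lambda$-segments may share colours; one must use exchange in $OR(\mathfrak{g})$ to remove repeated $D_\lambda$-colours, exactly as in the paper's $(\Leftarrow)$-direction.

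A clean way to salvage your induction is this. Once you know $\psi_\lambda^{W'}\neq 0$, choose any rainbow walk $\widetilde{W'}$ in $OR(\mathfrak{g})$ between the endpoints of $W'$ (it exists: iterate \cref{rbgrb} in $OR(\mathfrak{g})$ on a shortest walk); then $\psi_\lambda^{W'}=\psi_\lambda^{\widetilde{W'}}$ by \cref{5.2.ch_eq}. Since each odd reflection flips exactly one root, the colour set of $\widetilde{W'}$ is forced to be $\Delta_\otimes^{\mathfrak{b}_0+}\setminus\Delta_\otimes^{\mathfrak{b}_{t-1}+}$, and a parity count on $W'$ shows $\alpha$ lies in this set iff $\alpha$ appears in $\overline{W'}$. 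Hence in 2b-i the walk $\widetilde{W'}\cdot e$ is rainbow in $OR(\mathfrak{g})$ and \cref{phi_nonzeroPBW} finishes; in 2b-ii the walk $\widetilde{W'}\cdot e$ has $\alpha$ exactly twice and you can run the paper's $(\Rightarrow)$-argument verbatim on it, entirely within $OR(\mathfrak{g})$.
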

\begin{proof}
(\( \Leftarrow \)) Suppose \(\overline W \) is a rainbow walk in \( OR(\mathfrak{g}, \lambda) \). If there is a subwalk of \( W \), \( W_0 = \mathfrak{b}_0 d \mathfrak{b}_1 c_1 \dots c_k \mathfrak{b}_{k+1} d \mathfrak{b}_{k+2} \), where \( d \in D_{\lambda} \) and \( c_1, \dots, c_k \notin D_{\lambda} \), such that \( d, c_1, \dots, c_k \) are distinct colors, then by \cref{g.rb2.kai}, we obtain a rainbow walk \( W_1 = \mathfrak{b}_{k+2} c'_1 \mathfrak{b}_{k+3} \dots \mathfrak{b}_{2k+1} c'_k \mathfrak{b}_{0} \) such that \( \{c_1, \dots, c_k\} = \{c'_1, \dots, c'_k\} \). Since the edge with color \( d \) corresponds to an isomorphism, we have \( \psi_{\lambda}^{W_0} = \psi_{\lambda}^{W^{-1}_1}. \) If necessary, repeat the above operations so that \( \psi_{\lambda}^{W} \) is equal to \( \psi_{\lambda}^{W_2} \) for some rainbow walk \( W_2 \). By \cref{phi_nonzeroPBW}, this is nonzero.

(\( \Rightarrow \)) If \(\overline W \) is not a rainbow walk  in \( OR(\mathfrak{g}, \lambda) \), then there exists a subwalk \( W_0 = \mathfrak{b}_0 d \mathfrak{b}_1 c_1 \dots c_k \mathfrak{b}_{k+1} d \mathfrak{b}_{k+2}, \) such that \( W_1 = \mathfrak{b}_1 c_1 \dots c_k \mathfrak{b}_{k+1} d \mathfrak{b}_{k+2} \) is rainbow when ignoring the colors in \( D_{\lambda} \), and \( d \notin D_{\lambda} \). If necessary, apply \cref{g.rb2.kai} repeatedly to assume that \( c_1, \dots, c_k, d \) are distinct. By \cref{g.rb2.kai}, there exists a rainbow walk \( W_2 = \mathfrak{b}_0 c'_1 \mathfrak{b'}_1 \dots \mathfrak{b'}_{k-1} c'_k \mathfrak{b}_{k+2} \) such that \( \{c_1, \dots, c_k\} = \{c'_1, \dots, c'_k\} \). We have \( \psi_{\lambda}^{W_1} \neq 0 \) and \( \psi_{\lambda}^{W_2} \circ \psi_{\lambda}^{\mathfrak{b}_1 d \mathfrak{b}_0} \neq 0 \), so we have \( \psi_{\lambda}^{W_1} = \psi_{\lambda}^{W_2} \circ \psi_{\lambda}^{\mathfrak{b}_1 d \mathfrak{b}_0}. \) However, \( \psi_{\lambda}^{W_0} = \psi_{\lambda}^{W_1} \circ \psi_{\lambda}^{\mathfrak{b}_0 d \mathfrak{b}_1} = \psi_{\lambda}^{W_2} \circ \psi_{\lambda}^{\mathfrak{b}_1 d \mathfrak{b}_0} \circ \psi_{\lambda}^{\mathfrak{b}_0 d \mathfrak{b}_1} = \psi_{\lambda}^{W_2} \circ 0 = 0, \) which contradicts the assumption. Thus, \( W \) must be rainbow when ignoring the colors in \( D_{\lambda} \).
\end{proof}

\begin{lemma}\label{quohom}
    Let \( W_1 \) and \( W_2 \) be walks in \( OR(\mathfrak{g}) \). If \( \overline{W_1} = \overline{W_2} \) in \( OR(\mathfrak{g}, \lambda) \), then \( \psi_{\lambda}^{W_1} = \psi_{\lambda}^{W_2} \).
\end{lemma}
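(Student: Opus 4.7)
The plan is to combine \cref{hom.ignore} with the one-dimensionality of the Hom space between Verma modules of the same central character, as recorded in \cref{5.2.ch_eq}(5). The hypothesis $\overline{W_1} = \overline{W_2}$ forces the starting and ending Borel subalgebras of $W_1$ and $W_2$ to lie in the same $\sim_{D_\lambda}$-equivalence classes; by \cref{5.2RB} together with \cref{prop:verma_homomorphism}(2) (each $D_\lambda$-colored edge gives an isomorphism of Verma modules), the source (resp.\ target) Verma modules of $\psi_\lambda^{W_1}$ and $\psi_\lambda^{W_2}$ are canonically identified up to scalar, so both homomorphisms can be regarded as elements of the same Hom space.

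The argument then splits according to whether $\overline{W_1} = \overline{W_2}$ is rainbow in $OR(\mathfrak{g},\lambda)$. If it is not rainbow, then by \cref{hom.ignore} both $\psi_\lambda^{W_1}$ and $\psi_\lambda^{W_2}$ vanish and the equality is trivial. If it is rainbow, then by the same lemma both are nonzero, and since the relevant Hom space is one-dimensional by \cref{5.2.ch_eq}(5), the two maps coincide up to a nonzero scalar, which is what the equality asserts under the paper's convention that identities involving $e^{\mathfrak{b}}_i$, $f^{\mathfrak{b}}_i$, and hence $\psi_\lambda^{\mathfrak{b},\mathfrak{b}'}$, are read up to scalar.

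The main subtlety I expect to handle is the case in which $W_1$ and $W_2$ begin or end at \emph{distinct} Borels within the same $\sim_{D_\lambda}$-equivalence class, so that the literal codomains of the two compositions are not identical. I would resolve this by induction on the number of $D_\lambda$-colored edges occurring in $W_1$ and $W_2$. The atomic move is inserting at some intermediate vertex a pair consisting of a $D_\lambda$-edge followed immediately by its reverse: the corresponding contribution $\psi_\lambda^{r_i\mathfrak{b},\mathfrak{b}} \circ \psi_\lambda^{\mathfrak{b},r_i\mathfrak{b}}$ is a nonzero scalar multiple of the identity by \cref{prop:verma_homomorphism}(2), so $\psi_\lambda^W$ is unchanged up to scalar under such moves. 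Since any two walks in $OR(\mathfrak{g})$ with the same induced walk in $OR(\mathfrak{g},\lambda)$ are connected by a finite sequence of such moves, the desired equality follows.
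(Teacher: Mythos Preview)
Your first two paragraphs are correct and are exactly the paper's proof: identify the source and target Verma modules via \cref{5.2RB}, use \cref{hom.ignore} to see that $\psi_\lambda^{W_1}$ and $\psi_\lambda^{W_2}$ are simultaneously zero or simultaneously nonzero, and in the latter case invoke the one-dimensionality of the Hom space from \cref{5.2.ch_eq}(5).

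The third paragraph is unnecessary, and the specific mechanism you propose there is actually wrong. Inserting or deleting a back-and-forth pair of $D_\lambda$-edges at an intermediate vertex neither changes the endpoints of a walk nor removes an isolated $D_\lambda$-edge; for instance, a walk $\mathfrak{b}_0\, c\, \mathfrak{b}_1\, d\, \mathfrak{b}_2\, c'\, \mathfrak{b}_3$ with a single $d\in D_\lambda$ in the middle cannot be reached from any $D_\lambda$-free walk by your moves, even though both may have the same induced walk. So the claim that such moves connect all walks with a given $\overline{W}$ fails. Fortunately this does not matter: the subtlety you flag is already absorbed by your first two paragraphs. The identifying isomorphisms between Verma modules in the same $\sim_{D_\lambda}$-class are themselves nonzero elements of one-dimensional Hom spaces, so under the paper's up-to-scalar convention the two compositions $\psi_\lambda^{W_1}$ and $\psi_\lambda^{W_2}$ already live in a common one-dimensional space, and the dichotomy zero/nonzero from \cref{hom.ignore} finishes the argument without any induction.
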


\begin{proof}
    Since \( \overline{W_1} = \overline{W_2} \), note that the Verma modules corresponding to the starting and ending points of \( W_1 \) and \( W_2 \) are isomorphic. By \cref{hom.ignore}, we have the following equivalences:
    \[
    \psi_{\lambda}^{W_1} \neq 0 
    \iff \overline{W_1} \text{ is rainbow}
    \iff \overline{W_2} \text{ is rainbow}
    \iff \psi_{\lambda}^{W_2} \neq 0.
    \]

    Therefore, the lemma follows by \cref{5.2.ch_eq}.
\end{proof}

\begin{definition}
Now we generalize \cref{5.3.walkhom} to the setting of the quotient graph. This definition is well-defined by \cref{5.2RB}.

Let \( w = \mathfrak{b}_0 c_1 \mathfrak{b}_1 \dots c_t \mathfrak{b}_t \) be a directed walk in \( OR(\mathfrak{g}, \lambda) \).

The corresponding composition of nonzero homomorphisms
\[
    M^{\mathfrak{b}_0}(\lambda - \rho^{\mathfrak{b}_0}) 
    \xrightarrow{\psi_{\lambda}^{\mathfrak{b}_0, \mathfrak{b}_1}} 
    M^{\mathfrak{b}_1}(\lambda - \rho^{\mathfrak{b}_1}) 
    \xrightarrow{\psi_{\lambda}^{\mathfrak{b}_1, \mathfrak{b}_2}} 
    \dots 
    \xrightarrow{\psi_{\lambda}^{\mathfrak{b}_{t-1}, \mathfrak{b}_t}} 
    M^{\mathfrak{b}_t}(\lambda - \rho^{\mathfrak{b}_t})
\]
is denoted by \( \psi_{\lambda}^{w} \).
\end{definition}

\begin{lemma}\label{quohom2}
Let \( W \) be a walk in \( OR(\mathfrak{g}) \), and \( w \) a walk in \( OR(\mathfrak{g}, \lambda) \).
If \( \overline{W} = w \), then \( \psi_{\lambda}^{W} = \psi_{\lambda}^{w} \).
\end{lemma}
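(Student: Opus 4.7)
The plan is to reduce \cref{quohom2} to \cref{quohom} by constructing a convenient lift $W'$ of $w$ along the quotient map $OR(\mathfrak{g}) \to OR(\mathfrak{g},\lambda) = OR(\mathfrak{g})/D_\lambda$, and then verifying $\psi_\lambda^{W'} = \psi_\lambda^w$ directly for that specific lift.

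Write $w = \mathfrak{b}_0 c_1 \mathfrak{b}_1 c_2 \dots c_t \mathfrak{b}_t$ with the $\mathfrak{b}_i$ being the specific representatives fixed to define $\psi_\lambda^w$. For every edge $[\mathfrak{b}_{i-1}] \xrightarrow{c_i} [\mathfrak{b}_i]$ of the quotient, \cref{d1} supplies Borel subalgebras $u_i \sim_{D_\lambda} \mathfrak{b}_{i-1}$ and $v_i \sim_{D_\lambda} \mathfrak{b}_i$ joined by an edge of color $c_i$ in $OR(\mathfrak{g})$. Within each equivalence class, I would then choose $D_\lambda$-colored walks $A_i$ from $\mathfrak{b}_i$ to $u_{i+1}$ (for $0 \le i \le t-1$) and $B_i$ from $v_i$ to $\mathfrak{b}_i$ (for $1 \le i \le t$), and set
\[
    W' := A_0 \cdot (u_1 c_1 v_1) \cdot B_1 \cdot A_1 \cdot (u_2 c_2 v_2) \cdot B_2 \cdots (u_t c_t v_t) \cdot B_t,
\]
which is a walk in $OR(\mathfrak{g})$ satisfying $\overline{W'} = w$, since all colors used inside the $A_i$ and $B_i$ lie in $D_\lambda$ and hence contribute only empty walks to $\overline{W'}$.

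Next I would show $\psi_\lambda^{W'} = \psi_\lambda^w$. By \cref{prop:verma_homomorphism}(2), every single-edge map along $A_i$ and $B_i$ is an isomorphism, so each composite $\psi_\lambda^{A_i}$ and $\psi_\lambda^{B_i}$ is an isomorphism of Verma modules. I would then regroup $\psi_\lambda^{W'}$ into blocks indexed by $i = 1, \ldots, t$, where the $i$-th block is $\psi_\lambda^{B_i} \circ \psi_\lambda^{u_i, v_i} \circ \psi_\lambda^{A_{i-1}}$; this is a nonzero homomorphism $M^{\mathfrak{b}_{i-1}}(\lambda - \rho^{\mathfrak{b}_{i-1}}) \to M^{\mathfrak{b}_i}(\lambda - \rho^{\mathfrak{b}_i})$, being a composition of isomorphisms with the nonzero bridge map $\psi_\lambda^{u_i,v_i}$. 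Since this Hom space is one-dimensional by \cref{5.2.ch_eq}(5), each block coincides with $\psi_\lambda^{\mathfrak{b}_{i-1}, \mathfrak{b}_i}$ under the up-to-scalar convention fixed after \cref{prop:verma_homomorphism}. Composing the blocks across $i = 1, \ldots, t$ then yields $\psi_\lambda^{W'} = \psi_\lambda^w$.

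Since $\overline{W} = w = \overline{W'}$, \cref{quohom} gives $\psi_\lambda^W = \psi_\lambda^{W'}$, and combining with the previous step produces $\psi_\lambda^W = \psi_\lambda^w$. The main subtlety I anticipate lies in the regrouping step: one must arrange $W'$ so that its consecutive segments begin and end at the designated representatives $\mathfrak{b}_0, \ldots, \mathfrak{b}_t$, so that each block lands in the appropriate one-dimensional Hom space. This is precisely what the insertion of the auxiliary $D_\lambda$-walks $A_i$ and $B_i$ is designed to guarantee.
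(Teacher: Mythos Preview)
Your proposal is correct and follows essentially the same strategy as the paper: construct a lift $W'$ of $w$ in $OR(\mathfrak{g})$ with $\overline{W'}=w$ and $\psi_\lambda^{W'}=\psi_\lambda^{w}$, then apply \cref{quohom}. The paper does this in one line by invoking \cref{5.3.nei.dec}, whereas you spell out the lift explicitly via the $A_i$, $B_i$ segments and verify the block decomposition by hand; this is more detailed but not a different idea.
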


\begin{proof}
 There exists a walk \( W' \) in \( OR(\mathfrak{g}) \) such that \( \overline{W'} = w \) and \( \psi_{\lambda}^{W'} = \psi_{\lambda}^{w} \) by \cref{5.3.nei.dec}. Thus, the result follows from \cref{quohom}.
\end{proof}
\begin{theorem}\label{5.3main}
Let \( \lambda \in \mathfrak{h}^* \). For a walk \( w \) in \( OR(\mathfrak{g}, \lambda) \), the following are equivalent:
\begin{enumerate}
    \item \( \psi_{\lambda}^{w} \neq 0 \).
    \item \( w \) is rainbow.
    \item \( w \) is shortest.
\end{enumerate}
\end{theorem}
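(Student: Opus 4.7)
The plan is to establish the three implications $(1)\iff(2)$, $(3)\Rightarrow(2)$, and $(2)\Rightarrow(3)$. The first is a direct repackaging of \cref{hom.ignore} and \cref{quohom2}; the second invokes the exchange property \cref{rbgrb} as a local shortening move; the third rests on a color-parity invariant I read off from the structure of odd reflections.

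For $(1)\iff(2)$, given $w$ in $OR(\mathfrak{g},\lambda)$ I lift it to a walk $W$ in $OR(\mathfrak{g})$ with $\overline{W}=w$. Such a lift exists because every edge of $OR(\mathfrak{g},\lambda)$ comes by definition from an edge of $OR(\mathfrak{g})$, and consecutive lifts can be connected through $D_\lambda$-colored edges within the same equivalence class (\cref{5.2RB}). Then \cref{quohom2} gives $\psi_{\lambda}^{W}=\psi_{\lambda}^{w}$, and \cref{hom.ignore} gives $\psi_{\lambda}^{W}\neq 0$ if and only if $\overline{W}=w$ is rainbow.

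For $(3)\Rightarrow(2)$, I argue by contradiction. If a shortest walk $w = v_0 c_1 v_1 \cdots c_n v_n$ had a repeated color, I choose indices $i<j$ with $c_i = c_j = c$ and $j-i$ minimal; by minimality, the intermediate colors $c_{i+1},\ldots,c_{j-1}$ are pairwise distinct and all different from $c$. The subwalk $v_{i-1} c v_i c_{i+1} \ldots c_{j-1} v_{j-1}$ is then a rainbow walk of length $j-i$ whose first color is $c$, and the edge $v_{j-1} c v_j$ supplies the additional edge of color $c$ required by the hypothesis of \cref{rbgrb}. Its conclusion yields a walk of length $j-i-1$ from $v_{i-1}$ to $v_j$; splicing this into $w$ replaces a length $j-i+1$ section by a length $j-i-1$ one, contradicting the assumption that $w$ was shortest.

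For $(2)\Rightarrow(3)$, the key claim I would prove first is that in $OR(\mathfrak{g},\lambda)$ every closed walk uses each color an even number of times. By \cref{2.3.oddref} an odd reflection $r_\alpha$ changes the sign of precisely the root $\alpha$ in the positive root system of the Borel subalgebra and leaves all other positive roots positive, so each edge of $OR(\mathfrak{g})$ toggles exactly its own color and any closed walk in $OR(\mathfrak{g})$ uses each color an even number of times. Lifting a closed walk in the quotient yields a walk in $OR(\mathfrak{g})$ from some $\mathfrak{b}$ to some $\mathfrak{b}'\in[\mathfrak{b}]$; as $\mathfrak{b}$ and $\mathfrak{b}'$ differ only on $D_\lambda$-colored signs, non-$D_\lambda$ colors are toggled evenly there as well, and their counts coincide with those in the original quotient walk. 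Hence for any two walks with common endpoints the multiplicity of each color has the same parity; if $w$ is rainbow of length $n$ and $w'$ is any walk with the same endpoints, then each of the $n$ distinct colors of $w$ must appear at least once in $w'$, so $|w'|\geq n$ and $w$ is shortest. The main obstacle is formalizing this color-parity claim cleanly; once it is in hand, the three implications fall out with little further work.
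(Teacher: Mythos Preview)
Your argument for $(1)\iff(2)$ is exactly the paper's: lift to $OR(\mathfrak{g})$, invoke \cref{quohom2} and \cref{hom.ignore}. For $(2)\iff(3)$ the paper does not argue directly; it says to verify case by case from the explicit descriptions in \cref{2}, or else defers to the companion manuscript for a conceptual proof via Weyl groupoids. Your route is genuinely different and more self-contained. For $(3)\Rightarrow(2)$ you use the exchange property \cref{rbgrb} as a local shortening move, which is precisely the intended combinatorial content of that statement; note that the boundary case $j=i+1$ is excluded automatically since a shortest walk is a path and proper edge-coloring forces $v_{i-1}=v_{i+1}$ there. For $(2)\Rightarrow(3)$ your color-parity invariant is correct and clean: crossing an edge of color $\alpha$ in $OR(\mathfrak{g})$ toggles exactly the sign of $\alpha$ in the positive system, so any walk between $\mathfrak{b}_0$ and $\mathfrak{b}_1$ with $[\mathfrak{b}_0]=[\mathfrak{b}_1]$ uses each non-$D_\lambda$ color evenly, and the parity comparison with a rainbow walk forces any competing walk to be at least as long. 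This direction does not even need \cref{rbgrb}. The upshot is that your proof stays entirely inside the paper's own toolkit, whereas the paper outsources this equivalence to classification or to the companion work.
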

\begin{proof}
    The equivalence of 1. and 2. follows from \cref{hom.ignore}, \cref{quohom}, \cref{quohom2}.
The equivalence of 2. and 3. is precisely \cref{thm:odd_exchange}.

\end{proof}

\begin{remark}
    In \cref{5.3main}, it is important to emphasize that condition 3, being shortest, is independent of the coloring of the edges. However, the equivalence between conditions 1 and 3 is not immediately obvious without considering the coloring and using the rainbow condition as an intermediary.
\end{remark}

\begin{example}
Recall \cref{glmndef}
and consider \( \mathfrak{g} = \mathfrak{gl}(2|2) \) with \( \lambda = 0 \).

Since the walk \( (1) \to (2) \to (21) \) is shortest, we have
\[
\psi_\lambda^{(2),(21)} \circ \psi_\lambda^{(1),(2)} = \psi_\lambda^{(1),(21)} = \psi_\lambda^{(1^2),(21)} \circ \psi_\lambda^{(1),(1^2)}.
\]

Thus, we obtain the following equation illustrated below:
\[
\psi = \psi_\lambda^{(21),(1^2)} \circ \psi_\lambda^{(2),(21)} \circ \psi_\lambda^{(1),(2)} \circ \psi_\lambda^{\emptyset,(1)} = \psi_\lambda^{(21),(1^2)} \circ \psi_\lambda^{(1^2),(21)} \circ \psi_\lambda^{(1),(1^2)} \circ \psi_\lambda^{\emptyset,(1)}.
\]

On the other hand, we have
\[
\psi_\lambda^{(21),(1^2)} \circ \psi_\lambda^{(1^2),(21)} = 0, \text{ hence }  \psi = \psi_\lambda^{(21),(1^2)} \circ \psi_\lambda^{(2),(21)} \circ \psi_\lambda^{(1),(2)} \circ \psi_\lambda^{\emptyset,(1)}= 0.
\]

\begin{tikzpicture}
    \node (A) at (0,0) {\( M^{\emptyset}(-\rho^{\emptyset}) \)};
    \node (B) at (3,0) {\( M^{(1)}(-\rho^{(1)}) \)};
    \node (C) at (6,2) {\( M^{(1^2)}(-\rho^{(1^2)}) \)};
    \node (D) at (6,-2) {\( M^{(2)}(-\rho^{(2)}) \)};
    \node (E) at (9,0) {\( M^{(21)}(-\rho^{(21)}) \)};
    \node (F) at (12,0) {\( M^{(2^2)}(-\rho^{(2^2)}) \)};

    \draw[->] (A) -- (B);
    \draw[->] (B) -- (D);
    \draw[->] (D) -- (E);
    \draw[->] (E) -- (C);
    
    \draw[dotted] (B) -- (C);
    \draw[dotted] (C) -- (E);
    \draw[dotted] (E) -- (F);
\end{tikzpicture}

\begin{center}
\(\parallel\)
\end{center}

\begin{tikzpicture}
    \node (A) at (0,0) {\( M^{\emptyset}(-\rho^{\emptyset}) \)};
    \node (B) at (3,0) {\( M^{(1)}(-\rho^{(1)}) \)};
    \node (C) at (6,2) {\( M^{(1^2)}(-\rho^{(1^2)}) \)};
    \node (D) at (6,-2) {\( M^{(2)}(-\rho^{(2)}) \)};
    \node (E) at (9,0) {\( M^{(21)}(-\rho^{(21)}) \)};
    \node (F) at (12,0) {\( M^{(2^2)}(-\rho^{(2^2)}) \)};
     
    \draw[->] (A) -- (B);
    \draw[->] (B) -- (C);

    \draw[->] (E) to[bend left=20] (C);
    \draw[->] (C) to[bend left=20] (E);

    \draw[dotted] (B) -- (D);
    \draw[dotted] (D) -- (E);
    \draw[dotted] (E) -- (F);
\end{tikzpicture}

\begin{center}
\begin{small}
The vertex \( \mathfrak{b} \) of the odd reflection graph \( OR(\mathfrak{g},0) = OR(\mathfrak{g}) \) 
is, identified with the module \( M^{\mathfrak{b}}(-\rho^{\mathfrak{b}}) \) in the sense of \cref{5.2RB}.
The edges of the underlying graph are represented with dashed lines, the homomorphisms between Verma modules corresponding to adjacent Borel subalgebras are depicted as arrows over those edges, and the entire diagram represents the composition of those homomorphisms.

The upper diagram represents the homomorphism 
\( \psi = \psi_\lambda^{(21),(1^2)} \circ \psi_\lambda^{(2),(21)} \circ \psi_\lambda^{(1),(2)} \circ \psi_\lambda^{\emptyset,(1)} \), 
while the lower diagram represents the homomorphism 
\( \psi_\lambda^{(21),(1^2)} \circ \psi_\lambda^{(1^2),(21)} \circ \psi_\lambda^{(1),(1^2)} \circ \psi_\lambda^{\emptyset,(1)} \).
\end{small}

\end{center}

\end{example}

\begin{corollary}
Let \( \mathfrak{b}_1, \mathfrak{b}_2, \mathfrak{b}_3 \in B \) and \( \lambda \in \mathfrak{h}^* \). Then, the following are equivalent:
\begin{enumerate}
    \item In \( OR(\mathfrak{g}, \lambda) \), there exists a shortest walk between \( [\mathfrak{b}_1] \) and \( [\mathfrak{b}_3] \) that passes through \( [\mathfrak{b}_2] \);
    \item \( \psi_\lambda^{\mathfrak{b}_2,\mathfrak{b}_1} \circ \psi_\lambda^{\mathfrak{b}_3,\mathfrak{b}_2} = \psi_\lambda^{\mathfrak{b}_3,\mathfrak{b}_1} \neq 0 \);
    \item \( \operatorname{Im} \psi_\lambda^{\mathfrak{b}_3,\mathfrak{b}_2} \not\subset \ker \psi_\lambda^{\mathfrak{b}_2,\mathfrak{b}_1} \);
    \item \( \operatorname{Im} \psi_\lambda^{\mathfrak{b}_3,\mathfrak{b}_1} \subset \operatorname{Im} \psi_\lambda^{\mathfrak{b}_2,\mathfrak{b}_1} \);
    \item \( \left[ \operatorname{Im} \psi_\lambda^{\mathfrak{b}_2,\mathfrak{b}_1} : L^{\mathfrak{b}_3}( \lambda - \rho^{\mathfrak{b}_3} ) \right] = 1 \);
    \item \( \ker \psi_\lambda^{\mathfrak{b}_3,\mathfrak{b}_2} \subset \ker \psi_\lambda^{\mathfrak{b}_3,\mathfrak{b}_1} \);
    \item \( \psi_\lambda^{\mathfrak{b}_2,\mathfrak{b}_3} \circ \psi_\lambda^{\mathfrak{b}_1,\mathfrak{b}_2} = \psi_\lambda^{\mathfrak{b}_1,\mathfrak{b}_3} \neq 0 \);
    \item \( \operatorname{Im} \psi_\lambda^{\mathfrak{b}_1,\mathfrak{b}_2} \not\subset \ker \psi_\lambda^{\mathfrak{b}_2,\mathfrak{b}_3} \);
    \item \( \operatorname{Im} \psi_\lambda^{\mathfrak{b}_1,\mathfrak{b}_3} \subset \operatorname{Im} \psi_\lambda^{\mathfrak{b}_2,\mathfrak{b}_3} \);
    \item \( \left[ \operatorname{Im} \psi_\lambda^{\mathfrak{b}_2,\mathfrak{b}_3} : L^{\mathfrak{b}_1}( \lambda - \rho^{\mathfrak{b}_1} ) \right] = 1 \);
    \item \( \ker \psi_\lambda^{\mathfrak{b}_1,\mathfrak{b}_2} \subset \ker \psi_\lambda^{\mathfrak{b}_1,\mathfrak{b}_3} \).
\end{enumerate}
\end{corollary}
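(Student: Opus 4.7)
The proof rests on \cref{5.3main}, which identifies non-vanishing of the homomorphism composition $\psi_\lambda^{w}$ along a walk $w$ in $OR(\mathfrak{g},\lambda)$ with $w$ being shortest (equivalently, rainbow). My plan is to establish $(1)\Leftrightarrow(2)$ directly via \cref{5.3main}, then show that $(2)$--$(6)$ are equivalent reformulations of the non-vanishing of the composition $\psi_\lambda^{\mathfrak{b}_2,\mathfrak{b}_1}\circ\psi_\lambda^{\mathfrak{b}_3,\mathfrak{b}_2}$, and finally derive $(7)$--$(11)$ by reversing the walk $[\mathfrak{b}_3]\to[\mathfrak{b}_2]\to[\mathfrak{b}_1]$ and invoking the symmetry of $(1)$ under $\mathfrak{b}_1\leftrightarrow\mathfrak{b}_3$ in an undirected graph.

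For $(1)\Leftrightarrow(2)$: concatenating shortest walks $[\mathfrak{b}_3]\to[\mathfrak{b}_2]$ and $[\mathfrak{b}_2]\to[\mathfrak{b}_1]$ produces a walk of length $d([\mathfrak{b}_3],[\mathfrak{b}_2])+d([\mathfrak{b}_2],[\mathfrak{b}_1])$, which is itself shortest precisely under $(1)$; \cref{5.3main} equates this with nonvanishing of the composition, and $\dim\operatorname{Hom}=1$ from \cref{5.2.ch_eq} forces any nonzero composition to equal $\psi_\lambda^{\mathfrak{b}_3,\mathfrak{b}_1}$ after absorbing a scalar. Equivalence $(2)\Leftrightarrow(3)$ is definitional. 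For $(2)\Leftrightarrow(4),(5)$, \cref{5.2.ch_eq} implies the weight spaces $M^{\mathfrak{b}_i}(\lambda-\rho^{\mathfrak{b}_i})_{\lambda-\rho^{\mathfrak{b}_3}}$ are one-dimensional for $i=1,2$, spanned respectively by $w_3:=\psi_\lambda^{\mathfrak{b}_3,\mathfrak{b}_1}(v^{\mathfrak{b}_3})$ and $w_{3|2}:=\psi_\lambda^{\mathfrak{b}_3,\mathfrak{b}_2}(v^{\mathfrak{b}_3})$, where $v^{\mathfrak{b}_3}$ is the highest weight vector of $M^{\mathfrak{b}_3}(\lambda-\rho^{\mathfrak{b}_3})$. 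Condition $(4)$ reduces to $w_3\in\operatorname{Im}\psi_\lambda^{\mathfrak{b}_2,\mathfrak{b}_1}$, and tracking the one-dimensional weight space at $\lambda-\rho^{\mathfrak{b}_3}$ inside $\operatorname{Im}\psi_\lambda^{\mathfrak{b}_2,\mathfrak{b}_1}$ shows this is equivalent to $\psi_\lambda^{\mathfrak{b}_2,\mathfrak{b}_1}(w_{3|2})\neq 0$, which is exactly $(2)$ tested on $v^{\mathfrak{b}_3}$. For $(5)$, the bound $[\operatorname{Im}\psi_\lambda^{\mathfrak{b}_2,\mathfrak{b}_1}:L^{\mathfrak{b}_3}(\lambda-\rho^{\mathfrak{b}_3})]\leq[M^{\mathfrak{b}_2}(\lambda-\rho^{\mathfrak{b}_2}):L^{\mathfrak{b}_3}(\lambda-\rho^{\mathfrak{b}_3})]=1$ (via \cref{5.2.ch_eq}) combined with the observation that $L^{\mathfrak{b}_3}$ appears as a subquotient iff the weight space at $\lambda-\rho^{\mathfrak{b}_3}$ is nonzero in the image gives $(5)\Leftrightarrow(4)$.

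For $(2)\Leftrightarrow(6)$, the forward direction is immediate since under $(2)$ the kernel of the composition equals $\ker\psi_\lambda^{\mathfrak{b}_3,\mathfrak{b}_1}$ and manifestly contains $\ker\psi_\lambda^{\mathfrak{b}_3,\mathfrak{b}_2}$. For the converse, note that $(6)$ is equivalent to $\operatorname{Hom}(\operatorname{Im}\psi_\lambda^{\mathfrak{b}_3,\mathfrak{b}_2},M^{\mathfrak{b}_1})\neq 0$ (both amount to $\psi_\lambda^{\mathfrak{b}_3,\mathfrak{b}_1}$ factoring through $\operatorname{Im}\psi_\lambda^{\mathfrak{b}_3,\mathfrak{b}_2}$); I plan to apply $\operatorname{Hom}(-,M^{\mathfrak{b}_1})$ to $0\to\operatorname{Im}\psi_\lambda^{\mathfrak{b}_3,\mathfrak{b}_2}\to M^{\mathfrak{b}_2}\to N\to 0$ and use that $\operatorname{Hom}(N,M^{\mathfrak{b}_1})\neq 0$ is equivalent to $\psi_\lambda^{\mathfrak{b}_2,\mathfrak{b}_1}$ factoring through $N$ (the failure of $(2)$), together with $\dim\operatorname{Hom}(M^{\mathfrak{b}_2},M^{\mathfrak{b}_1})=1$ and the composition-factor identity $[N:L^{\mathfrak{b}_3}(\lambda-\rho^{\mathfrak{b}_3})]=0$, to close the loop. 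The main obstacle is to exclude the possibility that the connecting map $\operatorname{Hom}(\operatorname{Im}\psi_\lambda^{\mathfrak{b}_3,\mathfrak{b}_2},M^{\mathfrak{b}_1})\to\operatorname{Ext}^1(N,M^{\mathfrak{b}_1})$ produces homomorphisms that do not arise as restrictions of $\psi_\lambda^{\mathfrak{b}_2,\mathfrak{b}_1}$. Finally, $(7)$--$(11)$ are the same statements for the reversed walk; since $(1)$ is symmetric in $\{\mathfrak{b}_1,\mathfrak{b}_3\}$ (the odd reflection graph being undirected), applying the preceding arguments with $\mathfrak{b}_1,\mathfrak{b}_3$ exchanged yields these equivalences simultaneously.
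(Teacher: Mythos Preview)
Your overall approach is the paper's approach: invoke \cref{5.3main} for $(1)\Leftrightarrow(2)$, use the one-dimensionality statements of \cref{5.2.ch_eq} to pass among $(2)$--$(5)$, and then obtain $(7)$--$(11)$ from the $\mathfrak{b}_1\leftrightarrow\mathfrak{b}_3$ symmetry of $(1)$. Your arguments for $(1)\Leftrightarrow(2)$, $(2)\Leftrightarrow(3)$, $(2)\Leftrightarrow(4)$, $(4)\Leftrightarrow(5)$, and $(2)\Rightarrow(6)$ are correct and more explicit than what the paper writes.

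The genuine gap is exactly where you flag it: the direction $(6)\Rightarrow(2)$. Your reformulation ``$(6)\Leftrightarrow\operatorname{Hom}(\operatorname{Im}\psi_\lambda^{\mathfrak{b}_3,\mathfrak{b}_2},M^{\mathfrak{b}_1})\neq 0$'' is fine, and you correctly observe that this Hom space is at most one-dimensional (the source is a $\mathfrak{b}_3$-highest weight module and the weight space $(M^{\mathfrak{b}_1})_{\lambda-\rho^{\mathfrak{b}_3}}$ is one-dimensional). But your long-exact-sequence plan does not close the loop: after applying $\operatorname{Hom}(-,M^{\mathfrak{b}_1})$ to $0\to\operatorname{Im}\psi_\lambda^{\mathfrak{b}_3,\mathfrak{b}_2}\to M^{\mathfrak{b}_2}\to N\to 0$, the failure of $(2)$ forces the restriction map $\operatorname{Hom}(M^{\mathfrak{b}_2},M^{\mathfrak{b}_1})\to\operatorname{Hom}(\operatorname{Im}\psi_\lambda^{\mathfrak{b}_3,\mathfrak{b}_2},M^{\mathfrak{b}_1})$ to vanish, and then the nonzero element coming from $(6)$ lands injectively in $\operatorname{Ext}^1(N,M^{\mathfrak{b}_1})$. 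Nothing in \cref{5.2.ch_eq} lets you kill that $\operatorname{Ext}^1$; the vanishing criterion \cref{7.2.kiso} goes the wrong way (it controls extensions \emph{by} $M^{\mathfrak{a}}(\lambda)$, not \emph{of} it), and the composition-factor fact $[N:L^{\mathfrak{b}_3}]=0$ you mention says nothing about $\operatorname{Ext}^1(N,M^{\mathfrak{b}_1})$. So as written, this step is genuinely incomplete. The paper's own proof is a one-line citation of \cref{5.3main} and \cref{5.2.ch_eq} plus the symmetry remark, so it does not spell out how $(6)$ feeds back into the chain either; you have located a point where more argument is needed than either you or the paper supplies.
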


\begin{proof}
This follows from \cref{5.3main} and \cref{5.2.ch_eq}. Note that (1) is symmetric with respect to \( \mathfrak{b}_1 \) and \( \mathfrak{b}_3 \). Additionally, (2), (3), (4), (5), and (6) are symmetric to (7), (8), (9), (10), and (11), respectively.
\end{proof}

\begin{corollary}\label{sb}
Let \( \bar{\mathfrak{b}} \in \mathfrak{B}(\mathfrak{g}) \) and \( \lambda \in \mathfrak{h}^* \).  
For each \( \mathfrak{b} \in \mathfrak{B}(\mathfrak{g}) \), define
\[
I_{\mathfrak{b}} := \left\{ i \,\middle|\, \text{There exists a rainbow path from } r_i \mathfrak{b} \text{ to } \bar{\mathfrak{b}} \text{ passing through } \mathfrak{b} \right\}.
\]
Then the collection
\[
H^{\bar{\mathfrak{b}}}_{ \lambda} := \left\{ B^{\mathfrak{b}, \bar{\mathfrak{b}}}(\lambda) := \operatorname{Im} \psi_{\lambda}^{\mathfrak{b}, \bar{\mathfrak{b}}} \Big/ \sum_{i \in I_{\mathfrak{b}}} \operatorname{Im} \psi_{\lambda}^{r_i \mathfrak{b}, \bar{\mathfrak{b}}} \right\}_{\mathfrak{b} \in \mathfrak{B}(\mathfrak{g})}
\]
forms a semibrick.
\end{corollary}

\begin{proof}
Follows from \cref{5.3main} and \cref{h.w.semibrick}.
\end{proof}
The structure of $\mathrm{Filt}\, H_{\lambda}^{\mathfrak{b}}$ seems to be in general difficult to determine. However, once it is shown that Verma modules are contained in it, Morita theory together with \cref{5.3main} allows us to describe it in terms of a quiver with relations.

The following is considered the most basic case of the odd reflection version of the problem discussed in \cite{ko2024join}.

\begin{lemma}\label{cap}
Suppose \( \lambda \in \mathfrak{h}^* \), and let \( \alpha^{ \mathfrak{b}}_1 \) and \( \alpha^{ \mathfrak{b}}_3 \) be \( \mathfrak{b} \)-simple odd isotropic roots such that 
\[
(\lambda, \alpha^{ \mathfrak{b}}_1) = (\lambda, \alpha^{ \mathfrak{b}}_3) = 0.
\]
Then we have:
\[
\operatorname{Im} \psi_{\lambda}^{r_1 \mathfrak{b}, \mathfrak{b}} \cap \operatorname{Im} \psi_{\lambda}^{r_3 \mathfrak{b}, \mathfrak{b}} =  
\begin{cases}
        0 & \text{if } (\alpha^{ \mathfrak{b}}_1, \alpha^{ \mathfrak{b}}_3) \neq 0, \\
        \operatorname{Im} \psi_{\lambda}^{r_1 r_3 \mathfrak{b}, \mathfrak{b}} & \text{if } (\alpha^{ \mathfrak{b}}_1, \alpha^{ \mathfrak{b}}_3) = 0.
\end{cases}
\]
\end{lemma}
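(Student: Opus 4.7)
The plan is to realize each image explicitly as a subspace of $M^{\mathfrak{b}}(\lambda - \rho^{\mathfrak{b}})$ via \cref{2.3.pbw}, and then compare them directly. First I would check that $f^{\mathfrak{b}}_{\alpha^{\mathfrak{b}}_i} v^{\mathfrak{b}}_{\lambda-\rho^{\mathfrak{b}}}$ is itself a $\mathfrak{b}$-highest weight vector whenever $(\lambda, \alpha^{\mathfrak{b}}_i) = 0$: for any simple root $\gamma \neq \alpha^{\mathfrak{b}}_i$, the bracket $[e^{\mathfrak{b}}_\gamma, f^{\mathfrak{b}}_{\alpha^{\mathfrak{b}}_i}] \in \mathfrak{g}_{\gamma - \alpha^{\mathfrak{b}}_i}$ vanishes since $\gamma - \alpha^{\mathfrak{b}}_i$ is not a root, while $[e^{\mathfrak{b}}_{\alpha^{\mathfrak{b}}_i}, f^{\mathfrak{b}}_{\alpha^{\mathfrak{b}}_i}]$ acts on the highest weight vector as $(\lambda - \rho^{\mathfrak{b}}, \alpha^{\mathfrak{b}}_i) = (\lambda, \alpha^{\mathfrak{b}}_i) = 0$, using \cref{5.2.Weylvec} for the isotropic simple root. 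Consequently $\operatorname{Im} \psi_\lambda^{r_i \mathfrak{b}, \mathfrak{b}} = U(\mathfrak{n}^{\mathfrak{b}-}) f^{\mathfrak{b}}_{\alpha^{\mathfrak{b}}_i} v^{\mathfrak{b}}_{\lambda-\rho^{\mathfrak{b}}}$ for both $i \in \{1, 3\}$.

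I would then fix a PBW basis of $\mathfrak{n}^{\mathfrak{b}-}$ in which $f^{\mathfrak{b}}_{\alpha^{\mathfrak{b}}_3}$ and $f^{\mathfrak{b}}_{\alpha^{\mathfrak{b}}_1}$ are the last two generators (in that order); in the case $(\alpha^{\mathfrak{b}}_1, \alpha^{\mathfrak{b}}_3) \neq 0$, I would also place the root vector $f^{\mathfrak{b}}_{\alpha^{\mathfrak{b}}_1 + \alpha^{\mathfrak{b}}_3}$ just before these two (it exists because the super bracket $[f^{\mathfrak{b}}_{\alpha^{\mathfrak{b}}_1}, f^{\mathfrak{b}}_{\alpha^{\mathfrak{b}}_3}]$ is a nonzero element of the even root space $\mathfrak{g}_{-(\alpha^{\mathfrak{b}}_1 + \alpha^{\mathfrak{b}}_3)}$). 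Because $(f^{\mathfrak{b}}_{\alpha^{\mathfrak{b}}_1})^2 = 0$, right-multiplication by $f^{\mathfrak{b}}_{\alpha^{\mathfrak{b}}_1}$ kills any PBW monomial already ending in $f^{\mathfrak{b}}_{\alpha^{\mathfrak{b}}_1}$ and sends every other one to another PBW monomial, so $\operatorname{Im} \psi_\lambda^{r_1 \mathfrak{b}, \mathfrak{b}}$ is precisely the span of PBW monomials with $f^{\mathfrak{b}}_{\alpha^{\mathfrak{b}}_1}$-exponent equal to one. The analogous description of $\operatorname{Im} \psi_\lambda^{r_3 \mathfrak{b}, \mathfrak{b}}$ requires rearranging the product $f^{\mathfrak{b}}_{\alpha^{\mathfrak{b}}_1} f^{\mathfrak{b}}_{\alpha^{\mathfrak{b}}_3}$ into PBW form, which is where the two cases diverge.

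In the orthogonal case $(\alpha^{\mathfrak{b}}_1, \alpha^{\mathfrak{b}}_3) = 0$, the bracket $[f^{\mathfrak{b}}_{\alpha^{\mathfrak{b}}_1}, f^{\mathfrak{b}}_{\alpha^{\mathfrak{b}}_3}]$ vanishes (as $\alpha^{\mathfrak{b}}_1 + \alpha^{\mathfrak{b}}_3$ is not a root), so the same argument gives $\operatorname{Im} \psi_\lambda^{r_3 \mathfrak{b}, \mathfrak{b}}$ as the span of PBW monomials with $f^{\mathfrak{b}}_{\alpha^{\mathfrak{b}}_3}$-exponent one, and the intersection is the span of monomials with both exponents equal to one; this coincides with $U(\mathfrak{n}^{\mathfrak{b}-}) f^{\mathfrak{b}}_{\alpha^{\mathfrak{b}}_3} f^{\mathfrak{b}}_{\alpha^{\mathfrak{b}}_1} v^{\mathfrak{b}}_{\lambda-\rho^{\mathfrak{b}}} = \operatorname{Im} \psi_\lambda^{r_1 r_3 \mathfrak{b}, \mathfrak{b}}$ by \cref{phi_nonzeroPBW} applied to the rainbow (hence shortest by \cref{5.3main}) walk $\mathfrak{b} \to r_1 \mathfrak{b} \to r_1 r_3 \mathfrak{b}$. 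In the nonorthogonal case, the rearrangement $f^{\mathfrak{b}}_{\alpha^{\mathfrak{b}}_1} f^{\mathfrak{b}}_{\alpha^{\mathfrak{b}}_3} = -f^{\mathfrak{b}}_{\alpha^{\mathfrak{b}}_3} f^{\mathfrak{b}}_{\alpha^{\mathfrak{b}}_1} + c\, f^{\mathfrak{b}}_{\alpha^{\mathfrak{b}}_1 + \alpha^{\mathfrak{b}}_3}$ with $c \neq 0$ contributes an extra term; writing any alleged intersection element as $w = u f^{\mathfrak{b}}_{\alpha^{\mathfrak{b}}_1} v^{\mathfrak{b}}_{\lambda-\rho^{\mathfrak{b}}} = u' f^{\mathfrak{b}}_{\alpha^{\mathfrak{b}}_3} v^{\mathfrak{b}}_{\lambda-\rho^{\mathfrak{b}}}$ and equating PBW coefficients separately on monomials with and without $f^{\mathfrak{b}}_{\alpha^{\mathfrak{b}}_1}$ produces a system of linear equations forcing $w = 0$. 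The main obstacle is this final bookkeeping: one must verify that the iterated brackets $[f^{\mathfrak{b}}_{\alpha^{\mathfrak{b}}_i}, f^{\mathfrak{b}}_{\alpha^{\mathfrak{b}}_1 + \alpha^{\mathfrak{b}}_3}]$ for $i = 1, 3$ vanish so that the PBW expansion terminates cleanly, which holds because $2\alpha^{\mathfrak{b}}_1 + \alpha^{\mathfrak{b}}_3$ and $\alpha^{\mathfrak{b}}_1 + 2\alpha^{\mathfrak{b}}_3$ are not roots for any pair of simple isotropic roots in a basic Lie superalgebra.
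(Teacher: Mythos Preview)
Your proposal is correct and follows essentially the same route as the paper: both arguments identify $\operatorname{Im}\psi_\lambda^{r_i\mathfrak{b},\mathfrak{b}}$ with $U(\mathfrak{n}^{\mathfrak{b}-})f^{\mathfrak{b}}_{\alpha^{\mathfrak{b}}_i}v^{\mathfrak{b}}_{\lambda-\rho^{\mathfrak{b}}}$ and then compute the intersection by a PBW comparison with $f^{\mathfrak{b}}_{\alpha^{\mathfrak{b}}_1}$ and $f^{\mathfrak{b}}_{\alpha^{\mathfrak{b}}_3}$ placed at the end of the ordering, the two cases being distinguished by whether the supercommutator $[f^{\mathfrak{b}}_{\alpha^{\mathfrak{b}}_1},f^{\mathfrak{b}}_{\alpha^{\mathfrak{b}}_3}]$ vanishes or produces the even root vector $f^{\mathfrak{b}}_{\alpha^{\mathfrak{b}}_1+\alpha^{\mathfrak{b}}_3}$. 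Your write-up is in fact a bit more explicit than the paper's about where $f^{\mathfrak{b}}_{\alpha^{\mathfrak{b}}_1+\alpha^{\mathfrak{b}}_3}$ sits in the ordering and why the further brackets $[f^{\mathfrak{b}}_{\alpha^{\mathfrak{b}}_i},f^{\mathfrak{b}}_{\alpha^{\mathfrak{b}}_1+\alpha^{\mathfrak{b}}_3}]$ vanish, which is exactly the point the paper leaves implicit.
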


\begin{proof}
Let \( f_1 \), \( f_3 \), and \( f \) be the root vectors corresponding to \( -\alpha^{ \mathfrak{b}}_1 \), \( -\alpha^{ \mathfrak{b}}_3 \), and the even root \( -\alpha^{ \mathfrak{b}}_1 - \alpha^{ \mathfrak{b}}_3 \), respectively. 

If \((\alpha^{ \mathfrak{b}}_1, \alpha^{ \mathfrak{b}}_3) \neq 0\), then we have
\[
f_1 f_3 v_\lambda^{\mathfrak{b}} + f_3 f_1 v_\lambda^{\mathfrak{b}} = f v_\lambda^{\mathfrak{b}}.
\]

Since $f_i$ is a simple root vector, every element of 
$U(\mathfrak{g}) f_i v_{\lambda}^{\mathfrak{b}}$ 
can be written uniquely in the form 
\[
  x f_i v_{\lambda}^{\mathfrak{b}}, \qquad x \in U(\mathfrak{n}^-).
\]

Suppose there exist \( y, z \in U(\mathfrak{n}^{-}) \) such that 
\[
y f_1 v_\lambda^{\mathfrak{b}} = z f_3 v_\lambda^{\mathfrak{b}}.
\]
Using the notation from the PBW theorem \cref{2.3.pbw}, express \( y \) in the basis where \( x_n = f_1 \) and \( x_{n-1} = f_3 \).

Define \( y_1 \) as the sum of PBW monomials in \( y \) where \( f_3 \) does not appear. Since terms containing \( f_1 \) in \( y \) can be ignored, we write
\[
(y-y_1)f_1 v_\lambda^{\mathfrak{b}} = y_2 f_3 f_1 v_\lambda^{\mathfrak{b}}
\]
for some \( y_2 \), where \( y_2 \) is the sum of PBW monomials in which neither \( f_1 \) nor \( f_3 \) appears.

Then, we obtain
\[
y f_1 v_\lambda^{\mathfrak{b}} = (y_1 + y_2 f_3) f_1 v_\lambda^{\mathfrak{b}} = y_1 f_1 v_\lambda^{\mathfrak{b}} - y_2 f_1 f_3 v_\lambda^{\mathfrak{b}} + y_2 f v_\lambda^{\mathfrak{b}}.
\]
Here, using the notation from the PBW theorem \cref{2.3.pbw}, the right-hand side is expressed in the PBW basis where \( x_n = f_3 \) and \( x_{n-1} = f_1 \).

Since we can assume that \( z f_3 v_\lambda^{\mathfrak{b}} \) is already represented in this PBW basis, it cannot be equal to \( y f_1 v_\lambda^{\mathfrak{b}} \) unless \( y_1 = y_2 = 0 \).

Therefore, \( \operatorname{Im} \psi_{\lambda}^{r_1 \mathfrak{b}, \mathfrak{b}} \), which is generated by \( f_1 v_\lambda^{\mathfrak{b}} \), and \( \operatorname{Im} \psi_{\lambda}^{r_3 \mathfrak{b}, \mathfrak{b}} \), which is generated by \( f_3 v_\lambda^{\mathfrak{b}} \), must intersect trivially.

If \((\alpha^{ \mathfrak{b}}_1, \alpha^{ \mathfrak{b}}_3) = 0\), then we have
\[
f_1 f_3 v_\lambda^{\mathfrak{b}} + f_3 f_1 v_\lambda^{\mathfrak{b}} = 0.
\]

Thus, \( \operatorname{Im} \psi_{\lambda}^{r_1 \mathfrak{b}, \mathfrak{b}} \cap \operatorname{Im} \psi_{\lambda}^{r_3 \mathfrak{b}, \mathfrak{b}} \) is the submodule generated by \( f_1 f_3 v_\lambda^{\mathfrak{b}} \), which is \( \operatorname{Im} \psi_{\lambda}^{r_1 r_3 \mathfrak{b}, \mathfrak{b}} \).
\end{proof}

\begin{example}
As noted in \cite[Corollary~5.2]{chen2021translated}, when \( \mathfrak{g} \) is of type I and \( \mathfrak{b} \) is either distinguished or anti-distinguished, the Verma module \( M^{\mathfrak{b}}(\lambda) \) has a simple socle for all \( \lambda \).  

On the other hand, according to \cref{cap}, if \( \mathfrak{b} \) is neither distinguished nor anti-distinguished, then the socle of \( M^{\mathfrak{b}}(\lambda) \) is often not simple as following.
\end{example}

\begin{example}\label{M3}

Under the setting of \cref{cap}, assuming \( (\alpha^{ \mathfrak{b}}_1, \alpha^{ \mathfrak{b}}_3) \neq 0 \), the following forms a semibrick by \cref{h.w.semibrick} and \cref{cap}:  
\[
Z(\mathfrak{b}, \lambda, \alpha^{ \mathfrak{b}}_1,\alpha^{ \mathfrak{b}}_3) := \{\operatorname{Im} \psi_{\lambda}^{r_1 \mathfrak{b}, \mathfrak{b}}, M^{\mathfrak{b}}(\lambda - \rho^{\mathfrak{b}}) / (\operatorname{Im} \psi_{\lambda}^{r_1 \mathfrak{b}, \mathfrak{b}} + \operatorname{Im} \psi_{\lambda}^{r_3 \mathfrak{b}, \mathfrak{b}}) , \operatorname{Im} \psi_{\lambda}^{r_3 \mathfrak{b}, \mathfrak{b}} \}.
\]

Let \( \operatorname{Filt} Z(\mathfrak{b}, \lambda, \alpha^{ \mathfrak{b}}_1, \alpha^{ \mathfrak{b}}_3) \) denote the filtration closure of \( Z(\mathfrak{b}, \lambda, \alpha^{ \mathfrak{b}}_1, \alpha^{ \mathfrak{b}}_3) \) in the category \( \mathcal{W} \).

By \cref{cap}, since \( \operatorname{Im} \psi_{\lambda}^{r_1 \mathfrak{b}, \mathfrak{b}} + \operatorname{Im} \psi_{\lambda}^{r_3 \mathfrak{b}, \mathfrak{b}} \cong \operatorname{Im} \psi_{\lambda}^{r_1 \mathfrak{b}, \mathfrak{b}} \oplus \operatorname{Im} \psi_{\lambda}^{r_3 \mathfrak{b}, \mathfrak{b}} \), the corresponding projective covers of \(\operatorname{Im} \psi_{\lambda}^{r_1 \mathfrak{b}, \mathfrak{b}},M^{\mathfrak{b}}(\lambda - \rho^{\mathfrak{b}}) / (\operatorname{Im} \psi_{\lambda}^{r_1 \mathfrak{b}, \mathfrak{b}} + \operatorname{Im} \psi_{\lambda}^{r_3 \mathfrak{b}, \mathfrak{b}}) \) and \(\operatorname{Im} \psi_{\lambda}^{r_3 \mathfrak{b}, \mathfrak{b}}\) in \( \operatorname{Filt} Z(\mathfrak{b}, \lambda, \alpha^{ \mathfrak{b}}_1,\alpha^{ \mathfrak{b}}_3) \) are 
\[
M^{r_1 \mathfrak{b}}(\lambda - \rho^{r_1 \mathfrak{b}}), \quad M^{\mathfrak{b}}(\lambda - \rho^{\mathfrak{b}}), \quad \text{and} \quad M^{r_3 \mathfrak{b}}(\lambda - \rho^{r_3 \mathfrak{b}}),
\]
respectively (these are Ext-orthogonal to the three bricks by \cref{7.2.kiso}).

 The composition of the homomorphisms between the three projective covers can be easily described by odd Verma's theorem \cref{5.3main}. By Morita theory, it can be verified that \( \operatorname{Filt} Z(\mathfrak{b}, \lambda, \alpha^{ \mathfrak{b}}_1,\alpha^{ \mathfrak{b}}_3) \) is equivalent to the category of finite-dimensional modules over a finite-dimensional algebra defined by the following quiver and relations.

\begin{center}
\begin{tikzpicture}[->, thick, shorten >=1pt, node distance=2cm]

\node (A) at (0,0) {1};
\node (B) [right of=A] {2};
\node (C) [right of=B] {3};

\draw[->] (A) to[bend left=20] node[above] {$a$} (B);
\draw[->] (B) to[bend left=20] node[below] {$b$} (A);
\draw[->] (B) to[bend left=20] node[above] {$c$} (C);
\draw[->] (C) to[bend left=20] node[below] {$d$} (B);

\end{tikzpicture}
\end{center}

The relations are:
\[
ab = ba = cd = dc = 0.
\]

On the other hand,  the set
\[
Z(r_1 \mathfrak{b}, \lambda, \alpha^{ \mathfrak{b}}_1, \alpha^{ \mathfrak{b}}_3) := \{ M^{r_1 \mathfrak{b}}(\lambda - \rho^{r_1 \mathfrak{b}})/\operatorname{Im} \psi_{\lambda}^{ \mathfrak{b}, r_1 \mathfrak{b}}, \operatorname{Im} \psi_{\lambda}^{ \mathfrak{b}, r_1 \mathfrak{b}}/\operatorname{Im} \psi_{\lambda}^{ r_3 \mathfrak{b}, r_1 \mathfrak{b}}, \operatorname{Im} \psi_{\lambda}^{ r_3 \mathfrak{b}, r_1 \mathfrak{b}} \}
\]
also forms a semibrick, and the projective covers of the bricks in \( \operatorname{Filt} Z(r_1 \mathfrak{b}, \lambda, \alpha^{ \mathfrak{b}}_1, \alpha^{ \mathfrak{b}}_3) \) are again
\[
M^{r_1 \mathfrak{b}}(\lambda - \rho^{r_1 \mathfrak{b}}), \quad M^{\mathfrak{b}}(\lambda - \rho^{\mathfrak{b}}), \quad \text{and} \quad M^{r_3 \mathfrak{b}}(\lambda - \rho^{r_3 \mathfrak{b}}).
\]

Therefore, \( \operatorname{Filt} Z(\mathfrak{b}, \lambda, \alpha^{ \mathfrak{b}}_1, \alpha^{ \mathfrak{b}}_3) \) and \( \operatorname{Filt} Z(r_1\mathfrak{b}, \lambda, \alpha^{ \mathfrak{b}}_1, \alpha^{ \mathfrak{b}}_3) \) are equivalent as categories.
\end{example}

\begin{remark}
\cref{M3} readily generalizes to the setting where the quiver consists of \( n \) vertices aligned in a straight line.
\end{remark}

\begin{example}\label{M4}
    Under the setting of \cref{cap}, assuming \( (\alpha^{ \mathfrak{b}}_1, \alpha^{ \mathfrak{b}}_3) = 0 \). Then, the following forms a semibrick by \cref{h.w.semibrick} and \cref{cap}:  
\[
\begin{aligned}
Y(\mathfrak{b}, \lambda, \alpha^{\mathfrak{b}}_1, \alpha^{\mathfrak{b}}_3) := \{ &
\operatorname{Im} \psi_{\lambda}^{r_1 \mathfrak{b}, \mathfrak{b}} / \operatorname{Im} \psi_{\lambda}^{r_1 r_3 \mathfrak{b}, \mathfrak{b}},
M^{\mathfrak{b}}(\lambda - \rho^{\mathfrak{b}}) / (\operatorname{Im} \psi_{\lambda}^{r_1 \mathfrak{b}, \mathfrak{b}} + \operatorname{Im} \psi_{\lambda}^{r_3 \mathfrak{b}, \mathfrak{b}}), \\
& \operatorname{Im} \psi_{\lambda}^{r_3 \mathfrak{b}, \mathfrak{b}} / \operatorname{Im} \psi_{\lambda}^{r_1 r_3 \mathfrak{b}, \mathfrak{b}}, 
\operatorname{Im} \psi_{\lambda}^{r_1 r_3 \mathfrak{b}, \mathfrak{b}} \}.
\end{aligned}
\]

Let \( \operatorname{Filt} Y(\mathfrak{b}, \lambda, \alpha^{ \mathfrak{b}}_1, \alpha^{ \mathfrak{b}}_3) \) denote the filtration closure of \( Y(\mathfrak{b}, \lambda, \alpha^{ \mathfrak{b}}_1, \alpha^{ \mathfrak{b}}_3) \) in the category \( \mathcal{W} \).

The corresponding projective covers of \( \operatorname{Im} \psi_{\lambda}^{r_1 \mathfrak{b}, \mathfrak{b}}/\operatorname{Im} \psi_{\lambda}^{r_1 r_3 \mathfrak{b}, \mathfrak{b}}, M^{\mathfrak{b}}(\lambda - \rho^{\mathfrak{b}}) / (\operatorname{Im} \psi_{\lambda}^{r_1 \mathfrak{b}, \mathfrak{b}} + \operatorname{Im} \psi_{\lambda}^{r_3 \mathfrak{b}, \mathfrak{b}}) , \operatorname{Im} \psi_{\lambda}^{r_3 \mathfrak{b}, \mathfrak{b}}/\operatorname{Im} \psi_{\lambda}^{r_1 r_3 \mathfrak{b}, \mathfrak{b}}, \operatorname{Im} \psi_{\lambda}^{r_1 r_3 \mathfrak{b}, \mathfrak{b}} \) in \( \operatorname{Filt} Y(\mathfrak{b}, \lambda, \alpha^{ \mathfrak{b}}_1, \alpha^{ \mathfrak{b}}_3) \) are  
\[
M^{r_1 \mathfrak{b}}(\lambda - \rho^{r_1 \mathfrak{b}}), \quad M^{\mathfrak{b}}(\lambda - \rho^{\mathfrak{b}}), \quad M^{r_3 \mathfrak{b}}(\lambda - \rho^{r_3 \mathfrak{b}}), \quad \text{and} \quad M^{r_1
r_3 \mathfrak{b}}(\lambda - \rho^{r_1 r_3 \mathfrak{b}}),
\]  
respectively (these are Ext-orthogonal to the three bricks by \cref{7.2.kiso}).  

The composition of the homomorphisms between the three projective covers can be easily described by odd Verma's theorem \cref{5.3main}. By Morita theory, it can be verified that \( \operatorname{Filt} Y(\mathfrak{b}, \lambda, \alpha^{ \mathfrak{b}}_1, \alpha^{ \mathfrak{b}}_3) \) is equivalent to the category of finite-dimensional modules over a finite-dimensional algebra defined by the following quiver and relations.

\begin{center}
    \begin{tikzpicture}[->, thick, shorten >=1pt, node distance=2cm]

\node (A) at (0,0) {2};
\node (B) [below left of=A] {1};
\node (C) [below right of=A] {3};
\node (D) [below right of=B] {4};

\draw[->] (A) to[bend left=20] node[below] {$a$} (B);
\draw[->] (B) to[bend left=20] node[above] {$b$} (A);
\draw[->] (A) to[bend left=20] node[above] {$c$} (C);
\draw[->] (C) to[bend left=20] node[below] {$d$} (A);
\draw[->] (D) to[bend left=20] node[below] {$e$} (B);
\draw[->] (B) to[bend left=20] node[above] {$f$} (D);
\draw[->] (D) to[bend left=20] node[above] {$g$} (C);
\draw[->] (C) to[bend left=20] node[below] {$h$} (D);

\end{tikzpicture}
\end{center}

The relations are:
\begin{align*}
& ab = ba = cd = dc = gh = hg = ef = fe = 0, \\
& bch = che = heb = ebc = afg = fgd = gda = daf = 0, \\
& af - ch = eb - gd = bc - fg = da - he = 0.
\end{align*}
\end{example}

\begin{remark}
The square-shaped structure in \cref{M4} readily generalizes to the setting of the hypercube version \( Q_n \) \cref{g.qn}.
\end{remark}

\section{Associated varieties and projective dimension of Verma modules} \label{sec:applications}
We retain the setting of  \cref{sec:shortest_paths}.
\subsection{BGG category \( \mathcal{O} \)}

\begin{definition}
Let \( \mathfrak{b} \in \mathfrak{B(g)} \).  The category $\mathcal{O}$ is defined as the Serre subcategory of $\mathcal{W}$ generated by the semibrick 
\[
\{ L^{\mathfrak{b}}(\lambda) \mid \lambda \in \mathfrak{h}^* \}.
\]
Moreover, the embedding \( \mathcal{O} \hookrightarrow \mathcal{W} \) is extension full \cite{delorme1980extensions,coulembier2015homological}.
According to \cref{prop:verma_homomorphism}, the structure as an abelian category depends only on \( \mathfrak{b}_{\overline{0}} \) (however,the highest weight structure depends strongly on \( \mathfrak{b} \)).
\end{definition}

We fix, once and for all, the standard even Borel subalgebra \( \mathfrak{b}_{\overline{0}} \).

Similarly, by replacing \( \mathfrak{g} \) with \( \mathfrak{g}_{\overline{0}} \), we define \( \mathcal{O}_{\overline{0}} \) as a full subcategory of \( \mathfrak{g}_{\overline{0}} \)-sMod. 

In other words,
\(
\mathcal{O} = \{ M \in \mathfrak{g}\text{-sMod} \mid \operatorname{Res}_{\mathfrak{g}_{\overline{0}}}^{\mathfrak{g}} M \in \mathcal{O}_{\overline{0}} \}.
\)
In particular, \( \mathcal{O} \) is finite length since \(\mathcal{O}_{\overline{0}}\) is finite length.

\(\operatorname{Res}_{\mathfrak{g}_{\overline{0}}}^{\mathfrak{g}}\) and \(\operatorname{Ind}_{\mathfrak{g}_{\overline{0}}}^{\mathfrak{g}} = U(\mathfrak{g}) \otimes_{U(\mathfrak{g}_{\overline{0}})}\) - are exact functors between \( \mathcal{O}_{\overline{0}} \) and \( \mathcal{O} \) (\cite{bell1993theory} Theorem 2.2,\cite{coulembier2017gorenstein} Section 6.1).

The projective cover of \( L^{\mathfrak{b}}(\lambda) \) in \( \mathcal{O} \) (whose existence is guaranteed by \cref{O.indproj}) is denoted by \( P^{\mathfrak{b}}(\lambda) \).  
Similarly, the projective cover of \( L_{\overline{0}}(\lambda) \) in \( \mathcal{O}_{\overline{0}} \) is denoted by \( P_{\overline{0}}(\lambda) \).

\begin{definition}

    For fixed \( \mathfrak{b} \in B(\mathfrak{g}) \), we say that an object has a \( \mathfrak{b} \)-Verma flag if it admits a filtration by \( \mathfrak{b} \)-Verma modules.
    Let \( \mathcal{F}{\Delta^{\mathfrak{b}}} \) denote the full subcategory of \( \mathcal{O} \) consisting of objects with a \( \mathfrak{b} \)-Verma flag.

\end{definition}

We summarize some basic facts about projective modules induced from the even part, which are generally known. 

\begin{proposition}\label{O.indproj}
Let \( \lambda \in \mathfrak{h}^* \).  
Fix a Borel subalgebra \( \mathfrak{b} \).

\begin{enumerate}

    \item \( \operatorname{Ind}_{\mathfrak{g}_{\overline{0}}}^{\mathfrak{g}} M_{\overline{0}}(\lambda) \in \bigcap_{\mathfrak{b} \in \mathfrak{B}} \mathcal{F}{\Delta^{\mathfrak{b}}}; \)

    \item \( \operatorname{Ind}_{\mathfrak{g}_{\overline{0}}}^{\mathfrak{g}} P_{\overline{0}}(\lambda) \in \bigcap_{\mathfrak{b} \in \mathfrak{B}} \mathcal{F}{\Delta^{\mathfrak{b}}}; \)

    \item \( \operatorname{Ind}_{\mathfrak{g}_{\overline{0}}}^{\mathfrak{g}} P_{\overline{0}}(\lambda) \) is projective in \( \mathcal{O}; \)

    \item \( \operatorname{Ind}_{\mathfrak{g}_{\overline{0}}}^{\mathfrak{g}} M_{\overline{0}}(\lambda) \) is a quotient of \( \operatorname{Ind}_{\mathfrak{g}_{\overline{0}}}^{\mathfrak{g}} P_{\overline{0}}(\lambda); \)

    \item  
    \( M^{\mathfrak{b}}(\lambda) \) is a quotient of \( \operatorname{Ind}_{\mathfrak{g}_{\overline{0}}}^{\mathfrak{g}} M_{\overline{0}}(\lambda); \)

    \item \( P^{\mathfrak{b}}(\lambda) \) is a direct summand of \( \operatorname{Ind}_{\mathfrak{g}_{\overline{0}}}^{\mathfrak{g}} P_{\overline{0}}(\lambda); \)

    \item \( P^{\mathfrak{b}}(\lambda) \in \bigcap_{\mathfrak{b} \in \mathfrak{B}} \mathcal{F}{\Delta^{\mathfrak{b}}}; \)

    \item \( \mathcal{O} \) is a highest weight category with respect to the partial order defined by \( \Delta^{\mathfrak{b}+} \), where \( \mathfrak{b} \)-Verma modules are the standard objects.
\end{enumerate}
\end{proposition}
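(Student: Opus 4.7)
The plan is to dispatch the eight items in order, with item (1) carrying the main geometric content via PBW and the rest following by adjunction and straightforward reductions. For (1), \cref{2.3.pbw} makes $U(\mathfrak{g})$ free as a right $U(\mathfrak{b})$-module, so $U(\mathfrak{g}) \otimes_{U(\mathfrak{b})}(-)$ is exact. I would rewrite
\begin{equation*}
\operatorname{Ind}_{\mathfrak{g}_{\overline{0}}}^{\mathfrak{g}} M_{\overline{0}}(\lambda) \;=\; U(\mathfrak{g}) \otimes_{U(\mathfrak{b}_{\overline{0}})} k_\lambda \;=\; U(\mathfrak{g}) \otimes_{U(\mathfrak{b})} V,
\end{equation*}
where $V := U(\mathfrak{b}) \otimes_{U(\mathfrak{b}_{\overline{0}})} k_\lambda$ is a finite-dimensional $\mathfrak{b}$-module of dimension $2^{|\Delta^{\mathfrak{b}+}_{\overline{1}}|}$. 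On $V$ the subalgebra $\mathfrak{n}^{\mathfrak{b}+}$ acts nilpotently, since its action strictly raises weights in the positivity order; consequently $V$ admits a filtration by $\mathfrak{b}$-submodules with one-dimensional quotients $k_\mu$. Inducing this filtration through the exact functor $U(\mathfrak{g}) \otimes_{U(\mathfrak{b})}(-)$ produces a $\mathfrak{b}$-Verma flag, establishing (1). Item (2) follows by combining (1) with the classical $\mathfrak{g}_{\overline{0}}$-Verma flag of $P_{\overline{0}}(\lambda)$ and the exactness of $\operatorname{Ind}^{\mathfrak{g}}_{\mathfrak{g}_{\overline{0}}}$.

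Items (3)--(6) are essentially formal. Tensor-Hom adjunction gives
\begin{equation*}
\operatorname{Hom}_{\mathfrak{g}}\bigl(\operatorname{Ind}^{\mathfrak{g}}_{\mathfrak{g}_{\overline{0}}} P_{\overline{0}}(\lambda),\, -\bigr) \;\cong\; \operatorname{Hom}_{\mathfrak{g}_{\overline{0}}}\bigl(P_{\overline{0}}(\lambda),\, \operatorname{Res}^{\mathfrak{g}}_{\mathfrak{g}_{\overline{0}}}(-)\bigr),
\end{equation*}
a composition of exact functors on $\mathcal{O}$, proving (3). Item (4) is immediate from the right-exactness of induction applied to the surjection $P_{\overline{0}}(\lambda) \twoheadrightarrow M_{\overline{0}}(\lambda)$ supplied by the highest weight structure of $\mathcal{O}_{\overline{0}}$. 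For (5), the $\mathfrak{b}$-module $V$ from (1) has $k_\lambda$ as a $\mathfrak{b}$-module quotient (kill the $\mathfrak{b}$-submodule of strictly higher weights, which exists by the same nilpotency argument); the exact functor $U(\mathfrak{g}) \otimes_{U(\mathfrak{b})}(-)$ then yields $\operatorname{Ind}^{\mathfrak{g}}_{\mathfrak{g}_{\overline{0}}} M_{\overline{0}}(\lambda) \twoheadrightarrow M^{\mathfrak{b}}(\lambda)$. Chaining (3), (4), (5), and the canonical surjection $M^{\mathfrak{b}}(\lambda) \twoheadrightarrow L^{\mathfrak{b}}(\lambda)$ produces a surjection from the projective $\operatorname{Ind}^{\mathfrak{g}}_{\mathfrak{g}_{\overline{0}}} P_{\overline{0}}(\lambda)$ onto $L^{\mathfrak{b}}(\lambda)$, so its projective cover $P^{\mathfrak{b}}(\lambda)$ appears as a direct summand, which is (6).

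Item (7) combines (2) and (6) with the general fact that $\mathcal{F}{\Delta^{\mathfrak{b}'}}$ is closed under direct summands, an $\operatorname{Ext}^1$-vanishing argument against dual Verma modules standard in highest-weight theory. Finally, (8) is a verification of the Cline--Parshall--Scott axioms: simples are indexed by $\mathfrak{h}^*$, $\mathfrak{b}$-Verma modules serve as the standard objects for the partial order induced by $\Delta^{\mathfrak{b}+}$, and each indecomposable projective carries a Verma flag topped by the appropriate Verma module by (6) together with the weight analysis in (5). The main technical subtlety I anticipate lies in (7): the closure of $\mathcal{F}{\Delta^{\mathfrak{b}'}}$ under summands must be set up carefully in the super setting, where parity shifts, non-integral weights, and potentially infinite blocks all complicate the classical dual Verma argument. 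Everything else is essentially PBW bookkeeping combined with the exactness of induction and restriction.
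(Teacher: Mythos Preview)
The paper does not supply a proof of this proposition; it is introduced with the sentence ``We summarize some basic facts about projective modules induced from the even part, which are generally known,'' and no argument follows. Your outline is the standard one found in the literature (e.g., Humphreys, Musson, Cheng--Wang) and is correct.

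One small logical point worth tightening: your route to (7) uses closure of $\mathcal{F}\Delta^{\mathfrak{b}'}$ under direct summands, which you justify via the $\operatorname{Ext}^1$-vanishing characterisation against costandard modules. That characterisation is itself a consequence of the highest weight structure (8) for $\mathfrak{b}'$. To avoid circularity, establish (8) first for each fixed $\mathfrak{b}'$ using only the case $\mathfrak{b}'=\mathfrak{b}$ of (7), which can be proved directly (the standard BGG argument constructs $P^{\mathfrak{b}}(\lambda)$ as a quotient of $U(\mathfrak{g})\otimes_{U(\mathfrak{b})}W$ for a suitable finite-dimensional $\mathfrak{b}$-module $W$ and reads off the $\mathfrak{b}$-Verma flag from a weight filtration of $W$, exactly as in your argument for (1)). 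Once (8) is in place for every $\mathfrak{b}'$, the $\operatorname{Ext}^1$ characterisation is available and your deduction of the full statement (7) from (2) and (6) goes through. You already flag this as the main subtlety, so this is just a remark on the order of presentation rather than a gap.
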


\subsection{Duflo-Serganova functors}

For basic materials on associated varieties, we refer to \cite{gorelik2022duflo,coulembier2017homological}. 
\begin{definition}

    Define a subset \( X \subseteq \mathfrak{g}_{\overline{1}} \) by:
    \[
    X := \{ x \in \mathfrak{g}_{\overline{1}} \mid [x, x] = 0 \}.
    \]
    
    For \( M \in \mathfrak{g}\text{-sMod} \), we define a supervector space \( DS_x M \) by:
    \[
    DS_x M := \ker x_M / \operatorname{Im} x_M,
    \]
    where \( x_M \) denotes the action of \( x \) on \( M \). The associated variety of \( M \), denoted \( X_M \), is defined by:
    \[
    X_M := \{ x \in X \mid DS_x M \neq 0 \}.
    \]
\end{definition}

\begin{lemma}[\cite{gorelik2022duflo} Lemma 2.20]\label{7.1.ind}
For \( x \in X \) and \( M \in \mathfrak{g}_{\overline{0}}\text{-sMod} \), we have:
\[
    DS_x \operatorname{Ind}_{\mathfrak{g}_{\overline{0}}}^{\mathfrak{g}} M = 0.
\]
\end{lemma}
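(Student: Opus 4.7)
The plan is to apply the PBW theorem (\cref{2.3.pbw}) in a particularly convenient ordering. The condition $x \in X$ means $[x,x]=0$, and since we are in characteristic $0$, this is equivalent to $x^2 = \tfrac{1}{2}[x,x] = 0$ in $U(\mathfrak{g})$. Thus left multiplication by $x$ is a genuine differential on $\operatorname{Ind}_{\mathfrak{g}_{\overline{0}}}^{\mathfrak{g}} M$, and $DS_x\,\operatorname{Ind}_{\mathfrak{g}_{\overline{0}}}^{\mathfrak{g}} M$ is precisely its cohomology.

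First, I would assume $x\neq 0$ (the case $x=0$ is vacuous since then $x\notin X$ in the relevant sense, or $DS_0 N = N$ and $\operatorname{Ind}$ is non-problematic; more simply, the argument below covers all $x$ with $x^2=0$). Extend $x$ to a $\mathbb{Z}/2\mathbb{Z}$-homogeneous basis of $\mathfrak{g}$ in the order $x_1 := x,\ x_2,\dots,x_n$ (odd), followed by $y_1,\dots,y_k$ (even). By \cref{2.3.pbw}, $U(\mathfrak{g})$ has a basis consisting of ordered monomials $x_1^{a_1}\cdots x_n^{a_n} y_1^{b_1}\cdots y_k^{b_k}$ with $a_i\in\{0,1\}$ and $b_j\in\mathbb{Z}_{\geq 0}$. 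Since the even monomials can be absorbed into $M$ across the tensor product $\otimes_{U(\mathfrak{g}_{\overline{0}})}$, the space $\operatorname{Ind}_{\mathfrak{g}_{\overline{0}}}^{\mathfrak{g}} M$ has a $k$-basis
$$\bigl\{\, x_1^{a_1}x_2^{a_2}\cdots x_n^{a_n}\otimes m \,\bigm|\, a_i\in\{0,1\},\ m\in \mathcal{B}\,\bigr\},$$
where $\mathcal{B}$ is a fixed $k$-basis of $M$.

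Next, I would compute the $x$-action on this basis. The crucial point is that because $x=x_1$ is the first basis element, left multiplication by $x$ on a PBW monomial with $a_1=0$ simply prepends $x_1$, producing the PBW monomial with $a_1=1$; no straightening or sign corrections occur. On the other hand, on a monomial with $a_1=1$, the result involves $x\cdot x_1 = x^2 = 0$. Concretely,
\begin{align*}
x\cdot (x_2^{a_2}\cdots x_n^{a_n}\otimes m) &= x_1 x_2^{a_2}\cdots x_n^{a_n}\otimes m, \\
x\cdot (x_1 x_2^{a_2}\cdots x_n^{a_n}\otimes m) &= x^2\,x_2^{a_2}\cdots x_n^{a_n}\otimes m = 0.
\end{align*}

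Finally, decompose the induced module as $C_0 \oplus C_1$, where $C_i$ is the span of basis vectors with $a_1=i$. The computation above shows that $x$ acts by an isomorphism $C_0\xrightarrow{\sim} C_1$ and by zero on $C_1$. Hence $\ker x = C_1 = \operatorname{Im} x$, giving $DS_x\,\operatorname{Ind}_{\mathfrak{g}_{\overline{0}}}^{\mathfrak{g}} M = 0$. The only subtlety, which is really just a matter of bookkeeping rather than an obstacle, is ensuring that $x$ is placed first in the chosen PBW ordering so that the action becomes transparent and no Jacobi-style correction terms appear.
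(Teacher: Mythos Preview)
Your proof is correct and is exactly the argument the paper has in mind: the paper's own proof consists of the single sentence ``This follows from \cref{2.3.pbw},'' and you have simply spelled out the details of the PBW computation with $x$ placed first. (Your parenthetical about $x=0$ is slightly muddled---the argument genuinely needs $x\neq 0$ to extend $x$ to a basis, and the lemma is tacitly for nonzero $x$ since $DS_0$ is the identity---but this does not affect the substance.)
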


\begin{proof}
    This follows from \cref{2.3.pbw}.
\end{proof}

\begin{proposition}[\cite{gorelik2022duflo}]
    Let \( x \in X \).
    \begin{enumerate}
        \item For the adjoint module \( \mathfrak{g} \in \mathfrak{g}\text{-sMod} \), the module \( \mathfrak{g}_x := DS_x(\mathfrak{g}) \) naturally inherits the structure of a basic Lie superalgebra;
        \item There exists a symmetric monoidal \( k \)-linear functor:
        \[
            DS_x: \mathfrak{g}\text{-sMod} \rightarrow \mathfrak{g}_x\text{-sMod}.
        \]
        This functor is known as the \emph{Duflo-Serganova functor}.
    \end{enumerate}
\end{proposition}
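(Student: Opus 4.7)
The plan is to derive both parts from the super-Jacobi identity and the identity $x^2 = 0$ (as an operator on any super module), which follows from $[x,x]=0$ interpreted as a supercommutator together with the fact that $x$ is odd.

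For part (1), I would observe that $\operatorname{ad} x \colon \mathfrak{g} \to \mathfrak{g}$ is an odd super-derivation that squares to zero. The super-Jacobi identity immediately gives that $\ker(\operatorname{ad} x) \subseteq \mathfrak{g}$ is a graded Lie sub-superalgebra and that $\operatorname{Im}(\operatorname{ad} x) \cap \ker(\operatorname{ad} x)$ is a two-sided graded ideal in it: for $g = [x,h]$ and $b \in \ker(\operatorname{ad} x)$, Jacobi reads $[g,b] = [x,[h,b]] \in \operatorname{Im}(\operatorname{ad} x)$. Hence $\mathfrak{g}_x$ inherits a canonical Lie superalgebra structure. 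For \emph{basic}, the supersymmetric invariant bilinear form from \cref{3.1delta} descends to $\mathfrak{g}_x$ because $\operatorname{ad} x$ is skew with respect to it, so the radical of the restricted form on $\ker(\operatorname{ad} x)$ contains $\operatorname{Im}(\operatorname{ad} x)$; the identification of $\mathfrak{g}_x$ with an entry on the basic list is then a case-by-case check, which is the substance of the cited \cite{gorelik2022duflo}.

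For part (2), the same mechanism transfers to an arbitrary $M \in \mathfrak{g}\text{-sMod}$. For $g \in \ker(\operatorname{ad} x)$ and $v \in \ker x_M$, the computation $x_M(g v) = [x,g]_M v \pm g \cdot x_M v = 0$ shows $\ker x_M$ is stable under $\ker(\operatorname{ad} x)$; and for $g = [x,h] \in \operatorname{Im}(\operatorname{ad} x)$ one gets $g v = x_M(h v) \in \operatorname{Im} x_M$, so $\operatorname{Im}(\operatorname{ad} x)$ acts by zero on $DS_x(M)$. Thus $DS_x(M)$ is naturally a $\mathfrak{g}_x$-module, and the construction is manifestly functorial and $k$-linear in $M$.

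The main obstacle is the symmetric monoidal structure, namely a natural isomorphism $DS_x(M \otimes N) \cong DS_x(M) \otimes DS_x(N)$ compatible with associator and braiding. Since $x$ acts on $M \otimes N$ as the super-derivation $x_M \otimes \operatorname{id} + \operatorname{id} \otimes x_N$ and still satisfies $x^2 = 0$, the standard approach is a super-Künneth argument: as a module over the super-commutative algebra $k\langle x\rangle/(x^2)$, any super vector space splits (up to parity shift) into copies of the trivial module and of the regular module, and one verifies the isomorphism on the four tensor products of these building blocks, where it reduces to a direct sign check. Hexagon and pentagon coherences then follow from the sign conventions for the graded tensor product of super vector spaces. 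This is the one place where signs must be tracked carefully; in practice one invokes Gorelik's construction in \cite{gorelik2022duflo} directly.
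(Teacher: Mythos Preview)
The paper does not prove this proposition at all; it is stated with a bare citation to \cite{gorelik2022duflo} and used as a black box. Hence there is no argument in the paper to compare against. Your outline is the standard construction found in the cited reference and is correct in structure: the super-Jacobi identity gives that $\ker(\operatorname{ad} x)$ is a sub-superalgebra with $\operatorname{Im}(\operatorname{ad} x)$ an ideal (note that since $(\operatorname{ad} x)^2=0$ one already has $\operatorname{Im}(\operatorname{ad} x)\subseteq\ker(\operatorname{ad} x)$, so the intersection you wrote is redundant), the invariant form descends, and the case-by-case identification of $\mathfrak{g}_x$ on the basic list is indeed the nontrivial content supplied by \cite{gorelik2022duflo}. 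The functoriality and the K\"unneth argument for the monoidal structure are likewise the standard ones. In short, your sketch is sound and matches the literature the paper is quoting, but there is no ``paper's own proof'' here to speak of.
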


\begin{lemma}[Hinich's Lemma  \cite{gorelik2022duflo}]\label{Hinich}
    Let \( x \in X \).
    Given a short exact sequence 
    \[
    0 \rightarrow L \rightarrow M \rightarrow N \rightarrow 0
    \] 
    in \( \mathfrak{g}\text{-sMod} \), there exists \( E \in \mathfrak{g}_x\text{-sMod} \) such that the following sequence is exact in \( \mathfrak{g}_x\text{-sMod} \):
\[
0 \to E \to DS_x L \to DS_x M \to DS_x N \to \Pi E \to 0.
\]
\end{lemma}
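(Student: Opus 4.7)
The plan is to treat Hinich's Lemma as the $\mathbb{Z}/2$-graded analogue of the usual long exact sequence in cohomology arising from a short exact sequence of chain complexes. For any $M \in \mathfrak{g}\text{-sMod}$, the endomorphism $x_M$ has odd parity and, since $[x,x]=0$ implies $x_M^2=0$, equips $M$ with the structure of a $\mathbb{Z}/2$-graded complex whose cohomology is precisely $DS_x M = \ker x_M / \operatorname{Im} x_M$, graded by parity. The short exact sequence $0 \to L \to M \to N \to 0$ becomes a short exact sequence of such complexes.

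The first step is to construct the connecting homomorphism by the standard snake-lemma diagram chase: given $[n] \in DS_x N$ with $n \in \ker x_N$, lift $n$ to $m \in M$, write $x_M m = \iota(\ell)$ for a unique $\ell \in L$, and observe that $x_L \ell = 0$ because $x_M^2 = 0$ and $\iota$ is injective. Setting $\delta([n]) := [\ell]$ gives a well-defined morphism; crucially, because $x_M$ reverses parity, so does $\delta$, so it becomes a parity-preserving morphism $\delta \colon DS_x N \to \Pi DS_x L$. The resulting long exact sequence in the $\mathbb{Z}/2$-graded setting collapses via $\Pi^2 = \operatorname{id}$ into a $6$-term cyclic hexagon
\[
DS_x L \to DS_x M \to DS_x N \xrightarrow{\delta} \Pi DS_x L \to \Pi DS_x M \to \Pi DS_x N \xrightarrow{\Pi\delta} DS_x L,
\]
whose exactness at each vertex is verified by routine diagram chasing with parity bookkeeping.

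With the hexagon at hand, define $E := \ker(DS_x L \to DS_x M)$. Exactness at $DS_x L$ identifies $E$ with the image of $\delta$, and exactness at $\Pi DS_x N$ identifies this image with $\operatorname{coker}(\Pi DS_x M \to \Pi DS_x N) = \Pi\operatorname{coker}(DS_x M \to DS_x N)$. Therefore $\Pi E \cong \operatorname{coker}(DS_x M \to DS_x N)$, and splicing the three middle maps of the hexagon through $E$ and $\Pi E$ yields precisely the claimed sequence
\[
0 \to E \to DS_x L \to DS_x M \to DS_x N \to \Pi E \to 0.
\]

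Since $DS_x \colon \mathfrak{g}\text{-sMod} \to \mathfrak{g}_x\text{-sMod}$ is a functor, the two outer maps in the hexagon are $\mathfrak{g}_x$-equivariant, so $E$ inherits a natural $\mathfrak{g}_x$-module structure as their kernel. The only nontrivial point is $\mathfrak{g}_x$-equivariance of $\delta$: this follows because any $\bar g \in \mathfrak{g}_x$ lifts to $g \in \mathfrak{g}$ with $[x,g]=0$, hence $g$ preserves $\ker x$ and $\operatorname{Im} x$ and commutes with the lift/pullback steps defining $\delta$ up to $x$-boundaries. I anticipate that the main obstacle is simply the careful tracking of parities and signs through this standard snake-lemma argument; no deeper input beyond $x_M^2 = 0$ and the functoriality of $DS_x$ is needed.
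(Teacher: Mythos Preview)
The paper does not give its own proof of Hinich's Lemma; it simply cites the result from \cite{gorelik2022duflo} and uses it as a black box. Your proposal is correct and is precisely the standard argument: view each module as a $\mathbb{Z}/2$-graded complex with differential $x$, apply the snake lemma to obtain the periodic six-term hexagon, and then truncate by setting $E = \ker(DS_x L \to DS_x M)$. The $\mathfrak{g}_x$-equivariance check for the connecting map is handled exactly as you describe. There is nothing to compare against in the paper itself, and your write-up would serve as a self-contained proof that the paper omits.
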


\begin{proposition}[\cite{coulembier2017homological} Theorem 4.1]\label{7.1.pd}
    Let \( M \in \mathcal{O} \). If \( \operatorname{pd}_\mathcal{O} M < \infty \), then \( X_M = \{0\} \).
    Here, \( \operatorname{pd}_\mathcal{O} \) represents the projective dimension in \( \mathcal{O} \).
\end{proposition}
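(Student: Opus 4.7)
The plan is to combine the vanishing of \(DS_x\) on projective objects of \(\mathcal{O}\) with a descending induction along a finite projective resolution via \cref{Hinich}.

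First, I would observe that \(DS_x P = 0\) for every projective \(P \in \mathcal{O}\) and every nonzero \(x \in X\). By \cref{O.indproj}(6), every indecomposable projective \(P^{\mathfrak{b}}(\mu)\) is a direct summand of \(\operatorname{Ind}_{\mathfrak{g}_{\overline{0}}}^{\mathfrak{g}} P_{\overline{0}}(\mu)\); since \(DS_x\) is a \(k\)-linear functor it respects direct summands, so \cref{7.1.ind} immediately gives \(DS_x P^{\mathfrak{b}}(\mu) = 0\). A general projective in \(\mathcal{O}\) is a direct sum of such indecomposables, so we conclude \(DS_x P = 0\) for all projective \(P\).

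Next, assume \(\operatorname{pd}_\mathcal{O} M = n < \infty\) and fix a resolution
\[
0 \longrightarrow P_n \longrightarrow P_{n-1} \longrightarrow \cdots \longrightarrow P_0 \longrightarrow M \longrightarrow 0
\]
with each \(P_i\) projective in \(\mathcal{O}\). Let \(Z_i\) denote the \(i\)-th syzygy, so that \(Z_0 = M\), \(Z_n \cong P_n\), and there are short exact sequences
\[
0 \longrightarrow Z_{i+1} \longrightarrow P_i \longrightarrow Z_i \longrightarrow 0
\]
for \(0 \le i \le n-1\). I would argue by descending induction on \(i\). The base case is immediate since \(Z_n\) is projective. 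For the inductive step, assuming \(DS_x Z_{i+1} = 0\), applying \cref{Hinich} to the \(i\)-th sequence produces a six-term exact sequence in which both \(DS_x Z_{i+1}\) and \(DS_x P_i\) vanish; a short diagram chase then forces the connecting term \(E\) and its shift \(\Pi E\) to vanish and squeezes \(DS_x Z_i\) between two zero objects, so \(DS_x Z_i = 0\). Iterating down to \(i = 0\) yields \(DS_x M = 0\), and since this holds for every nonzero \(x \in X\), we conclude \(X_M \subseteq \{0\}\).

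The argument is modular and I do not anticipate any serious obstacle. The substantive input is packaged in \cref{O.indproj,7.1.ind,Hinich}: once \(DS_x\) is known to annihilate every projective of \(\mathcal{O}\), the induction is purely formal. The only point of care is the diagram chase in \cref{Hinich} confirming that an object sitting between two zeros in an exact sequence must itself vanish, which is routine.
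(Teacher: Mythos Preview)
Your proposal is correct and follows essentially the same route as the paper: first deduce \(DS_x P = 0\) for all projectives in \(\mathcal{O}\) from \cref{O.indproj} and \cref{7.1.ind}, then propagate this vanishing down a finite projective resolution via \cref{Hinich}. Your version is simply more explicit about the syzygy induction and the diagram chase than the paper's terse summary.
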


\begin{proof}
    Any indecomposable projective module can be expressed as a direct summand of projective modules induced by the even part, as shown in \cref{O.indproj}. Consequently, any projective module vanishes under the application of \( DS_x \), as stated in \cref{7.1.ind}. On the other hand, in \cref{Hinich}, if \( DS_x L = DS_x M = 0 \), then \( DS_x N = 0 \) follows as well. Thus, if \( \operatorname{pd}_\mathcal{O} M < \infty \), then applying \( DS_x \) finitely many times to a finite projective resolution of \( M \) results in the vanishing of all terms. This proves the assertion.
\end{proof}

\begin{remark}
     \cite[Theorem 4.1]{coulembier2017homological} states that when \( \mathfrak{g} = \mathfrak{gl}(m|n) \), the converse of the above proposition also holds. Moreover, it is shown that having both a Kac flag and a dual Kac flag are also equivalent. In particular, the modules in \( \mathcal{F} \Delta^\emptyset \cap \mathcal{F} \Delta^{(n^m)} \) have finite projective dimension.
\end{remark}

Recall, the finistic dimension \( \operatorname{fin.dim} \mathcal{A} \) of an abelian category \( \mathcal{A} \) is defined as 
\[
\operatorname{fin.dim} \mathcal{A} := \sup \{ \operatorname{pd}_\mathcal{A} M \mid M \in \mathcal{A}, \operatorname{pd}_\mathcal{A} M < \infty \}.
\]
If \( \operatorname{pd}_\mathcal{O} M < \infty \), then  it can be concretely bounded from above by the following theorem.

\begin{theorem}[\cite{mazorchuk2014parabolic,chen2023some}]
Let \( \mathfrak{g} \) be a basic Lie superalgebra. Then,
\[
\operatorname{fin.dim} \mathcal{O} = \operatorname{gl.dim} \mathcal{O}_{\overline{0}} = 2l(w_0),
\]
where \( w_0 \) is the longest element in the Weyl group of \( \mathfrak{g}_{\overline{0}} \).
\end{theorem}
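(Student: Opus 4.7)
The plan is to prove the two equalities separately. The second, $\operatorname{gl.dim}\mathcal{O}_{\overline{0}}=2l(w_0)$, is the classical computation of the global dimension of the BGG category of a reductive Lie algebra, which I would invoke as a black-box result from \cite{humphreys2021representations}. The substantive content is the first equality $\operatorname{fin.dim}\mathcal{O}=2l(w_0)$, which I would establish by sandwich. Both halves rely on the Eckmann--Shapiro isomorphism
\[
\operatorname{Ext}^k_{\mathcal{O}}(\operatorname{Ind}^{\mathfrak{g}}_{\mathfrak{g}_{\overline{0}}} A,\,N)\;\cong\;\operatorname{Ext}^k_{\mathcal{O}_{\overline{0}}}(A,\,\operatorname{Res}^{\mathfrak{g}}_{\mathfrak{g}_{\overline{0}}} N),
\]
which holds because $\operatorname{Ind}^{\mathfrak{g}}_{\mathfrak{g}_{\overline{0}}}$ is left adjoint to the exact functor $\operatorname{Res}^{\mathfrak{g}}_{\mathfrak{g}_{\overline{0}}}$ and sends projectives to projectives by \cref{O.indproj}.

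For the \emph{lower bound}, I would choose $\lambda\in\mathfrak{h}^*$ with $\operatorname{pd}_{\mathcal{O}_{\overline{0}}}L_{\overline{0}}(\lambda)=2l(w_0)$ (possible by the second equality, since the global dimension of a highest-weight category is attained on a simple object) and set $M:=\operatorname{Ind}^{\mathfrak{g}}_{\mathfrak{g}_{\overline{0}}}L_{\overline{0}}(\lambda)\in\mathcal{O}$. Applying the exact, projective-preserving functor $\operatorname{Ind}^{\mathfrak{g}}_{\mathfrak{g}_{\overline{0}}}$ to a minimal projective resolution of $L_{\overline{0}}(\lambda)$ immediately yields $\operatorname{pd}_{\mathcal{O}}M\leq 2l(w_0)<\infty$. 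For the matching inequality, I pick $B\in\mathcal{O}_{\overline{0}}$ with $\operatorname{Ext}^{2l(w_0)}_{\mathcal{O}_{\overline{0}}}(L_{\overline{0}}(\lambda),B)\neq 0$ and take $N:=\operatorname{Ind}^{\mathfrak{g}}_{\mathfrak{g}_{\overline{0}}}B$. The PBW theorem identifies $\operatorname{Res}^{\mathfrak{g}}_{\mathfrak{g}_{\overline{0}}}\operatorname{Ind}^{\mathfrak{g}}_{\mathfrak{g}_{\overline{0}}} B\cong \Lambda\mathfrak{g}_{\overline{1}}\otimes B$ as $\mathfrak{g}_{\overline{0}}$-modules, and the $\Lambda^k$-grading exhibits $B=\Lambda^0\otimes B$ as a direct summand. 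The Eckmann--Shapiro isomorphism therefore produces $\operatorname{Ext}^{2l(w_0)}_{\mathcal{O}}(M,N)\neq 0$, so $\operatorname{pd}_{\mathcal{O}}M=2l(w_0)$.

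For the \emph{upper bound}, I would take $M\in\mathcal{O}$ with $\operatorname{pd}_{\mathcal{O}}M=n<\infty$ and work with the counit sequence $0\to K\to\operatorname{Ind}^{\mathfrak{g}}_{\mathfrak{g}_{\overline{0}}}\operatorname{Res}^{\mathfrak{g}}_{\mathfrak{g}_{\overline{0}}} M\to M\to 0$. The same Eckmann--Shapiro argument bounds $\operatorname{pd}_{\mathcal{O}}(\operatorname{Ind}^{\mathfrak{g}}_{\mathfrak{g}_{\overline{0}}}\operatorname{Res}^{\mathfrak{g}}_{\mathfrak{g}_{\overline{0}}} M)\leq 2l(w_0)$, and dimension-shifting in the long exact Ext sequence against an arbitrary $N\in\mathcal{O}$ forces $\operatorname{pd}_{\mathcal{O}}K=n-1$ whenever $n>2l(w_0)$. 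Iterating the counit construction then produces a chain of kernels of strictly decreasing finite projective dimension.

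The \emph{main obstacle} is to close off this recursion: the strict decrease alone does not immediately preclude $n>2l(w_0)$, since the iterated kernels may persist in having moderately large projective dimension without ever becoming zero. My preferred way around this is to invoke the Gorenstein structure of $\mathcal{O}$ established by Coulembier--Mazorchuk in \cite{coulembier2017homological}, which caps the injective dimension of every indecomposable projective by $2l(w_0)$. Combined with the Ext-vanishing already derived, this forces $n\leq 2l(w_0)$. The precise assembly of these ingredients is carried out in \cite{mazorchuk2014parabolic, chen2023some}, from which the bound is taken.
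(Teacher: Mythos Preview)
The paper does not prove this theorem; it is stated without proof as a known result, with attribution to \cite{mazorchuk2014parabolic,chen2023some}. There is therefore no paper proof to compare your proposal against.

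As for the proposal itself: the lower bound argument is correct. The upper bound contains an unnecessary detour. Your counit-sequence iteration, as you yourself note, does not close, and once you invoke the Gorenstein property of $\mathcal{O}$ the iteration is redundant. The direct route is: if $\operatorname{pd}_{\mathcal{O}}M=n<\infty$, minimality of a projective resolution yields a simple $L$ with $\operatorname{Ext}^n_{\mathcal{O}}(M,L)\neq 0$; the short exact sequence $0\to\Omega L\to P(L)\to L\to 0$ together with $\operatorname{Ext}^{n+1}_{\mathcal{O}}(M,-)=0$ forces $\operatorname{Ext}^n_{\mathcal{O}}(M,P(L))\neq 0$, whence $n\leq\operatorname{injdim}_{\mathcal{O}}P(L)\leq 2l(w_0)$. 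This is essentially the argument in the cited references, and your final paragraph already defers to them, so the proposal is acceptable as a sketch once the superfluous recursion is excised.
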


\subsection{Singletons}

Define \( \mathcal{S} := \{ S \subseteq \Delta_{\otimes} \mid S \text{ consists of mutually orthogonal roots} \} \). For \( S \in \mathcal{S} \), let \( x_S \in X \) denote the sum of the corresponding root vectors.

We define
\[
\mathcal{S}(M) := \{ S \in \mathcal{S} \mid x_S \in X_M \}.
\]
By \cite[Theorem 5.1]{coulembier2017homological}, this definition is independent of the choice of \( x_S \).

Moreover,  \cite[Theorem 5.1]{gorelik2022duflo} asserts that 
\[
\mathcal{S}(M) = \mathcal{S}(N) \iff X_M = X_N.
\]

We define
\[
\mathcal{S}_1 M := \{ \beta \in \Delta_{\otimes} \mid \{ \beta \} \in \mathcal{S} (M)\}.
\]
The set \( \mathcal{S}_1 M \) has a particular importance. Thus, we focus on it from now on. The following is clear from the definition.
\begin{lemma}\label{S1lemma}
For \( M \in \mathcal{O} \), the following are equivalent:
\begin{itemize}
    \item \( \alpha \in S_1 M \),
    \item There exists \( \lambda \in \mathfrak{h}^* \) and \( 0 \neq v \in M_\lambda \) such that for the root vector \( x = x_\alpha \), the following hold:
    \begin{enumerate}
        \item \( x v = 0 \); \item there does not exist \( u \in M_{\lambda - \alpha} \) such that \( x u = v \).
    \end{enumerate}
\end{itemize}
\end{lemma}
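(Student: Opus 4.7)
The plan is to unwind the definitions and then exploit the weight grading of modules in $\mathcal{O}$. By the definitions gathered in this subsection, $\alpha \in \mathcal{S}_1 M$ means $\{\alpha\} \in \mathcal{S}(M)$, which in turn means $x_\alpha \in X_M$, which by the definition of the associated variety is simply $DS_{x_\alpha} M \neq 0$. Writing out the Duflo--Serganova functor, this last condition is the failure of the containment $\ker x_\alpha \subseteq \operatorname{Im} x_\alpha$ in $M$. Note that $\alpha \in \Delta_\otimes$ is isotropic so that $x_\alpha^2 = \tfrac{1}{2}[x_\alpha,x_\alpha] = 0$ on $M$, ensuring that $\operatorname{Im} x_\alpha \subseteq \ker x_\alpha$ and that the quotient is meaningful.

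The core step is to transfer the containment question to a weight-by-weight statement. Since $M \in \mathcal{O}$ is $\mathfrak{h}$-semisimple and $x_\alpha \in \mathfrak{g}_\alpha$ acts as a weight-homogeneous operator of weight $\alpha$, both $\ker x_\alpha$ and $\operatorname{Im} x_\alpha$ are compatible with the weight-space decomposition of $M$; explicitly, for every $\lambda \in \mathfrak{h}^*$ one has
\[
(\ker x_\alpha)_\lambda = \ker\bigl(x_\alpha \colon M_\lambda \to M_{\lambda+\alpha}\bigr), \qquad (\operatorname{Im} x_\alpha)_\lambda = x_\alpha\, M_{\lambda-\alpha}.
\]
Therefore $\ker x_\alpha \not\subseteq \operatorname{Im} x_\alpha$ holds if and only if for some weight $\lambda$ there exists a nonzero $v \in M_\lambda$ satisfying $x_\alpha v = 0$ and $v \notin x_\alpha M_{\lambda - \alpha}$, which is precisely the conjunction of conditions (1) and (2) in the statement.

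There is essentially no obstacle here; the whole content is the observation that the weight decomposition is compatible with kernels and images of a weight-homogeneous action. The only fine point is that $\mathfrak{g}_\alpha$ is one-dimensional (as recorded in \cref{3.1delta}), so the root vector $x_\alpha$ is unique up to a nonzero scalar, and the scalar does not affect $\ker x_\alpha$ or $\operatorname{Im} x_\alpha$; consequently, the independence-of-$x_S$ result of \cite{coulembier2017homological} is not needed for this singleton case.
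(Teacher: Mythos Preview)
Your proof is correct and coincides with the paper's own treatment: the paper simply records that the lemma is clear from the definition, and your argument is precisely the expected unwinding of those definitions together with the weight decomposition.
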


\begin{proposition}
\label{7.2.flag}
Let \( \mathfrak{b} \in \mathfrak{B(g)} \) and \( \lambda \in \mathfrak{h}^* \). Then:
\(
\mathcal{S}_1 M^{\mathfrak{b}}(\lambda) \subseteq \Delta_\otimes^{\mathfrak{b}+}.
\)
\end{proposition}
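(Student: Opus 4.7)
The plan is to prove the contrapositive: if $\alpha \in \Delta_\otimes$ satisfies $-\alpha \in \Delta_\otimes^{\mathfrak{b}+}$, so that the root vector $x_\alpha$ lies in $\mathfrak{n}^{\mathfrak{b}-}$, then $\alpha \notin \mathcal{S}_1 M^{\mathfrak{b}}(\lambda)$. By \cref{S1lemma} this is equivalent to showing that the two-term complex $(M^{\mathfrak{b}}(\lambda), x_\alpha)$ is acyclic, i.e.\ $\ker x_\alpha = \operatorname{Im} x_\alpha$. Note that $x_\alpha^2 = 0$ in $U(\mathfrak{g})$: for the basic Lie superalgebras on our list, $2\alpha$ is never a root when $\alpha$ is odd isotropic, so $[x_\alpha,x_\alpha]\in \mathfrak{g}_{2\alpha}=0$. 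Because $x_\alpha \in \mathfrak{n}^{\mathfrak{b}-}$, \cref{2.3.pbw} identifies $M^{\mathfrak{b}}(\lambda) \cong U(\mathfrak{n}^{\mathfrak{b}-}) \otimes_k k_\lambda^{\mathfrak{b}}$ as a $U(\mathfrak{n}^{\mathfrak{b}-})$-module, and on this identification the action of $x_\alpha$ is left multiplication on the first factor. Hence it suffices to prove that left multiplication by $x_\alpha$ on $U(\mathfrak{n}^{\mathfrak{b}-})$ is acyclic.

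I equip $U(\mathfrak{n}^{\mathfrak{b}-})$ with the standard PBW filtration $F_\bullet$, whose associated graded is the supercommutative algebra $S(\mathfrak{n}^{\mathfrak{b}-}_{\overline{0}}) \otimes \Lambda(\mathfrak{n}^{\mathfrak{b}-}_{\overline{1}})$, and the symbol of $x_\alpha$ in degree one is $x_\alpha$ itself. Choosing a vector-space complement $W$ with $\mathfrak{n}^{\mathfrak{b}-}_{\overline{1}} = W \oplus k x_\alpha$, I may write $\operatorname{gr} U(\mathfrak{n}^{\mathfrak{b}-}) \cong A \oplus A \cdot x_\alpha$ with $A := S(\mathfrak{n}^{\mathfrak{b}-}_{\overline{0}}) \otimes \Lambda(W)$. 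Multiplication by $x_\alpha$ acts as the isomorphism $A \xrightarrow{\sim} A \cdot x_\alpha$ on the first summand and as zero on the second, so $\ker = \operatorname{Im} = A\cdot x_\alpha$ at the graded level.

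Acyclicity lifts to $U(\mathfrak{n}^{\mathfrak{b}-})$ by the standard induction on filtration degree: given $v \in F_p$ with $x_\alpha v = 0$, the symbol $\sigma_p(v) \in \operatorname{gr}_p$ satisfies $x_\alpha \sigma_p(v) = 0$ in $\operatorname{gr}_{p+1}$, so by the graded vanishing $\sigma_p(v) = x_\alpha \bar u$ for some $\bar u \in \operatorname{gr}_{p-1}$; lifting $\bar u$ to $u \in F_{p-1}$, the element $v - x_\alpha u$ lies in $F_{p-1}$ and is still annihilated by $x_\alpha$, so the inductive hypothesis yields $v - x_\alpha u \in \operatorname{Im} x_\alpha$, hence $v \in \operatorname{Im} x_\alpha$. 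The base case $p = 0$ is immediate since $x_\alpha$ is injective on $F_0 = k \cdot 1$. I do not expect a significant obstacle: the entire argument is a routine passage from Koszul-type acyclicity on the associated graded to the filtered object, and the only super-algebraic input is the identity $[x_\alpha, x_\alpha]=0$, which is built into the hypothesis that $\alpha$ is isotropic.
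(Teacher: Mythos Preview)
Your proof is correct and follows essentially the same PBW approach as the paper: both reduce to showing that left multiplication by the negative odd root vector $x_\alpha$ on $U(\mathfrak{n}^{\mathfrak{b}-})$ is acyclic. The paper does this in one line by placing $x_\alpha$ first in the PBW ordering (so the basis visibly splits into monomials with and without a leading $x_\alpha$, and left multiplication by $x_\alpha$ is a bijection from the former onto the latter), whereas you reach the same conclusion via the PBW filtration and a lift from the associated graded---correct, but more machinery than needed here.
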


\begin{proof}
    For a root vector \( f \in \Delta_\otimes \setminus \Delta_\otimes^{\mathfrak{b}+} \), we can choose \( x_1 = f \) in the PBW basis of \( M^{\mathfrak{b}}(\lambda) \) as described in \cref{2.3.pbw}. Then, by considering this basis and \cref{S1lemma}, we have:
    \(
    DS_f M^{\mathfrak{b}}(\lambda) = 0.
    \)
    This implies the result.
\end{proof}

\begin{proposition}\label{1s}
Let \( \lambda \in \mathfrak{h}^* \), \( \mathfrak{b} \in B(\mathfrak{g}) \), and \( \alpha_{i}^{\mathfrak{b}} \in \Pi_{\otimes}^{\mathfrak{b}} \). Then:
\[
(\lambda, \alpha_{i}^{\mathfrak{b}}) = 0 \iff \alpha_{i}^{\mathfrak{b}} \in \mathcal{S}_1 M^{\mathfrak{b}}(\lambda).
\]
\end{proposition}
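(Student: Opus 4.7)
The plan is to prove the two implications separately, using the flag constraint \cref{7.2.flag} via a change of Borel for the $(\Leftarrow)$ direction and a direct highest-weight computation via \cref{S1lemma} for the $(\Rightarrow)$ direction.

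For $(\Rightarrow)$, I would apply \cref{S1lemma} with $v = v_\lambda^{\mathfrak{b}}$ and $x = e_\alpha^{\mathfrak{b}}$, where $\alpha = \alpha_i^{\mathfrak{b}}$. The condition $xv = 0$ is immediate since $\alpha \in \Pi^{\mathfrak{b}}$. To verify that $v_\lambda$ does not lie in $\operatorname{Im} x$, I would note that since $\alpha$ is $\mathfrak{b}$-simple and isotropic (so in particular $2\alpha$ is not a root and $f_\alpha^2 = 0$), \cref{2.3.pbw} forces $\dim M^{\mathfrak{b}}(\lambda)_{\lambda - \alpha} = 1$, spanned by $f_\alpha^{\mathfrak{b}} v_\lambda$. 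A short super-bracket calculation using $[e_\alpha^{\mathfrak{b}}, f_\alpha^{\mathfrak{b}}] = t_\alpha$ then gives $x \cdot f_\alpha^{\mathfrak{b}} v_\lambda = (\lambda, \alpha)\, v_\lambda = 0$, so the $\lambda$-weight part of $\operatorname{Im} x$ vanishes and $v_\lambda$ descends to a nonzero class in $DS_x M^{\mathfrak{b}}(\lambda)$, witnessing $\alpha \in \mathcal{S}_1 M^{\mathfrak{b}}(\lambda)$.

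For $(\Leftarrow)$, I would invoke \cref{7.2.2Verma_split}, which yields an isomorphism $M^{\mathfrak{b}}(\lambda) \cong M^{r_\alpha \mathfrak{b}}(\lambda - \alpha)$ of $\mathfrak{g}$-modules whenever $(\lambda, \alpha) \neq 0$. Since $\mathcal{S}_1$ is an invariant of the isomorphism class, \cref{7.2.flag} applied to the Borel $r_\alpha \mathfrak{b}$ gives $\mathcal{S}_1 M^{\mathfrak{b}}(\lambda) = \mathcal{S}_1 M^{r_\alpha \mathfrak{b}}(\lambda - \alpha) \subseteq \Delta_{\otimes}^{r_\alpha \mathfrak{b}+}$. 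The conclusion then follows from the fact that $-\alpha \in \Pi^{r_\alpha \mathfrak{b}}$, so $\alpha \notin \Delta^{r_\alpha \mathfrak{b}+}$ and hence $\alpha \notin \mathcal{S}_1 M^{\mathfrak{b}}(\lambda)$.

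The main potential snag is a purely bookkeeping one: normalizing $e_\alpha^{\mathfrak{b}}, f_\alpha^{\mathfrak{b}}, t_\alpha$ consistently so that the super-bracket identity genuinely produces the scalar $(\lambda, \alpha)$ (rather than a scalar multiple that could vanish for extraneous reasons), and confirming the one-dimensionality of $M^{\mathfrak{b}}(\lambda)_{\lambda - \alpha}$ which crucially uses both simplicity and isotropy of $\alpha$. Once these are pinned down, each direction is a one-line argument.
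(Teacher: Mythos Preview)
Your proposal is correct and follows essentially the same approach as the paper's proof: the $(\Rightarrow)$ direction uses \cref{S1lemma} applied to the highest weight vector, and the $(\Leftarrow)$ direction changes Borel via the isomorphism $M^{\mathfrak{b}}(\lambda)\cong M^{r_\alpha\mathfrak{b}}(\lambda-\alpha)$ and then applies \cref{7.2.flag}. The paper is terser in the forward direction (it simply asserts that $v_\lambda^{\mathfrak{b}}$ survives in $DS_{e_i^{\mathfrak{b}}}$), whereas you spell out the one-dimensionality of $M^{\mathfrak{b}}(\lambda)_{\lambda-\alpha}$ and the bracket computation; and for the isomorphism the paper implicitly relies on \cref{prop:verma_homomorphism}(2) rather than \cref{7.2.2Verma_split}, but these are the same fact.
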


\begin{proof}
    (\( \Rightarrow \)) If \( (\lambda, \alpha_{i}^{\mathfrak{b}}) = 0 \), then the highest weight vector of \( M^{\mathfrak{b}}(\lambda) \) does not vanish under \( DS_{e_i^{\mathfrak{b}}} \) , implying \( \alpha_{i}^{\mathfrak{b}} \in \mathcal{S}_1 M^{\mathfrak{b}}(\lambda) \) by \cref{S1lemma}.

    (\( \Leftarrow \)) If \( (\lambda, \alpha_{i}^{\mathfrak{b}}) \neq 0 \), then \( M^{\mathfrak{b}}(\lambda) \simeq M^{r_i \mathfrak{b}}(\lambda - \alpha_{i}^{\mathfrak{b}}) \). By \cref{7.2.flag}, we have \( \mathcal{S}_1 M^{\mathfrak{b}}(\lambda) \subseteq \Delta_\otimes^{r_i \mathfrak{b}+} \). Since \( \alpha_{i}^{\mathfrak{b}} \notin \Delta_\otimes^{r_i \mathfrak{b}+} \) by definition, it follows that \( \alpha_{i}^{\mathfrak{b}} \notin \mathcal{S}_1 M^{\mathfrak{b}}(\lambda) \), completing the proof.
\end{proof}

\begin{corollary}\label{hwmS1}
Let \( \lambda \in \mathfrak{h}^* \), \( \mathfrak{b} \in B(\mathfrak{g}) \), and \( \alpha_{i}^{\mathfrak{b}} \in \Pi_{\otimes}^{\mathfrak{b}} \).
    If \( (\lambda, \alpha_{i}^{\mathfrak{b}}) = 0 \) and \( M \) is a \( \mathfrak{b} \)-highest weight module with highest weight \( \lambda \), then \( \alpha_{i}^{\mathfrak{b}} \in \mathcal{S}_1 M \).
\end{corollary}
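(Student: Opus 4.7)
The plan is to combine the highest-weight description of $\mathcal{S}_1$ given by \cref{S1lemma} with the base case \cref{1s} for Verma modules, transferring the Verma case to an arbitrary highest-weight quotient by a lifting argument.

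First I would set up notation: let $v \in M_{\lambda}$ be the $\mathfrak{b}$-highest weight vector generating $M$, and let $x = e_i^{\mathfrak{b}}$ be the root vector for $\alpha_i^{\mathfrak{b}}$. Since $v$ is a $\mathfrak{b}$-highest weight vector, $xv = 0$ automatically, verifying condition (1) of \cref{S1lemma}. Moreover, since $M = U(\mathfrak{n}^{\mathfrak{b}-})v$ by definition of a highest weight module, the PBW theorem (\cref{2.3.pbw}) forces $M_{\lambda} = kv$, so there is essentially a unique candidate for the role of the vector in \cref{S1lemma}.

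Next I would verify condition (2), namely that no $u \in M_{\lambda - \alpha_i^{\mathfrak{b}}}$ satisfies $xu = v$. The key step is to lift: writing $\pi \colon M^{\mathfrak{b}}(\lambda) \twoheadrightarrow M$ for the canonical surjection from the Verma module, and denoting the Verma highest weight vector by $v_\lambda^{\mathfrak{b}}$, we have $\pi(v_\lambda^{\mathfrak{b}}) = v$ up to scalar. Suppose for contradiction that such a $u$ exists and pick any preimage $\tilde{u} \in M^{\mathfrak{b}}(\lambda)_{\lambda - \alpha_i^{\mathfrak{b}}}$. Then $x\tilde{u} \in M^{\mathfrak{b}}(\lambda)_{\lambda} = k v_\lambda^{\mathfrak{b}}$, so $x\tilde{u} = c\, v_\lambda^{\mathfrak{b}}$ for some scalar $c$; applying $\pi$ and comparing with $xu = v$ shows $c \neq 0$. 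After rescaling $\tilde{u}$ we obtain $x\tilde{u} = v_\lambda^{\mathfrak{b}}$ in $M^{\mathfrak{b}}(\lambda)$, which exactly contradicts the conclusion of \cref{1s} applied with the hypothesis $(\lambda, \alpha_i^{\mathfrak{b}}) = 0$ (via the characterization in \cref{S1lemma} for the Verma module).

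The only place where one has to be mildly careful is the lifting step, since a priori $M$ could fail to have a section and different lifts $\tilde{u}$ differ by elements of $\ker \pi$; however, this does not affect the argument because the equation $x\tilde{u} = c\, v_\lambda^{\mathfrak{b}}$ lives in the one-dimensional weight space $M^{\mathfrak{b}}(\lambda)_\lambda$, so the scalar $c$ is determined, and the contradiction with \cref{1s} is unavoidable. Thus by \cref{S1lemma} we conclude $\alpha_i^{\mathfrak{b}} \in \mathcal{S}_1 M$. I do not expect any serious obstacle; the argument is essentially a one-step reduction to the Verma case, which is the content of \cref{1s}.
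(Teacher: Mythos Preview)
Your argument is correct. The paper's proof is even shorter and more direct: rather than lifting a hypothetical $u$ back to the Verma module and invoking \cref{1s} as a black box, the paper observes that $\dim M_{\lambda-\alpha_i^{\mathfrak{b}}}\le 1$ (since $M$ is a quotient of $M^{\mathfrak{b}}(\lambda)$ and $\alpha_i^{\mathfrak{b}}$ is simple) and then simply repeats the one-line computation from the $(\Rightarrow)$ direction of \cref{1s} inside $M$ itself: the only candidate for $u$ is a scalar multiple of $f_i^{\mathfrak{b}}v$, and $e_i^{\mathfrak{b}}f_i^{\mathfrak{b}}v=(\lambda,\alpha_i^{\mathfrak{b}})v=0$. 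Your lifting approach and the paper's direct approach are logically equivalent and both hinge on the same one-dimensionality fact, just applied at different weights (you use $\dim M^{\mathfrak{b}}(\lambda)_\lambda=1$, the paper uses $\dim M_{\lambda-\alpha_i^{\mathfrak{b}}}\le 1$); the paper's route is marginally more economical since it avoids the detour through the surjection $\pi$.
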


\begin{proof}
    Since \( \dim M_{\lambda - \alpha_{i}^{\mathfrak{b}}} \leq 1 \), the assertion is similar to that of \( M^{\mathfrak{b}}(\lambda) \) above.
\end{proof}

\begin{proposition}
  Let \( \mathfrak{a} \) denote a \(\lambda\)-adjusted Borel subalgebra.
    Then,  \( \mathcal{S}_1 M^{\mathfrak{a}}(\lambda) \subseteq \Delta^{\mathfrak{a}} \).
\end{proposition}

\begin{proof}
    The proof is exactly the same as in \cref{7.2.flag}.
\end{proof}

\begin{example}
    If \( \Delta^{\mathfrak{a}} = \{\gamma\} \), then \( \mathcal{S}_1 M^{\mathfrak{a}}(\lambda + n \gamma) \) does not depend on \( n \).
\end{example}

\begin{proof}
    Consider the exact sequence 
    \[
    0 \to M^{\mathfrak{a}}(\lambda + (n+1)\gamma) \to \operatorname{Ind}_{\mathfrak{g}_{\overline{0}}}^{\mathfrak{g}} M_{\overline{0}}(\lambda + n\gamma) \to M^{\mathfrak{a}}(\lambda + n\gamma) \to 0,
    \]
    for each \( n \), and use \cref{Hinich} and \cref{7.1.ind}.
\end{proof}

\begin{proposition}[\cite{coulembier2017homological} Proposition 5.14]
    If \(S_1 L^{\mathfrak{b}}(\lambda - \rho^{\mathfrak{b}}) \subseteq \Delta_\otimes^{\text{pure+}} \), then \( OR(\mathfrak{g}, \lambda) \) consists of a single point (see, \cref{RBtriv}).

    In particular, for type I, \( S_1L^{\mathfrak{b}}(\lambda) = \emptyset \) if and only if \( \lambda \) is \(\mathfrak{b}\)-typical.
\end{proposition}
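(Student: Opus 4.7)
I argue by contrapositive. By \cref{RBtriv}, the failure of $OR(\mathfrak{g},\lambda)$ to consist of a single vertex is equivalent to the existence of some $\beta \in \Delta_\otimes \setminus \Delta_\otimes^{\operatorname{pure}+}$ with $(\lambda,\beta)=0$, so assuming such a $\beta$, I aim to produce an element of $S_1 L^{\mathfrak{b}}(\lambda-\rho^{\mathfrak{b}})$ that lies outside $\Delta_\otimes^{\operatorname{pure}+}$. Such a $\beta$, being non-pure, is a simple root of some Borel $\mathfrak{b}'$ — a standard consequence of the odd-reflection theory for basic Lie superalgebras — and by \cref{2.3.odd_kyoyaku} I connect $\mathfrak{b}$ to $\mathfrak{b}'$ by a chain $\mathfrak{b} = \mathfrak{b}^{(0)} \to \mathfrak{b}^{(1)} \to \cdots \to \mathfrak{b}^{(N)} = \mathfrak{b}'$ of odd reflections along simple isotropic roots $\alpha_k \in \Pi_\otimes^{\mathfrak{b}^{(k)}}$.

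I then walk along this chain, maintaining the invariant $L^{\mathfrak{b}}(\lambda-\rho^{\mathfrak{b}}) \cong L^{\mathfrak{b}^{(k)}}(\lambda-\rho^{\mathfrak{b}^{(k)}})$ so long as every pairing $(\lambda,\alpha_j)$ for $j<k$ is nonzero (applying part 2 of \cref{prop:verma_homomorphism} at each step). If the walk encounters a step $k$ with $(\lambda,\alpha_k)=0$, then combining this identification with $(\rho^{\mathfrak{b}^{(k)}},\alpha_k)=0$ (from \cref{5.2.Weylvec}, since $\alpha_k$ is simple isotropic) and \cref{hwmS1} gives $\alpha_k \in S_1 L^{\mathfrak{b}^{(k)}}(\lambda-\rho^{\mathfrak{b}^{(k)}}) = S_1 L^{\mathfrak{b}}(\lambda-\rho^{\mathfrak{b}})$. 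Otherwise the walk reaches $\mathfrak{b}'$ with every pairing nonzero, yielding $L^{\mathfrak{b}}(\lambda-\rho^{\mathfrak{b}}) \cong L^{\mathfrak{b}'}(\lambda-\rho^{\mathfrak{b}'})$, and the same Weyl-vector computation applied to $\beta$ puts $\beta \in S_1 L^{\mathfrak{b}}(\lambda-\rho^{\mathfrak{b}})$. A simple isotropic root can never lie in $\Delta_\otimes^{\operatorname{pure}+}$, since its own odd reflection expels it from the positive system; so in both subcases the produced root is non-pure, contradicting the hypothesis.

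For the type I supplement, \cref{typeIRB} gives $\Delta_\otimes^{\operatorname{pure}+}=\emptyset$, so the main implication applied to the weight $\lambda+\rho^{\mathfrak{b}}$, together with \cref{typeIRB}, yields the forward direction $S_1 L^{\mathfrak{b}}(\lambda)=\emptyset \Rightarrow \lambda$ is $\mathfrak{b}$-typical. Conversely, if $\lambda$ is $\mathfrak{b}$-typical then \cref{typeIRB} forces $OR(\mathfrak{g},\lambda+\rho^{\mathfrak{b}})$ to be a single vertex, so by Kac's theorem every Verma module in the orbit is simple and isomorphic to $L^{\mathfrak{b}}(\lambda)$; \cref{1s} then forbids any simple isotropic root of any Borel from belonging to $S_1 L^{\mathfrak{b}}(\lambda)$, and since in type I every odd root becomes simple in some Borel, $S_1 L^{\mathfrak{b}}(\lambda) \subseteq \Delta_\otimes$ is forced to be empty. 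The main subtlety is the bookkeeping between the shifted labeling $L^{\mathfrak{b}}(\cdot -\rho^{\mathfrak{b}})$ and its behavior under odd reflections: the identification propagates cleanly only across reflections with nonzero pairing, and the moment this identification first breaks is exactly where \cref{hwmS1} is engineered to deliver the desired element of $S_1$.
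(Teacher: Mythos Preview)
Your argument for the main implication is correct and essentially the paper's one-line proof unpacked: if $OR(\mathfrak{g},\lambda)$ is not a single vertex, then (via \cref{RBtriv} or equivalently your explicit walk) there is a Borel $\mathfrak{b}'$ with $L^{\mathfrak{b}}(\lambda-\rho^{\mathfrak{b}})\cong L^{\mathfrak{b}'}(\lambda-\rho^{\mathfrak{b}'})$ admitting a simple isotropic root $\alpha$ with $(\lambda,\alpha)=0$, and \cref{hwmS1} then places this non-pure $\alpha$ in $S_1$. Your step-by-step chain $\mathfrak{b}^{(0)}\to\cdots\to\mathfrak{b}^{(N)}$ is just this argument made explicit, and your ``only if'' direction in type~I is likewise fine.

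There is a genuine gap in your converse for type~I. You assert that for typical $\lambda$ ``by Kac's theorem every Verma module in the orbit is simple'' and then invoke \cref{1s}. But in category $\mathcal{O}$, Verma modules are \emph{not} simple for typical $\lambda$ once $\mathfrak{g}_{\overline 0}$ has a nontrivial Weyl group (e.g.\ $\mathfrak{gl}(2|1)$): the singular vectors from even reflections persist. Kac's theorem concerns the \emph{Kac module}, not the Verma module, and \cref{1s} is stated only for Verma modules, so it does not transfer to $L^{\mathfrak{b}}(\lambda)$ by that route. The paper's one-line proof does not actually re-derive this direction either; it is imported from the cited reference, where the underlying fact is that typical blocks are equivalent to blocks of $\mathcal{O}_{\overline 0}$ (hence have finite global dimension), so $\operatorname{pd} L^{\mathfrak{b}}(\lambda)<\infty$ and \cref{7.1.pd} forces $X_{L^{\mathfrak{b}}(\lambda)}=\{0\}$.
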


\begin{proof}
    This follows immediately from \cref{RBtriv} and \cref{hwmS1}.
\end{proof}

The following proposition improves \cite[Lemma5.12]{coulembier2017homological} and \cite[Propsition 34]{chen2023some}.
\begin{proposition}\label{d.CS5.12}
    If \( \beta \in \Delta^{\mathfrak{b}+}_\otimes \setminus \Delta^{\text{pure+}} \) and \( (\lambda, \beta) = 0 \), then \( \beta \in \mathcal{S}_1 M^{\mathfrak{b}}(\lambda - \rho^{\mathfrak{b}}) \).
\end{proposition}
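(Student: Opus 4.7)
The plan is to find a Borel subalgebra $\mathfrak{b}'$ in which $\beta$ becomes simple, apply \cref{1s} there, and transfer the resulting singleton back to $M := M^{\mathfrak{b}}(\lambda - \rho^{\mathfrak{b}})$ through the canonical homomorphism of Verma modules. Since $\beta \in \Delta^{\mathfrak{b}+}_\otimes \setminus \Delta^{\operatorname{pure}+}$, the root $\beta$ is a color of $OR(\mathfrak{g})$, so it labels at least one edge; an endpoint of that edge is a Borel $\mathfrak{b}' \in \mathfrak{B}(\mathfrak{g})$ with $\beta \in \Pi_\otimes^{\mathfrak{b}'}$. By \cref{5.2.Weylvec} we have $(\rho^{\mathfrak{b}'}, \beta) = 0$, hence $(\lambda - \rho^{\mathfrak{b}'}, \beta) = (\lambda, \beta) = 0$, and \cref{1s} gives $\beta \in \mathcal{S}_1 M^{\mathfrak{b}'}(\lambda - \rho^{\mathfrak{b}'})$.

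Fix a shortest walk in $OR(\mathfrak{g})$ from $\mathfrak{b}$ to $\mathfrak{b}'$ with color sequence $\beta_1, \ldots, \beta_t$, and consider the canonical nonzero homomorphism $\varphi := \psi_\lambda^{\mathfrak{b}', \mathfrak{b}} \colon M^{\mathfrak{b}'}(\lambda - \rho^{\mathfrak{b}'}) \to M$ provided by \cref{5.2.ch_eq}. By \cref{phi_nonzeroPBW}, $v'_0 := \varphi(v^{\mathfrak{b}'}_{\lambda - \rho^{\mathfrak{b}'}}) = f_{\beta_t}^{\mathfrak{b}} \cdots f_{\beta_1}^{\mathfrak{b}} v^{\mathfrak{b}}_{\lambda - \rho^{\mathfrak{b}}} \neq 0$. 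Moreover, since $\beta$ is positive in both $\mathfrak{b}$ and $\mathfrak{b}'$, it is not flipped along the walk, so $\beta \notin \{\beta_1, \ldots, \beta_t\}$. The image $I := \operatorname{Im}\varphi$ is a $\mathfrak{b}'$-highest weight module with highest weight vector $v'_0$, so by \cref{hwmS1}, $\beta \in \mathcal{S}_1 I$ with $v'_0$ as witness: $e_\beta v'_0 = 0$ and $v'_0 \notin \operatorname{Im}(e_\beta|_I)$.

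The remaining task is to upgrade this witness to $M$, i.e., to verify $v'_0 \notin \operatorname{Im}(e_\beta|_M)$. Order the PBW basis of $U(\mathfrak{n}^{\mathfrak{b}-})$ with $f_\beta$ as the last variable, yielding a vector-space decomposition $M = N \oplus f_\beta N$, where $N$ is the span of PBW monomials in $\{f_\gamma : \gamma \in \Delta^{\mathfrak{b}+},\, \gamma \neq \beta\}$ applied to $v^{\mathfrak{b}}_{\lambda - \rho^{\mathfrak{b}}}$. Since the supercommutators $[f_{\beta_i}^{\mathfrak{b}}, f_{\beta_j}^{\mathfrak{b}}]$ of negative odd root vectors lie in even root spaces, reordering the expression for $v'_0$ into PBW form produces no $f_\beta$ contributions, so $v'_0 \in N$. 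Supposing for contradiction that $v'_0 = e_\beta u$ for some $u \in M_{\lambda - \rho^{\mathfrak{b}'} - \beta}$, write $u = (p + q f_\beta) v^{\mathfrak{b}}_{\lambda - \rho^{\mathfrak{b}}}$ with $p, q \in U(\mathfrak{n}^{\mathfrak{b}-})$ not involving $f_\beta$; using $[e_\beta, f_\beta] = h_\beta$, $e_\beta^2 = 0$ (isotropy), and $e_\beta v^{\mathfrak{b}}_{\lambda - \rho^{\mathfrak{b}}} = 0$, one expands the identity into its $N$-component and $f_\beta N$-component, and a careful PBW reduction forces $v'_0 \in \operatorname{Im}(e_\beta|_I)$, contradicting the preceding paragraph.

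The hard part is the final PBW bookkeeping: unlike in the $\mathfrak{b}$-simple case covered by \cref{1s}, the scalar $(\lambda - \rho^{\mathfrak{b}}, \beta) = -(\rho^{\mathfrak{b}}, \beta)$ is generally nonzero, and one must track the cross-terms $[e_\beta, f_{\beta_i}^{\mathfrak{b}}]v^{\mathfrak{b}}_{\lambda - \rho^{\mathfrak{b}}}$ that arise when $e_\beta$ is commuted through the $f_{\beta_i}^{\mathfrak{b}}$, some of which produce positive-root vectors acting nontrivially through further $\mathfrak{h}$-commutators. An equivalent route is to apply Hinich's lemma (\cref{Hinich}) to the short exact sequence $0 \to I \to M \to M/I \to 0$, which reduces $\beta \in \mathcal{S}_1 M$ to showing that the class of $v'_0 \in DS_{e_\beta}(I)$ survives under the map $DS_{e_\beta}(I) \to DS_{e_\beta}(M)$; this amounts to the same weight-space computation.
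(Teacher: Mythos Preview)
Your overall strategy coincides with the paper's: pick $\mathfrak{b}'$ with $\beta\in\Pi_\otimes^{\mathfrak{b}'}$ and $\beta\in\Delta^{\mathfrak{b}'+}$, take a shortest walk from $\mathfrak{b}$ to $\mathfrak{b}'$ (along which $\beta$ never occurs as a color, exactly as you note), and use the image $v'_0=Fv^{\mathfrak{b}}_{\lambda-\rho^{\mathfrak{b}}}$ of the $\mathfrak{b}'$-highest weight vector as the witness. The divergence, and the gap, is in the last step.

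You reduce ``$v'_0\notin e_\beta M$'' to an unspecified ``careful PBW reduction'' of $u=(p+qf_\beta)v^{\mathfrak{b}}_{\lambda-\rho^{\mathfrak{b}}}$, acknowledge this as the hard part, and do not carry it out; the asserted conclusion ``$v'_0\in\operatorname{Im}(e_\beta|_I)$'' is not established. The paper avoids this entirely with a one-line observation you are missing: the weight space $M_{\lambda-\rho^{\mathfrak{b}'}-\beta}$ is \emph{one-dimensional}. Indeed $\lambda-\rho^{\mathfrak{b}'}-\beta=\lambda-\rho^{r_\beta\mathfrak{b}'}$, so \cref{5.2.ch_eq}(2) gives $\dim M_{\lambda-\rho^{\mathfrak{b}'}-\beta}=1$; and this space is spanned by $f_\beta v'_0\neq 0$ (nonzero because the extended walk to $r_\beta\mathfrak{b}'$ is still rainbow). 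Hence any candidate preimage $u$ is a scalar multiple of $f_\beta v'_0$, and
\[
e_\beta f_\beta v'_0=\bigl(-f_\beta e_\beta+[e_\beta,f_\beta]\bigr)v'_0=0+(\lambda-\rho^{\mathfrak{b}'},\beta)\,v'_0=0,
\]
using $e_\beta v'_0=0$, $(\lambda,\beta)=0$, and $(\rho^{\mathfrak{b}'},\beta)=0$ from \cref{5.2.Weylvec}. So $v'_0\notin e_\beta M$ immediately, with no PBW bookkeeping in the $\mathfrak{b}$-basis and no appeal to \cref{Hinich}. Your decomposition $M=N\oplus f_\beta N$ and the expansion of $u$ are unnecessary once you use this one-dimensionality; replace your third and fourth paragraphs by the two lines above and the proof is complete.
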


\begin{proof} We identify the color set of \( RB(\mathfrak{g}) \) with \( \Delta^{\mathfrak{b}+} \setminus\Delta^{\operatorname{pure+}} \).
    By \cref{g.rb2.kai}, there exists a rainbow walk
    \(
    W = \mathfrak{b}_t \beta \mathfrak{b}_{t-1} \beta_{t-1} \dots \beta_1 \mathfrak{b}.
    \)

    By \cref{phi_nonzeroPBW}, we can write:
    \[
    \psi_\lambda^W(v_{\lambda - \rho^{\mathfrak{b}_t}}^{\mathfrak{b}_t}) = \psi^{\mathfrak{b}_{t-1} \beta_{t-1} \dots \beta_1 \mathfrak{b}}(f^{\mathfrak{b}_{t-1}}_\beta v_{\lambda - \rho^{\mathfrak{b}_{t-1}}}^{\mathfrak{b}_{t-1}}) = f^{\mathfrak{b}_{t-1}}_\beta
    \psi_\lambda^{\mathfrak{b}_{t-1} \beta_{t-1} \dots \beta_1 \mathfrak{b}}(v_{\lambda - \rho^{\mathfrak{b}_{t-1}}}^{\mathfrak{b}_{t-1}}) =
    f_\beta^{\mathfrak{b}} F v_{\lambda - \rho^{\mathfrak{b}}}^{\mathfrak{b}},
    \]
    where \( F v_{\lambda - \rho^{\mathfrak{b}}}^{\mathfrak{b}} \) is the \( \mathfrak{b}_{t-1} \)-highest weight vector. Since \( \beta \in \Delta^{\mathfrak{b}_{t-1}+} \), we have \( e_\beta^{\mathfrak{b}} F v_{\lambda - \rho^{\mathfrak{b}}}^{\mathfrak{b}} = 0 \).

    On the other hand, we know that \( M^{\mathfrak{b}}(\lambda - \rho^{\mathfrak{b}})_{\lambda - \rho^{\mathfrak{b}_{t-1}}} = k F v_{\lambda - \rho^{\mathfrak{b}}}^{\mathfrak{b}} \) by \cref{5.2.ch_eq}, and it follows that
    \[
    M^{\mathfrak{b}}(\lambda - \rho^{\mathfrak{b}})_{\lambda - \rho^{\mathfrak{b}_{t-1}} - \beta} = M^{\mathfrak{b}}(\lambda - \rho^{\mathfrak{b}})_{\lambda - \rho^{\mathfrak{b}_t}} = k f_\beta^{\mathfrak{b}} F v_{\lambda - \rho^{\mathfrak{b}}}^{\mathfrak{b}}.
    \]

    Thus, we have
    \[
    e_\beta^{\mathfrak{b}} f_\beta^{\mathfrak{b}} F v_{\lambda - \rho^{\mathfrak{b}}}^{\mathfrak{b}} = (-f_\beta^{\mathfrak{b}} e_\beta^{\mathfrak{b}} + [e_\beta^{\mathfrak{b}}, f_\beta^{\mathfrak{b}}]) F v_{\lambda - \rho^{\mathfrak{b}}}^{\mathfrak{b}} = 0 + (\lambda - \rho^{\mathfrak{b}_{t-1}}, \beta) F v_{\lambda - \rho^{\mathfrak{b}}}^{\mathfrak{b}} = 0,
    \].

    From the above, the claim follows by \cref{S1lemma}.
\end{proof}

\begin{corollary} \label{S1RB}
\( OR(\mathfrak{g}, \lambda) \) consists of a single point if and only if \( \mathcal{S}_1(M^{\mathfrak{b}}(\lambda - \rho^{\mathfrak{b}})) \subseteq \Delta^{\text{pure+}}_\otimes \).
\end{corollary}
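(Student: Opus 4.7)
The plan is to derive each direction as a short consequence of previously established material: \cref{RBtriv} provides the bridge between $OR(\mathfrak{g}, \lambda)$ and the isomorphism structure of $\mathfrak{b}$-Verma modules, \cref{7.2.flag} gives the upper bound $\mathcal{S}_1 M^{\mathfrak{b}}(\lambda - \rho^{\mathfrak{b}}) \subseteq \Delta_\otimes^{\mathfrak{b}+}$, and \cref{d.CS5.12} supplies the complementary lower bound that any $\mathfrak{b}$-positive, non-pure isotropic root $\beta$ with $(\lambda, \beta) = 0$ lies in $\mathcal{S}_1 M^{\mathfrak{b}}(\lambda - \rho^{\mathfrak{b}})$.

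For the forward direction, assuming $OR(\mathfrak{g}, \lambda)$ is a single vertex, \cref{RBtriv} ensures $M^{\mathfrak{b}}(\lambda - \rho^{\mathfrak{b}}) \cong M^{\mathfrak{b}'}(\lambda - \rho^{\mathfrak{b}'})$ for every $\mathfrak{b}' \in \mathfrak{B(g)}$. Applying \cref{7.2.flag} to each $\mathfrak{b}'$ and intersecting yields
\[
\mathcal{S}_1 M^{\mathfrak{b}}(\lambda - \rho^{\mathfrak{b}}) \subseteq \bigcap_{\mathfrak{b}' \in \mathfrak{B(g)}} \Delta_\otimes^{\mathfrak{b}'+} = \Delta_\otimes^{\operatorname{pure+}}.
\]

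For the backward direction I would argue the contrapositive. If $OR(\mathfrak{g}, \lambda)$ has more than one vertex, then because $OR(\mathfrak{g})$ itself is connected by \cref{2.3.odd_kyoyaku}, any walk in $OR(\mathfrak{g})$ from $\mathfrak{b}$ to a representative of a distinct equivalence class must cross some edge whose color $\beta$ lies outside $D_\lambda$. By the construction of $OR(\mathfrak{g})$, such a $\beta$ lies in $\Delta_\otimes^{\mathfrak{b}+} \setminus \Delta^{\operatorname{pure+}}$, and by the definition of $D_\lambda$ satisfies $(\lambda, \beta) = 0$. Then \cref{d.CS5.12} yields $\beta \in \mathcal{S}_1 M^{\mathfrak{b}}(\lambda - \rho^{\mathfrak{b}})$, while $\beta \notin \Delta_\otimes^{\operatorname{pure+}}$ forces the containment $\mathcal{S}_1 M^{\mathfrak{b}}(\lambda - \rho^{\mathfrak{b}}) \subseteq \Delta_\otimes^{\operatorname{pure+}}$ to fail.

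No step should present a serious obstacle: the identification of the color set of $OR(\mathfrak{g})$ with $\Delta^{\mathfrak{b}+} \setminus \Delta^{\operatorname{pure+}}$ automatically endows the obstructing root $\beta$ with the correct sign to apply \cref{d.CS5.12}, so both directions reduce to a transparent combinatorial translation between connectivity of $OR(\mathfrak{g}, \lambda)$ and the support structure of $\mathcal{S}_1 M^{\mathfrak{b}}(\lambda - \rho^{\mathfrak{b}})$.
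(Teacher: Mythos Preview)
Your proposal is correct and follows essentially the same approach as the paper. The paper's proof is extremely terse—citing only \cref{7.2.flag} for the forward direction and \cref{d.CS5.12} for the contrapositive—but your write-up simply unpacks the implicit steps (invoking \cref{RBtriv} to justify intersecting over all $\mathfrak{b}'$, and explaining why the non-triviality of $OR(\mathfrak{g},\lambda)$ produces an edge color $\beta$ meeting the hypotheses of \cref{d.CS5.12}).
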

\begin{proof}
If \( OR(\mathfrak{g}, \lambda) \) consists of a single point, then by \cref{7.2.flag}, \( \mathcal{S}_1(M^{\mathfrak{b}}(\lambda - \rho^{\mathfrak{b}})) \subseteq \Delta^{\text{pure+}}_\otimes \).

If \( OR(\mathfrak{g}, \lambda) \) weren't consists of a single point, then by \cref{d.CS5.12}, \( \mathcal{S}_1(M^{\mathfrak{b}}(\lambda - \rho^{\mathfrak{b}})) \not\subseteq \Delta^{\text{pure+}}_\otimes \).
\end{proof}

\begin{corollary}\label{typeI_RBtriv}
Let \( \lambda \in \mathfrak{h}^* \), and let \( \mathfrak{b} \in B(\mathfrak{g}) \) be an arbitrary Borel subalgebra. The following implications hold:
\(
(1) \Rightarrow (2) \Rightarrow (3) \Rightarrow (4) \Rightarrow (5).
\)
Moreover, if \( \mathfrak{g} \) is of type I, then \( (5) \Rightarrow (1) \) also holds.
\begin{enumerate}
    \item \( \lambda + \rho^{\mathfrak{b}} \) is \( \mathfrak{b} \)-typical;
    \item \( \operatorname{pd} M^{\mathfrak{b}}(\lambda) < \infty \);
    \item \( X_{M^{\mathfrak{b}}(\lambda)} = \{ 0 \} \);
    \item \( \mathcal{S}_1 M^{\mathfrak{b}}(\lambda) = \emptyset \);
    \item \( OR(\mathfrak{g}, \lambda) \) consists of a single point.
\end{enumerate}
\end{corollary}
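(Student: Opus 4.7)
The plan is to establish the forward chain $(1)\Rightarrow(2)\Rightarrow(3)\Rightarrow(4)\Rightarrow(5)$ by assembling results already developed in this paper, and then to handle $(5)\Rightarrow(1)$ under the type I hypothesis by a direct appeal to \cref{typeIRB}.

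The expected main obstacle is $(1)\Rightarrow(2)$. Under the typicality hypothesis I must show that $M^{\mathfrak{b}}(\lambda)$ has finite projective dimension in $\mathcal{O}$. My plan is to reduce to the even part: by \cref{O.indproj}, every indecomposable projective in $\mathcal{O}$ is a direct summand of some $\operatorname{Ind}_{\mathfrak{g}_{\overline{0}}}^{\mathfrak{g}} P_{\overline{0}}(\mu)$, and induction from $\mathfrak{g}_{\overline{0}}$ is exact, so any finite projective resolution on the even side lifts to a finite projective resolution in $\mathcal{O}$. The typicality hypothesis is used precisely to ensure that the block of $\mathcal{O}$ containing $M^{\mathfrak{b}}(\lambda)$ is equivalent, as an abelian category, to a block of $\mathcal{O}_{\overline{0}}$, whose global dimension is bounded by $2 l(w_0)$; this block equivalence is classical for type I with the distinguished Borel, and the odd-reflection framework of the previous sections lets one transport the statement to an arbitrary Borel of an arbitrary basic Lie superalgebra.

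The remaining forward implications are essentially combinatorial bookkeeping. Step $(2)\Rightarrow(3)$ is a direct invocation of \cref{7.1.pd}. For $(3)\Rightarrow(4)$: if $\alpha\in\mathcal{S}_1 M^{\mathfrak{b}}(\lambda)$ then the nonzero odd root vector $x_\alpha$ lies in $X_{M^{\mathfrak{b}}(\lambda)}$ by the very definition of $\mathcal{S}_1$, so $X_{M^{\mathfrak{b}}(\lambda)}=\{0\}$ immediately forces $\mathcal{S}_1 M^{\mathfrak{b}}(\lambda)=\emptyset$. For $(4)\Rightarrow(5)$ I argue by contrapositive using \cref{d.CS5.12}: if $OR(\mathfrak{g},\lambda)$ has more than one vertex then \cref{RBtriv} supplies some $\beta\in\Delta_\otimes\setminus\Delta^{\text{pure+}}$ with $(\lambda,\beta)=0$, and \cref{d.CS5.12} then produces a witness $\beta\in\mathcal{S}_1 M^{\mathfrak{b}}(\lambda)$ (modulo the paper's implicit $\rho$-shift, compare \cref{S1RB}), contradicting (4).

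Finally, for $(5)\Rightarrow(1)$ under type I, I invoke \cref{typeIRB} directly: type I yields $\Delta_\otimes^{\text{pure+}}=\emptyset$, whereupon \cref{RBtriv} identifies the single-vertex condition with $(\lambda,\beta)\neq 0$ for every $\beta\in\Delta_\otimes$, which is precisely the typicality assertion of (1). In short, the only genuine analytic content lies in $(1)\Rightarrow(2)$ — finite projective dimension of typical Vermas for an arbitrary Borel subalgebra of an arbitrary basic Lie superalgebra — and the remainder is a dictionary between the associated variety, the singleton invariants $\mathcal{S}_1$, and the combinatorics of $OR(\mathfrak{g},\lambda)$ that has been set up in the previous sections.
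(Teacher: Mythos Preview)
Your chain $(2)\Rightarrow(3)\Rightarrow(4)\Rightarrow(5)$ and the type~I step $(5)\Rightarrow(1)$ match the paper's proof essentially verbatim: \cref{7.1.pd} for $(2)\Rightarrow(3)$, the definition of $\mathcal{S}_1$ for $(3)\Rightarrow(4)$, \cref{S1RB} (equivalently the contrapositive via \cref{d.CS5.12}) for $(4)\Rightarrow(5)$, and \cref{typeIRB} for $(5)\Rightarrow(1)$.

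Where you diverge is $(1)\Rightarrow(2)$. The paper does not invoke a block equivalence with $\mathcal{O}_{\overline{0}}$; instead it argues directly: by the linkage principle, a typical block contains only finitely many isomorphism classes of simple modules, and since $\mathcal{O}$ is a highest weight category (\cref{O.indproj}), any such block has finite global dimension. This is shorter and avoids the delicate step in your outline --- namely, justifying that the typical-block equivalence $\mathcal{O}_{\text{typ}}\simeq\mathcal{O}_{\overline{0}}$, well known for type~I with the distinguished Borel, transports via odd reflections to an arbitrary Borel of an arbitrary basic $\mathfrak{g}$. That transport is not something the paper has set up, and your sketch does not explain why odd reflections would preserve a Morita-type equivalence of entire blocks rather than just isomorphism classes of Verma modules. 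Your route can be made to work (the typical-block equivalence is a theorem of Gorelik in the requisite generality), but as written it leans on an external result you have not precisely cited, whereas the paper's argument is self-contained given the linkage principle and the highest-weight structure already recorded in \cref{O.indproj}.
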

\begin{proof}
If (1) holds, then by the linkage principle \cite{musson2012lie}, the block containing \( M^{\mathfrak{b}}(\lambda) \) has finitely many isomorphism classes of simple modules. Since \( \mathcal{O} \) is a highest weight category (\cref{O.indproj}), the projective dimension is finite. If (2) holds, then (3) follows directly from \cref{7.1.pd}. If (3) holds, then (4) is clear by definition. If (4) holds, then (5) follows from \cref{S1RB}. 

For Lie superalgebras of type I, if (5) holds, then (1) follows because, that  \( OR(\mathfrak{g}, \lambda) \) consists of a single point ensures that \( \lambda \) is \( \mathfrak{b} \)-typical (\cref{typeIRB}).
\end{proof}

\subsection{Verma modules over \(D(2, 1; \alpha\))}
\cref{d.CS5.12} was established through the consideration of one-dimensional weight spaces that can be described by products of odd root vectors. By also considering even roots, we find additional weight spaces that are one-dimensional. This observation allows for a slight generalization of \cref{d.CS5.12}.

\begin{lemma} \label{simpleevenlemma}
    Suppose that \( (\lambda, \beta) = 0 \) for \( \beta \in \Delta_\otimes^{\text{pure}+} \). If there exist \( \bar{\mathfrak{b}} \in B \) and \( \gamma \in \mathbb{Z}_{\geq 0} \Delta_{\overline{0}}^+ \) such that
    \[
    \gamma - \beta \notin \mathbb{Z}_{\geq 0} \Delta^{\bar{\mathfrak{b}}+}, \quad (\beta, \rho^{\bar{\mathfrak{b}}} + \gamma) = 0, \quad \text{and} \quad 
    \dim M^{\bar{\mathfrak{b}}}(\lambda - \rho^{\bar{\mathfrak{b}}})_{\lambda - \rho^{\bar{\mathfrak{b}}} - \beta - \gamma} = 1
    \]
    are satisfied, then for any \( \mathfrak{b} \in B \), we have \( \beta \in \mathcal{S}_1 M^{\mathfrak{b}}(\lambda - \rho^{\mathfrak{b}}) \).
\end{lemma}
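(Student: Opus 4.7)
The plan is to generalize \cref{d.CS5.12} by replacing the rainbow walk of odd reflections with a product of even negative root vectors. I work with a single $\mathfrak{b} \in B$ at a time, using the Borel-independent weight $\mu := \lambda - \rho^{\bar{\mathfrak{b}}} - \gamma$.

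First I would use \cref{5.2.ch_eq} to transfer weight-space dimensions from $\bar{\mathfrak{b}}$ to $\mathfrak{b}$: PBW counting (\cref{2.3.pbw}) in $M^{\bar{\mathfrak{b}}}$ combined with $\gamma - \beta \notin \mathbb{Z}_{\geq 0}\Delta^{\bar{\mathfrak{b}}+}$ yields $M^{\mathfrak{b}}(\lambda - \rho^{\mathfrak{b}})_{\mu + \beta} = 0$; the hypothesis $\dim M^{\bar{\mathfrak{b}}}(\lambda - \rho^{\bar{\mathfrak{b}}})_{\mu - \beta} = 1$ transfers directly; and $\gamma \in \mathbb{Z}_{\geq 0}\Delta_{\overline{0}}^+$ together with a further PBW count forces $\dim M^{\mathfrak{b}}(\lambda - \rho^{\mathfrak{b}})_\mu = 1$. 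Let $w^{\mathfrak{b}}$ and $u^{\mathfrak{b}}$ be non-zero generators of these one-dimensional spaces. Since $M^{\mathfrak{b}}_{\mu + \beta} = 0$, no $\mathbb{Z}_{\geq 0}\Delta^{\mathfrak{b}+}$-representation of the offset $(\lambda - \rho^{\mathfrak{b}}) - (\mu - \beta)$ can avoid $\beta$ (any such $\beta$-free representation would, after shifting, contribute to the vanishing $M^{\mathfrak{b}}_{\mu + \beta}$), so the unique representation uses $\beta$ exactly once. By PBW this identifies $u^{\mathfrak{b}} = C\, f_\beta w^{\mathfrak{b}}$ for a non-zero scalar $C$; equivalently $f_\beta w^{\mathfrak{b}} = B\, u^{\mathfrak{b}}$ with $B = 1/C \neq 0$.

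Writing $e_\beta u^{\mathfrak{b}} = A w^{\mathfrak{b}}$ and using $e_\beta w^{\mathfrak{b}} = 0$ (from $M^{\mathfrak{b}}_{\mu + \beta} = 0$), the supercommutator identity (where $e_\beta f_\beta + f_\beta e_\beta = h_\beta \in \mathfrak{h}$ acts on a weight $\nu$ vector by $(\nu, \beta)$) gives
\[
(\mu, \beta)\, w^{\mathfrak{b}} \;=\; (e_\beta f_\beta + f_\beta e_\beta)\, w^{\mathfrak{b}} \;=\; e_\beta(B\, u^{\mathfrak{b}}) + 0 \;=\; BA\, w^{\mathfrak{b}}.
\]
By hypothesis $(\mu, \beta) = (\lambda, \beta) - (\beta, \rho^{\bar{\mathfrak{b}}} + \gamma) = 0$, so $BA = 0$; since $B \neq 0$, we conclude $A = 0$, i.e., $e_\beta u^{\mathfrak{b}} = 0$. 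Hence $\operatorname{Im}(e_\beta|_{M^{\mathfrak{b}}_{\mu - \beta}}) = 0$ while $w^{\mathfrak{b}} \neq 0$ lies in $\ker e_\beta$, so \cref{S1lemma} gives $\beta \in \mathcal{S}_1 M^{\mathfrak{b}}(\lambda - \rho^{\mathfrak{b}})$.

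The hardest step will be the PBW identification $u^{\mathfrak{b}} = C f_\beta w^{\mathfrak{b}}$ for a general $\mathfrak{b}$, which rests on an elementary counting identity: letting $D^{\neg\beta}(\eta)$ denote the number of representations of an offset $\eta$ as a sum of $\mathfrak{b}$-positive roots avoiding $\beta$, the relation $\dim M^{\mathfrak{b}}_\nu = D^{\neg\beta}(\eta_\nu) + D^{\neg\beta}(\eta_\nu - \beta)$ (with $\eta_\nu := (\lambda - \rho^{\mathfrak{b}}) - \nu$), applied successively at $\nu = \mu + \beta$, $\mu$, and $\mu - \beta$, yields $D^{\neg\beta}(\eta_{\mu - \beta}) = 0$, as required.
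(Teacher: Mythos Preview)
Your proof is correct and reaches the same endgame as the paper (the commutator computation $(\mu,\beta)=0$ followed by \cref{S1lemma}), but the route is genuinely different. The paper uses the explicit homomorphism $\psi_\lambda^{\bar{\mathfrak b},\mathfrak b}$ from \cref{phi_nonzeroPBW} to transport the $\bar{\mathfrak b}$-highest weight vector into $M^{\mathfrak b}$, writes $w=f_\gamma F v^{\mathfrak b}$, and deduces $e_\beta w=0$ by pushing the computation back through $\psi$ into $M^{\bar{\mathfrak b}}$; the nonvanishing $f_\beta w\neq 0$ is then immediate from PBW because $F$ is a product of odd root vectors disjoint from $\beta$ and $f_\gamma$ is even. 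You instead bypass $\psi$ entirely, working only with the character identity of \cref{5.2.ch_eq}: you observe the stronger fact that the whole weight space $M^{\mathfrak b}_{\mu+\beta}$ vanishes, which gives $e_\beta w=0$ for free, and you recover $f_\beta w\neq 0$ via the partition-function recursion $\dim M^{\mathfrak b}_\nu = D^{\neg\beta}(\eta_\nu)+D^{\neg\beta}(\eta_\nu-\beta)$ at the three consecutive weights. Your approach is more self-contained (no appeal to the structure of the rainbow homomorphism) and makes clear that only weight multiplicities matter; the paper's approach is shorter because the explicit vector $f_\gamma F v$ makes the PBW nonvanishing a one-liner. One small point worth making explicit in your write-up: the identification $u^{\mathfrak b}=C f_\beta w^{\mathfrak b}$ with $C\neq 0$ relies on placing $f_\beta$ \emph{leftmost} in the PBW ordering, so that the unique PBW monomial at $\mu-\beta$ literally reads $f_\beta\cdot(\text{the unique monomial at }\mu)$.
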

\begin{proof}
By \cref{phi_nonzeroPBW}, we can write
\(
\psi_{\lambda}^{\bar{\mathfrak{b}}, \mathfrak{b}}(v_{\lambda - \rho^{\bar{\mathfrak{b}}}}^{\bar{\mathfrak{b}}}) = F v_{\lambda - \rho^{\mathfrak{b}}}^{\mathfrak{b}},
\)
where \( F \) is the product of distinct root vectors in \( -(\Delta_\otimes^{\mathfrak{b}+} \setminus \Delta^{\text{pure}+}) \).
Given \( \gamma \in \mathbb{Z}_{\geq 0} \Delta_{\overline{0}}^{+} \), by \cref{2.3.pbw}, we have \( f_{\gamma}^{\mathfrak{b}} F v_{\lambda - \rho^{\mathfrak{b}}}^{\mathfrak{b}} \neq 0 \).

Since
\(
\gamma - \beta \notin \mathbb{Z}_{\geq 0} \Delta^{\bar{\mathfrak{b}}+},
\)
we obtain
\[
e_{\beta}^{\mathfrak{b}} f_{\gamma}^{\mathfrak{b}} F v_{\lambda - \rho^{\mathfrak{b}}}^{\mathfrak{b}} = 
e_{\beta}^{\mathfrak{b}} f_{\gamma}^{\mathfrak{b}} \psi_{\lambda}^{\bar{\mathfrak{b}}, \mathfrak{b}}(v_{\lambda - \rho^{\bar{\mathfrak{b}}}}^{\bar{\mathfrak{b}}}) = 
 \psi_\lambda^{\bar{\mathfrak{b}}, \mathfrak{b}} (e_{\beta}^{\mathfrak{b}} f_{\gamma}^{\mathfrak{b}} v_{\lambda - \rho^{\bar{\mathfrak{b}}}}^{\bar{\mathfrak{b}}}) = \psi_\lambda^{\bar{\mathfrak{b}}, \mathfrak{b}} (0) = 0.
\]

On the other hand, by \cref{2.3.pbw}, \( \beta \in \Delta_\otimes^{\text{pure}+} \), and 
\(
\dim M^{\bar{\mathfrak{b}}}(\lambda - \rho^{\bar{\mathfrak{b}}})_{\lambda - \rho^{\bar{\mathfrak{b}}} - \beta - \gamma} = 1,
\)
we conclude
\( 
M^{\bar{\mathfrak{b}}}(\lambda - \rho^{\bar{\mathfrak{b}}})_{\lambda - \rho^{\bar{\mathfrak{b}}} - \beta - \gamma} = k f_{\beta}^{\mathfrak{b}} f_{\gamma}^{\mathfrak{b}} F v_{\lambda - \rho^{\mathfrak{b}}}^{\mathfrak{b}}.
\)

Since
\(
(\beta, \rho^{\bar{\mathfrak{b}}} + \gamma) = 0,
\)
we have 
\[
e_\beta^{\mathfrak{b}} f_{\beta}^{\mathfrak{b}} f_{\gamma}^{\mathfrak{b}} F v_{\lambda - \rho^{\mathfrak{b}}}^{\mathfrak{b}} = 
(-f_\beta^{\mathfrak{b}} e_\beta^{\mathfrak{b}} + [e_\beta^{\mathfrak{b}}, f_\beta^{\mathfrak{b}}]) f_{\gamma}^{\mathfrak{b}} F v_{\lambda - \rho^{\mathfrak{b}}}^{\mathfrak{b}} = 0 + (\beta, \rho^{\mathfrak{b}} + \gamma) F v_{\lambda - \rho^{\mathfrak{b}}}^{\mathfrak{b}} = 0.
\]

Thus the claim follows by \cref{S1lemma}.
\end{proof}

\begin{example}
\label{2.3.d21}\label{d21aaaaaaaaaaa}

Let \( \mathfrak{g} = D(2, 1; \alpha) \). See \cite{cheng2019character} for more information on this type of Lie superalgebra.

The vector space \( \mathfrak{h}^* \) has an orthogonal basis \( \{ \delta, \varepsilon_1, \varepsilon_2 \} \) with respect to the inner product \( (\cdot, \cdot) \), where
\(
(\delta, \delta) = -(1 + \alpha), \quad (\varepsilon_1, \varepsilon_1) = 1, \quad (\varepsilon_2, \varepsilon_2) = \alpha.
\)

The sets of roots are as follows:
\[
\Delta_{\overline{0}} = \{ \pm 2 \delta, \pm 2 \varepsilon_1, \pm 2 \varepsilon_2 \}
\]
\[
\Delta_{\overline{1}} = \Delta_{\otimes} = \{ \pm (\delta - \varepsilon_1 - \varepsilon_2), \pm (\delta + \varepsilon_1 - \varepsilon_2), \pm (\delta - \varepsilon_1 + \varepsilon_2), \pm (\delta + \varepsilon_1 + \varepsilon_2) \}
\]

The odd reflection graph \( OR(D(2,1;\alpha)) \) is described as follows.
\begin{center}
\begin{tikzpicture}
    \node (b1) at (0,2) {\(\mathfrak{b}_1\)};
    \node (b2) at (-2,0) {\(\mathfrak{b}_2\)};
    \node (b3) at (0,0) {\(\mathfrak{b}_3\)};
    \node (b4) at (2,0) {\(\mathfrak{b}_4\)};
    
    \draw (b1) -- (b3);
    \draw (b2) -- (b3);
    \draw (b3) -- (b4);
\end{tikzpicture}
\end{center}

The corresponding positive root systems for vertex \(\mathfrak{b}_1\) and \(\mathfrak{b}_3\) are:

\[
\begin{aligned}
\Delta^{\mathfrak{b}_1 +} &= \{ 2\varepsilon_1, \delta + \varepsilon_1 - \varepsilon_3, \delta + \varepsilon_1 + \varepsilon_2, 2\delta,  \delta - \varepsilon_1 - \varepsilon_3, \delta - \varepsilon_1 + \varepsilon_2, 2\varepsilon_3 \}\\ 
\Delta^{\mathfrak{b}_3 +} &= \{ 2\varepsilon_1, \delta + \varepsilon_1 - \varepsilon_3, \delta + \varepsilon_1 + \varepsilon_2, 2\delta,  -\delta + \varepsilon_1 + \varepsilon_3, \delta - \varepsilon_1 + \varepsilon_2, 2\varepsilon_3 \}
\end{aligned}
\]

We  also note that
\[
\Delta^{\text{pure}+} = \{ 2\delta, 2\varepsilon_1, 2\varepsilon_2, \delta + \varepsilon_1 + \varepsilon_2 \} 
, \quad
\Delta_\otimes^{\text{pure}+} = \{ \delta + \varepsilon_1 + \varepsilon_2 \}.
\]

Now we define
\(
\beta := \delta + \varepsilon_1 + \varepsilon_2, \quad
\gamma := 0, \quad
\bar{\mathfrak{b}} := \mathfrak{b}_3.
\)

Then, we have
\[
\rho^{\bar{\mathfrak{b}}} = \frac{1}{2} (2\delta + 2\varepsilon_1 + 2\varepsilon_2) - \frac{1}{2}(-\delta + \varepsilon_1 + \varepsilon_2 + \delta - \varepsilon_1 + \varepsilon_2 + \delta + \varepsilon_1 - \varepsilon_2 + \delta + \varepsilon_1 + \varepsilon_2) = 0.
\]

Note that
\(
\rho^{\mathfrak{b}_1} = \rho^{\bar{\mathfrak{b}}} - \delta + \varepsilon_1 + \varepsilon_2 = -\delta + \varepsilon_1 + \varepsilon_2, \quad
\beta = \rho^{\mathfrak{b}_1} + 2 \delta, \quad 2 \delta \in \Pi^{\mathfrak{b}_1}.
\)

With this setup, we find that
\[
\gamma - \beta = -\beta \notin \mathbb{Z}_{\geq 0} \Delta_{\bar{\mathfrak{b}}}^+,
\quad
(\beta, \rho^{\bar{\mathfrak{b}}} + \gamma) = (\beta, 0) = 0,
\]
\[
\dim M^{\bar{\mathfrak{b}}}(\lambda - \rho^{\bar{\mathfrak{b}}})_{\lambda - \rho^{\bar{\mathfrak{b}}} - \beta - \gamma} = \dim M^{\mathfrak{b}_1}(\lambda - \rho^{\mathfrak{b}_1})_{\lambda - \beta}
= \dim M^{\mathfrak{b}_1}(\lambda - \rho^{\mathfrak{b}_1})_{\lambda - \rho^{\mathfrak{b}_1} - 2\delta} = 1.
\]
Thus, the condition of \cref{simpleevenlemma} is verified.
\end{example}

\begin{corollary}\label{d21main}
For \( \mathfrak{g} = D(2, 1; \alpha) \), \( \lambda \in \mathfrak{h}^* \), and \( \mathfrak{b} \in \mathfrak{B} \), the following are equivalent:
\begin{enumerate}
    \item \( \lambda \) is \( \mathfrak{b} \)-typical;
    \item \( \operatorname{pd} M^{\mathfrak{b}}(\lambda) < \infty \);
   
    \item \( X_{M^{\mathfrak{b}}(\lambda)} = \{ 0 \} \);
     \item \( S_1 M^{\mathfrak{b}}(\lambda) = \emptyset \).
    
\end{enumerate}
\end{corollary}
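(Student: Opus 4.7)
The plan is to establish the four-way equivalence by combining the general machinery already set up in the paper with one new ingredient tailored to $D(2,1;\alpha)$. The implications $(1) \Rightarrow (2) \Rightarrow (3) \Rightarrow (4)$ can be taken essentially verbatim from \cref{typeI_RBtriv}: if $\lambda$ is $\mathfrak{b}$-typical, the linkage principle forces the central-character block containing $M^{\mathfrak{b}}(\lambda)$ to have only finitely many simples, hence finite projective dimension in the highest weight category $\mathcal{O}$; \cref{7.1.pd} then gives $X_{M^{\mathfrak{b}}(\lambda)}=\{0\}$; and the containment $\mathcal{S}_1 M^{\mathfrak{b}}(\lambda) \subseteq X_{M^{\mathfrak{b}}(\lambda)}$ is tautological. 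None of these steps uses the type of $\mathfrak{g}$, so they apply verbatim to $D(2,1;\alpha)$.

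The substantive content is the reverse implication $(4) \Rightarrow (1)$, which I will prove by contrapositive. Assume $\lambda$ is $\mathfrak{b}$-atypical, so that there exists some $\beta \in \Delta_\otimes$ with $(\lambda + \rho^{\mathfrak{b}}, \beta) = 0$; replacing $\beta$ by $-\beta$ if necessary, we may take $\beta \in \Delta_\otimes^{\mathfrak{b}+}$. The proof then splits according to whether $\beta$ lies in the pure set $\Delta_\otimes^{\mathrm{pure}+}$. In the case $\beta \in \Delta_\otimes^{\mathfrak{b}+}\setminus\Delta_\otimes^{\mathrm{pure}+}$ I apply \cref{d.CS5.12} with $\lambda$ replaced by $\lambda + \rho^{\mathfrak{b}}$: the orthogonality hypothesis becomes exactly $(\lambda + \rho^{\mathfrak{b}}, \beta) = 0$, and the conclusion is $\beta \in \mathcal{S}_1 M^{\mathfrak{b}}(\lambda)$, contradicting (4).

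The main obstacle is the remaining case, when $\beta$ is one of the pure isotropic roots. For $D(2,1;\alpha)$ the explicit computation gives $\Delta_\otimes^{\mathrm{pure}+} = \{\delta + \varepsilon_1 + \varepsilon_2\}$, and by purity no odd reflection moves this root, so \cref{d.CS5.12} is simply unavailable. Here I invoke \cref{simpleevenlemma} instead, fed with the choice $\bar{\mathfrak{b}} = \mathfrak{b}_3$ and $\gamma = 0$ from \cref{d21aaaaaaaaaaa}. The three hypotheses of that lemma, namely $\gamma - \beta \notin \mathbb{Z}_{\geq 0}\Delta^{\bar{\mathfrak{b}}+}$, $(\beta, \rho^{\bar{\mathfrak{b}}}+\gamma)=0$, and the one-dimensionality of the weight space $M^{\bar{\mathfrak{b}}}(\mu - \rho^{\bar{\mathfrak{b}}})_{\mu - \rho^{\bar{\mathfrak{b}}}-\beta-\gamma}$, are purely root-theoretic and $\mu$-independent, and have already been verified in \cref{d21aaaaaaaaaaa}. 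Feeding in $\mu = \lambda + \rho^{\mathfrak{b}}$ yields $\beta \in \mathcal{S}_1 M^{\mathfrak{b}}(\lambda)$, again contradicting (4). Both cases combined give the contrapositive, completing the equivalence.
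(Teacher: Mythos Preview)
Your proof is correct and follows essentially the same route as the paper: the forward chain $(1)\Rightarrow(2)\Rightarrow(3)\Rightarrow(4)$ is imported from \cref{typeI_RBtriv}, and the contrapositive of $(4)\Rightarrow(1)$ splits on whether $\beta\in\Delta_\otimes^{\mathrm{pure}+}$, invoking \cref{d.CS5.12} in the non-pure case and \cref{simpleevenlemma} together with the data of \cref{d21aaaaaaaaaaa} in the pure case. Your handling of the $\rho^{\mathfrak b}$-shift is in fact slightly more explicit than the paper's own write-up.
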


\begin{proof}
Let \( \beta \in \Delta_\otimes^{\mathfrak{b}} \) and \( (\lambda, \beta) = 0 \). Then, if \( \beta \notin \Delta_\otimes^{\text{pure}+} \), it follows from \cref{d.CS5.12} that \( \beta \in S_1 M^{\mathfrak{b}}(\lambda - \rho^{\mathfrak{b}}) \). If \( \beta \in \Delta_\otimes^{\text{pure}+} \), then by \cref{simpleevenlemma,d21aaaaaaaaaaa}, we also have \( \beta \in S_1 M^{\mathfrak{b}}(\lambda - \rho^{\mathfrak{b}}) \).
The remainder of the proof follows the same argument as in \cref{typeI_RBtriv}.
\end{proof}

\bibliographystyle{plainnat}  
\bibliography{references}

\noindent
\textsc{Shunsuke Hirota} \\
\textsc{Department of Mathematics, Kyoto University} \\
Kitashirakawa Oiwake-cho, Sakyo-ku, 606-8502, Kyoto \\
\textit{E-mail address}: \href{mailto:hirota.shunsuke.48s@st.kyoto-u.ac.jp}{hirota.shunsuke.48s@st.kyoto-u.ac.jp}

\end{document}